\DeclareMathAlphabet\mathbfcal{OMS}{cmsy}{b}{n}
\DeclareMathAlphabet{\mathpzc}{OT1}{pzc}{m}{it}
\theoremstyle{definition}
\newtheorem{defin}{Definition}[section]
\newtheorem{thm}[defin]{Theorem}
\newtheorem{conj}[defin]{Conjecture}
\newtheorem{lem}[defin]{Lemma}
\newtheorem{prop}[defin]{Proposition}
\newtheorem{cor}[defin]{Corollary}
\theoremstyle{remark}
\newtheorem{rem}[defin]{Remark}
\newtheorem{quest}[defin]{Question}
\newcommand{\Ric}{\mathrm{Ric}}
\newcommand{\cFut}{\check{\mathrm{F}}\mathrm{ut}}
\newcommand{\sqddbar}{\sqrt{-1} \partial \bar{\partial}}
\newcommand{\cmu}{\bm{\check{\mu}}}
\newcommand{\NAmu}{\bm{\check{\mu}}_{\mathrm{NA}}}
\newcommand{\pcH}{\mathcal{H}^{\mathbb{R}}_{\mathrm{NA}}}
\newcommand{\ibar}{\bar{\imath}}
\newcommand{\jbar}{\bar{\jmath}}
\newcommand{\cW}{\check{\mathcal{W}}}
\newcommand{\nH}{\mathcal{H}_{\mathrm{NA}}}
\title[I, Perelman's entropy and $\mu$-cscK metrics]{Entropies in $\mu$-framework of canonical metrics and K-stability, I -- Archimedean aspect: Perelman's entropy and $\mu$-cscK metrics}
\author{Eiji Inoue}
\address{Graduate School of Mathematical Sciences, the University of Tokyo \endgraf 3-8-1 Komaba, Meguro, Tokyo 153-8914, Japan. }
\email{eijinoe@ms.u-tokyo.ac.jp}
\begin{document}

\begin{abstract}
This is the first in a series of two papers (cf. \cite{Ino4}) studying $\mu$-cscK metrics and $\mu$K-stability from a new perspective, inspired by observations in \cite{Ino3} and in this first paper. 

This first paper is about a characterization of $\mu$-cscK metrics in terms of Perelman's $W$-entropy $\cW^\lambda$. 
We regard Perelman's $W$-entropy as a functional on the tangent bundle $T \mathcal{H} (X, L)$ of the space $\mathcal{H} (X, L)$ of K\"ahler metrics in a given K\"ahler class $L$. 
The critical points of $\cW^\lambda$ turn out to be $\mu^\lambda$-cscK metrics. 
When $\lambda \le 0$, the supremum along the fibres gives a smooth functional on $\mathcal{H} (X, L)$, which we call \textit{$\mu$-entropy}. 
Then $\mu^\lambda$-cscK metrics are also characterized as critical points of this functional, similarly as extremal metric is characterized as the critical points of Calabi functional. 

We also prove the $W$-entropy is monotonic along geodesics, following Berman--Berndtsson's subharmonicity argument. 
Then studying the limit of the $W$-entropy, we obtain a lower bound of the $\mu$-entropy. 
This bound is not just analogous, but indeed related to Donaldson's lower bound on Calabi functional by the extremal limit $\lambda \to -\infty$. 
\end{abstract}

\maketitle

\tableofcontents

\section{Introduction}

In this series, we unveil a new aspect of $\mu$-cscK metrics and $\mu$K-stability introduced in \cite{Ino2, Ino3} (see also \cite{Lah1}), which is concerned with optimal degeneration of polarized varieties. 
We also observe a universal aspect of various frameworks on optimal degeneration including K\"ahler--Ricci soliton and extremal metric. 

This work is inspired by the pioneering works of Donaldson \cite{Don}, Dervan \cite{Der2} and Xia \cite{Xia} in the context of extremal metric (see also \cite{Der1} and \cite[Theorem 1.3]{His16}), of He \cite{He}, Dervan--Sz\'ekelyhidi \cite{DS} and recent Han--Li \cite{HL2} in the context of K\"ahler--Ricci soliton (see also \cite[Theorem 4.3 \& Corollary 4.5]{Ber}, \cite{His} and \cite{CSW}), and of Chi Li \cite{Li1} on the normalized volume in the context of Sasaki--Einstein metric. 
As we review in section \ref{Observations}, one can think these studies originated from volume minimization \cite{TZ1, MSY}, and so does our study. 

\subsection{Backgrounds: $\mu$-cscK metrics and $\mu$K-stability}

\subsubsection{$\mu$-cscK metric}

Let $X$ be a compact K\"ahler manifold and $L$ be the K\"ahler class of an ample $\mathbb{Q}$-line bundle. 
We assume $X$ is smooth throughout this first article; we also study singular $X$ in the second article. 
Many of our arguments work also for transcendental $L$, but we restrict our interest to the rational case. 
In the rational case, we can adjust our study to non-archimedean pluripotential framework (cf. \cite{BJ1}), which we pursue in the subsequent article. 

For a parameter $\lambda \in \mathbb{R}$ and a real holomorphic vector field $\xi$ on $X$ which is of the form $\xi = \mathrm{Im} (\partial^\sharp \theta_\xi) = J \nabla \theta_\xi/2$ (cf. section \ref{convention}), a K\"ahler metric $\omega \in L$ is called a \textit{$\mu^\lambda$-cscK metric with respect to $\xi$} if $L_\xi \omega = 0$ and the \textit{$\mu^\lambda_\xi$-scalar curvature} 
\[ s^\lambda_\xi (\omega) := s (\omega) + \Delta \theta_\xi - |\partial^\sharp \theta_\xi|^2 - \lambda \theta_\xi \]
is constant. 
This curvature notion naturally (and in the unique manner) comes up from the moment map picture on K\"ahler--Ricci soliton \cite{Ino1, Ino2}. 
Indeed, K\"ahler--Ricci soliton is equivalent to $\mu^{2\pi \lambda}$-cscK metric in the K\"ahler class $-\lambda^{-1} K_X$ ($\lambda > 0$) on a Fano manifold $X$ (cf. \cite{Ino1}). 
The framework on $\mu$-cscK metrics encloses both the frameworks on K\"ahler--Ricci solitons and cscK metrics. 
We refer \cite{Ino2} for foundational aspects of $\mu$-cscK metrics and a more extensive work \cite{Lah1, Lah2} concerned with moment map picture. 

In the study \cite{Ino2}, we introduced the following \textit{$\mu$-volume functional}: 
\[ \log \mathrm{Vol}^\lambda (\xi) := \frac{\int_X s^\lambda_\xi e^{\theta_\xi} \omega^n}{\int_X e^{\theta_\xi} \omega^n} + \lambda \log \int_X e^{\theta_\xi} \omega^n. \]
The functional is defined on the Lie algebra $\mathfrak{k}$ of a compact subgroup $K \subset \mathrm{Aut} (X, L)$ and is independent of the choice of $K$-invariant metric $\omega \in L$. 
For Fano manifold, the functional is equivalent to Tian--Zhu's volume functional \cite{TZ1} appeared in the study of K\"ahler--Ricci soliton. 

This series is mainly devoted to a further exploration of the $\mu$-volume functional. 
We study a natural extension of (the minus log of) the $\mu$-volume functional defined on the space of non-archimedean metrics ($\supset$ test configurations), which we call the \textit{non-arhchimedean $\mu$-entropy}. 
The extension is introduced based on the equivariant cohomological nature of this $\mu$-volume functional. 

When our computation has equivariant cohomological background, it is convenient to compute with the moment map $\mu_\xi = -\theta_\xi/2$ rather than with the $\bar{\partial}$-potential $\theta_\xi$. 
It is thus convenient to assign a terminology for $\mu^\lambda_{-\xi/2}$-cscK metric. 
We call it \textit{$\check{\mu}^\lambda_\xi$-cscK metric}. 
A \textit{$\mu^\lambda$-cscK metric} is a $\mu^\lambda_\xi$/$\check{\mu}^\lambda_\xi$-cscK metric for some $\xi$. 

\subsubsection{Digression: phase transition and extremal limit}

Throughout this series, we study $\mu$-cscK metrics for a fixed parameter $\lambda$, especially for $\lambda \le 0$. 
It is observed in \cite{Ino2} that interesting phenomenon happens when varying $\lambda$, which inspires us to interpret the parameter $-\lambda$ as ``empirical temperature''. 
(It may be more appropriate regarding $\lambda^{-1}$ as ``absolute temperature''. 
The smaller $\lambda$, the higher temperature. 
Negative temperature is hotter than any positive $\lambda > 0$, in view of statistical role of the reverse temperature. )
Though it is off the topic of this series, we briefly describe it here as it is a fascinating aspect of $\mu$-cscK metrics. 

Firstly, a fall in ``temperature'' $-\lambda$ yields a ``phase transition phenomenon'': the possible states $\xi$ associated to some $\mu^\lambda$-cscK metrics branch off at some $-\lambda \ll 0$. 
Even on the simplest variety $\mathbb{C}P^1$, there appears a $\mu^\lambda_\xi$-cscK metric with respect to a non-trivial state $\xi \neq 0$ once the temperature $-\lambda$ gets across the ``phase transition point'' $-8\pi/\int_{\mathbb{C}P^1} \omega$. 
Such new \textit{non-trivial} $\mu^\lambda$-cscK metrics are unique modulo $\mathrm{Aut} (\mathbb{C}P^1)$ in this case, while the Fubini--Study metric gives a \textit{trivial} $\mu^\lambda$-cscK metric. 
For this example, the non-trivial state $\xi \neq 0$ is breaking the symmetry: $\{ g \in SU (2) ~|~ g_* \xi = \xi \} = U (1) \neq SU (2)$, where $SU (2) \subset \mathrm{Aut} (\mathbb{C}P^1)$ is a maximal compact group. 
This phenomenon is contrast to the case $\lambda \le 0$, in which case $\xi$ must be in the center of a maximal compact group (cf. \cite[Corollary 3.19]{Ino2}). 
In particular, non-trivial possible states $\xi \in \mathfrak{su} (2)$ associated to some $\mu^\lambda$-cscK metrics are note unique nor discrete; they form a sphere $S^2 = SU (2). \xi \subset \mathfrak{su} (2)$. 
In view of the $\mu$-volume functional, the nontrivial $\mu^\lambda_\xi$-cscK metric turns into a new ``stable state'', while the Fubini--Study metric transforms into ``supercooled state''. 

\begin{figure}[h]
\begin{minipage}[b]{0.45\linewidth}
\centering
\includegraphics[width=5cm]{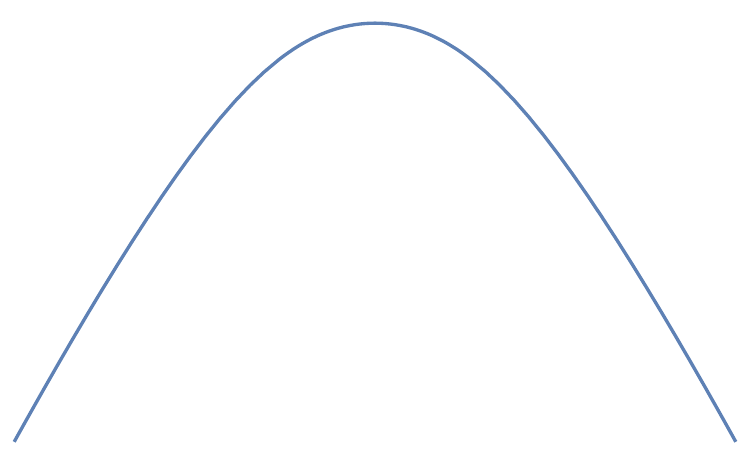}
\end{minipage}
\begin{minipage}[b]{0.45\linewidth}
\centering
\includegraphics[width=5cm]{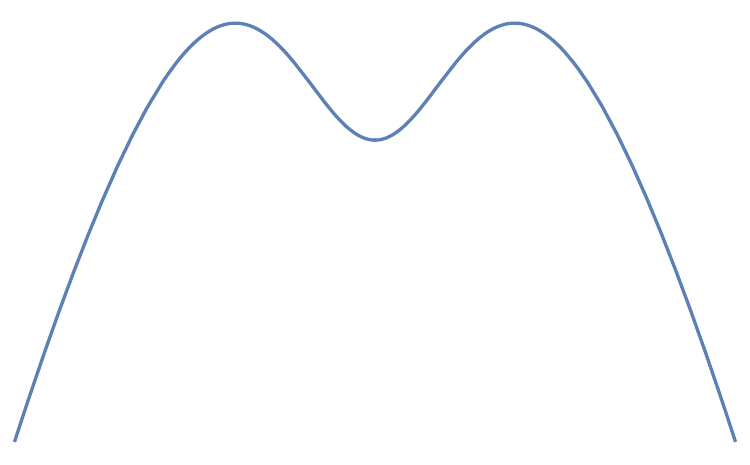}
\end{minipage}
\caption{The graphs of the $\mu$-entropy ($=$ the minus log of the $\mu$-volume) on $\mathfrak{u} (1) \subset \mathfrak{aut} (\mathbb{C}P^1)$ for $\lambda < \lambda_{\mathrm{ice}}$ and $\lambda > \lambda_{\mathrm{ice}}$, respectively. }
\end{figure}

Conversely, when the ``temperature'' is sufficiently high $-\lambda \gg 0$, the possible states $\xi$ associated to some $\mu^\lambda$-cscK metrics are uniquely determined for each ``temperature'' $-\lambda$ (cf. \cite[Theorem B (3)]{Ino2}), and is characterized as the minimizer of the $\mu$-volume functional. 
As ``temperature'' heats up $-\lambda \to + \infty$, the rescaled state $\lambda \xi$ converge to the extremal vector field $\xi_{\mathrm{ext}}$ (cf. \cite[Theorem D]{Ino2}). 
This implies the $\mu^\lambda$-scalar curvature converge to the extremal scalar curvature as $\lambda \to -\infty$: 
\[ s^\lambda_{\xi_\lambda} (\omega) ~ \longrightarrow ~ s (\omega) - \theta_{\xi_{\mathrm{ext}}}. \]
Based on this observation, we showed the following in \cite{Ino2} and \cite{Ino3} (the latter is essentially due to \cite{Lah1}), respectively. 
\begin{itemize}
\item If there is an extremal metric in $L$, we can construct by perturbation a family $\{ \omega_\lambda \}_{\lambda \ll 0}$ of $\mu^\lambda$-cscK metrics in $L$ converging to the extremal metric as $\lambda \to -\infty$. 

\item Conversely, if there are $\mu^\lambda$-cscK metrics in $L$ (or just $\mu^\lambda$K-semistable) for every $- \infty < \lambda \ll 0$, then $(X, L)$ is relatively K-semistable. 
\end{itemize}
Therefore, we can understand extremal metric (resp. relative K-stability) as the limit of $\mu^\lambda$-cscK metrics (resp. $\mu^\lambda$K-stability). 

We conjecture the following uniqueness for ``high temperature'' case $-\lambda \ge 0$, which fails for $-\lambda \ll 0$ as we already noted. 

\begin{conj}
\label{uniqueness}
For $\lambda \le 0$, $\mu^\lambda$-cscK metrics are unique modulo the automorphism group. 
\end{conj}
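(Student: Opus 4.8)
The plan is to prove uniqueness by the variational method that establishes uniqueness of cscK metrics (Berman--Berndtsson, Chen--Cheng) and of K\"ahler--Ricci solitons (Tian--Zhu, Berman--Witt Nystr\"om): split the datum of a $\mu^\lambda$-cscK metric into its Hamiltonian field $\xi$ and its metric $\omega$, and treat each by a convexity argument that is valid precisely because $\lambda\le0$. Note that the $\mu$-entropy itself plays the role of the Calabi functional (as established above, $\mu^\lambda$-cscK metrics are its critical points), and like the Calabi functional it is not the convex object governing uniqueness; the required convexity is instead supplied by the subharmonicity of the entropy term of $\check W^\lambda$ proved in this paper, which I would package into a Mabuchi-type functional $\mathbf M^\lambda_\xi$ at fixed $\xi$ whose critical points are the $\mu^\lambda_\xi$-cscK metrics.

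First I would show that the field $\xi$ is unique up to conjugacy. Fix a maximal torus $T\subset\mathrm{Aut}(X,L)$; after conjugating we may assume the two fields lie in $\mathfrak t$. Each is a critical point of the fibre functional $\xi\mapsto\check W^\lambda(\omega,\xi)$, equivalently of $-\log\mathrm{Vol}^\lambda(\xi)=-\bar s^\lambda_\xi-\lambda\log\int_X e^{\theta_\xi}\omega^n$. The quadratic term $\overline{|\partial^\sharp\theta_\xi|^2}$ hidden in $-\bar s^\lambda_\xi$ is convex, and $\log\int_X e^{\theta_\xi}\omega^n$ is convex in $\xi$ as a cumulant-generating function; since $-\lambda\ge0$, the whole of $-\log\mathrm{Vol}^\lambda$ is strictly convex on $\mathfrak t$ modulo the trivial directions. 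Hence its critical point is unique, so $\xi$ is determined up to the adjoint action. This strict convexity is exactly what the sign condition $\lambda\le0$ buys, and what fails for $\lambda>0$, where the several critical points are the phase transition described above. After conjugating, I reduce to two $T$-invariant $\mu^\lambda$-cscK metrics $\omega_0,\omega_1$ sharing the same $\xi$.

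Next I would connect $\omega_0,\omega_1$ by the weak ($C^{1,1}$) geodesic in the space of $T$-invariant potentials and run the subharmonicity argument of this paper along it. The entropy term of $\check W^\lambda$ is convex along the geodesic by Berman--Berndtsson's positivity of the relative energy, and for $\lambda\le0$ the soliton terms $\Delta\theta_\xi-|\partial^\sharp\theta_\xi|^2-\lambda\theta_\xi$ together with $\lambda\log\int_X e^{\theta_\xi}\omega^n$ leave $\mathbf M^\lambda_\xi$ convex. Since $\omega_0,\omega_1$ are both critical and realize the same critical value, $\mathbf M^\lambda_\xi$ is affine along the geodesic; the rigidity half of the Berman--Berndtsson theorem then forces the geodesic to be generated by a holomorphic vector field commuting with $\xi$, so $\omega_1=g^*\omega_0$ for some $g\in\mathrm{Aut}(X,L)$. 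This yields uniqueness modulo automorphisms.

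The main obstacle I anticipate is the coupling of the two parts. The $\mu$-entropy is a genuine fibrewise supremum, so the optimal $\xi$ moves as $\omega$ varies along the geodesic, and one cannot honestly freeze $\xi$ while deforming the metric. Overcoming this needs either a joint convexity in the pair $(\phi,\xi)$ or a min--max/Legendre reduction proving that the fibre-maximizing $\xi$ is constant along a geodesic joining two critical metrics; it is here, and not only in the finite-dimensional step, that the sign $\lambda\le0$ must be used decisively. Compounding this is the low regularity of geodesics: as in the cscK case, both the convexity and --- more seriously --- the equality case must be carried out for merely $C^{1,1}$ geodesics, and the rigidity argument must absorb the extra soliton terms, which have no counterpart in the cscK setting and whose interaction with the degenerate geodesic equation is the most delicate point.
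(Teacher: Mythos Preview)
The statement is labelled a \emph{conjecture} in the paper, not a theorem; the paper does not prove it. What the paper establishes is a reduction: combining Corollary~\ref{minimizer} with Theorem~\ref{Uniqueness of moment}, one shows that if $\omega$ is $\check\mu^\lambda_\xi$-cscK then $\xi$ is a \emph{global} maximizer of $\cmu^\lambda(X,L;\bullet)$ on any maximal torus $\mathfrak t$, and hence Conjecture~\ref{uniqueness} reduces to the uniqueness of this maximizer on $\mathfrak t$. Your second step --- uniqueness of $\mu^\lambda_\xi$-cscK metrics at \emph{fixed} $\xi$ --- is already proved in \cite{Lah2}, which the paper cites as partial evidence; so the outline ``pin down $\xi$, then apply geodesic convexity'' is exactly the reduction the paper reaches, not a new proof.

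The genuine gap is your first step. You claim that for $\lambda\le 0$ the functional $-\log\mathrm{Vol}^\lambda$ is strictly convex on $\mathfrak t$, arguing from convexity of the quadratic piece $\overline{|\partial^\sharp\theta_\xi|^2}$ and of the cumulant $\log\int_X e^{\theta_\xi}\omega^n$. But the paper states explicitly, immediately after the conjecture: ``The main difficulty is that the $\mu$-volume functional is not convex in general, even when $\lambda\le 0$.'' The term $\overline{|\partial^\sharp\theta_\xi|^2}=\int_X|\partial^\sharp\theta_\xi|^2 e^{\theta_\xi}\omega^n\big/\int_X e^{\theta_\xi}\omega^n$ is a ratio with $\xi$-dependent weight $e^{\theta_\xi}$, and the cross-terms from differentiating the weight spoil naive convexity; the same applies to the scalar-curvature and $\lambda\,\overline{\theta_\xi}$ contributions. (There is also a sign issue: the paper shows the relevant vector is a \emph{maximizer} of $\cmu^\lambda(X,L;\bullet)=-\log\mathrm{Vol}^\lambda+\mathrm{const}$, so you would need strict \emph{concavity}, not convexity, to conclude uniqueness.) The obstacle you anticipate at the end --- the $\xi$ drifting along the geodesic --- is in fact handled by the paper's reduction; what remains open is precisely the finite-dimensional uniqueness on $\mathfrak t$, and your convexity argument does not settle it.
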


The main difficulty is that the $\mu$-volume functional is not convex in general, even when $\lambda \le 0$. 
As a partial evidence of the conjecture, we currently know the following. 

\begin{itemize}
\item \cite{Lah2}: For a fixed $\lambda \in \mathbb{R}$ and a \textit{fixed vector} $\xi$, $\mu^\lambda_\xi$-cscK metrics are unique modulo $\mathrm{Aut}_\xi (X, L)$. 

\item \cite{Ino2}: If $\mu^\lambda_\xi$-cscK metric exists for $\lambda \le 0$, then $\xi$ is a local minimizer of the $\mu^\lambda$-volume functional. 
Moreover, there are only finitely many local minimizers in the Lie algebra $\mathfrak{k}$ of any compact group $K \subset \mathrm{Aut} (X, L)$. 
\end{itemize}

As a consequence of our main theorem of this article, we will see $\xi$ is actually a global minimizer of the $\mu^\lambda$-volume functional, if $\mu^\lambda_\xi$-cscK metric exists for $\lambda \le 0$. 
Thus the problem reduces to the uniqueness of global minimizers of the $\mu^\lambda$-volume functional, which is equivalent to the uniqueness of the global maximizers of $\mu$-entropy for vectors which we introduce later. 

\subsubsection{Equivariant intersection}

Equivariant intersection is a basic language for describing $\mu$K-stability and our $\mu$-entropy of test configurations. 
It also brings us transparent understanding on some known results in K-stability (cf. \cite{Ino3, Leg}). 
The idea of using localization can be traced back to Futaki \cite{Fut}, although we apply localization in a slightly different way from his original work. 
We briefly explain the concept below. 
We refer to \cite{EG1, GS, GGK} and \cite{Ino3} for further information. 

In this first article, we compute equivariant intersection using equivariant differential form. 
For a compact Lie group $K$, an equivariant closed $2$-form $\alpha + \nu$ on a manifold $X$ is a pair of $K$-invariant $2$-form $\alpha$ and a smooth $K$-equivariant map $\nu: X \to \mathfrak{k}^\vee$ to the dual of the Lie algebra which satisfy $-d\nu_\xi = i_{\xi^\#} \omega$ for the pairing $\nu_\xi = \langle \nu, \xi \rangle$. 
It defines a second equivariant cohomology class $[\alpha + \nu] \in H^2_K (X, \mathbb{R})$ and conversely, every second equivariant cohomology class $M \in H^2_K (X, \mathbb{R})$ can be represented in this way. 
Here we identify the singular equivariant cohomology $H^*_K (X, \mathbb{R})$ with the deRham--Cartan model ${^\hbar H_{\mathrm{dR}, K}} (X)$ with $\hbar = 1$ which is described in \cite{Ino3}. 
We must choose positive $\hbar$ for our results. 

When $G$ is a complex Lie group with a maximal compact subgroup $K$, we have a natural isomorphism $H^*_G (X, \mathbb{Z}) \cong H^*_K (X, \mathbb{Z})$. 
Then for a real analytic function $f$ on $\mathbb{R}$, the equivariant intersection $(M. f (L); \xi) = {^1 (M. f (L); \xi)}$ is computed as the integration 
\[ (M. f(L); \xi) = \int_X (\alpha + \nu_\xi) f (\omega+ \mu_\xi) = \int_X \Big{(} \frac{1}{(n-1)!} f^{(n-1)} (\mu_\xi) \omega^n + \frac{1}{n!} \nu_\xi f^{(n)} (\mu_\xi) \omega^n \Big{)}. \]
When we consider $G = \mathbb{C}^\times$, we denote $(M. f (L); \rho. \eta)$ for the positive generator $\eta \in \mathfrak{u} (1)$ and $\rho \in \mathbb{R}$. 
In this series, we are especially interested in the equivariant intersection $(M. e^L; \rho)$ for $\rho \ge 0$. 

As we observed in \cite{Ino3}, the equivariant intersection on scheme is actually a purely algebro-geometric notion: the above description is just a one possible way to compute the equivariant intersection. 
In the second article, we are interested in such algebraic aspect.

\subsubsection{$\mu$K-stability}

Let $\xi$ be a real holomorphic vector field generating a closed torus $T = \overline{\exp (\mathbb{R}. \xi)}$ in $\mathrm{Aut} (X, L)$. 
We denote its complexification by the same symbol $T$. 
A polarized scheme $(X, L)$ is called \textit{$\check{\mu}^\lambda_\xi$K-semistable} if $\cFut^\lambda_\xi (\mathcal{X}, \mathcal{L}) \ge 0$ for every $T$-equivariant test configuration $(\mathcal{X}, \mathcal{L})$. 
It is shown in \cite{Lah1} and is restated in \cite{Ino3} that if a smooth polarized manifold $(X, L)$ admits a $\check{\mu}^\lambda_\xi$-cscK metric, then $(X, L)$ is $\check{\mu}^\lambda_\xi$K-semistable. 
Here the \textit{$\mu$-Futaki invariant} $\cFut^\lambda_\xi (\mathcal{X}, \mathcal{L}) := D_\xi \check{\mu} (\mathcal{X}, \mathcal{L}) + \lambda D_\xi \check{\sigma} (\mathcal{X}, \mathcal{L})$ for a normal test configuration $(\mathcal{X}, \mathcal{L})$ is defined by the following equivariant intersection: 
\begin{align*}
D_\xi \check{\mu} (\mathcal{X}, \mathcal{L}) 
&:= 2 \pi \frac{(K_{\bar{\mathcal{X}}/\mathbb{C}P^1}^T. e^{\bar{\mathcal{L}}_T}; \xi) \cdot (e^{L_T}; \xi)  - (K_X^T. e^{L_T}; \xi) \cdot (e^{\bar{\mathcal{L}}_T}; \xi) }{(e^{L_T}; \xi)^2}, 
\\
D_\xi \check{\sigma} (\mathcal{X}, \mathcal{L})
&:= \frac{(\bar{\mathcal{L}}_T. e^{\bar{\mathcal{L}}_T}; \xi) \cdot (e^{L_T}; \xi) - (L_T. e^{L_T}; \xi) \cdot (e^{\bar{\mathcal{L}}_T}; \xi) }{(e^{L_T}; \xi)^2} - \frac{(e^{\bar{\mathcal{L}}_T}; \xi)}{(e^{L_T}; \xi)}. 
\end{align*}
For general non-normal test configuration, we replace the equivariant canonical divisor class $K_{\bar{\mathcal{X}}/\mathbb{C}P^1}^T$ with an equivariant Chow class $\kappa_{\bar{\mathcal{X}}/\mathbb{C}P^1}^T$ as explained in \cite{Ino3}, which fits into equivariant Grothendieck--Riemann--Roch theorem for general scheme (cf. \cite{EG2}). 

In \cite{Ino3}, we introduce an equivariant characteristic class $\bm{\mu}^\lambda = \bm{\mu} + \lambda \bm{\sigma}$ for family of polarized schemes. 
The $\mu$-Futaki invariant is understood as ``\textit{the derivative of the $\mu$-character at $\xi$ along the test configuration $(\mathcal{X}, \mathcal{L})$}''. 
We justified this slogan by introducing a derivation $\mathcal{D}_\xi$ on equivariant cohomology and applying it to the $\mu$-character $\bm{\mu}^\lambda$. 
This idea applies not only to test configuration, but also to family of polarized schemes, which yields an analogue of CM line bundle in our $\mu$K-stability context. 

For a test configuration $(\mathcal{X}, \mathcal{L})$, the $\mu$-character $\bm{\mu}^\lambda_{\mathbb{C}^\times} (\mathcal{X}, \mathcal{L})$ lives in $\hat{H}_{\mathbb{C}^\times} (\mathbb{C}, \mathbb{R})$. 
Identifying $\hat{H}_{\mathbb{C}^\times} (\mathbb{C}, \mathbb{R})$ with the ring $\mathbb{R} \llbracket \eta \rrbracket$ of formal power series, we can regard $\bm{\mu}^\lambda_{\mathbb{C}^\times} (\mathcal{X}, \mathcal{L})$ as a formal power series. 
We can easily see by the method in \cite{Ino3} that the corresponding power series actually converges around the origin and extends to a real analytic function $\cmu^\lambda (\mathcal{X}, \mathcal{L}; \bullet)$ on $\mathbb{R}$. 
We are especially interested in the function $\cmu^\lambda (\mathcal{X}, \mathcal{L}; \bullet)$ restricted to the half line $[0, \infty)$. 

This original description is not important in this first article, while it turns to be essential in the second article \cite{Ino4}. 
We later adopt an equivariant intersection formula on the compactified space $\bar{\mathcal{X}}$ as a definition of the functional $\cmu^\lambda (\mathcal{X}, \mathcal{L}; \bullet)$ in this first article. 
These definitions can be compared by equivariant localization (see \cite{Ino4}). 

\subsection{Main concepts and results}

We firstly introduce main concepts of this article. 
The concepts turned up implicitly in the study \cite{Ino2, Ino3}. 

\subsubsection{Perelman's $W$-entropy as a functional on the tangent bundle}

Let $X$ be a compact K\"ahler manifold and $L$ be a K\"ahler class. 
We denote by $\mathcal{H} (X, L)$ the space of K\"ahler metrics in the K\"ahler class $L$. 

For a reference K\"ahler metric $\omega_{\mathrm{ref}} \in \mathcal{H} (X, L)$, we obtain an open embedding $\mathcal{H} (X, L) \hookrightarrow C^\infty (X)/\mathbb{R}$ by assigning the K\"ahler potential $\varphi$ of a metric $\omega_\varphi = \omega_{\mathrm{ref}} + \sqddbar \varphi$. 
For another choice of the reference $\omega'_{\mathrm{ref}} = \omega_\psi$, the coordinate change is just given by parallel translation $\varphi \mapsto \varphi - \psi$, so we can identify the tangent bundle $T \mathcal{H} (X, L)$ with the product $\mathcal{H} (X, L) \times C^\infty (X)/\mathbb{R}$ in the canonical way: 
\[ \mathcal{H} (X, L) \times C^\infty (X)/\mathbb{R} \xrightarrow{\sim} T \mathcal{H} (X, L): (\omega, f) \mapsto \frac{d}{dt}\Big{|}_{t=0} (\omega + t \sqddbar f). \]
We use this canonical identification to compute variations of functionals on $T\mathcal{H} (X, L)$. 
We sometimes call $f \in T_\omega \mathcal{H} = C^\infty (X)/\mathbb{R}$ a \textit{momentum} and a pair $(\omega, f)$ a \textit{state}. 

For a K\"ahler metric $\omega \in \mathcal{H} (X, L)$ and $f \in C^{0, 1} (X)$, the (normalized) \textit{$W$-entropy} is defined as 
\begin{equation}
\cW^\lambda (\omega, f) := - \frac{\int_X \big{(} s (\omega) + |\partial^\sharp f|^2 - \lambda (n+f) \big{)} e^f \omega^n}{\int_X e^f \omega^n} - \lambda \log  \int_X e^f \frac{\omega^n}{n!}. 
\end{equation}
We can write as $\cW^\lambda = \cW + \lambda \check{S}$ by putting
\begin{align}
\cW (\omega, f) 
&:= - \frac{\int_X \big{(} s (\omega) + |\partial^\sharp f|^2 \big{)} e^f \omega^n}{\int_X e^f \omega^n}, 
\\
\check{S} (\omega, f) 
&:= \frac{\int_X (n+f) e^f \omega^n}{\int_X e^f \omega^n} - \log  \int_X e^f \frac{\omega^n}{n!}. 
\end{align}
As $\cW^\lambda (\omega, f+c) = \cW^\lambda (\omega, f)$, we may regard $\cW^\lambda$ as a functional on the tangent bundle $T \mathcal{H} (X, L)$ (by restricting to $C^\infty (X) \subset C^{0,1} (X)$). 

We recall our $W$-entropy is equivalent to the minus half of Perelman's original definition \cite{Per} (modulo constant depending only on $n$ and $\tau$): 
\[ W (g, f, \tau) = \int_X \big{(} \tau (R (g) + |\nabla f|^2) - (2n - f) \big{)} e^{-f} \frac{\mathrm{vol}_g}{(4\pi \tau)^n} \]
for $f$ normalized as $\int_X e^{-f} \mathrm{vol}_g/(4\pi \tau)^n = 1$. 
Here $R (g)$ denotes the Riemannian scalar curvature for a general Riemannian metric, which is $2 s (\omega)$ when $g$ is K\"ahler. 
Though we consider the same functional, we have different viewpoint and motivation in this article. 
In Perelman's study on Ricci flow the parameter $\tau$ is positive and is regarded as a reverse time $\tau = T_0 -t$, while in our study of $\mu$-cscK metrics the negative case $\tau^{-1} = \lambda \le 0$ is more fruitful, mainly due to the invertibility of the operator $\Delta - \nabla f - \lambda$, and the parameter is fixed (similarly as we fix it in the study of \textit{normalized} K\"ahler--Ricci flow). 
The crucial difference is that we are interested in the functional restricted to the space of K\"ahler metrics, not the space of the whole Riemannian metrics, to encounter with holomorphy. 

As in \cite{Per}, we define the \textit{$\mu$-entropy} as 
\begin{equation}
\cmu^\lambda (\omega) := \sup_{f \in C^{0,1} (X)} \cW^\lambda (\omega, f). 
\end{equation}
We will see in section \ref{Perelman's W-functional and mu-entropy} that the supremum is indeed attained by a smooth function. 
The functional $\cmu^\lambda: \mathcal{H} (X, L) \to \mathbb{R}$ is lower semicontinuous by definition, while the smoothness, even the continuity, is not evident. 
In section \ref{mu-cscK metrics and Perelman's $W$-entropy}, we show the smoothness for $\lambda \le 0$. 

This functional is intensively studied especially for Fano manifold with $L = -K_X$ and $\lambda = 2\pi$ in many literatures including \cite{He, TZ2, TZZZ, Ber, DS}. 
We discuss this in section \ref{Observations, He}. 
In this article, we study this functional for a general polarized manifold. 

The functional plays a role analogous to the Calabi functional. 
We will see in section \ref{Observations, Calabi} that the $\mu$-entropy is indeed connected to the Calabi functional by the extremal limit $\lambda^{-1} \to 0$. 
Similarly to Calabi functional, the functional $\cmu^\lambda$ is always bounded from below: 
\begin{equation} 
\cmu^\lambda (\omega) \ge \cW^\lambda (\omega, 0) = 2\pi \frac{n (K_X. L^{\cdot n-1})}{L^{\cdot n}} + \lambda n - \lambda \log \frac{(L^{\cdot n})}{n!}. 
\end{equation}
Note this is contrast to the (weighted) Mabuchi/Ding functional as these are unbounded when (weighted) K/D-unstable. 

\subsubsection{Remarks on test configuration}

Schemes are of finite type over $\mathbb{C}$ throughout this series. 

In this series, a \textit{test configuration} of a polarized scheme $(X, L)$ is a triple $(\mathcal{X}, \mathcal{L}; \rho)$ of the following data. 
\begin{itemize}
\item $\mathcal{X}$ is a scheme endowed with a $\mathbb{C}^\times$-action, a proper flat surjective morphism $\varpi: \mathcal{X} \to \mathbb{C}$ which is $\mathbb{C}^\times$-equivariant with respect to the scaling action on $\mathbb{C}$, and a $\mathbb{C}^\times$-equivariant isomorphism $j_\circ : X \times \mathbb{C}^* \xrightarrow{\sim} \varpi^{-1} (\mathbb{C}^*)$. 

\item $\mathcal{L}$ is a $\mathbb{C}^\times$-equivariant relatively ample $\mathbb{Q}$-line bundle on $\mathcal{X}$ satisfying $j_\circ^* \mathcal{L} = p_X^* L$ on $X \times \mathbb{C}^*$. 
We often identify $\mathcal{L}$ with an element of the equivariant cohomology $H^2_{\mathbb{C}^\times} (\mathcal{X}, \mathbb{R})$. 
When we emphasize that $\mathcal{L}$ is a $\mathbb{C}^\times$-equivariant class, we write it as $\mathcal{L}_{\mathbb{C}^\times}$. 

\item $\rho$ is a non-negative real number. (scaling parameter)
\end{itemize}

\begin{rem}
In the non-archimedean pluripotential theory (cf. \cite{BHJ1, BJ1, BJ2}), a test configuration $(\mathcal{X}, \mathcal{L}; \rho)$ is understood as the rescaled non-archimedean metric $\phi_{(\mathcal{X}, \mathcal{L}), \rho}$ of $\phi_{(\mathcal{X}, \mathcal{L})}$. 
The scaling parameter $\rho$ was veiled in literatures as it can be dismissed thanks to the fact that `classical' invariants for test configuration, such as $L^p$-norms and Mabuchi invariant (a modification of Donaldson--Futaki invariant in the non-archimedean context), are homogeneous with respect to $\rho$. 

In analytic viewpoint, these `classical' invariants are homogeneous because these express the slope of some functionals \textit{on the space $\mathcal{H} (X, L)$ of K\"ahler potentials}. 
As we see in this article, the $\mu$-entropy $\bm{\mu}^\lambda (\mathcal{X}, \mathcal{L}; \rho)$ for test configurations introduced later can be understood as the limit of the $W$-entropy along geodesic rays. 
The $W$-entropy is defined \textit{on the tangent bundle $T \mathcal{H} (X, L)$ of the space of K\"ahler potentials}, not on $\mathcal{H} (X, L)$. 
This difference from the classical functional yields the non-linearlity on $\rho$ of the $\mu$-entropy of test configurations. 
\end{rem}

We denote by $\mathbb{C}_-$ the affine space $\mathbb{C}$ endowed with the reverse scaling $\mathbb{C}^\times$-action: $z.t = t^{-1} z$. 
We can compactify $\mathcal{X}$ by equivariantly gluing $X \times \mathbb{C}_-$ and $\mathcal{X}$ over $X \times \mathbb{C}^*$. 
Similarly we obtain a natural extension $\bar{\mathcal{L}} \to \bar{\mathcal{X}}$ of $\mathcal{L}$. 

We can shift the original $\mathbb{C}^\times$-action on $\mathcal{L}$ by a weight $m \in \mathbb{Z}$ of $\mathbb{C}^\times$. 
Under this shifting, the extension $\bar{\mathcal{L}}$ is replaced with $\bar{\mathcal{L}} + m [\mathcal{X}_0]$. 
Since $\bar{\mathcal{L}}$ is relatively ample, we can take $m$ so that $\bar{\mathcal{L}} + m [\mathcal{X}_0] = \bar{\mathcal{L}} + \varpi^* \mathcal{O} (m)$ is ample. 
As this shifting will not affect the interested invariant $\cmu^\lambda (\mathcal{X}, \mathcal{L}; \rho)$, we sometimes assume $\bar{\mathcal{L}}$ is ample by shifting. 
(We must pay attention that the moment map associated to $\bar{\mathcal{L}}$ also shifts, so that we are not free to choose the normalization of the moment map in the $U (1)$-equivariant class $\bar{\mathcal{L}}_{U (1)}$ when we make use of the ampleness of $\bar{\mathcal{L}}$. )

We call a test configuration $(\mathcal{X}, \mathcal{L}; \rho)$ \textit{normal (resp. smooth)} if $\mathcal{X}$ is normal (resp. smooth) and \textit{snc smooth} if it is smooth and the central fibre $\mathcal{X}_0$ is supported on an snc divisor on $\mathcal{X}$. 
We say a test configuration $(\mathcal{X}, \mathcal{L}; \rho)$ \textit{dominates} another test configuration $(\mathcal{X}', \mathcal{L}'; \rho)$ if the canonical birational map $j'_\circ \circ j_\circ^{-1}: \mathcal{X} \dashrightarrow \mathcal{X}'$ extends to a morphism $\mathcal{X} \to \mathcal{X}'$. 
The \textit{trivial configuration} is the product $(X \times \mathbb{C}, L \times \mathbb{C})$ endowed with the scaling $\mathbb{C}^\times$-action $(x, \tau). t = (x, \tau t)$. 

For a normal test configuration $(\mathcal{X}, \mathcal{L}; \rho)$, we denote by $(\mathcal{X}_d, \mathcal{L}_d; \rho)$ the normalization of the base change of $(\mathcal{X}, \mathcal{L})$ along $z^d: \mathbb{C} \to \mathbb{C}$. 
We have $\phi_{(\mathcal{X}_d, \mathcal{L}_d), \rho} = \phi_{(\mathcal{X}, \mathcal{L}), d \rho}$ for the associated non-archimedean metrics. 

\subsubsection{$\mu$-entropy of test configuration}

For a normal test configuration $(\mathcal{X}, \mathcal{L}; \rho)$, we define its \textit{$\mu^\lambda$-entropy} $\cmu^\lambda (\mathcal{X}, \mathcal{L}; \rho)$ by 
\begin{align} 
\cmu (\mathcal{X}, \mathcal{L}; \rho) 
&:= 2 \pi \frac{(K_X. e^L) - \rho. (K_{\bar{\mathcal{X}}/\mathbb{C}P^1}^{\mathbb{C}^\times}. e^{\bar{\mathcal{L}}_{\mathbb{C}^\times}}; \rho)}{(e^L) - \rho. (e^{\bar{\mathcal{L}}_{\mathbb{C}^\times}}; \rho)}, 
\\
\bm{\check{\sigma}} (\mathcal{X}, \mathcal{L}; \rho) 
&:= \frac{(L. e^L) - \rho. (\bar{\mathcal{L}}_{\mathbb{C}^\times}. e^{\bar{\mathcal{L}}_{\mathbb{C}^\times}}; \rho)}{(e^L) - \rho. (e^{\bar{\mathcal{L}}_{\mathbb{C}^\times}}; \rho)} - \log \Big{(} (e^L) - \rho. (e^{\bar{\mathcal{L}}_{\mathbb{C}^\times}}; \rho) \Big{)},
\\
\cmu^\lambda (\mathcal{X}, \mathcal{L}; \rho) 
&:= \cmu (\mathcal{X}, \mathcal{L}; \rho) + \lambda \bm{\check{\sigma}} (\mathcal{X}, \mathcal{L}; \rho). 
\end{align}

By equivariant localization (cf. \cite{GGK, Ino3}), we can write these invariants as 
\begin{align*}
\cmu (\mathcal{X}, \mathcal{L}; \rho) 
&= 2\pi \frac{(\kappa_{\mathcal{X}_0}. e^{\mathcal{L}|_{\mathcal{X}_0}}; \rho)}{(e^{\mathcal{L}|_{\mathcal{X}_0}}; \rho)}, 
\\
\bm{\check{\sigma}} (\mathcal{X}, \mathcal{L}; \rho) 
&= \frac{(\mathcal{L}|_{\mathcal{X}_0}. e^{\mathcal{L}|_{\mathcal{X}_0}}; \rho)}{(e^{\mathcal{L}|_{\mathcal{X}_0}}; \rho)} - \log (e^{\mathcal{L}|_{\mathcal{X}_0}}; \rho), 
\end{align*}
which relates the definition in this first article to the $\mu$-character introduced in \cite{Ino3} and that in the second article \cite{Ino4}. 
Here $\kappa_{\mathcal{X}_0} \in \mathrm{CH}^{\mathbb{C}^\times} (\mathcal{X}_0, \mathbb{Q})$ denotes the $\mathbb{C}^\times$-equivariant canonical Chow class of the central fibre, which can be defined for general scheme (cf. \cite{Ful, EG2} or \cite{Ino3}). 
In this first article, we only use these localized expression as a simplified expression of the equivariant intersection in the above definition. 

\subsubsection{$\mu$-entropy for vectors}

It is observed in \cite{Ino2, Ino3} that for the product configuration $(\mathcal{X}, \mathcal{L}) = (X_{\mathbb{C}, \xi}, L_{\mathbb{C}, \xi})$ associated to (the fundamental vector field $\xi$ of) a $\mathbb{C}^\times$-action on $X$, we have 
\[ \cmu^\lambda (X_{\mathbb{C}, \xi}, L_{\mathbb{C}, \xi}; \rho) = -\log \frac{\mathrm{Vol}^\lambda (-\rho \xi/2)}{(e^n. n!)^\lambda} = \cW^\lambda (\omega, \mu_{\rho \xi}). \]
Since the central fibre is reduced, we have $\cmu^\lambda (X_{\mathbb{C}, d \xi}, L_{\mathbb{C}, d \xi}; \rho) = \cmu^\lambda ((X_{\mathbb{C}, \xi}, L_{\mathbb{C}, \xi})_d; \rho) = \cmu^\lambda (X_{\mathbb{C}, \xi}, L_{\mathbb{C}, \xi}; d \rho)$ for non-negative integer $d$. 

Thus for a torus action $(X, L) \circlearrowleft T$, we can define the functional $\cmu^\lambda (X, L; \bullet): \mathbb{Q} \otimes N \to \mathbb{R}$ by $p/q \otimes \xi \mapsto \cmu^\lambda (X_{\mathbb{C}, \xi}, L_{\mathbb{C}, \xi}; p/q)$, where $N \subset \mathfrak{t}$ is the lattice of one parameter subgroups. 
We can extend this functional to the whole $\mathfrak{t}$ continuously by putting 
\begin{equation}
\label{mu-entropy for vector}
\cmu^\lambda (X, L; \xi) := \cW^\lambda (\omega, \mu_\xi) = -\log \frac{\mathrm{Vol}^\lambda (-\xi/2)}{(e^n. n!)^\lambda} 
\end{equation}
for general $\xi \in \mathfrak{t}$, using a $\xi$-invariant K\"ahler metric $\omega \in L$ and a moment map $\mu$ which satisfies $-d\mu_\xi = i_\xi \omega$. 
As observed in \cite{Ino2}, this is independent of the choice of the $\xi$-invariant K\"ahler metric $\omega \in L$. 
By this description, we obviously have $\cmu^\lambda (X, L; \xi) \le \cmu^\lambda (\omega)$ for any $\xi$-invariant K\"ahler metric $\omega \in L$. 
We will see this indeed holds for non-invariant general $\omega \in L$. 

\subsubsection{Main results}

Here we collect the main results of this article. 
The first result is about the characterization of $\mu^\lambda$-cscK metrics in terms of $W$-entropy. 

\begin{thm}
The following are equivalent for a state $(\omega, f) \in T\mathcal{H} (X, L)$. 
\begin{itemize}
\item The vector field $\partial^\sharp f$ is holomorphic and the metric $\omega$ is a $\mu^\lambda$-cscK metric with respect to $\xi = \mathrm{Im} \partial^\sharp f$. 

\item The state $(\omega, f)$ is a critical point of $\cW^\lambda: T \mathcal{H} (X, L) \to \mathbb{R}$. 
\end{itemize}
\end{thm}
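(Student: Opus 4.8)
The plan is to read off the critical point condition from the first variation of $\check{W}^\lambda$ in the two directions of the canonical splitting $T\mathcal{H}(X,L)\cong\mathcal{H}(X,L)\times C^\infty(X)/\mathbb{R}$: a state $(\omega,f)$ is critical exactly when $\check{W}^\lambda$ has vanishing derivative both along the fibre (varying $f$ with $\omega$ fixed) and along the base (varying $\omega$ with $f$ fixed). Computing the two variations separately, and then showing that their conjunction is equivalent to the two assertions in the first bullet, is the whole content.

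First I would compute the fibre variation. Fixing $\omega$ and setting $f_t=f+t\phi$, I differentiate the three building blocks $\int_X(s+|\partial^\sharp f|^2)e^f\omega^n$, $\int_X(n+f)e^f\omega^n$ and $\int_Xe^f\omega^n$, using $\tfrac{d}{dt}|\partial^\sharp f|^2=2\,\mathrm{Re}\langle\partial\phi,\partial f\rangle$ and integrating the gradient term by parts against the weighted volume $e^f\omega^n$, so that $\int_X\langle\partial\phi,\partial f\rangle\,e^f\omega^n=-\int_X\phi(\Delta f+|\partial^\sharp f|^2)e^f\omega^n$. After collecting the covariance terms coming from the normalization $\int_Xe^f\omega^n$ and from $\check{S}$, the derivative takes the form $\int_X\phi\cdot h(\omega,f)\,e^f\omega^n$ up to a multiple of $\int_X\phi\,e^f\omega^n$; since $\check{W}^\lambda(\omega,f+c)=\check{W}^\lambda(\omega,f)$ the latter drops out, and vanishing for all $\phi$ is equivalent to a single second order equation $h(\omega,f)=\mathrm{const}$. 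This is the Euler--Lagrange equation for the fibrewise supremum defining $\cmu^\lambda(\omega)$, i.e. the equation satisfied by the maximizing momentum; the computation is elementary.

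Next I would compute the base variation. Fixing the momentum $f$ and setting $\omega_t=\omega+t\sqddbar\psi$, I use the standard linearizations $\delta(\omega^n)=(\Delta\psi)\omega^n$, the linearization of the scalar curvature $s(\omega)$, and $\delta|\partial^\sharp f|^2=-(\sqddbar\psi)(\partial^\sharp f,\overline{\partial^\sharp f})$ coming from $\delta g^{i\bar j}$. Integrating by parts I bring the variation to the form $\int_X\psi\cdot E(\omega,f)\,\omega^n$ for a fourth order expression $E$, and the base Euler--Lagrange equation is $E=0$. By the envelope principle this $E=0$ is nothing but the Euler--Lagrange equation of the $\mu$-entropy $\cmu^\lambda=\sup_f\check{W}^\lambda$ evaluated at the fibre maximizer, which is the precise sense in which $\cmu^\lambda$ plays the role of the Calabi functional.

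The crux, and the step I expect to be the main obstacle, is the equivalence of the pair (fibre equation, $E=0$) with ``$\partial^\sharp f$ holomorphic and $\omega$ is $\mu^\lambda$-cscK with respect to $\xi=\mathrm{Im}\,\partial^\sharp f$.'' The implication $\Leftarrow$ is a direct verification: assuming $\bar\partial\partial^\sharp f=0$ and $L_\xi\omega=0$, a Bochner-type identity expresses $h$ and $E$ through the $\mu^\lambda_\xi$-scalar curvature, and both Euler--Lagrange equations follow from holomorphy together with the single constancy $s^\lambda_\xi=\mathrm{const}$. For the hard implication $\Rightarrow$ one starts only from the two Euler--Lagrange equations, with $\partial^\sharp f$ a priori non-holomorphic. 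Following the passage from the Calabi functional to extremal metrics, the plan is to first use the fibre equation to rewrite the fourth order expression $E$, and then pair $E=0$ with the momentum $f$ in the weighted inner product and integrate by parts, producing an identity of the shape $\int_X|\bar\partial\partial^\sharp f|^2\,e^f\omega^n=0$, the unwanted indefinite terms being cancelled precisely by the fibre equation. This forces $\bar\partial\partial^\sharp f=0$, the holomorphy in the first bullet; then $\xi=\mathrm{Im}\,\partial^\sharp f$ is a Killing field with $L_\xi\omega=0$, and feeding holomorphy back into the fibre equation via the same Bochner identity yields $s^\lambda_\xi=\mathrm{const}$. The one analytic point to keep in mind is that a critical state is genuinely a pair $(\omega,f)$ with smooth $f$; this is guaranteed by the attainment of the fibrewise supremum by a smooth maximizer, established in the section on Perelman's $W$-functional.
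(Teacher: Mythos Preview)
Your plan is correct and is essentially the same as the paper's: split the criticality into the fibre and base directions, identify the fibre Euler--Lagrange equation as $s^\lambda_f(\omega)=\bar s^\lambda_f(\omega)$, and then, using the fibre equation, test the base variation against $\varphi=f$ to force $\int_X|\mathcal{D}f|^2 e^f\omega^n=0$ and hence $\bar\partial\partial^\sharp f=0$. The paper organizes the base computation via two auxiliary expressions $A_f(\varphi),B_f(\varphi)$ and the identities $\int_X\mathrm{Re}\,A_f(\varphi)\,e^f\omega^n=0$ and $\mathrm{Re}(A_f-B_f)(\varphi)=\mathrm{Re}(\mathcal{D}\varphi,\mathcal{D}f)$, which yields the clean formula $\frac{d}{dt}\big|_{t=0}\check{W}^\lambda(\omega_t,f)=\int_X\mathrm{Re}(\mathcal{D}\varphi,\mathcal{D}f)\,e^f\omega^n\big/\int_X e^f\omega^n$ once $f$ is fibre-critical; this is exactly the ``rewriting of $E$ using the fibre equation'' you anticipate, and the remaining step---setting $\varphi=f$---is identical to yours.
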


When $\lambda \le 0$, we can reinterpret this variational result in terms of $\mu$-entropy. 

\begin{thm}
\label{main theorem on mu-entropy}
When $\lambda \le 0$, the $\mu$-entropy $\cmu^\lambda$ is smooth on $\mathcal{H} (X, L)$. 
In this case, the following are equivalent for a K\"ahler metric $\omega \in \mathcal{H} (X, L)$. 
\begin{enumerate}
\item[(a)] $\omega$ is a $\mu^\lambda$-cscK metric. 

\item[(b)] $\omega$ is a minimizer of $\cmu^\lambda: \mathcal{H} (X, L) \to \mathbb{R}$. 

\item[(b')] $\omega$ is a critical point of $\cmu^\lambda: \mathcal{H} (X, L) \to \mathbb{R}$. 

\item[(d)] There is $\xi$ such that $\cmu^\lambda (X, L; \xi) = \cmu^\lambda (\omega)$. 
\end{enumerate}
\end{thm}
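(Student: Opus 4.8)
The plan is to reduce the whole statement to the fibrewise maximization of $\check{W}^\lambda$ over the momentum variable, combined with the already-established characterization of critical points of $\check{W}^\lambda$ on $T\mathcal{H}(X,L)$. First I would establish the smoothness of $\cmu^\lambda$ by studying, for each fixed $\omega$, the fibre functional $f \mapsto \check{W}^\lambda(\omega,f)$ on $C^\infty(X)/\mathbb{R}$. Computing the first variation in $f$ shows that a critical momentum solves a semilinear elliptic equation, and a direct computation of the second variation identifies the Hessian at a solution with (a positive multiple of) the drift operator $\Delta_f := \Delta + \langle \partial^\sharp f, \partial^\sharp \,\cdot\,\rangle$ shifted by $\lambda$, self-adjoint with respect to $e^f\omega^n$. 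In the convention making $\Delta_f$ have non-positive spectrum with kernel exactly the constants, the operator $\Delta_f + \lambda$ is negative definite on $C^\infty(X)/\mathbb{R}$ precisely when $\lambda \le 0$: the shift by $\lambda \le 0$ removes the constant direction, which is exactly the invertibility of $\Delta - \nabla f - \lambda$ emphasized in the introduction. Thus each fibre critical point is nondegenerate, hence a strict local maximum; the global fibre maximizer $f_\omega$ is unique, and by the implicit function theorem $\omega \mapsto f_\omega$ is smooth. Smoothness of $\cmu^\lambda(\omega)=\check{W}^\lambda(\omega,f_\omega)$ and the envelope identity $d\cmu^\lambda|_\omega = (\partial_\omega \check{W}^\lambda)(\omega,f_\omega)$ then follow at once.

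With this in hand I would prove the four conditions equivalent by running the cycle $(a)\Rightarrow(b'')\Rightarrow(b)\Rightarrow(b')\Rightarrow(a)$. For $(b')\Rightarrow(a)$: if $\omega$ is a critical point of $\cmu^\lambda$, the envelope identity gives $(\partial_\omega\check{W}^\lambda)(\omega,f_\omega)=0$, while $(\partial_f\check{W}^\lambda)(\omega,f_\omega)=0$ by definition of the fibre maximizer; hence $(\omega,f_\omega)$ is a critical point of $\check{W}^\lambda$ on $T\mathcal{H}(X,L)$, and the preceding characterization theorem forces $\partial^\sharp f_\omega$ to be holomorphic and $\omega$ to be $\mu^\lambda$-cscK with respect to $\mathrm{Im}\,\partial^\sharp f_\omega$. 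For $(a)\Rightarrow(b'')$: if $\omega$ is $\mu^\lambda$-cscK with respect to some $\xi$, then the associated holomorphic potential is a fibre critical momentum, hence coincides with the unique maximizer $f_\omega$; therefore $\cmu^\lambda(\omega)=\check{W}^\lambda(\omega,f_\omega)$, which by the identity \eqref{mu-entropy for vector} equals $\cmu^\lambda(X,L;\xi')$ for the corresponding generator $\xi'$ (a rescaling of $\xi$ dictated by the convention $\mu_\xi=-\theta_\xi/2$).

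The remaining two implications are the minimizing statements, and they rest on the global lower bound $\cmu^\lambda(X,L;\xi)\le \cmu^\lambda(\omega)$, valid for every $\omega\in\mathcal{H}(X,L)$ and every $\xi$ tangent to a torus in $\mathrm{Aut}(X,L)$, which extends the evident inequality for $\xi$-invariant metrics. Granting this, $(b'')\Rightarrow(b)$ is immediate: if $\cmu^\lambda(X,L;\xi)=\cmu^\lambda(\omega)$ then $\cmu^\lambda(\omega)=\cmu^\lambda(X,L;\xi)\le\cmu^\lambda(\omega')$ for all $\omega'$, so $\omega$ minimizes. Finally $(b)\Rightarrow(b')$ is trivial, since $\cmu^\lambda$ is smooth on the open set $\mathcal{H}(X,L)$ and any minimizer of a smooth functional is a critical point. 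This closes the cycle and proves the equivalence of $(a)$, $(b)$, $(b')$, $(b'')$.

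I expect the main obstacle to be the analytic heart of the smoothness claim, namely the \emph{existence} of a global fibre maximizer and its \emph{uniqueness}: the fibre functional is not manifestly concave, so while negative-definiteness of $\Delta_f+\lambda$ for $\lambda\le 0$ gives nondegeneracy, a strict local maximum, and local uniqueness, promoting this to a unique global maximizer requires a genuine coercivity/compactness argument that uses $\lambda\le 0$. The second delicate point is the global lower bound $\cmu^\lambda(X,L;\xi)\le\cmu^\lambda(\omega)$ for \emph{non-invariant} $\omega$; I would deduce this from the monotonicity of the $W$-entropy along the geodesic ray issuing from $\omega$ in the direction of the product configuration $X_{\mathbb{C},\xi}$, recovering $\cmu^\lambda(X,L;\xi)$ as the slope at infinity and using monotonicity to bound it by the value at $\omega$.
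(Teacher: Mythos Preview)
Your overall architecture matches the paper's: smoothness of $\cmu^\lambda$ via the implicit function theorem applied to the fibre maximizer, the envelope identity to pass between critical points of $\cmu^\lambda$ and critical points of $\check{W}^\lambda$, and the monotonicity/slope machinery of Section~\ref{W-functional along geodesic rays} to obtain the global lower bound $\cmu^\lambda(X,L;\xi)\le\cmu^\lambda(\omega)$ needed for $(b'')\Rightarrow(b)$ (equivalently, $(a)\Rightarrow(b)$). Your cycle $(a)\Rightarrow(b'')\Rightarrow(b)\Rightarrow(b')\Rightarrow(a)$ is logically equivalent to the paper's arrangement and in one respect cleaner, since it sidesteps the $\xi$-invariance hypothesis that the paper's direct proof of $(b'')\Rightarrow(a)$ in Theorem~\ref{Uniqueness of moment} carries.

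The one substantive difference is precisely where you flag an obstacle: the \emph{global} uniqueness of the fibre critical momentum. Your Hessian argument gives nondegeneracy and local uniqueness, and you correctly note that promoting this to global uniqueness via coercivity would take real work. The paper bypasses this entirely with a short direct argument. Given two critical momenta $f,g$, set $u=f-g$ and $h=(f+g)/2$, and normalize so that $\int_X u\,e^h\omega^n=0$. Subtracting the two critical-point equations $\Delta f-\tfrac{1}{2}|\nabla f|^2-\lambda f=\bar{s}^\lambda_f-s(\omega)$ and the analogue for $g$ yields
\[
\Delta u-(\nabla h,\nabla u)=\lambda u+(\bar{s}^\lambda_f-\bar{s}^\lambda_g).
\]
Multiplying by $u$ and integrating against $e^h\omega^n$ (the constant drops out by the normalization) gives $\int_X|\nabla u|^2 e^h\omega^n=\lambda\int_X u^2 e^h\omega^n$, which for $\lambda\le 0$ forces $u\equiv 0$. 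This trick---linearizing not at one critical point but symmetrically at their midpoint $h$---is what makes the argument global and elementary. The existence of a maximizer, which you also mention, is handled separately in the paper by a Rothaus-type variational argument (Section~\ref{Perelman's W-functional and mu-entropy}), not by the Hessian analysis.
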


We firstly give a proof for the implication (a) $\iff$ (b') $\iff$ (d) $\Longleftarrow$ (b) in section \ref{mu-cscK metrics and Perelman's $W$-entropy}. 
The rest implication (a) $\Longrightarrow$ (b) is concluded in section \ref{W-functional along geodesic rays} as a consequence of the following. 


\begin{thm}
\label{inequality}
For $\lambda \in \mathbb{R}$, we have 
\[ \sup_{(\mathcal{X}, \mathcal{L}), \rho \ge 0} \cmu^\lambda (\mathcal{X}, \mathcal{L}; \rho) \le \inf_{\omega_\varphi \in \mathcal{H} (X, L)} \cmu^\lambda (\omega_\varphi), \]
where $(\mathcal{X}, \mathcal{L}; \rho)$ runs over all test configurations. 
\end{thm}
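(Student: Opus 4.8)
The plan is to reduce to the pointwise inequality $\cmu^\lambda (\mathcal{X}, \mathcal{L}; \tau) \le \cmu^\lambda (\omega_\varphi)$ for every test configuration $(\mathcal{X}, \mathcal{L}; \tau)$ and every $\omega_\varphi \in \mathcal{H} (X, L)$; taking the supremum over test configurations on the left and the infimum over metrics on the right then gives the claim. First I would fix such a pair and attach to the rescaled non-archimedean metric $\phi_{(\mathcal{X}, \mathcal{L}), \tau}$ the associated weak geodesic ray $\{ \varphi_s \}_{s \ge 0}$ in $\mathcal{H} (X, L)$ emanating from $\varphi_0 = \varphi$ (via the envelope construction of Phong--Sturm, with $C^{1,1}$ regularity after Chu--Tosatti--Weinkove). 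Lifting the ray to its velocity gives a curve $s \mapsto (\omega_s, \dot\varphi_s)$ in the tangent bundle $T\mathcal{H} (X, L)$, and I set
\[ W (s) := \check{W}^\lambda (\omega_s, \dot\varphi_s). \]
Since the $W$-entropy is invariant under adding constants to the momentum, the normalization of $\dot\varphi_s$ is irrelevant.

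The two inputs are a monotonicity and a limit identity. For the monotonicity I would invoke the result, established earlier via Berman--Berndtsson's subharmonicity argument, that $W (s)$ is non-increasing along geodesics; this is the only place the geodesic equation enters, and it forces $\lim_{s \to \infty} W (s) \le W (0)$. At the initial point, $W (0) = \check{W}^\lambda (\omega_0, \dot\varphi_0) \le \sup_f \check{W}^\lambda (\omega_0, f) = \cmu^\lambda (\omega_\varphi)$ holds trivially from the definition of the $\mu$-entropy as a fibrewise supremum. It therefore remains to identify the limit at infinity with the non-archimedean entropy,
\[ \lim_{s \to \infty} W (s) = \cmu^\lambda (\mathcal{X}, \mathcal{L}; \tau), \]
after which the chain $\cmu^\lambda (\mathcal{X}, \mathcal{L}; \tau) = \lim_{s \to \infty} W (s) \le W (0) \le \cmu^\lambda (\omega_\varphi)$ closes the argument. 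As a consistency check, for a product configuration $(X_{\mathbb{C}, \xi}, L_{\mathbb{C}, \xi})$ the ray is a one-parameter orbit, $W (s) \equiv \check{W}^\lambda (\omega, \mu_{\tau \xi})$ is constant, and the statement collapses to the already-observed bound $\cmu^\lambda (X, L; \tau \xi) \le \cmu^\lambda (\omega)$.

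The core of the proof, and the step I expect to be the main obstacle, is the limit identity. Here one must compute the asymptotics as $s \to \infty$ of each ingredient of
\[ W (s) = - \frac{\int_X \big{(} s (\omega_s) + |\partial^\sharp \dot\varphi_s|^2 - \lambda (n + \dot\varphi_s) \big{)} e^{\dot\varphi_s} \omega_s^n}{\int_X e^{\dot\varphi_s} \omega_s^n} - \lambda \log \int_X e^{\dot\varphi_s} \frac{\omega_s^n}{n!} \]
and match the result to the localized expressions $2\pi (\kappa_{\mathcal{X}_0}. e^{\mathcal{L}|_{\mathcal{X}_0}}; \tau)/(e^{\mathcal{L}|_{\mathcal{X}_0}}; \tau)$ and $(\mathcal{L}|_{\mathcal{X}_0}. e^{\mathcal{L}|_{\mathcal{X}_0}}; \tau)/(e^{\mathcal{L}|_{\mathcal{X}_0}}; \tau) - \log (e^{\mathcal{L}|_{\mathcal{X}_0}}; \tau)$ defining $\cmu^\lambda (\mathcal{X}, \mathcal{L}; \tau)$. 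The mechanism is a Duistermaat--Heckman type convergence of the weighted measures $e^{\dot\varphi_s} \omega_s^n$, together with an integration by parts turning $s (\omega_s) + \Delta \dot\varphi_s - |\partial^\sharp \dot\varphi_s|^2$ into the equivariant canonical class $\kappa_{\mathcal{X}_0}$, exactly as in the equivariant-cohomological identification of \cite{Ino3}.

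Two difficulties must be managed in that step. The weak geodesic ray is only $C^{1,1}$, so $s (\omega_s)$ and the quadratic term $|\partial^\sharp \dot\varphi_s|^2$ are not classically defined; and this nonlinear gradient term is precisely the ingredient absent from the classical Mabuchi-slope computations, so its limiting behaviour has to be controlled directly rather than borrowed. I would handle the regularity either by first reducing to snc smooth test configurations, where the degenerate fibre and the ray are transparent, and passing to the limit using the lower semicontinuity of $\cmu^\lambda$ and the insensitivity of $\cmu^\lambda (\mathcal{X}, \mathcal{L}; \tau)$ to resolution of the total space, or by working throughout with the smooth approximating subgeodesics while using the uniform a priori bound $\cmu^\lambda (\cdot) \ge \check{W}^\lambda (\cdot, 0)$, which keeps $W$ bounded below and guarantees the monotone limit is finite. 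I expect the matching of this analytic limit with the equivariant intersection, rather than the monotonicity, to carry the real weight of the argument.
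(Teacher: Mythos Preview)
Your overall architecture is right, and you correctly identify the main obstruction; but the gap you flag is precisely the one your proposed fixes do not close, and it is exactly where the paper's argument diverges from yours.

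The functional $W(s) = \check{W}^\lambda(\omega_s, \dot\varphi_s)$ is \emph{not defined} along a $C^{1,1}$ geodesic: the term $s(\omega_s)$ requires fourth derivatives of $\varphi_s$. Neither of your remedies helps. Reducing to snc smooth test configurations improves regularity of the extension of $\Phi$ across the central fibre, but the geodesic itself is still only $C^{1,1}$ in the interior, so $s(\omega_s)$ remains undefined. Working with smooth approximating subgeodesics loses the monotonicity: the derivative formula $\frac{d}{ds}\check{W}^\lambda(\omega_s,-\dot\phi_s) = -\int |\mathcal{D}\dot\phi|^2 e^{-\dot\phi}\omega_s^n / \int e^{-\dot\phi}\omega_s^n$ uses the geodesic equation in an essential way, and there is no uniform sign on a mere subgeodesic. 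So there is no route to a monotone $W(s)$ directly.

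What the paper does instead is to integrate in $s$ and work with an action functional. Split $\check{W}^\lambda = \check{W} + \lambda\check{S}$. The $\check{S}$ piece is in fact \emph{constant} along the $C^{1,1}$ geodesic and equals $\bm{\check{\sigma}}(\mathcal{X},\mathcal{L};\tau)$ exactly, so no limit argument is needed there. For the $\check{W}$ piece one introduces
\[
\mathcal{A}_{\bm{\phi}}(t) = \frac{1}{n!}\int_X \log\frac{\omega_{\phi_t}^n}{\omega^n}\, e^{-\dot\phi_t}\omega_{\phi_t}^n + \frac{1}{n!}\int_0^t\!\! ds\int_X n\,\mathrm{Ric}(\omega)\wedge e^{-\dot\phi_s}\omega_{\phi_s}^{n-1},
\]
a Chen--Tian type expression where the scalar curvature has been traded for the entropy of $\omega_{\phi_t}^n$ against $\omega^n$ plus a term involving only the \emph{fixed} curvature $\mathrm{Ric}(\omega)$. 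This is well-defined for $C^{1,1}$ rays. The Berman--Berndtsson subharmonicity argument then shows $\mathcal{A}_{\bm{\phi}}$ is \emph{convex}, so its right derivative $-W^+(t) := \frac{d}{dt}_+\mathcal{A}_{\bm{\phi}}(t)$ is monotone. At $t=0$ the non-negativity of entropy yields the inequality $W^+(0) \le \check{W}(\omega,-\dot\phi_0)$ (not an equality); at $t\to\infty$ an equivariant Stokes computation gives the slope in terms of $K^{\log}_{\bar{\mathcal{X}}/\mathbb{P}^1}$, hence $\NAmu \ge \cmu$. Chaining these yields the theorem. Your expected equality $\lim W(s) = \cmu^\lambda(\mathcal{X},\mathcal{L};\tau)$ is both unnecessary and not what one obtains.
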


This inequality is analogous to Donaldson's lower bound \cite{Don} (resp. \cite{He, DS}) for Calabi functional (resp. $H$-functional). 
These are indeed related by extremal limit $\lambda \to - \infty$ as explained in section \ref{Observations, Calabi}. 
We will reformulate this result in the second article \cite{Ino4}. 

The first two theorems are proved in section \ref{mu-cscK metrics and Perelman's $W$-entropy}. 
The last theorem is proved in section \ref{Slope formula}, based on the monotonicity established in section \ref{Monotonicity}. 
We employ Berman--Berndtsson's subharmonicity argument \cite{BB} there. 

\subsubsection{Interpretation of the result: Minimax picture}

Our proof of the Theorem \ref{inequality} inspires us to interpret the result as a minimax theorem on Perelman's $W$-entropy $\check{W}^\lambda (\omega, f)$. 
It is then natural to regard test configurations as dual concepts of K\"ahler metrics. 
As proposed in \cite{BHJ1}, one can identify test configurations with non-archimedean metrics. 
This perspective provides us the following understanding: $W$-entropy gives a duality between K\"ahler metrics and non-archimedean metrics. 

With this in mind, let us rewrite the inequality in Theorem \ref{inequality} as follows: 
\begin{equation}
\label{minimax inequality} 
\sup_{\varphi \in \nH (X, L)} \NAmu^\lambda (\varphi) \le \inf_{\omega_\phi \in \mathcal{H} (X, L)} \cmu^\lambda (\omega_\phi). 
\end{equation}
Then the following corollaries can be interpreted as a duality. 

\begin{cor}
Suppose $\lambda \le 0$. 
If $\omega$ is a $\check{\mu}^\lambda_\xi$-cscK metric for some $\xi$, then 
\[ \cmu^\lambda (\omega) = \NAmu^\lambda (\varphi_\xi) = \sup_{\varphi \in \nH (X, L)} \NAmu^\lambda (\varphi) = \inf_{\omega_\phi \in \mathcal{H} (X, L)} \cmu^\lambda (\omega_\phi), \]
for the associated non-archimedean metric $\varphi_\xi$. 
\end{cor}

\begin{proof}
If $\omega$ is a $\check{\mu}^\lambda_\xi$-cscK metric for $\lambda \le 0$, then by the second main theorem, we get $\cmu^\lambda (\omega) = \NAmu^\lambda (\varphi_\xi)$. 
Thus by the inequality (\ref{minimax inequality}), we obtain 
\[ \cmu^\lambda (\omega) = \NAmu^\lambda (\varphi_\xi) \le \sup_{\varphi \in \nH (X, L)} \NAmu^\lambda (\varphi) \le \inf_{\omega_\phi \in \mathcal{H} (X, L)} \cmu^\lambda (\omega_\phi) \le \cmu^\lambda (\omega). \]
\end{proof}

\begin{cor}
\label{mu-cscK characterization}
Suppose $\lambda \le 0$. 
The following are equivalent for a K\"ahler metric $\omega \in \mathcal{H} (X, L)$. 
\begin{enumerate}
\item[(a)] $\omega$ is a $\mu^\lambda$-cscK metric. 

\item[(b)] $\cmu^\lambda (\omega) = \inf_{\omega_\varphi \in \mathcal{H} (X, L)} \cmu^\lambda (\omega_\varphi)$. 

\item[(c)] $\cmu^\lambda (\omega) = \sup_{\phi \in \pcH (X, L)} \NAmu^\lambda (\phi)$. 

\item[(d)] $\cmu^\lambda (\omega) = \cmu^\lambda_{\mathrm{NA}} (\phi_\xi)$ for some vector $\xi$. 
\end{enumerate}
\end{cor}

\begin{proof}
As we already see (a) $\iff$ (b) $\iff$ (d), it suffices to show (c) $\Rightarrow$ (b) and (d) $\Rightarrow$ (c). 
The implication (c) $\Rightarrow$ (b) follows by the inequality (\ref{minimax inequality}): 
\[ \inf_{\omega_\varphi \in \mathcal{H} (X, L)} \cmu^\lambda (\omega_\varphi) \le \cmu^\lambda (\omega) = \sup_{\phi \in \pcH (X, L)} \NAmu^\lambda (\phi) \le \inf_{\omega_\varphi \in \mathcal{H} (X, L)} \cmu^\lambda (\omega_\varphi). \]
As for (d) $\Rightarrow$ (c), we again use the inequality (\ref{minimax inequality}): 
\[ \sup_{\phi \in \pcH (X, L)} \NAmu^\lambda (\phi) \le \cmu^\lambda (\omega) = \NAmu^\lambda (\phi_\xi) \le \sup_{\phi \in \pcH (X, L)} \NAmu^\lambda (\phi). \]
\end{proof}

Now we explain the minimax picture. 
The following diagram illustrates the relation of Perelman's $W$-entropy and $\mu$-entropies for K\"ahler metrics and test configurations. 
\[ \begin{tikzcd} 
~
& \check{W}^\lambda (\omega_\phi, \varphi) \ar{dr}{\sup_\varphi} \ar[swap]{dl}{\inf_{\omega_\phi}}
&~
\\
\NAmu^\lambda (\varphi) 
& ~
& \bm{\check{\mu}}^\lambda (\omega_\phi)
\end{tikzcd} \]
Here we define $\check{W}^\lambda (\omega_\phi, \varphi)$ as follows. 
For $\omega_\phi \in \mathcal{H} (X, L)$ and $\varphi \in \nH (X, L)$, we have a unique $C^{1,1}$-geodesic ray $\omega_{\phi_t} \in \mathcal{H}^{1,1} (X, L)$ subordinate to $\varphi$ emanating from $\omega_\phi$. 
We then put 
\[ \check{W}^\lambda (\omega_\phi, \varphi) := \check{W}^\lambda (\omega_\phi, \dot{\phi}_0). \]

By the definition of $\bm{\check{\mu}}^\lambda (\omega_\phi)$, we have 
\[ \bm{\check{\mu}}^\lambda (\omega_\phi) \ge \sup_{\varphi \in \nH (X, L)} \check{W}^\lambda (\omega_\phi, \varphi). \]
On the other hand, as we will prove, by the monotonicity and the slope formula, we have $\NAmu^\lambda (\varphi) \le \check{W}^\lambda (\omega_\phi, \varphi)$ for $\varphi \in \nH (X, L)$ associated to a smooth test configuration $(\mathcal{X}, \mathcal{L})$. 
Thus we have 
\[ \NAmu^\lambda (\varphi) \le \inf_{\omega_\phi \in \nH (X, L)} \check{W}^\lambda (\omega_\phi, \varphi). \]
Since we obviously have 
\[ \sup_{\varphi \in \mathcal{H} (X, L)} \inf_{\omega_\phi \in \nH (X, L)} \check{W}^\lambda (\omega_\phi, \varphi) \le \inf_{\omega_\phi \in \mathcal{H} (X, L)} \sup_{\varphi \in \nH (X, L)} \check{W}^\lambda (\omega_\phi, \varphi), \]
we obtain 
\[ \sup_{\varphi \in \nH (X, L)} \NAmu^\lambda (\varphi) \le \inf_{\omega_\phi \in \mathcal{H} (X, L)} \bm{\check{\mu}}^\lambda (\omega_\phi). \]
Therefore, Theorem \ref{inequality} can be interpreted as a minimax inequality. 
This interpretation leads us to the following conjecture. 

\begin{conj}
We have the following equalities: 
\begin{gather*} 
\bm{\check{\mu}}^\lambda (\omega_\phi) = \sup_{\varphi \in \mathcal{H} (X, L)} \check{W}^\lambda (\omega_\phi, \varphi), 
\\
\NAmu^\lambda (\varphi) = \inf_{\omega_\phi \in \nH (X, L)} \check{W}^\lambda (\omega_\phi, \varphi). 
\end{gather*}
Moreover, for $\lambda \le 0$, we have 
\[ \sup_{\varphi \in \nH (X, L)} \NAmu^\lambda (\varphi) = \inf_{\phi \in \mathcal{H} (X, L)} \bm{\check{\mu}}^\lambda (\omega_\phi). \]
\end{conj}

This is analogous to Sion's minimax theorem: let $C$ be a compact convex subset of a topological vector space and $H$ be a convex subset of a topological vector space. 
Suppose a function $W: H \times C \to \mathbb{R}$ satisfies the following conditions: 
\begin{itemize}
\item For every $\varphi \in C$, $W (\cdot, \varphi): H \to \mathbb{R}$ is lsc and quasi-convex. 

\item For every $\phi \in H$, $W (\phi, \cdot): C \to \mathbb{R}$ is usc and quasi-concave. 
\end{itemize}
Then we have 
\[ \max_{\varphi \in C} \inf_{\phi \in H} W (\phi, \varphi) = \inf_{\phi \in H} \max_{\varphi \in C} W (\phi, \varphi). \]

\subsubsection{Conventions for K\"ahlerian tensor calculus}
\label{convention}

Here we fix our convention for K\"ahlerian tensor calculus used throughout this series. 

Let $X$ be a complex manifold. 
We put $d^c := \frac{\sqrt{-1}}{2} (\bar{\partial} - \partial)$ so that it is a real operator satisfying $dd^c = \sqddbar$. 
Usually, $d^c$ is divided by $\pi$ or $2\pi$ from ours, though, we prefer to use our convention as this difference affects to the geodesic equation and hence to our computation on equivariant integration: for $a \in \mathbb{R}$, a (smooth) path of K\"ahler metrics $\omega + d (a. d^c) \phi_t$ is a geodesic iff $\ddot{\phi}_t - a. |\bar{\partial} \dot{\phi}_t|^2 = 0$. 

When we consider $(1,1)$-forms in a cohomology class $[\alpha]$, we often identify a smooth function $\varphi$ and the associated $(1,1)$-form $\alpha_\varphi = \alpha + \sqddbar \varphi = \alpha + dd^c \varphi$. 
We use the notation ``$\varphi \in C^\infty (X, \alpha)$'' to clarify this stance: ``we are now identifying $\varphi \in C^\infty (X)$ with the $(1,1)$-form $\alpha_\varphi$''. 

As usual, for a K\"ahler form $\omega$, we denote by $\mathcal{H} (X, \omega)$ the set of smooth functions satisfying $\omega_\varphi > 0$. 
We often denote by $L$ a K\"ahler class and by $\mathcal{H} (X, L)$ the set of K\"ahler metrics in $L$, which is sometimes implicitly assumed to be integral to simplify arguments. 

Just in order not to make a confusion, we avoid the following simple convention familiar in pluri-potential context: denote by $(1,1)$-form $dd^c \varphi$ for $\varphi$ representing a collection of local potentials $\varphi = \{ \varphi_\alpha \}_\alpha$ or a metric $h = e^{-\varphi}$ on a line bundle. 
Instead, we always fix a reference $\alpha$ and use the notation $dd^c_\alpha \varphi := \alpha_\varphi = \alpha + dd^c \varphi$ for $\varphi \in C^\infty (X, \alpha)$. 

A \textit{real holomorphic vector field} is a real vector field $\xi$ such that $\xi^J := J\xi + \sqrt{-1} \xi$ is holomorphic ($\Leftrightarrow L_\xi J =0$). 
Let $\mathfrak{h} (X)$ denote the space of real holomorphic vector fields. 
We put 
\begin{align}
\mathfrak{h}_0 (X) &:= \{ \xi \in \mathfrak{h} (X) ~|~ \exists \theta \in C^\infty_{\mathbb{C}} (X) \text{ s.t. } \xi = \mathrm{Im} \partial^\sharp \theta \}, 
\\
\mathfrak{h}_c (X, L) &:= \{ \xi \in \mathfrak{h}_0 (X) ~|~ \xi \in \mathfrak{t} \text{ for a closed torus } T \subset \mathrm{Aut} (X, L) \}, 
\\
{^\nabla \mathfrak{isom}} (X, \omega) &:= \{ \xi \in \mathfrak{h}_0 (X) ~|~ \exists \theta \in C^\infty_{\mathbb{R}} (X) \text{ s.t. } \xi = \mathrm{Im} \partial^\sharp \theta \}. 
\end{align}
The subspaces $\mathfrak{h}_0 (X), {^\nabla \mathfrak{isom}} (X, \omega) \subset \mathfrak{h}$ are linear. 
On the other hand, $\mathfrak{h}_c (X, L)$ is not linear but just the orbit of a linear subspace by the conjugate action. 
Since ${^\nabla \mathfrak{isom}} (X, \omega) \subset \mathfrak{isom} (X, \omega)$, we have ${^\nabla \mathfrak{isom}} (X, \omega) \subset \mathfrak{h}_c (X, L)$. 
In this series, we take a vector from $\mathfrak{h}_c (X, L)$ unless we specify the domain. 

For the complex vector fields
\[ \partial^\sharp f = g^{i \bar{\jmath}} f_{\bar{\jmath}} \frac{\partial}{\partial z^i} = \frac{1}{2} (\nabla f - \sqrt{-1} J\nabla f), \quad \bar{\partial}^\sharp f = g^{i \bar{\jmath}} f_i \frac{\partial}{\partial \bar{z}^{\jmath}} = \frac{1}{2} (\nabla f + \sqrt{-1} J\nabla f), \]
we frequently use the following fact: for $f \in C^\infty_\mathbb{R} (X)$ and $u, v \in C^\infty_{\mathbb{C}} (X)$, we have 
\begin{align*} 
\int_X (\bar{\Box} - \bar{\partial}^\sharp f) u. \bar{v} ~e^f \omega^n 
&= \int_X (\bar{\partial} u, \bar{\partial} v) e^f \omega^n 
= \int_X u. (\Box - \partial^\sharp f) \bar{v} ~e^f \omega^n, 
\end{align*}
where $(, )$ denotes the hermitian metric associated to the metric $\omega$.

\subsubsection*{Acknowledgement}

I wish to thank Tomoyuki Hisamoto for his interest and frequent helpful discussions since I had been in Kyoto. 
I would like to express my deep gratitude to Abdellah Lahdili for his interest and stimulating discussions, which motivates me greatly and accelerates this study. 
I would express my sincere gratitude to Laszlo Lempert for his interest and insightful comments on `Lagrangian experiment'. 
I am grateful to Ruadha\'i Dervan, Akito Futaki, Genki Hosono, Chi Li, Yuji Odaka for their interest, helpful comments and discussions. 
This work is supported by JSPS KAKENHI Grant Number 18J22154, MEXT, Japan.

\section{Perelman's $\mu$-entropy and $\mu$-cscK metrics}

\subsection{Perelman's $W$-entropy and $\mu$-entropy}
\label{Perelman's W-functional and mu-entropy}

We firstly study $\cW^\lambda$ restricted to the fibre direction $T_\omega \mathcal{H} \subset T \mathcal{H}$. 

\subsubsection{Variational formula on critical momentum}

We put 
\[ s^\lambda_f (\omega) := (s (\omega)+ \bar{\Box} f) + (\bar{\Box} f - |\partial^\sharp f|^2) -\lambda f \]
for $f \in C^\infty (X)$ and 
\[ \bar{s}^\lambda_f (\omega) := \int_X (s (\omega) +|\bar{\partial}^\sharp f|^2 -\lambda f) e^f \omega^n \Big{/} \int_X e^f \omega^n \]
for $f \in C^{0,1} (X)$. 
We note $\bar{s}^\lambda_f (\omega) = \int_X s^\lambda_f (\omega) e^f \omega^n / \int_X e^f \omega^n$ for $f \in C^\infty (X)$. 

The following is firstly observed in \cite{Per} and in \cite{TZ2} for Fano manifold. 

\begin{lem}
\label{Variational formula}
Fix a smooth K\"ahler metric $\omega$ on $X$. 
We have the following. 
\begin{enumerate}
\item A momentum $f \in C^{0,1} (X)$ is a critical point of $\cW^\lambda (\omega, \cdot)$ iff it is smooth and satisfies $s^\lambda_f (\omega) = \bar{s}^\lambda_f (\omega)$. 

\item If $f \in C^\infty (X)$ is a critical point of $\cW^\lambda (\omega, \cdot)$, then we have 
\[ \frac{d^2}{dt^2}\Big{|}_{t=0} \cW^\lambda (\omega, f + t u) = -\int_X (2|\partial^\sharp u|^2 - \lambda u^2) e^f \omega^n \Big{/} \int_X e^f \omega^n \]
for every $u \in C^\infty (X)$ with $\int_X u e^f \omega^n = 0$, which is negative if $\lambda$ is less than the first eigenvalue of $\Delta - \nabla f$, especially when $\lambda \le 0$. 
\end{enumerate}
\end{lem}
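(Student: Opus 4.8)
The plan is to compute the first and second variations of $\check{W}^\lambda(\omega,\cdot)$ along the affine path $f_t=f+tu$ with $\omega$ fixed. Write $Z_f:=\int_X e^f\omega^n$ and abbreviate the average against $e^f\omega^n$ by $\langle F\rangle_f:=Z_f^{-1}\int_X F\,e^f\omega^n$, so that $\check{W}^\lambda(\omega,f)=-\langle s+|\partial^\sharp f|^2\rangle_f+\lambda\langle n+f\rangle_f-\lambda\log(Z_f/n!)$ up to an irrelevant additive constant. The only two analytic facts I need are the covariance rule $\frac{d}{dt}\langle F_t\rangle_{f_t}=\langle\dot F_t\rangle_f+\langle F_t u\rangle_f-\langle F_t\rangle_f\langle u\rangle_f$ together with $\frac{d}{dt}\log Z_{f_t}=\langle u\rangle_f$, and the weighted integration-by-parts identity recorded in the conventions, $\int_X(\bar{\Box}-\bar{\partial}^\sharp f)u\cdot\bar v\,e^f\omega^n=\int_X(\bar{\partial}u,\bar{\partial}v)\,e^f\omega^n$, where $\bar{\Box}=\bar{\partial}^*\bar{\partial}$ is the positive Laplacian.

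For the first variation the single nontrivial term is the gradient one, $\frac{d}{dt}|_{t=0}|\partial^\sharp f_t|^2=\partial^\sharp f(u)+\bar{\partial}^\sharp f(u)$, whose average I rewrite by integrating by parts (the identity above with $v=f$) as $2\langle u(\bar{\Box}f-|\partial^\sharp f|^2)\rangle_f$. Collecting everything, the $\lambda\langle u\rangle_f$ contributions cancel and the remainder reorganizes into $-\mathrm{Cov}_f(s+|\partial^\sharp f|^2-\lambda f,u)-2\langle u(\bar{\Box}f-|\partial^\sharp f|^2)\rangle_f$; since $\langle s+|\partial^\sharp f|^2-\lambda f\rangle_f=\bar{s}^\lambda_f$, this collapses to the clean formula
\[ \frac{d}{dt}\Big|_{t=0}\check{W}^\lambda(\omega,f+tu)=-\frac{1}{Z_f}\int_X u\,\big(s^\lambda_f(\omega)-\bar{s}^\lambda_f(\omega)\big)\,e^f\omega^n, \]
using $s^\lambda_f=s+2\bar{\Box}f-|\partial^\sharp f|^2-\lambda f$. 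Vanishing for all $u$ forces $s^\lambda_f=\bar{s}^\lambda_f$. For $f$ merely in $C^{0,1}$ this computation is read as the weak form of the same Euler--Lagrange equation tested against smooth $u$; rewritten as $2\bar{\Box}f=\bar{s}^\lambda_f-s+|\partial^\sharp f|^2+\lambda f$ its right-hand side is continuous, so Schauder / $L^p$ elliptic estimates followed by a routine bootstrap promote $f$ to $C^\infty$. This regularity step is the one point where genuine analytic work, rather than bookkeeping, is required, and it is the main obstacle.

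For the second variation I differentiate the first-variation identity again at a smooth critical $f$. The decisive simplification is that $s^\lambda_f-\bar{s}^\lambda_f\equiv 0$ there, so every term produced by differentiating $Z_{f_t}^{-1}$ or $e^{f_t}$ multiplies zero and drops out, leaving
\[ \frac{d^2}{dt^2}\Big|_{t=0}\check{W}^\lambda(\omega,f+tu)=-\frac{1}{Z_f}\int_X u\,\frac{d}{dt}\Big|_{t=0}\big(s^\lambda_{f_t}-\bar{s}^\lambda_{f_t}\big)\,e^f\omega^n. \]
Here $\frac{d}{dt}|_{t=0}\bar{s}^\lambda_{f_t}$ is a constant, whose contribution is a multiple of $\langle u\rangle_f=\int_X u\,e^f\omega^n/Z_f=0$ and so vanishes exactly by the imposed constraint. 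A direct differentiation gives $\frac{d}{dt}|_{t=0}s^\lambda_{f_t}=2\bar{\Box}u-(\partial^\sharp f(u)+\bar{\partial}^\sharp f(u))-\lambda u=(\Delta-\nabla f-\lambda)u$, the drift Laplacian of the statement with $\Delta=2\bar{\Box}$.

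Finally, pairing with $u$ and integrating by parts once more (the recorded identity with $v=u$) converts $\int_X u(\Delta-\nabla f)u\,e^f\omega^n$ into the weighted Dirichlet energy $\int_X|\nabla u|^2 e^f\omega^n=2\int_X|\partial^\sharp u|^2 e^f\omega^n$, which yields
\[ \frac{d^2}{dt^2}\Big|_{t=0}\check{W}^\lambda(\omega,f+tu)=-\frac{\int_X\big(2|\partial^\sharp u|^2-\lambda u^2\big)e^f\omega^n}{\int_X e^f\omega^n}. \]
Negativity is then a spectral statement: on the $\langle\cdot\rangle_f$-orthogonal complement of the constants (where our $u$ lives, by $\langle u\rangle_f=0$) the self-adjoint nonnegative operator $\Delta-\nabla f$ has spectrum bounded below by its first positive eigenvalue $\lambda_1>0$, so $\int_X 2|\partial^\sharp u|^2 e^f\omega^n\ge\lambda_1\int_X u^2 e^f\omega^n$ and the quantity above is $\le-(\lambda_1-\lambda)\langle u^2\rangle_f<0$ whenever $\lambda<\lambda_1$, in particular for every $\lambda\le 0$.
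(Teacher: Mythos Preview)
Your proof is correct and follows essentially the same route as the paper: the first variation is computed identically (with the same distributional reading and elliptic bootstrap for $f\in C^{0,1}$), and for the second variation the paper likewise differentiates the first-variation identity, though it records the general formula $-\frac{d^2}{dt^2}\big|_{t=0}\check{W}^\lambda=\big\langle 2|\bar\partial^\sharp u|^2+(s^\lambda_f-\bar{s}^\lambda_f-\lambda)(u-\bar u_f)^2\big\rangle_f$ before specializing, whereas you invoke criticality and the normalization $\langle u\rangle_f=0$ at the outset to reach the same endpoint more quickly. One small slip: for $f\in C^{0,1}$ the right-hand side $\bar{s}^\lambda_f-s+|\partial^\sharp f|^2+\lambda f$ is only $L^\infty$, not continuous, at the first step of the bootstrap---but the $L^p$ route you mention handles this, giving $f\in W^{2,p}\hookrightarrow C^{1,\alpha}$, after which Schauder iteration proceeds as you say.
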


\begin{proof}
We compute 
\begin{align*} 
\frac{d}{dt}\Big{|}_{t=0} \cW^\lambda (\omega, f + t u) 
&= - \int_X \Big{(} 2(\bar{\partial}^\sharp f, \bar{\partial}^\sharp u) + (s(\omega) + |\bar{\partial}^\sharp f|^2 -\lambda f ) u \Big{)} e^f \omega^n \Big{/} \int_X e^f \omega^n 
\\
&\qquad+ \bar{s}^\lambda_f \int_X u e^f \omega^n \Big{/} \int_X e^f \omega^n, 
\end{align*}
which can be arranged as 
\[ - \int_X (s^\lambda_f (\omega) - \bar{s}^\lambda_f (\omega)) u ~e^f \omega^n \Big{/} \int_X e^f \omega^n \]
when $f$ is smooth, using integration by parts. 
For a less regular critical point $f \in C^{0,1} (X)$, we have 
\[ (s (\omega) + \bar{\Box} f) + (\bar{\Box} f - |\partial^\sharp f|^2) - \lambda f = \bar{s}^\lambda_f \]
in the distributional sense. 
The elliptic bootstrap argument shows that $f$ is indeed smooth and satisfies $s^\lambda_f (\omega) = \bar{s}^\lambda_f (\omega)$ in the usual sense. 

For the second claim, we firstly compute 
\begin{align*} 
\frac{d}{dt}\Big{|}_{t=0} \bar{s}^\lambda_{f + t u} 
&= \left( \int_X (\bar{\Box} u - \lambda u) e^ f \omega^n + \int_X (s (\omega) + \bar{\Box} f - \lambda f) u ~ e^f \omega^n \right) \Big{/} \int_X e^f \omega^n - \bar{s}^\lambda_f \int_X u e^f \omega^n \Big{/} \int_X e^f \omega^n 
\\
&= \int_X (s^\lambda_f - \bar{s}^\lambda_f) u ~ e^f \omega^n \Big{/} \int_X e^f \omega^n - \lambda \int_X u e^f \omega^n \Big{/} \int_X e^f \omega^n. 
\end{align*}
We exhibit the second variation for general $u$ as an independent interest: 
\begin{align*} 
-\frac{d^2}{dt^2}\Big{|}_{t=0} 
&\cW^\lambda (\omega, f + t u) = \frac{d}{dt}\Big{|}_{t=0} \int_X (s^\lambda_{f + t u} - \bar{s}^\lambda_{f + t u}) u ~ e^{f + t u} \omega^n \Big{/} \int_X e^{f + t u} \omega^n 
\\
&= \left( \int_X (2 \bar{\Box} u - 2 \bar{\partial}^\sharp f (u) - \lambda u) u ~e^f \omega^n + \int_X s^\lambda_f u^2 ~e^f \omega^n \right) \Big{/} \int_X e^f \omega^n
\\
&\qquad - \int_X s^\lambda_f u ~e^f \omega^n \Big{/} \int_X e^f \omega^n \cdot \int_X u~ e^f \omega^n \Big{/} \int_X e^f \omega^n 
\\
&\quad - \frac{d}{dt}\Big{|}_{t=0} \bar{s}^\lambda_{f + t u} \int_X u ~e^f \omega^n \Big{/} \int_X e^f \omega^n
\\
&\qquad - \bar{s}^\lambda_f \int_X u^2 ~e^f \omega^n \Big{/} \int_X e^f \omega^n + \bar{s}^\lambda_f  \Big{(} \int_X u ~e^f \omega^n \Big{/} \int_X e^f \omega^n \Big{)}^2
\\
&= \int_X \Big{(} 2 |\bar{\partial}^\sharp u|^2 + (s^\lambda_f - \bar{s}^\lambda_f - \lambda) (u - \bar{u}_f)^2 \Big{)} ~e^f \omega^n \Big{/} \int_X e^f \omega^n 
\end{align*}
where we put $\bar{u}_f = \int_X u e^f \omega^n / \int_X e^f \omega^n$. 
This proves the claim by the first result. 
\end{proof}

\subsubsection{Maximal momentum}

The following is due to \cite{Rot}, which covers the case $\lambda > 0$ but also works for the case $\lambda \le 0$ with a minor change. 
Here we draw the proof to clarify the difference and to make our arguments self-contained. 
The case $\lambda \le 0$ is slightly simpler than the case $\lambda > 0$. 

\begin{thm}
For each $\lambda \in \mathbb{R}$ and $\omega \in \mathcal{H} (X, L)$, there exists a smooth function $f \in C^\infty (X)$ which attains the maximum of the functional $\cW^\lambda (\omega, \cdot): C^{0, 1} (X) \to \mathbb{R}$. 
In fact, every maximizer is smooth. 
\end{thm}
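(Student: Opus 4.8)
The plan is to linearize the problem through the substitution $u = e^{f/2}$, which turns $\check{W}^\lambda(\omega,\cdot)$ into a logarithmic Sobolev functional to which Rothaus's existence theory \cite{Rot} applies. Since $e^f = u^2$ and $|\partial^\sharp f|^2 e^f = 4|\partial^\sharp u|^2$, and since we may normalize $\int_X u^2 \omega^n = \int_X \omega^n =: V$ to kill the invariance $f \mapsto f + c$, maximizing $\check{W}^\lambda(\omega,\cdot)$ is equivalent, up to an additive constant, to minimizing
\[ F(u) := \int_X \big( 4 |\partial^\sharp u|^2 + s(\omega)\, u^2 - \lambda\, u^2 \log u^2 \big)\, \omega^n \]
over $u \in W^{1,2}(X)$ with $u \ge 0$ and $\int_X u^2 \omega^n = V$ (here I used $2 u^2 \log u = u^2 \log u^2$ and absorbed the term $-\lambda n \int_X u^2\omega^n = -\lambda n V$ into the constant). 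I would then run the direct method of the calculus of variations on $F$.

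The first step is coercivity and boundedness below on the constraint set. When $\lambda \le 0$ this is essentially immediate: the pointwise inequality $t \log t \ge -e^{-1}$ gives $-\lambda\, u^2 \log u^2 \ge \lambda e^{-1}$, whence $F(u) \ge 4\int_X |\partial^\sharp u|^2 \omega^n + \big( \min_X s(\omega) + \lambda e^{-1} \big) V$, and the Dirichlet term controls $\|u\|_{W^{1,2}}$ on the set $\int_X u^2\omega^n = V$. This is the sense in which the case $\lambda \le 0$ is the simpler one. When $\lambda > 0$ the entropy term carries the unfavorable sign, and here the logarithmic Sobolev inequality on the compact manifold $(X,\omega)$ is indispensable: for every $\varepsilon > 0$ there is $\beta(\varepsilon)$ with $\int_X u^2 \log u^2\, \omega^n \le \varepsilon \int_X |\nabla u|^2 \omega^n + \beta(\varepsilon)$ whenever $\int_X u^2 \omega^n = V$. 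Using $4|\partial^\sharp u|^2 = 2|\nabla u|^2$ and choosing $\varepsilon < 2/\lambda$ absorbs the entropy term into the Dirichlet energy with a strictly positive leftover coefficient, again yielding boundedness below and coercivity.

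Next I take a minimizing sequence $u_k \ge 0$ with $\int_X u_k^2 \omega^n = V$; by coercivity it is bounded in $W^{1,2}(X)$, so after passing to a subsequence $u_k \rightharpoonup u_\infty$ weakly in $W^{1,2}$, strongly in $L^2$ by Rellich, and almost everywhere. The constraint passes to the limit by strong $L^2$-convergence, giving $\int_X u_\infty^2 \omega^n = V$ and in particular $u_\infty \not\equiv 0$. Weak lower semicontinuity of $\int_X |\partial^\sharp u|^2\omega^n$ and $L^2$-continuity of $\int_X s(\omega) u^2\omega^n$ are standard; the delicate point, and the main obstacle, is the behaviour of the entropy term $\int_X u^2 \log u^2\,\omega^n$ under weak convergence. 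For $\lambda \le 0$ it enters with a convex, favorable sign, and lower semicontinuity follows from convexity together with Fatou's lemma. For $\lambda > 0$ one must instead extract from the log-Sobolev bound the uniform integrability of the family $u_k^2 \log u_k^2$, so that this term converges; this equi-integrability, obtained from the Sobolev inequality exactly as in \cite{Rot}, is the crux of the argument. Combining these estimates gives $F(u_\infty) \le \liminf_k F(u_k)$, so $u_\infty$ is a minimizer.

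Finally I address regularity. The minimizer $u_\infty \ge 0$ satisfies an Euler--Lagrange equation of the form $4 \bar{\Box} u_\infty + s(\omega)\, u_\infty - \lambda\, u_\infty \log u_\infty^2 = c\, u_\infty$ for the Lagrange multiplier constant $c$. Since $u_\infty \not\equiv 0$, a strong maximum principle (Harnack) argument applied to this equation forces $u_\infty > 0$ everywhere; on $\{u_\infty > 0\}$ the nonlinearity $u \log u^2$ is smooth, so elliptic bootstrapping upgrades $u_\infty$ to $C^\infty(X)$, and hence $f = \log u_\infty^2 \in C^\infty(X)$ attains the maximum of $\check{W}^\lambda(\omega,\cdot)$. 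For the final assertion that \emph{every} maximizer is smooth, any maximizer $f \in C^{0,1}(X)$ is in particular a critical point of $\check{W}^\lambda(\omega,\cdot)$, so its smoothness is precisely the content of Lemma~\ref{Variational formula}(1) and the elliptic bootstrap recorded there.
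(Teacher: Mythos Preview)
Your overall strategy---the substitution $u=e^{f/2}$, the reformulation as a log-Sobolev-type variational problem, the direct method with the dichotomy $\lambda\le 0$ (easy) versus $\lambda>0$ (log-Sobolev), and the elliptic bootstrap at the end---is exactly the route the paper takes, following Rothaus \cite{Rot}. Your final sentence, deducing ``every maximizer is smooth'' directly from Lemma~\ref{Variational formula}(1), is a clean shortcut compared with the paper's argument through the $u$-variable.

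The genuine gap is the positivity step. You assert that ``a strong maximum principle (Harnack) argument applied to this equation forces $u_\infty>0$ everywhere'', but the Euler--Lagrange equation carries the nonlinearity $u\log u^2$, whose linearized coefficient $\log u^2$ blows up on the zero set of $u$. Standard Harnack inequalities and strong maximum principles require the zeroth-order coefficient to be bounded (or at least in $L^p$ for $p$ sufficiently large), and that fails here. This is precisely why the paper, again following \cite{Rot}, does \emph{not} invoke Harnack but instead runs a delicate barrier argument: assuming $u(p)=0$, one passes to geodesic polar coordinates about $p$, derives an integral inequality for the spherical average $F(r)=\int u(r,\theta)\,\sigma(r,\theta)\,d\theta$, and proves by induction on the decay rate that $F(r)\le r^k$ for all $k$, forcing $F\equiv 0$ near $p$ and hence $u\equiv 0$ on $X$, contradicting the normalization. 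Without this (or an equivalent substitute adapted to the logarithmic singularity), your conclusion $u_\infty>0$---and with it the definition and smoothness of $f=2\log u_\infty$---is unjustified.
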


\begin{proof}
We consider the following functional: 
\[ \mathcal{L}^\lambda (u) := - \int_X \big{(} s (\omega) u^2 + 2 |\nabla u|^2 - \lambda n u^2 - \lambda u^2 \log u^2 \big{)} \omega^n \Big{/} \int_X u^2 \omega^n - \lambda \log \Big{(} \frac{1}{n!} \int_X u^2 \omega^n \Big{)}. \]
We have $\cW^\lambda (\omega, f) = \mathcal{L}^\lambda (e^{f/2})$ and $\mathcal{L}^\lambda (c u) = \mathcal{L}^\lambda (u)$ for $c \in \mathbb{R}^\times$. 
We can see as follows that $\mathcal{L}^\lambda$ gives a continuous function on $L^2_1 (X) \setminus 0$. 
For $u, v \neq 0$, we have a measurable function $\theta: X \to [0,1]$ such that 
\[ u^2 \log u^2 - v^2 \log v^2 = \Big{(} 2 u_\theta + 4 u_\theta \log u_\theta \Big{)} (|u| - |v|) \]
for $u_\theta = \theta |u| + (1- \theta) |v|$ by the mean value theorem and the measurable selection theorem. 
Then we have 
\begin{align*} 
\left| \int_X u^2 \log u^2 \omega^n - \int_X v^2 \log v^2 \omega^n \right| 
&\le \int_X \left| \Big{(} 2 u_\theta + 4 u_\theta \log u_\theta \Big{)} (|u| - |v|) \right| \omega^n
\\
&\le \int_X \Big{(} 2 u_\theta + 4 \max (e^{-1}, C u_\theta^{1+\delta}) \Big{)} \Big{|}|u| - |v|\Big{|} \omega^n
\\
&\le C' \max (e^{-1}, \| u \|_{L^{2+ 2\delta}}^{1+\delta}, \| v \|_{L^{2+ 2\delta}}^{1+\delta} ) \Big{\|} |u| - |v| \Big{\|}_{L^2}, 
\end{align*}
where we use the convexity of $x^{1+\delta}$ and H\"older's inequality in the last line. 
It follows by the Sobolev embedding $L^2_1 \hookrightarrow L^{2n/(n-1)} \subset L^{2+2 \delta}$ that the functional $\mathcal{L}^\lambda$ is continuous on the sphere $\{ u \in L^2_1 (X) ~|~ \| u \|_{L^2} = 1 \}$, hence is continuous on $L^2_1 (X) \setminus 0$ by the scaling invariance. 

To obtain the claim, we firstly see that there exists a non-negative maximizer $u$ of $\mathcal{L}^\lambda$ and then show that $u$ is a strictly positive smooth function, by which we obtain a smooth maximizer $f = 2 \log u$ of $\cW^\lambda$. 

Firstly, the functional $\mathcal{L}^\lambda$ is bounded from above. 
To see this, we may normalize $u$ as $\| u \|_{L^2} =1$. 
When $\lambda \le 0$, the boundedness follows by Jensen's inequality applied for $\varphi (x) = x \log x$: 
\[ \lambda \int_X u^2 \log u^2 \omega^n \le \lambda V \varphi (V^{-1}). \]
When $\lambda > 0$, we can bound 
\[ - \int_X (|\nabla u|^2 - \lambda u^2 \log u^2) \omega^n \]
from above as in \cite{Rot}. 

Secondly, we construct a non-negative maximizer $u$. 
A similar argument as above shows that we have a uniform constant $c$ such that 
\[ \int_X |\nabla u|^2 \omega^n \le -\mathcal{L}^\lambda (u) + c \]
for every $u$ with $\| u \|_{L^2} = 1$. 
Take a sequence $u_i \in L^2_1$ so that $\mathcal{L}^\lambda (u_i) \to \sup \mathcal{L}^\lambda$ and $\| u_i \|_{L^2} =1$. 
Then $u_i$ is bounded in $L^2_1$-norm by the above inequality. 
By Sobolev embedding, we can take a subsequence $u_i$ converging weakly to $u$ in $L^2_1$ and strongly in $L^{2n/(n-1) - \epsilon}$. 
The weak convergence implies $\liminf \int_X |\nabla u_i |^2 \omega^n \ge \int_X |\nabla u |^2 \omega^n$, so we get 
\[ \sup \mathcal{L}^\lambda = \lim \mathcal{L}^\lambda (u_i) \le \mathcal{L}^\lambda (u). \]
Thus the limit $u$ gives a maximizer of $\mathcal{L}^\lambda$. 
Since $|\nabla |u|| \le |\nabla u|$ in general, we obtain a non-negative maximizer. 

Since $\mathscr{L}^\lambda (t) = \mathcal{L}^\lambda (u+t \varphi)$ is $C^1$ for smooth $\varphi$, for the maximizer $u$, we have 
\begin{align*} 
0 
&= D_u \mathcal{L}^\lambda (\varphi) 
\\
&=- 2 \int_X \Big{(} (s (\omega) u - \lambda u \log u^2) \varphi + 2(\nabla u, \nabla \varphi) \Big{)} \omega^n \Big{/} \int_X u^2 \omega^n 
\\
&\qquad + 2 \bar{s}^\lambda_{(u)} \cdot \int_X u \varphi \omega^n \Big{/} \int_X u^2 \omega^n, 
\end{align*}
where we put 
\[ \bar{s}^\lambda_{(u)} := \int_X \big{(} s (\omega) u^2 + 2 |\nabla u|^2 - \lambda u^2 \log u^2 \big{)} \omega^n \Big{/} \int_X u^2 \omega^n. \]
This gives us the following distributional differential equation on $u$: 
\[ s (\omega) u - \lambda u \log u^2 + 2 \Delta u - \bar{s}^\lambda_{(u)} u = 0. \]
Then as in \cite{Rot}, we can see the maximizer $u$ is indeed in $C^{2,1} (X)$ and satisfies the above differential equation in the ordinary sense. 

Now we assume $u$ takes zero at a point $p \in X$. 
We will show $u$ must be zero around $p$ and hence is zero on $X$ by the connectedness, which makes a contradiction to the fact $\| u \|_{L^2} =1$ and thus implies that the assumption was absurd. 
Once this is proved, we easily see the smoothness of $f = 2\log u$ by the elliptic bootstrap argument, so that we get the desired claim. 
Take a normal polar coordinate $(r, \theta) = (r, \theta_1, \ldots, \theta_{2n-1})$ at $p$. 
We put $A (r, \theta) := \omega^n/dr \wedge d\theta$, $S (r) = \int A (r, \theta) d\theta$ and $\sigma (r, \theta) := A (r, \theta) /S(r)$. 
We have $(\partial/\partial r) \log \sigma (r, \theta) \le C r$ as noted in \cite{Rot}. 
It suffices to see the function 
\[ F (r) := \int u (r, \theta) \sigma (r, \theta) d\theta \]
is zero on $r \in (0, R)$ for sufficiently small $R$. 
To see this, we set up an induction on the decay rate: if we have $F (r) \le r^k$ on $(0, R)$, then we obtain $F (r) \le r^{k+1/2}$. 
We put 
\begin{align*}
G (r) 
&:= \int \frac{\partial}{\partial r} u(r, \theta) \sigma (r, \theta) d\theta, 
\\
L (r) 
&:= \int u \log u^2 \sigma d\theta, 
\\
K (r)
&:= \int (s (\omega) u - \bar{s}^\lambda_{(u)} u) \sigma d\theta. 
\end{align*}
For a smooth function $\varphi (r, \theta) = \varphi (r)$ which is compactly supported on the geodesic ball $B (p, R)$ and depends only on the variable $r$, we have 
\[ D_u \mathcal{L}^\lambda (\varphi) = \int_0^R \Big{(} 2 G (r) S (r) \frac{d}{dr} \varphi (r) - \lambda L (r) S (r) \varphi (r) + K (r) S (r) \varphi (r) \Big{)} dr = 0 \]
by $\int (\nabla u, \nabla \varphi) \omega^n = \int_0^R G (r) S (r) (d/dr) \varphi (r) dr$. 
Running all possible $\varphi$, we get 
\[ 2\frac{d}{dr} (G (r) S (r)) = K (r) S (r) - \lambda L (r) S (r) \]
for $r \in (0, R)$. 
Since $u (p) = 0$ and $u$ is non-negative and continuous, we may assume $u \log u^2 \le 0$ on a small ball $B (p, R)$. 
So when $\lambda \le 0$, we obtain 
\[ 2\frac{d}{dr} (G (r) S (r)) \le K (r) S (r) \le C' F (r) S (r). \]
Therefore, we get 
\begin{align*} 
F (r) 
&= \int_0^r F' (s) ds = \int_0^r G (s) ds + \int_0^r ds \int u (s, \theta) \Big{(} \frac{\partial}{\partial s} \log \sigma (s, \theta) \Big{)} \sigma (s, \theta) d\theta 
\\
&\le C' \int_0^r ds S (s)^{-1} \int_0^s F (t) S (t) dt + C \int_0^r s F (s) ds
\\
&\le C'' \left( \int_0^r \frac{ds}{s^{n-1}} \int_0^s F (t) t^{n-1} dt + \int_0^r s F (s) ds \right), 
\end{align*}
which provides us a machinery for running the induction as desired. 
We refer \cite{Rot} for the rest detail. 
\end{proof}

\subsection{$\mu$-cscK metrics and Perelman's $W$-entropy}
\label{mu-cscK metrics and Perelman's $W$-entropy}

In this subsection we show that the critical points of $\cW^\lambda$ precisely correspond to $\mu^\lambda$-cscK metrics as stated in the first main theorem.

\subsubsection{Variational formula on state}

\begin{thm}
\label{Critical points of W}
Let $f$ be a critical point of the functional $\cW^\lambda (\omega, \cdot)$. 
For a perturbation $\omega_t = \omega + \sqrt{-1} \partial \bar{\partial} \phi_t$ with $\dot{\phi}_0 = \varphi$, we have 
\[ \frac{d}{dt}\Big{|}_{t=0} \cW^\lambda (\omega_t, f) = \int_X \mathrm{Re} (\mathcal{D} \varphi, \mathcal{D} f) e^f \omega^n \Big{/} \int_X e^f \omega^n. \]
\end{thm}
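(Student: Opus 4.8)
The plan is to differentiate the closed-form expression for $\check{W}^\lambda$ in the metric variable with $f$ held fixed, and then to collapse the result using the critical momentum equation. First I would record the reduced expression
\[ \check{W}^\lambda(\omega, f) = -\bar{s}^\lambda_f(\omega) + \lambda n - \lambda\log\Big(\tfrac{1}{n!}\int_X e^f\omega^n\Big), \]
which follows from the definition together with the identity $\bar{s}^\lambda_f(\omega) = \int_X s^\lambda_f(\omega)e^f\omega^n/\int_X e^f\omega^n$ noted before Lemma~\ref{Variational formula}. Differentiating along $\omega_t = \omega + \sqddbar\phi_t$ with $\dot{\phi}_0 = \varphi$ thus reduces the problem to computing $\frac{d}{dt}\big|_{t=0}\bar{s}^\lambda_f(\omega_t)$ and $\frac{d}{dt}\big|_{t=0}\log\int_X e^f\omega_t^n$. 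Since $f$, and hence $e^f$, is frozen, the only metric variations I need are the standard linearization of the scalar curvature $\dot{s} = -\bar{\Box}^2\varphi - (\Ric, \sqddbar\varphi)$ (up to the paper's sign normalization), the variation of the squared gradient $\frac{d}{dt}|\partial^\sharp f|^2 = -\varphi^{i\bar{\jmath}}f_i f_{\bar{\jmath}}$ coming from $\frac{d}{dt}g^{i\bar{\jmath}} = -\varphi^{i\bar{\jmath}}$, and $\frac{d}{dt}\omega_t^n = (\mathrm{tr}_\omega\sqddbar\varphi)\,\omega^n$, the last governing the numerator, the denominator $\int_X e^f\omega^n$ and the normalization simultaneously.

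The heart of the argument is a weighted integration-by-parts computation. I would assemble the three contributions above and repeatedly apply the self-adjointness of the drift Laplacian $\bar{\Box} - \bar{\partial}^\sharp f$ with respect to the weighted measure $e^f\omega^n$, namely the identity $\int_X(\bar{\Box} - \bar{\partial}^\sharp f)u\cdot\bar{v}\,e^f\omega^n = \int_X(\bar{\partial}u, \bar{\partial}v)e^f\omega^n$ recorded among the conventions, in order to transfer all derivatives off $\varphi$ and onto $f$. The aim is a weighted Bochner--Lichnerowicz identity that re-expresses the fourth-, third- and second-order terms as the symmetric bilinear form $(\mathcal{D}\varphi, \mathcal{D}f)$ paired against $e^f\omega^n$, where $\mathcal{D} = \bar{\partial}\partial^\sharp$. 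Concretely, the Ricci term from $\dot{s}$ and the gradient-variation term $-\varphi^{i\bar{\jmath}}f_i f_{\bar{\jmath}}$ are precisely the pieces that should combine, through the weighted Bochner formula (in which the Bakry--Emery Ricci curvature $\Ric + \sqddbar f$, equivalently the weighted scalar curvature $s^\lambda_f$, enters), into this tensorial pairing.

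Finally I would invoke the hypothesis that $f$ is a critical momentum. By Lemma~\ref{Variational formula} this means $f$ is smooth and $s^\lambda_f(\omega) = \bar{s}^\lambda_f(\omega)$ is constant. I would use this as $s^\lambda_f - \bar{s}^\lambda_f \equiv 0$ to discard every term of the shape $\int_X(s^\lambda_f - \bar{s}^\lambda_f)(\cdots)e^f\omega^n$ generated by the integration by parts, which are exactly the scalar/trace contributions; and the constancy of $s^\lambda_f$ lets the $\bar{s}^\lambda_f$-weighted terms coming from $\frac{d}{dt}\omega_t^n$ cancel against those produced by the denominator and normalization derivatives, absorbing the explicit $\lambda$-terms. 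What should remain is exactly the real quantity $\int_X\mathrm{Re}(\mathcal{D}\varphi, \mathcal{D}f)e^f\omega^n/\int_X e^f\omega^n$, the stated formula.

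The step I expect to be the main obstacle is the bookkeeping in the weighted Bochner stage: one must track the interaction of the fourth-order term from $\dot{s}$, the second-order gradient-variation term, the first-order drift terms created by integrating by parts against $e^f$, and the explicit $\lambda$-normalization, and then verify that criticality makes every non-tensorial term cancel and reconstructs the clean pairing $(\mathcal{D}\varphi, \mathcal{D}f)$ without residual $\lambda$. Getting the signs consistent with the paper's conventions for $\bar{\Box}$, $d^c$ and $\mathrm{tr}_\omega\sqddbar$ is essential here, since these govern the exact cancellations.
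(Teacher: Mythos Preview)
Your overall strategy matches the paper's: differentiate $\check{W}^\lambda(\omega_t,f)$ directly using the standard variation formulas $\dot s = -\mathcal{D}^*\mathcal{D}\varphi + (\bar\partial^\sharp s,\nabla\varphi)$, $\frac{d}{dt}|\partial^\sharp f|^2$, and $\frac{d}{dt}\omega_t^n$, then integrate by parts against $e^f\omega^n$ and use criticality $s^\lambda_f = \bar s^\lambda_f$ to kill the residual $\int_X (s^\lambda_f-\bar s^\lambda_f)(\nabla f,\nabla\varphi)e^f\omega^n$ term. You also correctly anticipate that the $\lambda$--dependent pieces cancel completely, so the answer is $\lambda$--free.

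Where your outline is thin is precisely the step you flag, and the paper does \emph{not} resolve it by a clean abstract weighted Bochner--Lichnerowicz identity. Instead it introduces two explicit local expressions
\[
A_f(\varphi)=g^{k\bar l}((g^{i\bar\jmath}f_i)_k\varphi_{\bar\jmath})_{\bar l}+g^{i\bar\jmath}f_i(g^{k\bar l}f_{\bar l})_{\bar\jmath}\varphi_k,\qquad
B_f(\varphi)=\text{(third--order curvature correction)},
\]
so that (i) the long variation rearranges to $-\int_X B_f(\varphi)e^f\omega^n$ modulo the $(s^\lambda_f-\bar s^\lambda_f)$ term, using a formula for $\mathcal{D}^{f*}\mathcal{D}\varphi$ from the author's earlier paper; (ii) $\int_X \mathrm{Re}\,A_f(\varphi)e^f\omega^n=0$, obtained not by integration by parts but by differentiating the \emph{conserved quantity} $\int_X(\bar\Box_t f - |\bar\partial_t^\sharp f|^2)e^f\omega_t^n=0$ in $t$; and (iii) the pointwise identity $\mathrm{Re}(A_f(\varphi)-B_f(\varphi))=\mathrm{Re}(\mathcal{D}\varphi,\mathcal{D}f)$, verified by a direct normal--coordinate computation at a point. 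Steps (ii) and (iii) together give the result. So your plan is right, but the ``bookkeeping'' you anticipate is organized in the paper by these auxiliary tensors and the conserved--quantity trick rather than by a single Bochner formula; you may find that packaging makes the cancellations tractable.
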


We prepare some convenient formulas. 

\begin{lem}
We have 
\begin{align*}
(\sqrt{-1} \partial \bar{\partial} f, \sqrt{-1} \partial \bar{\partial} \varphi) 
&= - \bar{\Box} (\bar{\partial}^\sharp f (\varphi)) + \bar{\partial}^\sharp f (\bar{\Box} \varphi) - g^{k \bar{l}} ((g^{i \jbar} f_i)_k \varphi_{\jbar})_{\bar{l}}, 
\\
(\sqrt{-1} \partial f \wedge \bar{\partial} f, \sqrt{-1} \partial \bar{\partial} \varphi) 
&= \bar{\partial}^\sharp f (\partial^\sharp f (\varphi)) - g^{i \jbar} f_i (g^{k \bar{l}} f_{\bar{l}})_{\jbar} \varphi_k
\end{align*}
and 
\begin{align*} 
\mathcal{D}^{f *} \mathcal{D} \varphi 
&= (\bar{\Box} - \bar{\partial}^\sharp f)^2 \varphi + (\Ric (\omega) - \sqrt{-1} \partial \bar{\partial} f, \sqrt{-1} \partial \bar{\partial} \varphi) + (\bar{\partial}^\sharp s^0_f (\omega), \nabla \varphi) 
\\
&\qquad + \Big{(} g^{i \jbar} (g^{k \bar{l}} f_k)_{i \bar{l}} \varphi_{\jbar} + g^{i \jbar} g^{k \bar{q}} g_{k \bar{l}, \bar{q}} (g^{p \bar{l}} f_p)_i \varphi_{\jbar} + g^{i \jbar} (g^{k \bar{l}} f_k)_i f_{\bar{l}} \varphi_{\jbar} \Big{)}. 
\end{align*}
\end{lem}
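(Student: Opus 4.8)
The plan is to prove all three formulas by direct computation in local holomorphic coordinates, reducing everything to commutations of covariant derivatives that are controlled by the K\"ahler identity $g_{i\jbar,k}=g_{k\jbar,i}$ and by the curvature commutation rule that produces the Ricci tensor. For the two pointwise identities I would start from the coordinate expressions of the pointwise inner products of $(1,1)$-forms, $(\sqddbar f, \sqddbar\varphi)=g^{i\jbar}g^{k\bar{l}}f_{i\bar{l}}\varphi_{k\jbar}$ and $(\sqrt{-1}\partial f\wedge\bar{\partial} f, \sqddbar\varphi)=g^{i\jbar}g^{k\bar{l}}f_i f_{\bar{l}}\varphi_{k\jbar}$, and then expand the claimed right-hand sides. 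For the first, expanding $\bar{\Box}(\bar{\partial}^\sharp f(\varphi))=g^{k\bar{l}}(g^{i\jbar}f_i\varphi_\jbar)_{k\bar{l}}$ and $\bar{\partial}^\sharp f(\bar{\Box}\varphi)=g^{i\jbar}f_i(g^{k\bar{l}}\varphi_{k\bar{l}})_\jbar$ by the product rule, the cross terms in which a single derivative hits each factor are exactly what cancel against $g^{k\bar{l}}((g^{i\jbar}f_i)_k\varphi_\jbar)_{\bar{l}}$, leaving only the pure second-derivative contraction; the K\"ahler condition is what guarantees that the surviving contraction coincides with the inner product. The second identity is the same but shorter computation with $f_i f_{\bar{l}}$ in place of the mixed Hessian, checking that $\bar{\partial}^\sharp f(\partial^\sharp f(\varphi))-g^{i\jbar}f_i(g^{k\bar{l}}f_{\bar{l}})_\jbar\varphi_k$ collapses to $g^{i\jbar}g^{k\bar{l}}f_i f_{\bar{l}}\varphi_{k\jbar}$ after one more application of the product rule.

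For the operator formula, recalling $\mathcal{D}\varphi=\bar{\partial}\partial^\sharp\varphi$, the first step is to identify $\mathcal{D}^{f*}$ as the adjoint with respect to the weighted pairing $\int_X(\cdot,\cdot)e^f\omega^n$. Using the weighted integration-by-parts identity recorded in our conventions, $\int_X(\bar{\Box}-\bar{\partial}^\sharp f)u\,\bar v\,e^f\omega^n=\int_X(\bar{\partial} u,\bar{\partial} v)e^f\omega^n$, the weighted codifferential of a $T^{1,0}$-valued $(0,1)$-form is the $f$-twisted adjoint $\bar{\partial}^*_f$, and I would write $\mathcal{D}^{f*}\mathcal{D}\varphi$ as the weighted divergence of $\mathcal{D}\varphi=\bar{\partial}\partial^\sharp\varphi$. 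Commuting one $\bar{\partial}$ past the holomorphic gradient produces a leading factor $\bar{\Box}-\bar{\partial}^\sharp f$, and commuting the remaining covariant derivatives through the K\"ahler-type Weitzenb\"ock (Bochner--Kodaira) identity yields the iterated operator $(\bar{\Box}-\bar{\partial}^\sharp f)^2\varphi$ at top order, at the price of exactly one Ricci commutator together with the third-derivative-of-$f$ terms contracted against $\varphi_\jbar$.

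The final step is to reorganize the curvature and weight contributions into the stated combination. The commutator feeds $(\Ric(\omega),\sqddbar\varphi)$, while the second-order-in-$f$ pieces coming from the weight assemble, via the first pointwise identity above, into the $-(\sqddbar f,\sqddbar\varphi)$ correction; the derivatives of $s^0_f(\omega)$ collect into $(\bar{\partial}^\sharp s^0_f(\omega),\nabla\varphi)$, and the leftover residual third-order derivatives of $f$ are precisely the three bracketed terms of the form $g^{i\jbar}(\cdots)\varphi_\jbar$, the second pointwise identity being used to clear the $|\partial^\sharp f|^2$-type contributions. I expect the main obstacle to be the bookkeeping in this last assembly: commuting the four covariant derivatives in the correct order so that exactly one Ricci tensor and no spurious curvature of $T^{1,0}$ survives, and verifying that every cross term involving derivatives of the weight $e^f$ reorganizes cleanly into the $\bar{\partial}^\sharp f$-twisted operator together with the explicit lower-order terms, with nothing left over. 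This is a long but mechanical calculation; the conceptual content sits entirely in the two pointwise identities and in the weighted Bochner--Kodaira formula.
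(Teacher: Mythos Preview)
Your proposal is correct and follows essentially the same approach as the paper: direct local coordinate computation using the K\"ahler identity and product rule for the two pointwise formulas, and a weighted-adjoint Weitzenb\"ock computation for $\mathcal{D}^{f*}\mathcal{D}\varphi$. The only difference is organizational: for the first identity the paper manipulates the left-hand side $g^{i\jbar}g^{k\bar l}f_{i\bar l}\varphi_{k\jbar}$ forward (using $g^{k\bar l}{g^{i\jbar}}_{,\bar l}={g^{k\jbar}}_{,\bar l}g^{i\bar l}$) rather than expanding the right-hand side, and for the operator formula the paper does not reproduce the computation at all but simply cites Proposition~3.4 of \cite{Ino2}, noting that the holomorphy hypothesis on $\partial^\sharp f$ used there is invoked only to kill the bracketed residual term, which here is retained.
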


\begin{proof}
Using $g^{k \bar{l}} {g^{i \jbar}}_{, \bar{l}} = {g^{k \jbar}}_{, \bar{l}} g^{i \bar{l}}$, we compute 
\begin{align*}
(\sqrt{-1} \partial \bar{\partial} f, \sqrt{-1} \partial \bar{\partial} \varphi) 
&= g^{i \jbar} g^{k \bar{l}} f_{i \bar{l}} \varphi_{k \jbar}
\\
&= g^{k \bar{l}} (g^{i \jbar} f_i \varphi_{k \jbar})_{\bar{l}} - g^{k \bar{l}} f_i (g^{i \jbar} \varphi_{k \jbar})_{\bar{l}}
\\
&= g^{k \bar{l}} (g^{i \jbar} f_i \varphi_{\jbar})_{k \bar{l}} - g^{k \bar{l}} ((g^{i \jbar} f_i)_k \varphi_{\jbar})_{\bar{l}} - g^{k \bar{l}} f_i (g^{i \jbar} \varphi_{k \jbar})_{\bar{l}}
\\
&= - \bar{\Box} (\bar{\partial}^\sharp f (\varphi)) - g^{k \bar{l}} ((g^{i \jbar} f_i)_k \varphi_{\jbar})_{\bar{l}} - g^{k \bar{l}} f_i ({g^{i \jbar}}_{, \bar{l}} \varphi_{k \jbar} + g^{i \jbar} \varphi_{k \jbar \bar{l}})
\\
&= - \bar{\Box} (\bar{\partial}^\sharp f (\varphi)) - g^{k \bar{l}} ((g^{i \jbar} f_i)_k \varphi_{\jbar})_{\bar{l}} - g^{k \bar{l}} f_i ({g^{i \jbar}}_{, \bar{l}} \varphi_{k \jbar} + (g^{i \jbar} \varphi_{k \bar{l}})_{\jbar} - {g^{i \jbar}}_{, \jbar} \varphi_{k \bar{l}})
\\
&= - \bar{\Box} (\bar{\partial}^\sharp f (\varphi)) - g^{k \bar{l}} ((g^{i \jbar} f_i)_k \varphi_{\jbar})_{\bar{l}} - g^{k \bar{l}} f_i ({g^{i \jbar}}_{, \bar{l}} \varphi_{k \jbar} + (g^{i \jbar} \varphi_{k \bar{l}})_{\jbar}) 
\\
&\qquad+ f_i (g^{k \bar{l}} g^{i \jbar} \varphi_{k \bar{l}})_{\jbar} - f_i g^{i \jbar} (g^{k \bar{l}} \varphi_{k \bar{l}})_{\jbar}
\\
&= - \bar{\Box} (\bar{\partial}^\sharp f (\varphi)) + \bar{\partial}^\sharp f (\bar{\Box} \varphi) - g^{k \bar{l}} ((g^{i \jbar} f_i)_k \varphi_{\jbar})_{\bar{l}} - g^{k \bar{l}} f_i {g^{i \jbar}}_{, \bar{l}} \varphi_{k \jbar} 
\\
&\qquad - g^{k \bar{l}} f_i  (g^{i \jbar} \varphi_{k \bar{l}})_{\jbar} + f_i (g^{k \bar{l}} g^{i \jbar} \varphi_{k \bar{l}})_{\jbar} 
\\
&= - \bar{\Box} (\bar{\partial}^\sharp f (\varphi)) + \bar{\partial}^\sharp f (\bar{\Box} \varphi) - g^{k \bar{l}} ((g^{i \jbar} f_i)_k \varphi_{\jbar})_{\bar{l}} 
\\
&\qquad - f_i {g^{k \jbar}}_{, \bar{l}} g^{i \bar{l}} \varphi_{k \jbar} + f_i {g^{k \bar{l}}}_{, \jbar} g^{i \jbar} \varphi_{k \bar{l}} 
\\
&= - \bar{\Box} (\bar{\partial}^\sharp f (\varphi)) + \bar{\partial}^\sharp f (\bar{\Box} \varphi) - g^{k \bar{l}} ((g^{i \jbar} f_i)_k \varphi_{\jbar})_{\bar{l}}. 
\end{align*}

The second equality is simple: 
\begin{align*}
(\sqrt{-1} \partial f \wedge \bar{\partial} f, \sqrt{-1} \partial \bar{\partial} \varphi)
&= g^{i \jbar} g^{k \bar{l}} f_i f_{\bar{l}} \varphi_{k \jbar} 
\\
&= g^{i \jbar} f_i (g^{k \bar{l}} f_{\bar{l}} \varphi_k )_{\jbar} - g^{i \jbar} f_i (g^{k \bar{l}} f_{\bar{l}})_{\jbar} \varphi_k
\\
&= \bar{\partial}^\sharp f (\partial^\sharp f (\varphi)) - g^{i \jbar} f_i (g^{k \bar{l}} f_{\bar{l}})_{\jbar} \varphi_k. 
\end{align*}

As for the last one, we refer the computation in Proposition 3.4 in \cite{Ino2} for the detail. 
In that proof, we used the assumption that $\partial^\sharp f$ is holomorphic only to eliminate the last term 
\[ \Big{(} g^{i \jbar} (g^{k \bar{l}} f_k)_{i \bar{l}} \varphi_{\jbar} + g^{i \jbar} g^{k \bar{q}} g_{k \bar{l}, \bar{q}} (g^{p \bar{l}} f_p)_i \varphi_{\jbar} + g^{i \jbar} (g^{k \bar{l}} f_k)_i f_{\bar{l}} \varphi_{\jbar} \Big{)}. \]
\end{proof}

We put 
\begin{align} 
A_f (\varphi) 
&:= g^{k \bar{l}} ((g^{i \jbar} f_i)_k \varphi_{\jbar})_{\bar{l}} + g^{i \jbar} f_i (g^{k \bar{l}} f_{\bar{l}})_{\jbar} \varphi_k, 
\\ 
B_f (\varphi) 
&:= g^{i \jbar} (g^{k \bar{l}} f_k)_{i \bar{l}} \varphi_{\jbar} + g^{i \jbar} g^{k \bar{q}} g_{k \bar{l}, \bar{q}} (g^{p \bar{l}} f_p)_i \varphi_{\jbar} + g^{i \jbar} (g^{k \bar{l}} f_k)_i f_{\bar{l}} \varphi_{\jbar}, 
\end{align}
which are globally defined complex valued functions by the above lemma.

\begin{prop}
We have 
\begin{equation} 
\int_X \mathrm{Re} A_f (\varphi) e^f \omega^n = 0 
\end{equation}
and 
\begin{equation} 
\mathrm{Re} (A_f (\varphi) - B_f (\varphi)) = \mathrm{Re} (\mathcal{D} \varphi, \mathcal{D} f). 
\end{equation}
\end{prop}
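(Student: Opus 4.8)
The plan is to establish the two assertions by different means: the integral identity $\int_X \mathrm{Re}\, A_f(\varphi)\, e^f\omega^n = 0$ by a weighted integration by parts combined with a conjugation symmetry, and the pointwise identity $\mathrm{Re}(A_f(\varphi) - B_f(\varphi)) = \mathrm{Re}(\mathcal{D}\varphi, \mathcal{D}f)$ by a direct local computation that compares the two sides term by term.

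For the first assertion, I would first note that the leading term of $A_f(\varphi)$, namely $g^{k\bar{l}}((g^{i\jbar}f_i)_k \varphi_{\jbar})_{\bar{l}}$, is an honest divergence: since the mixed Christoffel symbols of a K\"ahler metric vanish, one has $g^{k\bar{l}}\partial_{\bar{l}} R_k = \nabla_{\bar{l}}(g^{k\bar{l}} R_k)$ for $R_k := (g^{i\jbar}f_i)_k \varphi_{\jbar}$. Applying the weighted divergence theorem $\int_X \nabla_{\bar{l}} Z^{\bar{l}}\, e^f\omega^n = -\int_X f_{\bar{l}} Z^{\bar{l}}\, e^f\omega^n$ to $Z^{\bar{l}} = g^{k\bar{l}} R_k$, the first term integrates to $-\int_X (\partial^\sharp f)^k (g^{i\jbar}f_i)_k \varphi_{\jbar}\, e^f\omega^n$. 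Adding the undifferentiated second term $g^{i\jbar}f_i (g^{k\bar{l}}f_{\bar{l}})_{\jbar}\varphi_k$ yields
\[ \int_X A_f(\varphi)\, e^f\omega^n = \int_X \Big( -(\partial^\sharp f)^k\, \partial_k\big((\bar{\partial}^\sharp f)^{\jbar}\big)\, \varphi_{\jbar} + (\bar{\partial}^\sharp f)^{\jbar}\, \partial_{\jbar}\big((\partial^\sharp f)^k\big)\, \varphi_k \Big) e^f\omega^n. \]
Because $f$ is real we have $\overline{(\partial^\sharp f)^k} = (\bar{\partial}^\sharp f)^{\bar{k}}$ and $\overline{\varphi_{\jbar}} = \varphi_j$, so the integrand is carried to minus itself under complex conjugation; it is therefore pointwise purely imaginary, whence $\mathrm{Re}\int_X A_f(\varphi)\, e^f\omega^n = 0$. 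As $e^f\omega^n$ is real, this is exactly the first claim.

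For the second assertion I would expand $(\mathcal{D}\varphi, \mathcal{D}f)$ in local coordinates using the components $(\mathcal{D}\varphi)^i_{\bar{l}} = (g^{i\jbar}\varphi_{\jbar})_{\bar{l}}$ of the Lichnerowicz operator, so that $(\mathcal{D}\varphi, \mathcal{D}f) = g_{i\bar{p}} g^{q\bar{l}} (g^{i\jbar}\varphi_{\jbar})_{\bar{l}}\, \overline{(g^{p\jbar}f_{\jbar})_{\bar{q}}}$. Contracting via $g_{i\bar{p}} g^{i\jbar} = \delta^{\jbar}_{\bar{p}}$ and $\partial_{\bar{l}} g^{i\jbar} = -g^{i\bar{n}} g^{m\jbar} g_{m\bar{n},\bar{l}}$ splits this into a second-order piece $g^{q\bar{l}} \varphi_{\bar{p}\bar{l}}\, \partial_q(\bar{\partial}^\sharp f)^{\bar{p}}$ and a curvature piece. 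A parallel expansion of $A_f(\varphi)$ shows, after a Leibniz step, that its top-order term $g^{k\bar{l}} (g^{i\jbar}f_i)_k \varphi_{\jbar\bar{l}}$ agrees with the second-order piece of $(\mathcal{D}\varphi,\mathcal{D}f)$. The remaining terms of $A_f$ are third order in $f$, and rewriting their ordinary derivatives as covariant ones produces Christoffel contributions matching the middle term $g^{i\jbar} g^{k\bar{q}} g_{k\bar{l},\bar{q}} (g^{p\bar{l}}f_p)_i \varphi_{\jbar}$ of $B_f$; commuting the resulting covariant derivatives (via the Ricci relations already used in the preceding Lemma, together with the K\"ahler identity $g^{k\bar{l}} {g^{i\jbar}}_{,\bar{l}} = {g^{k\jbar}}_{,\bar{l}} g^{i\bar{l}}$) reduces the third-order expressions to Ricci-curvature terms that match the curvature piece of $(\mathcal{D}\varphi,\mathcal{D}f)$. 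I expect every residual term to be exchanged with its negative under conjugation, hence purely imaginary, so that $A_f(\varphi) - B_f(\varphi)$ and $(\mathcal{D}\varphi,\mathcal{D}f)$ coincide after applying $\mathrm{Re}$.

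The main obstacle is precisely this bookkeeping in the second assertion: correctly tracking the curvature produced when commuting $\partial_{\bar{l}}\partial_k$ past $\partial_k\partial_{\bar{l}}$ acting on the vector component $(\bar{\partial}^\sharp f)^{\jbar}$, and verifying that the third-order-in-$f$ contributions in $A_f$ and in the first term of $B_f$ cancel down to genuine Ricci terms rather than leaving a spurious real remainder. Since the preceding Lemma identifies $B_f$ as exactly the collection of terms annihilated by the holomorphy of $\partial^\sharp f$, I would use that characterization to organize the matching and to confirm that no real curvature term is left unpaired.
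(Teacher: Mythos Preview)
Your approach to the first assertion is correct and is genuinely more direct than the paper's. The paper obtains the identity by differentiating the vanishing integral $\int_X(\bar{\Box}_t f - |\bar{\partial}^\sharp_t f|^2)e^f\omega_t^n = 0$ along a variation $\omega_t = \omega + t\sqddbar\varphi$, then using the formulas of the preceding Lemma to rewrite the result as $-\int_X A_f(\varphi)e^f\omega^n$ plus terms whose real part visibly vanishes. Your route---one weighted integration by parts on the divergence term and the observation that the resulting integrand is carried to its negative under conjugation---reaches the same conclusion without invoking any variation, and is cleaner.

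For the second assertion your outline is on the right track but needlessly heavy. The paper simply evaluates both $A_f(\varphi)-B_f(\varphi)$ and $(\mathcal{D}\varphi,\mathcal{D}f)$ at a point $p$ in \emph{normal coordinates}, where $g_{i\jbar}(p)=\delta_{ij}$ and all first derivatives of $g$ vanish. In such coordinates the difference $A_f-B_f$ reduces to a short sum of second and third partial derivatives of $f$ and $\varphi$; the third-order terms from $A_f$ and from the first term of $B_f$ cancel exactly after using the curvature symmetry ${g^{k\bar{l}}}_{,i\bar{l}} = {g^{k\bar{\imath}}}_{,l\bar{l}}$ (equivalently $R_{l\bar{k}i\bar{l}}=R_{i\bar{k}l\bar{l}}$), and the surviving quadratic-in-$f$ terms are seen to be purely imaginary by the same conjugation trick you used in part one. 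What remains is $\mathrm{Re}\sum_{i,k} f_{ik}\varphi_{\bar{\imath}\bar{k}}$, which is exactly $\mathrm{Re}(\mathcal{D}\varphi,\mathcal{D}f)(p)$ in those coordinates. Your plan of tracking Christoffel symbols and commutator curvature in arbitrary coordinates would eventually yield the same, but passing to normal coordinates eliminates the bookkeeping you flagged as the main obstacle.
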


\begin{proof}
Put $\omega_t := \omega + t \sqddbar \varphi$. 
To see the first claim, we compute the derivative of $\int_X (\bar{\Box}_t f - |\bar{\partial}_t^\sharp f|^2) e^f \omega_t^n = 0$. 
Using the above lemma, we compute  
\begin{align*}
0 
&= \frac{d}{dt}\Big{|}_{t=0} \int_X (\bar{\Box}_t f - |\bar{\partial}_t^\sharp f|^2) e^f \omega_t^n
\\
&= \int_X \Big{(} (\sqddbar \varphi ,\sqddbar f) + (\sqddbar \varphi, \sqrt{-1} \partial f \wedge \bar{\partial} f) - (\bar{\Box} f - |\bar{\partial}^\sharp f|^2) \bar{\Box} \varphi \Big{)} e^f \omega^n
\\
&= \int_X \Big{(} (\sqddbar \varphi ,\sqddbar f) + (\sqddbar \varphi, \sqrt{-1} \partial f \wedge \bar{\partial} f) - \bar{\partial}^\sharp f (\bar{\Box} \varphi) \Big{)} e^f \omega^n
\\
&= \int_X \Big{(} (\sqddbar \varphi ,\sqddbar f) + (\sqddbar \varphi, \sqrt{-1} \partial f \wedge \bar{\partial} f) - \bar{\Box}^2 \varphi \Big{)} e^f \omega^n
\\
&= - \int_X A_f (\varphi) e^f \omega^n - \int_X (\bar{\Box} - \bar{\partial}^\sharp f) (\bar{\Box} + \bar{\partial}^\sharp f) (\varphi) e^f \omega^n + \int_X  \bar{\partial}^\sharp f ((\partial^\sharp f -\bar{\partial}^\sharp f) (\varphi)) e^f \omega^n, 
\\
&= - \int_X A_f (\varphi) e^f \omega^n - \int_X (\bar{\Box} - \bar{\partial}^\sharp f) (\bar{\Box} + \bar{\partial}^\sharp f) (\varphi) e^f \omega^n + \int_X  \bar{\partial}^\sharp f ((\partial^\sharp f -\bar{\partial}^\sharp f) (\varphi)) e^f \omega^n
\\
&= - \int_X \mathrm{Re} A_f (\varphi) e^f \omega^n - \sqrt{-1} \Big{(} \int_X \mathrm{Im} A_f (\varphi) e^f \omega^n + \int_X \bar{\Box} (J \nabla f (\varphi)) e^f \omega^n \Big{)}, 
\end{align*}
which proves the first claim.

As for the second claim, we compare them at $p \in X$ on a normal coordinate. 
We have 
\begin{align*} 
\mathrm{Re} (A_f (\varphi) 
&- B_f (\varphi) ) (p)
\\
&=\mathrm{Re} \Big{[} \Big{(} \sum_{i, k, \jbar} {g^{i \jbar}}_{, k \bar{k}} f_i \varphi_{\jbar} + \sum_{i, k} f_{i k \bar{k}} \varphi_{\ibar} + \sum_{i, k} f_{ik} \varphi_{\ibar \bar{k}} + \sum_{i, k} f_i f_{\bar{i} \bar{k}} \varphi_k \Big{)} 
\\
&\qquad - \Big{(} \sum_{i, k, l} {g^{k \bar{l}}}_{, i \bar{l}} f_k \varphi_{\ibar} + \sum_{i, k} f_{k i \bar{k}} \varphi_{\ibar} + \sum_{i, k} f_{k i} f_{\bar{k}} \varphi_{\ibar} \Big{)} \Big{]}. 
\end{align*}
This reduces to $\mathrm{Re} \sum_{i, k} f_{ik} \varphi_{\bar{i} \bar{k}} = \mathrm{Re} (\mathcal{D} \varphi, \mathcal{D} f) (p)$ by 
\[ \sum_{i,k,l} {g^{k \bar{l}}}_{, i \bar{l}} f_k \varphi_{\ibar} = - \sum_{i, k, l, p, q} g^{k \bar{q}} g^{p \bar{l}} g_{p \bar{q}, i \bar{l}} f_k \varphi_{\ibar} = \sum_{i, k, l} R_{l \bar{k} i \bar{l}} f_k \varphi_{\ibar} = \sum_{i, k, l} R_{i \bar{k} l \bar{l}} f_k \varphi_{\ibar} = \sum_{i, k, l} {g^{k \ibar}}_{, l \bar{l}} f_k \varphi_{\ibar} \]
and 
\[ \mathrm{Re} (\sum_{i, k} f_i f_{\bar{i} \bar{k}} \varphi_k - \sum_{i, k} f_{k i} f_{\bar{k}} \varphi_{\ibar}) = 0. \]
\end{proof}

Now we show Theorem \ref{Critical points of W}. 

\begin{proof}[Proof of Theorem \ref{Critical points of W}]
We exhibit the first variation for general $f$ as an independent interest. 
Using the above proposition, we compute 
\begin{align*}
\frac{d}{dt}\Big{|}_{t=0} \cW^\lambda (\omega_t, f) 
&= \frac{-1}{\int_X e^f \omega^n} \int_X \Big{[} - \mathcal{D}^* \mathcal{D} \varphi + ({\bar{\partial}}^\sharp s, \nabla \varphi) + (\sqddbar \varphi, \sqddbar f) 
\\
&\qquad \qquad \qquad - \Big{(} (s (\omega) + \bar{\Box} f -\lambda (n+f)) - (\bar{s}^\lambda_f (\omega) -\lambda n - \lambda) \Big{)} \bar{\Box} \varphi \Big{]} ~e^f \omega^n 
\\
&= \frac{-1}{\int_X e^f \omega^n} \int_X \Big{[} - \mathcal{D}^* \mathcal{D} \varphi + ({\bar{\partial}}^\sharp s, \nabla \varphi) + (\sqddbar \varphi, \sqddbar f) 
\\
&\qquad \qquad \qquad + (\bar{\Box} f - |\partial^\sharp f|^2 - \lambda) \bar{\Box} \varphi - (s^\lambda_f - \bar{s}^\lambda_f) \bar{\Box} \varphi \Big{]} ~e^f \omega^n  
\\
&= \frac{-1}{\int_X e^f \omega^n} \int_X \Big{[} - \bar{\Box}^2 \varphi - (\Ric (\omega) - \sqddbar f, \sqddbar \varphi) 
\\
&\qquad \qquad \qquad + (\bar{\Box} f - |\partial^\sharp f|^2) \bar{\Box} \varphi - \lambda (\bar{\partial}^\sharp f, \nabla \varphi) - (s^\lambda_f - \bar{s}^\lambda_f) \bar{\Box} \varphi \Big{]} ~e^f \omega^n  
\\
&= \frac{-1}{\int_X e^f \omega^n} \int_X \Big{[} - (\bar{\Box} - \bar{\partial}^\sharp f)^2 \varphi - (\Ric (\omega) - \sqddbar f, \sqddbar \varphi) - (\bar{\partial}^\sharp s^0_f, \nabla \varphi) 
\\
&\qquad \qquad \qquad - \Big{(} \bar{\Box} (\bar{\partial}^\sharp f (\varphi)) + \bar{\partial}^\sharp f (\bar{\Box} \varphi) - \bar{\partial}^\sharp f (\bar{\partial}^\sharp f (\varphi)) \Big{)} + \bar{\partial}^\sharp f (\bar{\Box} \varphi)
\\
&\qquad \qquad \qquad \quad + (\bar{\partial}^\sharp s^\lambda_f, \nabla \varphi) - (s^\lambda_f - \bar{s}^\lambda_f) \bar{\Box} \varphi \Big{]} ~e^f \omega^n 
\\
&= \frac{-1}{\int_X e^f \omega^n} \int_X \Big{[} -\mathcal{D}^{f *} \mathcal{D} \varphi + B_f (\varphi)  - (\bar{\Box} - \bar{\partial}^\sharp f) (\bar{\partial}^\sharp f (\varphi)) - (s^\lambda_f - \bar{s}^\lambda_f) (\bar{\partial}^\sharp f, \nabla \varphi) \Big{]} ~e^f \omega^n 
\\
&= \frac{-1}{\int_X e^f \omega^n} \int_X B_f (\varphi) ~e^f \omega^n + \frac{1}{\int_X e^f \omega^n} \int_X (s^\lambda_f - \bar{s}^\lambda_f) (\bar{\partial}^\sharp f, \nabla \varphi) ~e^f \omega^n
\\
&= \frac{-1}{\int_X e^f \omega^n} \mathrm{Re} \int_X B_f (\varphi) ~e^f \omega^n + \frac{1}{2 \int_X e^f \omega^n} \int_X (s^\lambda_f - \bar{s}^\lambda_f) (\nabla f, \nabla \varphi) ~e^f \omega^n
\\
&= \frac{1}{\int_X e^f \omega^n} \int_X \mathrm{Re} (A_f (\varphi) - B_f (\varphi)) ~e^f \omega^n + \frac{1}{2 \int_X e^f \omega^n} \int_X (s^\lambda_f - \bar{s}^\lambda_f) (\nabla f, \nabla \varphi) ~e^f \omega^n
\\
&= \frac{1}{\int_X e^f \omega^n} \int_X \mathrm{Re} (\mathcal{D} \varphi, \mathcal{D} f) ~e^f \omega^n + \frac{1}{2 \int_X e^f \omega^n} \int_X (s^\lambda_f - \bar{s}^\lambda_f) (\nabla f, \nabla \varphi) ~e^f \omega^n. 
\end{align*}
\end{proof}

\begin{cor}
Critical points of $\cW^\lambda: T \mathcal{H} \to \mathbb{R}$ correspond to $\mu^\lambda$-cscK metrics. 
Namely, a state $(\omega, f) \in T \mathcal{H} (X, L) = \mathcal{H} (X, L) \times C^\infty (X)/\mathbb{R}$ is a critical point of $\cW^\lambda$ if and only if $\xi' = \partial^\sharp f$ gives a holomorphic vector field and the K\"ahler metric $\omega$ is a $\mu^\lambda$-cscK metric with respect to $\xi = \mathrm{Im} \xi'$. 
\end{cor}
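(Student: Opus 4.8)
The plan is to characterize the critical points of $\check W^\lambda$ on the total space $T\mathcal H(X,L)$ by splitting the differential into fibre and base components. Under the canonical identification $T\mathcal H(X,L)\cong\mathcal H(X,L)\times C^\infty(X)/\mathbb R$, a variation of a state $(\omega,f)$ decomposes into a vertical part (moving the momentum $f$ with $\omega$ fixed) and a horizontal part (moving the metric $\omega$ with $f$ fixed). Hence $(\omega,f)$ is a critical point if and only if both partial differentials vanish, and I would identify each vanishing condition with one half of the $\mu^\lambda$-cscK equations.

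For the vertical direction, Lemma~\ref{Variational formula}(1) already supplies the answer: the vanishing of $\frac{d}{dt}\big|_{t=0}\check W^\lambda(\omega,f+tu)$ for all $u$ is equivalent to $f$ being smooth and satisfying $s^\lambda_f(\omega)=\bar s^\lambda_f(\omega)$, i.e. the $\mu^\lambda_f$-scalar curvature being constant. (Smoothness is automatic here, since states are taken in $C^\infty(X)$.) For the horizontal direction I would invoke Theorem~\ref{Critical points of W}, whose clean formula
\[ \frac{d}{dt}\Big|_{t=0}\check W^\lambda(\omega_t,f)=\int_X\mathrm{Re}(\mathcal D\varphi,\mathcal D f)\,e^f\omega^n\Big/\int_X e^f\omega^n \]
is valid once $f$ is a critical point of the fibre functional $\check W^\lambda(\omega,\cdot)$; indeed the general variation in its proof carries an extra term proportional to $\int_X(s^\lambda_f-\bar s^\lambda_f)(\nabla f,\nabla\varphi)e^f\omega^n$, which vanishes precisely under that condition. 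The decisive observation is that the real momentum $f$ is itself an admissible horizontal direction: setting $\varphi=f$ turns the integrand into $|\mathcal D f|^2e^f\ge 0$, so vanishing of the integral forces $\mathcal D f=0$, that is, $\partial^\sharp f$ is holomorphic. Conversely, if $\partial^\sharp f$ is holomorphic then $\mathcal D f=0$ and the horizontal variation vanishes for every $\varphi$.

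Assembling the two steps yields the equivalence. At a total critical point, the vertical condition gives fibre-criticality (hence smoothness and $s^\lambda_f=\bar s^\lambda_f$), which licenses the clean horizontal formula and the choice $\varphi=f$, forcing $\partial^\sharp f$ holomorphic; the converse runs backwards with no obstruction. It then remains to recognize $s^\lambda_f(\omega)=\bar s^\lambda_f(\omega)$ as the $\mu^\lambda$-cscK equation. Once $\partial^\sharp f$ is holomorphic with $f$ real, $\xi=\mathrm{Im}\,\partial^\sharp f$ is a genuine Hamiltonian real holomorphic, hence Killing, vector field, so $L_\xi\omega=0$; and via the K\"ahler identity $2\bar\Box f=\Delta f$ the quantity $s^\lambda_f(\omega)$ is literally the $\mu^\lambda_\xi$-scalar curvature $s^\lambda_\xi(\omega)$ with $\theta_\xi=f$. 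Thus constancy of $s^\lambda_f$ says exactly that $\omega$ is $\mu^\lambda$-cscK with respect to $\xi$.

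The main point to handle with care is this ordering: Theorem~\ref{Critical points of W} delivers the squared-norm integrand only at a fibre-critical $f$, so one must first extract the vertical condition and only then exploit $\varphi=f$. The remaining verifications—that holomorphicity of $\partial^\sharp f$ (with $f$ real) makes $\xi$ Killing so that $L_\xi\omega=0$, and that $s^\lambda_f$ reduces to $s^\lambda_\xi$ under $2\bar\Box f=\Delta f$—are elementary but essential for closing the identification.
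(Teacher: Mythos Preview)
Your proposal is correct and follows essentially the same route as the paper: split the differential into fibre and base directions, use Lemma~\ref{Variational formula}(1) for the vertical part to obtain $s^\lambda_f=\bar s^\lambda_f$, then invoke Theorem~\ref{Critical points of W} with the special variation $\varphi=f$ (equivalently $\phi_t=tf$) so that the horizontal variation reduces to $\int_X|\mathcal Df|^2e^f\omega^n$, forcing $\mathcal Df=0$. The paper's proof is much terser and leaves the ordering and the final identification with the $\mu^\lambda$-cscK condition implicit, but your more careful bookkeeping of these points is entirely in line with its argument.
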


\begin{proof}
We apply the above theorem for $\phi_t = t f$. 
It follows that a state $(\omega, f)$ is a critical point of $\cW^\lambda$ if and only if $f$ satisfies $s^\lambda_f (\omega) - \bar{s}^\lambda_f (\omega) = 0$ and $\mathcal{D} f = 0$, which is nothing but the condition in the claim. 
\end{proof}

\subsubsection{$\mu$-entropy in high temperature}

When $\lambda \le 0$, the critical momentum $f$ turns out to be a unique maximizer of the functional $\cW^\lambda (\omega, \cdot): C^{0,1} (X) \to \mathbb{R}$, which allows us to reduce all the information of the critical points of $\cW^\lambda$ on $T \mathcal{H}$ to that of a functional $\cmu^\lambda$ on $\mathcal{H} (X, L)$, analogous to the Calabi functional. 

\begin{thm}
\label{Uniqueness of moment}
Suppose $\lambda \le 0$, then we have the following. 
\begin{enumerate}
\item The functional $\cW^\lambda (\omega, \cdot): C^\infty (X) \to \mathbb{R}$ admits a unique critical point $f$ modulo constant for every K\"ahler metric $\omega$, which automatically maximizes $\cW^\lambda (\omega, \cdot)$. 

\item The functional $\cmu^\lambda: \mathcal{H} (X, L) \to \mathbb{R}$ is smooth. 
In this case, the following are equivalent for a K\"ahler metric $\omega$: 
\begin{itemize}
\item $\omega$ is a $\mu^\lambda$-cscK metric 

\item $\omega$ is a critical point of $\cmu^\lambda$ 

\item There is $\xi \in {^\nabla \mathfrak{isom}} (X, \omega)$ such that $\NAmu^\lambda (X, L; \xi) = \cmu^\lambda (\omega)$. 
\end{itemize}
In one of the above cases, $\omega$ minimizes $\cmu^\lambda$ among all $\xi$-invariant metrics. 
\end{enumerate}
\end{thm}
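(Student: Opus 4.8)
The plan is to leverage the second variation formula from Lemma \ref{Variational formula}(2), which shows that along the fibre direction $T_\omega\mathcal{H}$ the functional $\check{W}^\lambda(\omega,\cdot)$ is strictly concave whenever $\lambda$ lies below the first eigenvalue of $\Delta-\nabla f$. For part (1), I would first observe that the preceding theorem already guarantees existence of a smooth maximizer $f$. To prove uniqueness modulo constants, I would argue that when $\lambda\le 0$, any critical point is automatically a \emph{global} maximizer and that two distinct global maximizers would contradict strict concavity. The cleanest route is to connect any two critical momenta $f_0,f_1$ by the segment $f_t=(1-t)f_0+tf_1$ and apply the second-variation identity; since $\lambda\le 0$ forces the integrand $2|\partial^\sharp u|^2-\lambda u^2$ to be nonnegative (and positive unless $u$ is constant), the function $t\mapsto\check{W}^\lambda(\omega,f_t)$ is strictly concave unless $f_1-f_0$ is constant. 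As both endpoints are critical, the derivative vanishes at $t=0$ and $t=1$, which is impossible for a strictly concave function unless the endpoints agree modulo constants. This also shows every critical point is the maximizer.

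For part (2), smoothness of $\cmu^\lambda$ should follow from the implicit function theorem applied to the critical-point equation $s^\lambda_f(\omega)=\bar{s}^\lambda_f(\omega)$. The key input is that the second variation in the fibre direction is nondegenerate (strictly negative-definite on $\{u:\int_X u\,e^f\omega^n=0\}$) precisely when $\lambda\le 0$, so the maximizer $f=f(\omega)$ depends smoothly on $\omega$. Once $f(\omega)$ is smooth, the composition $\omega\mapsto\check{W}^\lambda(\omega,f(\omega))=\cmu^\lambda(\omega)$ is smooth. By the envelope theorem the differential of $\cmu^\lambda$ at $\omega$ equals the partial derivative of $\check{W}^\lambda$ in the $\omega$-direction at the optimal $f$, because the $f$-derivative vanishes at the maximizer. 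Theorem \ref{Critical points of W} then gives
\[
d_\omega\cmu^\lambda(\varphi)=\int_X \mathrm{Re}(\mathcal{D}\varphi,\mathcal{D}f)\,e^f\omega^n\Big{/}\int_X e^f\omega^n,
\]
so $\omega$ is a critical point of $\cmu^\lambda$ iff $\mathcal{D}f=0$, i.e. $\partial^\sharp f$ is holomorphic, and combined with $s^\lambda_f=\bar{s}^\lambda_f$ this is exactly the $\mu^\lambda$-cscK condition. This establishes the equivalence of the first two bullet points.

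For the third bullet point and the final minimization claim, I would use the identification \eqref{mu-entropy for vector}, namely $\NAmu^\lambda(X,L;\xi)=\check{W}^\lambda(\omega,\mu_\xi)$ for $\xi$-invariant $\omega$. If $\omega$ is $\mu^\lambda$-cscK with respect to $\xi=\mathrm{Im}\,\partial^\sharp f$, then $\mu_\xi$ (suitably normalized) is the critical momentum $f$, hence $\NAmu^\lambda(X,L;\xi)=\check{W}^\lambda(\omega,f)=\cmu^\lambda(\omega)$. Conversely, if such a $\xi$ achieves equality, then $\mu_\xi$ must be the unique maximizer $f$ from part (1), forcing $\mathcal{D}f=0$ and the $\mu^\lambda$-cscK equation. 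The restricted minimization statement among $\xi$-invariant metrics should follow because on the $\xi$-invariant slice the functional $\cmu^\lambda$ agrees with a convex-type functional whose critical point is the absolute minimizer; here I expect to invoke the convexity/uniqueness results of \cite{Lah2} for fixed $\xi$.

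The main obstacle I anticipate is establishing smoothness of the maximizer map $\omega\mapsto f(\omega)$ rigorously: this requires verifying the hypotheses of the implicit function theorem in the correct Banach (Hölder or Sobolev) spaces and confirming that the linearized operator—essentially $2(\Delta-\nabla f)-\lambda$ projected onto the mean-zero subspace—is invertible, which is exactly where the assumption $\lambda\le 0$ (or more generally $\lambda$ below the first eigenvalue) is indispensable. The nonlinearity coming from the $e^f$ weight and the normalization $\bar{s}^\lambda_f$ will need care, but the strict concavity from Lemma \ref{Variational formula}(2) should supply the needed coercivity.
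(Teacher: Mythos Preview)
Your argument for Part~(1) has a genuine gap. Lemma~\ref{Variational formula}(2) only gives the clean formula
\[
\frac{d^2}{dt^2}\Big|_{t=0}\check{W}^\lambda(\omega,f+tu)=-\int_X(2|\partial^\sharp u|^2-\lambda u^2)\,e^f\omega^n\Big/\int_X e^f\omega^n
\]
\emph{at a critical point} $f$. For general $f$ the paper exhibits the full second variation as
\[
-\int_X\Big(2|\bar{\partial}^\sharp u|^2+(s^\lambda_f-\bar{s}^\lambda_f-\lambda)(u-\bar{u}_f)^2\Big)e^f\omega^n\Big/\int_X e^f\omega^n,
\]
and the factor $s^\lambda_f-\bar{s}^\lambda_f$ has no sign away from critical points. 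So $\check{W}^\lambda(\omega,\cdot)$ is \emph{not} globally concave, and you cannot assert that $t\mapsto\check{W}^\lambda(\omega,f_t)$ is concave along the whole segment joining two critical momenta. Knowing only that both endpoints are strict local maxima does not produce a contradiction. The paper instead subtracts the two critical-point equations directly: writing $u=f-g$ and $h=(f+g)/2$, one obtains $\Delta u-(\nabla h,\nabla u)=\lambda u+\text{const}$, and testing against $u\,e^h\omega^n$ (with the normalization $\int_X u\,e^h\omega^n=0$) gives $\int_X|\nabla u|^2 e^h\omega^n=\lambda\int_X u^2 e^h\omega^n$, forcing $u=0$ when $\lambda\le 0$. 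This is the missing idea.

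Your Part~(2) is essentially the paper's argument: implicit function theorem applied to $(\phi,u)\mapsto s^\lambda_{f+u}(\omega_\phi)-\bar{s}^\lambda_{f+u}(\omega_\phi)$, whose linearization in $u$ is $\Delta-\nabla f-\lambda$ (not $2(\Delta-\nabla f)-\lambda$; the factor $2$ belongs to the Hessian, not the gradient map), invertible on the mean-zero slice for $\lambda\le 0$. The equivalences then follow exactly as you outline via Theorem~\ref{Critical points of W} and the identity $\NAmu^\lambda(X,L;\xi)=\check{W}^\lambda(\omega,\mu_\xi)$. For the final minimization among $\xi$-invariant metrics, there is no need to invoke \cite{Lah2}: since $\NAmu^\lambda(X,L;\xi)$ is independent of the $\xi$-invariant metric used to compute it, one has directly $\cmu^\lambda(\omega)=\NAmu^\lambda(X,L;\xi)=\check{W}^\lambda(\omega_\varphi,\mu_\xi^\varphi)\le\cmu^\lambda(\omega_\varphi)$ for every $\xi$-invariant $\omega_\varphi$.
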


\begin{proof}
Suppose there are two critical points $f, g \in C^\infty (X)$ of $W^\lambda (\omega, \cdot)$. 
We may normalize $f, g$ so that $\int_X (f-g) e^{(f+g)/2} \omega^n = 0$. 
We have 
\[ \Delta f - \frac{1}{2} |\nabla f|^2 - \lambda f = \bar{s}^\lambda_f -s (\omega) = \Delta g - \frac{1}{2} |\nabla g|^2 - \lambda g + (\bar{s}^\lambda_f - \bar{s}^\lambda_g). \]
Putting $u= f- g$ and $h = (f+g)/2$, we can arrange this as 
\[ \Delta u - (\nabla h, \nabla u) = \lambda u + (\bar{s}^\lambda_f - \bar{s}^\lambda_g). \]
Then we get 
\[ \int_X |\nabla u|^2 e^h \omega^n = \int_X (\Delta u - (\nabla h, \nabla u)) u~ e^h \omega^n = \lambda \int_X u^2 e^h \omega^n, \]
which implies $u = f - g$ is zero when $\lambda \le 0$ under the normalization condition. 

Next we show the second claim. 
If $(\omega, f)$ is a critical point of $\cW^\lambda$, then for any smooth perturbation $\omega_t$, we have a smooth family $f_t$ satisfying $s^\lambda_{f_t} (\omega_t) = \bar{s}^\lambda_{f_t} (\omega_t)$. 
To see this, we compute the derivative 
\begin{align*} 
D_0 \mathcal{S}^\lambda (0, u) 
&=\Delta u - (\nabla f, \nabla u) - \lambda u
\end{align*}
of the smooth map 
\[ \mathcal{S}^\lambda: C^{k+4, \alpha}_f (X) \times C^{k+2, \alpha}_f (X) \to C^{k, \alpha}_f (X): (\phi, u) \mapsto s^\lambda_{f+u} (\omega_\phi) - \bar{s}^\lambda_{f+u} (\omega_\phi). \]
Here we put $C^{k, \alpha}_f (X) = \{ u \in C^{k, \alpha} (X) ~|~ \int_X u e^f \omega^n = 0 \}$. 
The above expression shows that $D_0 \mathcal{S}^\lambda|_{\{ 0 \} \times C^{k+2, \alpha}_f (X)}: C^{k+2, \alpha}_f (X) \to C^{k, \alpha}_f (X)$ has a right inverse when $\lambda \le 0$, so that $\mathrm{Ker} D_0 \mathcal{S}^\lambda$ maps onto $C^{k+4, \alpha}_f (X)$ by the projection to the first factor. 
Thus by the implicit function theorem, we get the desired smooth family $f_t$. 
By the uniqueness of the critical momentum, we have $\cmu^\lambda (\omega_t) = \cW^\lambda (\omega_t, f_t)$ for this $f_t$, which shows the smoothness of $\cmu^\lambda$. 

As for the equivalence, if $\omega$ is a critical point of $\cmu^\lambda$, then $(\omega, f)$ gives a critical point of $\cW^\lambda$ for the unique critical momentum $f$ as 
\[ \frac{d}{dt}\Big{|}_{t=0} \cW^\lambda (\omega_t, f) = \frac{d}{dt}\Big{|}_{t=0} \cW^\lambda (\omega_t, f_t) = \frac{d}{dt}\Big{|}_{t=0} \cmu^\lambda (\omega_t) = 0, \] 
so that it is a $\mu^\lambda_\xi$-cscK metric for the real holomorphic vector field $\xi = \mathrm{Im} \partial^\sharp f$ thanks to the above corollary. 
Conversely, if $\omega$ is a $\mu^\lambda$-cscK metric, then it is a critical point of $\cW^\lambda$, so that it is a critical point of $\cmu^\lambda$. 

By the uniqueness of the critical momentum, we have $f = \mu_\xi$ for a $\check{\mu}^\lambda_\xi$-cscK metric $\omega$, so that we have $\NAmu^\lambda (X, L; \xi) = \cmu^\lambda (\omega)$. 
Conversely, if $\NAmu^\lambda (X, L; \xi) = \cmu^\lambda (\omega)$ for $\xi \in {^\nabla \mathfrak{isom}} (X, \omega)$, then since $\NAmu^\lambda (X, L; \xi) = \cW^\lambda (\omega, \mu_\xi)$, we have $\cmu^\lambda (\omega) = \cW^\lambda (\omega, \mu_\xi)$. 
Thus $\mu_\xi$ is a critical momentum, so that $\omega$ is a $\check{\mu}^\lambda_\xi$-cscK metric. 

As for the last claim, we have 
\[ \cmu^\lambda (\omega) = \cW^\lambda (\omega, \mu_{-2\xi}) = \NAmu^\lambda (\phi_{-2\xi}) = \cW^\lambda (\omega_\varphi, \mu_{-2\xi}^\varphi) \le \cmu^\lambda (\omega_\varphi) \]
for $\xi$-invariant metric $\omega_\varphi$, so that $\omega$ minimizes $\cmu^\lambda$ among all $\xi$-invariant metrics. 
\end{proof}

We will refine the last claim in the next section as we claimed in Theorem \ref{main theorem on mu-entropy}.

\section{$W$-entropy along geodesic rays}
\label{W-functional along geodesic rays}

In this section, we study $W$-functional along geodesic rays to prove the rest of our main theorems: (a) $\Rightarrow$ (b) in Theorem \ref{main theorem on mu-entropy} and Theorem \ref{inequality}. 
We imitate the proof of the corresponding inequality on $H$-functional (cf. \cite{DS}). 
The following are the key materials for the proof: 
\begin{itemize}
\item The monotonicity of the $W$-entropy along $C^{1,1}$-geodesic rays.  

\item The non-archimedean $\mu$-entropy of a test configuration is the limit of $W$-entropy along the geodesic ray subordinate to the test configuration. 
\end{itemize}

We must clarify the meaning of the $W$-entropy $\cW^\lambda (\omega_{\phi_t}, -\dot{\phi}_t)$ along $C^{1,1}$-geodesic ray as the curvature term $s (\omega_\phi) - \bar{\Box} \dot{\phi}$ of the integrand does not make sense for $C^{1,1}$-regular $\phi$. 
Integration by parts shows that the action functional $- \int_0^t \cW^\lambda (\omega_{\phi_t}, -\dot{\phi}_t) dt$ has a curvature free expression, so that we can introduce the action functional even for $C^{1,1}$-geodesic rays. 
We actually study the convexity and the slope of this action functional, instead of studying the monotonicity and the limit of the $W$-entropy.

\subsection{$W$-entropy is monotonic along geodesics}
\label{Monotonicity}

\subsubsection{First observation: monotonicity along smooth geodesic}

We firstly show the monotonicity along smooth geodesic rays as a motivative observation. 

\begin{prop}
Let $\{ \phi_t \}$ be a smooth geodesic. 
Then we have 
\begin{equation} 
\frac{d}{dt} \cW^\lambda (\omega_{\phi_t}, -\dot{\phi}_t) = - \int_X |\mathcal{D}_{\phi_t} \dot{\phi}_t|^2 e^{- \dot{\phi}_t} \omega_{\phi_t}^n \Big{/} \int_X e^{-\dot{\phi}_t} \omega_{\phi_t}^n \le 0. 
\end{equation}
\end{prop}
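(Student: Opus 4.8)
The plan is to differentiate along the curve $t \mapsto (\omega_{\phi_t}, -\dot\phi_t)$ in $T\mathcal{H}(X,L)$ by the chain rule, separating the motion in the base from the motion in the fibre. Writing $f_t := -\dot\phi_t$, the base contribution moves $\omega_{\phi_s}$ with velocity $\sqddbar \dot\phi_t$ while holding the fibre coordinate at $f_t$, and the fibre contribution moves the momentum $f_s = -\dot\phi_s$ with velocity $-\ddot\phi_t$ while holding $\omega_{\phi_t}$ fixed. For the first I would invoke the \emph{general} first-variation formula established inside the proof of Theorem~\ref{Critical points of W} (not its specialization to critical $f$), evaluated at $f = f_t$ and $\varphi = \dot\phi_t$:
\[
\frac{1}{\int_X e^{f}\omega^n}\int_X \mathrm{Re}(\mathcal{D}\dot\phi_t, \mathcal{D}f)\, e^{f}\omega^n + \frac{1}{2\int_X e^{f}\omega^n}\int_X (s^\lambda_f - \bar{s}^\lambda_f)(\nabla f, \nabla\dot\phi_t)\, e^{f}\omega^n .
\]
For the second I would use the first-variation computation from the proof of Lemma~\ref{Variational formula}, which for general smooth $f$ gives $-\int_X(s^\lambda_f - \bar{s}^\lambda_f)\,\dot f\, e^{f}\omega^n/\int_X e^{f}\omega^n$ with $\dot f = -\ddot\phi_t$; here all quantities are taken with respect to $\omega_{\phi_t}$.

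Substituting $f = -\dot\phi_t$ immediately produces the desired term. Since $\mathcal{D}$ is linear and $\dot\phi_t$ is real, $\mathrm{Re}(\mathcal{D}\dot\phi_t, \mathcal{D}(-\dot\phi_t)) = -|\mathcal{D}\dot\phi_t|^2$, so the first base term equals
\[
-\int_X |\mathcal{D}_{\phi_t}\dot\phi_t|^2\, e^{-\dot\phi_t}\omega_{\phi_t}^n \Big/ \int_X e^{-\dot\phi_t}\omega_{\phi_t}^n ,
\]
which is exactly the claimed right-hand side and is manifestly $\le 0$. It then remains to show the two surviving terms cancel, and this is where the geodesic equation enters: in the paper's convention ($a=1$, so that $\omega_{\phi_t} = \omega + dd^c\phi_t$) a smooth geodesic satisfies $\ddot\phi_t = |\partial^\sharp\dot\phi_t|^2$.

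The cancellation is a matter of matching constants. Using $f = -\dot\phi_t$ and the pointwise identity $|\nabla u|^2 = 2|\partial^\sharp u|^2$ for real $u$, we get $(\nabla f, \nabla\dot\phi_t) = -|\nabla\dot\phi_t|^2 = -2|\partial^\sharp\dot\phi_t|^2$, so the second base term becomes $-\int_X(s^\lambda_f - \bar{s}^\lambda_f)|\partial^\sharp\dot\phi_t|^2 e^{f}\omega^n/\int_X e^{f}\omega^n$. On the other hand the fibre term, with $\dot f = -\ddot\phi_t$, equals $+\int_X(s^\lambda_f - \bar{s}^\lambda_f)\ddot\phi_t\, e^{f}\omega^n/\int_X e^{f}\omega^n$, and the geodesic equation replaces $\ddot\phi_t$ by $|\partial^\sharp\dot\phi_t|^2$, making it exactly opposite to the second base term. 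Hence everything but the displayed quantity vanishes, giving both the stated equality and the inequality.

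I expect the only real subtlety to be the correct bookkeeping of the two first-variation formulas together — in particular one must read the \emph{general}, non-critical versions off the proofs above rather than from the theorem/lemma statements, which are phrased at critical points — and the precise factor-of-two matching between $(\nabla f, \nabla\dot\phi_t)$ and the geodesic value of $\ddot\phi_t$. Keeping the sign conventions $f = -\dot\phi_t$ and $dd^c = \sqddbar$ consistent throughout is exactly what forces the two leftover terms to cancel; no analytic input beyond the assumed smoothness of $\{\phi_t\}$ should be needed.
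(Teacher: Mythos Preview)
Your argument is correct and is in fact more economical than the paper's. The paper proves the proposition by a direct, from-scratch differentiation of $\int_X (s(\omega_{\phi_t}) + \bar{\Box}_{\phi_t}\theta_t) e^{\theta_t}\omega_{\phi_t}^n$ along the path, redoing much of the tensor calculus (the $A_f$, $B_f$ identities and the $\mathcal{D}^{f*}\mathcal{D}$ expansion) in the moving setting, and only at the very end invoking $\mathrm{Re}(A_{\theta_t}(\dot\phi_t) - B_{\theta_t}(\dot\phi_t)) = \mathrm{Re}(\mathcal{D}\dot\phi_t, \mathcal{D}\theta_t)$. You instead split the derivative by the chain rule into the base and fibre directions and feed in the \emph{general} (non-critical) first-variation formulas already derived in the proofs of Lemma~\ref{Variational formula} and Theorem~\ref{Critical points of W}; the geodesic equation $\ddot\phi_t = |\partial^\sharp\dot\phi_t|^2$ then forces the two residual $(s^\lambda_f - \bar{s}^\lambda_f)$ terms to cancel exactly as you say. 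Your route buys a shorter and more conceptual proof that makes clear why the geodesic hypothesis is precisely what eliminates the non-critical correction; the paper's route is more self-contained for this proposition but duplicates work. The factor-of-two bookkeeping ($|\nabla u|^2 = 2|\partial^\sharp u|^2$ for real $u$) and the sign conventions are handled correctly.
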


\begin{proof}
Put $\theta_t = -\dot{\phi}_t$. 
We have $\dot{\theta}_t = -|\bar{\partial}^\sharp_{\phi_t} \dot{\phi}_t|^2 = (\bar{\partial}^\sharp_{\phi_t} \theta_t, \nabla \dot{\phi}_t)$ as $\{ \phi_t \}$ is a geodesic. 
We easily see 
\[ \frac{d}{dt} \int_X e^{\theta_t} \omega_{\phi_t}^n = 0, \quad \frac{d}{dt} \int_X \theta_t e^{\theta_t} \omega_{\phi_t}^n = 0. \]

We compute 
\begin{align*}
\frac{d}{dt} \int_X
& (s (\omega_{\phi_t}) + \bar{\Box}_{\phi_t} \theta_t) e^{\theta_t} \omega_{\phi_t}^n
\\
&= \int_X \Big{(} - \mathcal{D}_{\phi_t}^* \mathcal{D}_{\phi_t} \dot{\phi}_t + (\bar{\partial}^\sharp s (\omega_{\phi_t}), \nabla \dot{\phi}_t) + (\sqddbar \dot{\phi}_t, \sqddbar \theta_t) + \bar{\Box}_{\phi_t} \dot{\theta}_t \Big{)} e^{\theta_t} \omega_{\phi_t}^n
\\
&\qquad- \int_X (s (\omega_{\phi_t}) + \bar{\Box}_{\phi_t} \theta_t) (\bar{\Box} \dot{\phi}_t - \dot{\theta}_t) e^{\theta_t} \omega_{\phi_t}^n
\\
&= \int_X \Big{(} - \bar{\Box}^2 \dot{\phi}_t - (\mathrm{Ric} (\omega_{\phi_t}), \sqddbar \dot{\phi}_t) + (\sqddbar \dot{\phi}_t, \sqddbar \theta_t) + \bar{\Box}_{\phi_t} (\bar{\partial}^\sharp \theta_t (\dot{\phi}_t)) \Big{)} e^{\theta_t} \omega_{\phi_t}^n
\\
&\qquad+ \int_X \bar{\Box}_{\phi_t} (\dot{\theta}_t - \bar{\partial}^\sharp \theta_t (\dot{\phi}_t)) e^{\theta_t} \omega_{\phi_t}^n
\\
&\qquad- \int_X (s (\omega_{\phi_t}) + \bar{\Box}_{\phi_t} \theta_t) (\bar{\Box} - \bar{\partial}^\sharp_{\phi_t} \theta_t) (\dot{\phi}_t) e^{\theta_t} \omega_{\phi_t}^n
\\
&\qquad \quad + \int_X (s (\omega_{\phi_t}) + \bar{\Box}_{\phi_t} \theta_t) (\dot{\theta}_t - \bar{\partial}^\sharp \theta_t (\dot{\phi}_t)) e^{\theta_t} \omega_{\phi_t}^n
\\
&= - \int_X \Big{(} \bar{\Box}_{\phi_t} (\bar{\Box}_{\phi_t} - \bar{\partial}^\sharp \theta_t) \dot{\phi}_t + (\Ric (\omega_{\phi_t}) - \sqddbar \theta_t, \sqddbar \dot{\phi}_t) \Big{)} e^{\theta_t} \omega_{\phi_t}^n
\\
&\qquad- \int_X (\bar{\partial}^\sharp (s (\omega_{\phi_t}) + \bar{\Box}_{\phi_t} \theta_t), \nabla \dot{\phi}_t) e^{\theta_t} \omega_{\phi_t}^n
\\
&= - \int_X \Big{(} (\bar{\Box}_{\phi_t} - \bar{\partial}^\sharp \theta_t)^2 \dot{\phi}_t + (\Ric (\omega_{\phi_t}) - \sqddbar \theta_t, \sqddbar \dot{\phi}_t) + (\bar{\partial}^\sharp s^0_{\theta_t} (\omega_{\phi_t}) , \nabla \dot{\phi}_t) \Big{)} e^{\theta_t} \omega_{\phi_t}^n
\\
&= - \int_X \Big{(} \mathcal{D}_{\phi_t}^{\theta_t *} \mathcal{D}_{\phi_t} \dot{\phi}_t - B_{\theta_t} (\dot{\phi}_t) \Big{)} e^{\theta_t} \omega_{\phi_t}^n
\\
&= \mathrm{Re} \int_X B_{\theta_t} (\dot{\phi}_t) e^{\theta_t} \omega_{\phi_t}^n
= - \int_X \mathrm{Re} (A_{\theta_t} (\dot{\phi}_t) - B_{\theta_t} (\dot{\phi}_t)) e^{\theta_t} \omega_{\phi_t}^n
\\
&= - \int_X \mathrm{Re} (\mathcal{D}_{\phi_t} \dot{\phi}_t, \mathcal{D}_{\phi_t} \theta_t) e^{\theta_t} \omega_{\phi_t}^n 
= \int_X |\mathcal{D}_{\phi_t} \theta_t|^2 e^{\theta_t} \omega_{\phi_t}^n. 
\end{align*}
Thus we get 
\[ \frac{d}{dt} \cW^\lambda (\omega_{\phi_t}, - \dot{\phi}_t) = - \int_X |\mathcal{D}_{\phi_t} \theta_t|^2 e^{\theta_t} \omega_{\phi_t}^n \Big{/} \int_X e^{\theta_t} \omega_{\phi_t}. \]
\end{proof}

\subsubsection{Relaxed action functional and integration by parts}

For smooth rays $\bm{\phi} = \{ \phi_s \}_{s \in [0,\infty)} \subset C^\infty (X, \omega)$ and $\bm{\psi} = \{ \psi_s \}_{s \in [0, \infty)} \subset C^\infty (X, \sigma)$ of $(1,1)$-forms, we put 
\[ \mathcal{A}^{\bm{\psi}}_{\bm{\phi}} (t) := - \int_0^t ds \int_X (dd^c_\sigma \psi_s - \dot{\psi}_s) e^{dd^c_\omega \phi_s -\dot{\phi}_s}. \]
When $\omega_{\phi_s} = dd^c_\omega \phi_s$ is positive, $\bm{\psi} = \{ \log (\omega_{\phi_s}^n/\omega^n) \}_{s \in [0, \infty)} \subset C^\infty (X, -\mathrm{Ric} (\omega))$ gives a smooth ray, and we have 
\[ \mathcal{A}^{\bm{\psi}}_{\bm{\phi}} (t) = \frac{1}{n!} \int_0^t ds \int_X (s (\omega_{\phi_s}) - \bar{\Box}_{\phi_s} \dot{\phi}_s) e^{-\dot{\phi}_s} \omega_{\phi_s}^n. \]
We simply write this as $\mathcal{A}_{\bm{\phi}} (t)$. 


For $A_t := \{ \tau \in \mathbb{C}_- ~|~ 1 \le |\tau| < e^t \}$, we consider functionals $\Phi (x, \tau) := \phi_{\log |\tau|} (x)$ and $\Psi (x, \tau) := \psi_{\log |\tau|} (x)$ on $X \times A_t$. 
The integration by parts shows the following. 

\begin{prop}
We have 
\[ \mathcal{A}^{\bm{\psi}}_{\bm{\phi}} (t) = \int_X \psi_t e^{-\dot{\phi}_t} \frac{(dd^c_\omega \phi_t)^n}{n!} + \frac{1}{\pi} \int_{X \times A_t} \Psi e^{-\dot{\Phi}} \frac{(dd^c_\omega \Phi)^{n+1}}{(n+1)!} - \int_0^t ds \int_X \sigma \wedge e^{-\dot{\phi}_s} \frac{(dd^c_\omega \phi_s)^{n-1}}{(n-1)!}. \]
\end{prop}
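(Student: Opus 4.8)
The plan is to prove the identity by Stokes' theorem on the product $X \times A_t$, exploiting that $\Phi$ and $\Psi$ are $S^1$-invariant (they depend on $\tau$ only through $s = \log|\tau|$), so that integration over the circles $|\tau| = e^s$ converts $\frac{1}{2\pi}\int_{A_t}$ into $\int_0^t ds$. First I would peel off the $\sigma$-contribution: expanding the exponential of forms and extracting the degree-$2n$ part gives
\[ \mathcal{A}^{\bm\psi}_{\bm\phi}(t) = \int_0^t ds \int_X e^{-\dot\phi_s}\Big( \dot\psi_s \frac{\omega_{\phi_s}^n}{n!} - (\sigma + dd^c\psi_s)\wedge\frac{\omega_{\phi_s}^{n-1}}{(n-1)!} \Big), \]
whose $\sigma$-term is already the last term on the right-hand side. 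It then remains to match the $\dot\psi_s$- and $dd^c\psi_s$-terms against the boundary term and the bulk integral over $X\times A_t$.

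The core computation is the bulk term. Writing $\Omega := dd^c_\omega\Phi = \omega + \sqddbar\Phi$ on the total space $X\times A_t$ (complex dimension $n+1$), I would decompose $\Omega$ into its $X$-part $\omega_{\phi_s}$, the mixed part $\eta := \sqrt{-1}\,\tfrac{(\dot\phi_s)_i}{2\bar\tau}\,dz^i\wedge d\bar\tau$ together with its conjugate, and the pure $\tau$-part $\sqrt{-1}\,\tfrac{\ddot\phi_s}{4|\tau|^2}\,d\tau\wedge d\bar\tau$, using $s = \tfrac12\log(\tau\bar\tau)$ and $\partial_\tau\partial_{\bar\tau}\Phi = \ddot\phi_s/4|\tau|^2$. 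Taking the $(n+1)$-st power and extracting the top component of bidegree $(2n,2)$, the pure $\tau$-part contributes $\ddot\phi_s$ through $\omega_{\phi_s}^n/n!$, while $\eta\wedge\bar\eta$ contributes $-|\partial^\sharp\dot\phi_s|^2$ via the standard identity $\sqrt{-1}\partial f\wedge\bar\partial f\wedge\omega^{n-1}/(n-1)! = |\partial^\sharp f|^2\,\omega^n/n!$. Using $\sqrt{-1}\,d\tau\wedge d\bar\tau = 2|\tau|^2\,ds\wedge d\theta$, one finds
\[ \frac{(dd^c_\omega\Phi)^{n+1}}{(n+1)!}\Big|_{(2n,2)} = \tfrac12(\ddot\phi_s - |\partial^\sharp\dot\phi_s|^2)\,\frac{\omega_{\phi_s}^n}{n!}\wedge ds\wedge d\theta, \]
so that after integrating $\theta$ over $[0,2\pi)$ the constants $\tfrac1\pi$, $2\pi$, $\tfrac12$ combine to $1$ and the bulk term equals $\int_0^t ds\int_X \psi_s e^{-\dot\phi_s}(\ddot\phi_s - |\partial^\sharp\dot\phi_s|^2)\frac{\omega_{\phi_s}^n}{n!}$. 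This is precisely where the $\tfrac1\pi$ prefactor and the $d^c = \tfrac{\sqrt{-1}}{2}(\bar\partial-\partial)$ normalization of the paper are essential.

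Finally I would produce the boundary term and reconcile. Differentiating $\int_X\psi_s e^{-\dot\phi_s}\omega_{\phi_s}^n/n!$ in $s$ and integrating from $0$ to $t$ (fundamental theorem of calculus) yields $\dot\psi_s$-, $-\psi_s\ddot\phi_s$-, and $\psi_s\, dd^c\dot\phi_s\wedge\omega_{\phi_s}^{n-1}/(n-1)!$-terms; on $X$ I would integrate the last one by parts, moving $dd^c$ onto $e^{-\dot\phi_s}$ (here $\omega_{\phi_s}^{n-1}$ is closed), and use $\sqddbar e^{-\dot\phi_s} = e^{-\dot\phi_s}(\sqrt{-1}\partial\dot\phi_s\wedge\bar\partial\dot\phi_s - \sqddbar\dot\phi_s)$ to generate exactly the $-\psi_s|\partial^\sharp\dot\phi_s|^2$ term and to convert $\psi_s\,dd^c\dot\phi_s$ into $-dd^c\psi_s$. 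Collecting terms, the $\psi_s\ddot\phi_s$ pieces cancel and one obtains $\int_X\psi_t e^{-\dot\phi_t}\omega_{\phi_t}^n/n! - \int_X\psi_0 e^{-\dot\phi_0}\omega_{\phi_0}^n/n! = \mathcal{A}' - (\text{bulk})$, where $\mathcal{A}'$ denotes the $\dot\psi_s$/$dd^c\psi_s$-part of $\mathcal{A}$. Under the standing normalization $\psi_0 = 0$ (the rays emanate from the reference, as for $\psi_s = \log(\omega_{\phi_s}^n/\omega^n)$ when $\phi_0 = 0$) the inner-boundary term vanishes, giving the asserted formula.

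The main obstacle I expect is the form-bookkeeping on $X\times A_t$: correctly isolating the $(2n,2)$-component of $\Omega^{n+1}$, fixing the signs in the mixed term $\eta\wedge\bar\eta$, and tracking the convention-dependent numerical constants so that the $\tfrac1\pi$ prefactor comes out exactly. A secondary point to handle with care is the inner boundary at $|\tau|=1$, which must be seen to vanish (or be absorbed by the normalization) for the formula to hold as stated.
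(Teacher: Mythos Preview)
Your approach is correct and essentially the same as the paper's: both expand the integrand into its $\sigma$-, $dd^c\psi$-, and $\dot\psi$-parts, apply the product rule/fundamental theorem of calculus to $\int_X \psi_s e^{-\dot\phi_s}\omega_{\phi_s}^n$ to produce the boundary term, and identify the residual ``geodesic defect'' $\int_0^t\!\int_X \psi_s(\ddot\phi_s - |\partial^\sharp\dot\phi_s|^2)e^{-\dot\phi_s}\omega_{\phi_s}^n/n!$ with the bulk integral $\tfrac{1}{\pi}\int_{X\times A_t}\Psi e^{-\dot\Phi}(dd^c_\omega\Phi)^{n+1}/(n+1)!$. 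The only cosmetic difference is direction: you compute $(dd^c_\omega\Phi)^{n+1}$ from the total-space side and read off the fibrewise expression, while the paper writes down the fibrewise expression and asserts it equals the total-space integral.

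One remark on your inner-boundary discussion: you correctly observe that the computation produces an extra $-\int_X\psi_0 e^{-\dot\phi_0}\omega_{\phi_0}^n/n!$ term, and you dispose of it by invoking a ``standing normalization $\psi_0=0$''. This is not in fact stated as a hypothesis in the paper; the paper's own proof simply does not mention the term. In the main application $\psi_s=\log(\omega_{\phi_s}^n/\omega^n)$ with $\phi_0=0$ one does have $\psi_0=0$, and the subsequent Definition in the paper takes the right-hand side of the formula as the \emph{definition} of $\mathcal{A}_\Phi^\Psi$ (with $\mathcal{A}_\Phi^\Psi(0):=\int_X\psi_0 e^{-\dot\phi_0}\omega^n/n!$), so the discrepancy is harmless in context. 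Your caution here is well placed.
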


\begin{proof}
By definition, we have
\[ n!. \mathcal{A}^{\bm{\psi}}_{\bm{\phi}} (t) = - \int_0^t ds \int_X n \sigma \wedge e^{-\dot{\phi}} (dd^c_\omega \phi)^{n-1} + \int_0^t ds \int_X n dd^c \psi \wedge e^{- \dot{\phi}} (dd^c_\omega \phi)^{n-1} + \int_0^t ds \int_X \dot{\psi} e^{-\dot{\phi}} (dd^c_\omega \phi)^n. \]

We can compute the integrand of the last term as 
\begin{align*} 
\int_X \dot{\psi} e^{- \dot{\phi}} (dd^c_\omega \phi)^n 
&= \frac{d}{ds} \int_X \psi e^{-\dot{\phi}} (dd^c_\omega \phi)^n + \int_X \psi (\ddot{\phi} e^{-\dot{\phi}} (dd^c_\omega \phi)^n - n d \dot{\phi} \wedge d^c \dot{\phi} \wedge e^{-\dot{\phi}} (dd^c_\omega \phi)^{n-1}) 
\\
&\qquad- \int_X n dd^c \psi \wedge e^{-\dot{\phi}} (dd^c_\omega \phi)^{n-1} 
\end{align*}
by using 
\begin{align*}
\frac{d}{dt} \Big{(} \psi e^{-\dot{\phi}} (dd^c_\omega \phi)^n \Big{)} 
&=  \dot{\psi} e^{- \dot{\phi}} (dd^c_\omega \phi)^n - \psi (\ddot{\phi} e^{-\dot{\phi}} (dd^c_\omega \phi)^n - n d \dot{\phi} \wedge d^c \dot{\phi} \wedge e^{-\dot{\phi}} (dd^c_\omega \phi)^{n-1}) 
\\
&\qquad + n \psi (dd^c \dot{\phi} - d \dot{\phi} \wedge d^c \dot{\phi}) e^{-\dot{\phi}} \wedge (dd^c_\omega \phi)^{n-1} 
\\
&=  \dot{\psi} e^{- \dot{\phi}} (dd^c_\omega \phi)^n - \psi (\ddot{\phi} e^{-\dot{\phi}} (dd^c_\omega \phi)^n - n d \dot{\phi} \wedge d^c \dot{\phi} \wedge e^{-\dot{\phi}} (dd^c_\omega \phi)^{n-1}) 
\\
&\qquad + n \psi dd^c (e^{- \dot{\phi}} (dd^c_\omega \phi)^{n-1}). 
\end{align*}

%
We can arrange the integration of the second term as 
\[ \int_0^t ds \int_X \psi (\ddot{\phi} e^{-\dot{\phi}} (dd^c_\omega \phi)^n - n d \dot{\phi} \wedge d^c \dot{\phi} \wedge e^{-\dot{\phi}} (dd^c_\omega \phi)^{n-1}) = \frac{1}{\pi} \int_{X \times A_t} \Psi e^{-\dot{\Phi}} (dd_\omega^c \Phi)^{n+1}, \] 
which shows the claim. 
\end{proof}

\subsubsection{Subgeodesic and momentum}

We put $A_t := \{ \tau \in \mathbb{C}_- ~|~ 1 \le |\tau| < e^t \}$, where $\mathbb{C}_-$ denotes $\mathbb{C}$ endowed with the reversed scaling action $z. t = t^{-1} z$. 
A \textit{subgeodesic ray} $\Phi$ on $X$ is a $U (1)$-invariant $\omega$-plurisubharmonic function on $X \times A_\infty$. 
We also use the notation $\bm{\phi} = \{ \phi_t (x) = \Phi (x, e^t) \}_{t \in [0, \infty)}$ to denote the subgeodesic $\Phi$. 
In this article, we restrict our interest to locally bounded $\Phi$ to simplify arguments on the \textit{momentum} $\dot{\Phi}$ introduced later. 
A (locally bounded) \textit{geodesic ray} is a locally bounded subgeodesic ray satisfying 
\[ (dd_\omega^c \Phi)^{n+1} = 0 \]
in the sense of Bedford--Taylor. 

For a normal test configuration $(\mathcal{X}, \mathcal{L})$ and any initial metric $\omega$, there exists a unique geodesic ray $\Phi$ emanating from $\omega$ such that the $\omega$-psh $\Phi (x, t^{-1})$ on $X \times \Delta^* \subset \mathcal{X}_\Delta$ extends to a locally bounded psh metric on the line bundle $\mathcal{L}_\Delta$ on $\mathcal{X}_\Delta$ (cf. \cite{PS1, BBJ}). 
We write it as $\Phi_{(\mathcal{X}, \mathcal{L})}$ or $\bm{\phi}_{(\mathcal{X}, \mathcal{L})}$. 
We say $\Phi$ is \textit{subordinate to} a normal test configuration $(\mathcal{X}, \mathcal{L}; \rho)$ if $\Phi (x, \tau) = \Phi_{(\mathcal{X}, \mathcal{L})} (x, \rho \tau)$. 
If we shift the $\mathbb{C}^\times$-action on $\mathcal{L}$ by a weight $m$, the associated geodesic ray shifts by $m \log |\tau|$. 

It is shown by \cite{CTW2} (cf. \cite{PS2}) that a geodesic ray $\Phi$ subordinate to some test configuration is in $C^{1,1}_{\mathrm{loc}} (X \times A_\infty, \omega)$. 
A \textit{$C^{1,1}$-(sub)geodesic ray} is a (sub)geodesic ray in $C^{1,1}_{\mathrm{loc}} (X \times A_\infty, \omega)$. 
We note functions in this class are regular than $C^{1, \bar{1}}_{\mathrm{loc}}$ consisting of $L^p_2$-functions with locally bounded Laplacian ($\forall p < \infty$). 

We can identify the space $C^{1,1}_{\mathrm{loc}}$ with the Sobolev space $L^\infty_{2, \mathrm{loc}}$. 
The Bedford--Taylor product $dd^c_\omega \phi_1 \wedge \dotsb \wedge dd^c_\omega \phi_k$ for $C^{1,1}$-regular $\omega$-psh $\phi_1, \ldots, \phi_k$ coincides with the usual product of differential forms $dd^c_\omega \phi_i$ with $L^\infty$-coefficients. 

We note the closure of $C^\infty \subset C^{k,1}$ in the $C^{k,1}$-norm is $C^{k+1} \subsetneq C^{k,1}$, so a purely $C^{k,1}$-function cannot be approximated by smooth functions in the strong $C^{k,1}$-topology. 
Still, every $C^{k,1}$-function can be approximated by smooth functions in the following weak topology, which is slightly weaker than the usual weak topology in functional analysis: Identifying $C^{k, 1}$ with a subspace of $C^k \times L^\infty$ by $f \mapsto (f, \nabla^{k+1} f)$, our weak topology on $C^{k,1}$ is defined as the topology induced from the strong topology of $C^k$ and the weak$^*$ topology of $L^\infty = (L^1)^*$. 
In this topology, $f_i$ converges to $f$ in $C^{k,1}$ if and only if it converges in $C^k$ and $\int_X \nabla^{k+1} f_i g d\mu \to \int_X \nabla^{k+1} f g d\mu$ for every function $g$ integrable with respect to the Lebesgue measure $\mu$. 
(The usual weak topology further assumes the convergence of $\langle \nabla^{k+1} f_i, G \rangle \to \langle \nabla^{k+1} f, G \rangle$ for every $G \in (L^\infty)^* \supsetneq L^1$. )
For $f \in C^{k,1}$, the convolution $\rho_\epsilon * f$ with a mollifyer $\rho_\epsilon$ gives a desired approximation, thanks to the uniform boundedness 
\[ \| \nabla^{k+1} (\rho_\epsilon * f) \|_{L^\infty} = \| \rho_\epsilon * \nabla^{k+1} f \|_{L^\infty} \le \| \rho_\epsilon \|_{L^1} \cdot \| \nabla^{k+1} f \|_{L^\infty} = \| \nabla^{k+1} f \|_{L^\infty}. \]
Here note we have sequential Banach--Alaoglu theorem since $L^\infty$ is the dual of separable space $L^1$ (while $L^\infty$ is not separable). 
We refer to approximation in this weak topology as \textit{approximation in $C^{k,1}$}. 
If $f_i \to f$ and $g_i \to g$ are approximation in $C^{k,1}$, then $f_i g_i \to fg$ is also approximation in $C^{k,1}$. 

We call attention to the difference between ``everywhere'' and ``almost everywhere'' for two reasons:
\begin{itemize}
\item When dealing with locally bounded psh, the Monge--Amp\`ere measures $(dd_\omega \phi_t)^n$ are not necessarily absolutely continuous with respect to the Lebesgue measure. 
So ``equal almost everywhere with respect to the Lebesgue measure'' does not imply ``equal almost everywhere with respect to $(dd^c \phi_t)^n$''. 

\item For a smooth metric $\omega$ and a test configuration $(\mathcal{X}, \mathcal{L}; \rho)$, we will construct a (possibly uncontinuous) monotonically decreasing functional $\cW^+$ on $[0, \infty)$ such that (1) $\cmu (\omega) \ge \cW^+ (0)$ and (2) $\lim_{t \to \infty} \cW^+ (t) = \NAmu (\mathcal{X}, \mathcal{L}; \rho)$, so that $\cmu (\omega) \ge \NAmu (\mathcal{X}, \mathcal{L}; \rho)$. 
If $\cW^+$ is just a functional satisfying (1) and (2), and $\cW^+ = \tilde{\mathcal{W}}^+$ almost everywhere for a monotonically decreasing functional $\tilde{\mathcal{W}}^+$, we fail to conclude $\cmu (\omega) \ge \NAmu (\mathcal{X}, \mathcal{L}; \rho)$ as we may have $\tilde{\mathcal{W}}^+ (0) \ge \cmu (\omega)$. 
\end{itemize}

When we clarify this point, we use the notation $\mathfrak{B} (X)$ rather than $L^\infty (X)$ to denote the space of (locally) bounded Borel functions. 
(Usually, $L^\infty (X)$ denotes the quotient of $\mathfrak{B} (X)$ which identifies two functions equal almost everywhere with respect to the Lebesgue measure. )
We use the notation $\mathfrak{B} (X, \sigma)$ similarly as $C^\infty (X, \sigma)$. 
We call a family $\{ \psi_t \}$ of locally bounded Borel functions \textit{equicontinuous} if the family $\{ \psi_t (x) \}_{x \in X}$ of functions on $t$ is equicontinuous. 
Namely, for every $t \in [0, \infty)$ and $\varepsilon > 0$, there is $\delta > 0$ such that $\sup_{x \in X} |\psi_t (x) - \psi_{t'} (x)| < \epsilon$ for every $t' \in [0, \infty)$ with $|t-t'| < \delta$. 

For a subgeodesic $\Phi$, for each $x \in X$, $\Phi (x, \tau)$ is a $U (1)$-invariant psh on $\tau$. 
This implies $\phi_t (x)$ is convex on $t$ for each $x \in X$. 
By the convexity, $\delta^{-1} (\phi_{t+\delta} (x) - \phi_t (x))$ is monotonically decreasing with respect to $\delta$, so that we get the right derivative: 
\begin{equation} 
\dot{\phi}_t (x) := \lim_{\delta \to +0} \frac{\phi_{t+\delta} (x) - \phi_t (x)}{\delta}. 
\end{equation}
Moreover, $\dot{\phi}_t$ is monotonically increasing with respect to $t$. 
We put $\dot{\Phi} (x, \tau) := \dot{\phi}_{\log |\tau|} (x)$. 
This function is Borel as the decreasing limit of Borel functions is Borel. 

For $t > 0$, we have 
\[ \frac{\phi_t (x) - \phi_{t-\delta} (x)}{\delta} \le \dot{\phi}_t (x) \le \frac{\phi_{t+\delta} (x) - \phi_t (x)}{\delta} \]
for any small $\delta > 0$ and every $x \in X$. 
Thus, suppose $\Phi$ is locally bounded, $\dot{\phi}_t$ is bounded on $X$ for each $t > 0$ and bounded from above for $t=0$. 
Moreover, by the monotonicity, $\dot{\Phi}|_{X \times A^\circ}$ is locally bounded when $\Phi$ is so. 

Furthermore, by 
\[ -\delta C \le \delta \dot{\phi}_t \le \phi_{t+ \delta} - \phi_t \le \delta \dot{\phi}_{t+\delta} \le \delta \dot{\phi}_{t+\delta_0} \le \delta C, \]
the family $\{ \phi_t \}$ is equicontinuous in the above sense. 
In particular, for $t_i \to t > 0$, we have the weak convergence $(dd^c_\omega \phi_{t_i})^n \to (dd^c \phi_t)^n$ of measures as the Bedford--Taylor product is continuous with respect to the pointwise uniform convergence. 
In particular, for a $C^{1,1}$-geodesic $\Phi$, the $L^\infty$-differential forms $(dd^c_\omega \phi_{t_i})^n$ weakly converge to $(dd^c_\omega \phi_t)^n$ as currents.

The following is well-known (cf. \cite{His16, His}). 

\begin{lem}
\label{affine functional}
For a $C^{1,1}$-geodesic ray $\bm{\phi}$, the integrations 
\[ \int_X e^{-\dot{\phi}_t} (dd^c_\omega \phi_t)^n, \quad \int_X \dot{\phi}_t e^{-\dot{\phi}_t} (dd^c_\omega \phi_t)^n \] 
are independent of $t$. 
\end{lem}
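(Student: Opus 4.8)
The plan is to recognize both integrals as moments of the pushforward (``Duistermaat--Heckman'') measure $\nu_t := (\dot{\phi}_t)_* (dd^c_\omega \phi_t)^n$ on $\mathbb{R}$. Writing $\chi_1 (s) = e^{-s}$ and $\chi_2 (s) = s e^{-s}$, the two quantities in question are $\int_{\mathbb{R}} \chi_j \, d\nu_t$. Since $\dot{\phi}_t$ is uniformly bounded on $X \times \{ e^{t_0} \le |\tau| \le e^T \}$ for every compact $[t_0, T] \subset (0, \infty)$ (already noted above for locally bounded $\Phi$), the measures $\nu_t$ have uniformly compact support there and $\chi_1, \chi_2$ are bounded continuous on it. It therefore suffices to prove that $\nu_t$ is independent of $t$, i.e. that $F_\chi (t) := \int_X \chi (\dot{\phi}_t) (dd^c_\omega \phi_t)^n$ is constant in $t$ for every $\chi \in C^\infty (\mathbb{R})$; the value at $t = 0$ then follows by right-continuity. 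Conceptually this is the statement that the Monge--Amp\`ere foliation transports $(dd^c_\omega \phi_{t_1})^n$ to $(dd^c_\omega \phi_{t_2})^n$ while preserving the value of $\dot{\phi}$, but I would argue it analytically.

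First, for a \emph{smooth} geodesic I would differentiate $F_\chi$ directly. Expanding $\frac{d}{dt}$ and integrating by parts once on the closed manifold $X$ to move the $dd^c$ off $\dot{\phi}_t$, one obtains
\[ \frac{d}{dt} F_\chi (t) = \int_X \chi' (\dot{\phi}_t) \, \big{(} \ddot{\phi}_t - |\bar{\partial}^\sharp \dot{\phi}_t|^2 \big{)} \, (dd^c_\omega \phi_t)^n, \]
where the parenthesis is exactly the fibrewise part of $(dd^c_\omega \Phi)^{n+1}$. For a genuine geodesic it vanishes by the geodesic equation $\ddot{\phi}_t = |\bar{\partial}^\sharp \dot{\phi}_t|^2$, so $F_\chi$ is constant; the special cases $\chi = \chi_1, \chi_2$ recover the two identities already recorded in the smooth-geodesic proposition above.

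For a $C^{1,1}$-geodesic I would argue by approximation on each compact annular slab $X \times \{ e^{t_0} \le |\tau| \le e^T \}$. Solving the nondegenerate Monge--Amp\`ere equations $(dd^c_\omega \Phi_\varepsilon)^{n+1} = \varepsilon \beta$ (with $\beta$ a fixed smooth positive volume form on the slab) under the boundary data $\phi_{t_0}, \phi_T$ produces smooth $\varepsilon$-geodesics enjoying uniform $C^{1,1}$-bounds, with $\Phi_\varepsilon \to \Phi$ in $C^{1, \alpha}$ and $(dd^c_\omega \Phi_\varepsilon)^{n+1} \to 0$ in mass (cf. \cite{CTW2}, after Chen and B{\l}ocki). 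For each $\varepsilon$ the displayed computation gives $|\ddot{\phi}_{\varepsilon, t} - |\bar{\partial}^\sharp \dot{\phi}_{\varepsilon, t}|^2| (dd^c_\omega \phi_{\varepsilon, t})^n \le C \varepsilon\, \omega^n$ pointwise, hence $|\frac{d}{dt} F^\varepsilon_\chi (t)| \le C \| \chi' \|_\infty \varepsilon$ and $|F^\varepsilon_\chi (t_2) - F^\varepsilon_\chi (t_1)| = O (\varepsilon)$ uniformly in the slab. Letting $\varepsilon \to 0$ and using that $\dot{\phi}_{\varepsilon, t} \to \dot{\phi}_t$ uniformly in $x$ together with the weak convergence $(dd^c_\omega \phi_{\varepsilon, t})^n \to (dd^c_\omega \phi_t)^n$ (both consequences of the uniform $C^{1,1}$-bounds) yields $F_\chi (t_2) = F_\chi (t_1)$.

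The main obstacle is precisely this passage from smooth to $C^{1,1}$. For a merely $C^{1,1}$ solution the density of $(dd^c_\omega \phi_t)^n$ is bounded but need not be continuous in $t$, so one cannot differentiate $F_\chi$ under the integral sign directly; and plain mollification in the weak $C^{1,1}$-topology is useless here, because convolving a solution of the degenerate Monge--Amp\`ere equation destroys control on $(dd^c_\omega \Phi)^{n+1}$ and hence on the error term $\ddot{\phi} - |\bar{\partial}^\sharp \dot{\phi}|^2$. It is the $\varepsilon$-geodesic regularization, with its uniform estimates, that keeps this error of order $\varepsilon$ and makes the limit legitimate. One must also keep the whole argument on compact slabs $[t_0, T] \subset (0, \infty)$ and only afterwards extend the constancy to $t = 0$ by continuity.
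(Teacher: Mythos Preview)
Your proof is correct and complete, but it takes a genuinely different route from the paper's. The paper does not invoke $\varepsilon$-geodesics at all: it simply mollifies the $C^{1,1}$-geodesic $\Phi$ to smooth rays $\Phi_i$ (the ``approximation in $C^{1,1}_{\mathrm{loc}}$'' set up just before the lemma), writes for each smooth ray
\[
f_i'(t) \;=\; -\int_X e^{-\dot{\phi}_{i,t}}\bigl(\ddot{\phi}_{i,t}\,\omega_{\phi_{i,t}}^n - n\,d\dot{\phi}_{i,t}\wedge d^c\dot{\phi}_{i,t}\wedge\omega_{\phi_{i,t}}^{n-1}\bigr),
\]
recognizes this as an integral against the fibrewise density of $(dd^c_\omega\Phi_i)^{n+1}$, and then lets $i\to\infty$. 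Under convolution the second derivatives of $\Phi_i$ are uniformly bounded in $L^\infty$ and converge to those of $\Phi$ in every $L^p_{\mathrm{loc}}$ with $p<\infty$; hence the above polynomial expression converges in $L^1$ on each slab to the corresponding expression for $\Phi$, which vanishes a.e.\ because for a $C^{1,1}$-psh function the Bedford--Taylor product $(dd^c_\omega\Phi)^{n+1}$ coincides with the pointwise product of $L^\infty$-forms and equals zero. Thus $f_i(t_2)-f_i(t_1)=\int_{t_1}^{t_2}f_i'\,ds\to 0$ while $f_i\to f$, and $f$ is constant. This is exactly the template the paper reuses in the later proofs (e.g.\ Lemma~\ref{W plus} and the proposition on $\mathcal{A}_\Phi^\Psi$).

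Your assertion that ``plain mollification \dots\ is useless here, because convolving \dots\ destroys control on $(dd^c_\omega\Phi)^{n+1}$'' is therefore too strong. You are right that mollification gives no pointwise smallness of $(dd^c_\omega\Phi_i)^{n+1}$, but the paper never needs that: it only needs the \emph{limit} of the integrated expressions to be an integral against $(dd^c_\omega\Phi)^{n+1}=0$, and $L^p$-convergence of second derivatives supplies exactly this. Your $\varepsilon$-geodesic regularization trades that limiting argument for a direct $O(\varepsilon)$ bound on each $f_\varepsilon'$; this is conceptually tidy but imports the existence and uniform $C^{1,1}$-estimates for the perturbed problem, whereas the paper's mollification argument is entirely elementary once the $C^{1,1}$-framework is in place. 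Your Duistermaat--Heckman reformulation (reducing to constancy of $F_\chi$ for all $\chi$) is a nice organizing device but is not needed for the two specific integrals at hand.
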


\begin{proof}
One can show this by firstly computing the derivative for smooth (not necessarily subgeodesic) ray, then approximating the geodesic ray by smooth rays in $C^{1,1}_{\mathrm{loc}}$. 
As the derivative consists of integration with respect to $(dd_\omega^c \Phi)^{n+1}$, it vanishes. 
As we will exhibit and repeat such arguments later, we omit the detail for this well-known case. 
\end{proof}

\subsubsection{Relaxed action functional along subgeodesic ray}

Here we study the relaxed action functional $\mathcal{A}_{\bm{\phi}}^{\bm{\psi}}$ for weakly regular $\bm{\phi}$ and $\bm{\psi}$. 
Though we only deal with $C^{1,1}$-geodesic ray $\bm{\phi}$ except for this subsection, we introduce the functional for locally bounded subgeodesics $\bm{\phi}$ in order to clarify a sufficient condition for continuity. 

\begin{defin}
For a locally bounded subgeodesic ray $\Phi$ and a locally bounded Borel function $\Psi \in \mathfrak{B} (X \times A_\infty, \sigma)$, for each $t \in (0, \infty)$, we put 
\[ \mathcal{A}_\Phi^\Psi (t) := \int_X \psi_t e^{-\dot{\phi}_t} \frac{(dd^c_\omega \phi_t)^n}{n!} + \frac{1}{\pi} \int_{X \times A_t} \Psi e^{-\dot{\Phi}} \frac{(dd_\omega^c \Phi)^{n+1}}{(n+1)!} - \int_0^t ds \int_X \sigma \wedge e^{-\dot{\phi}_s} \frac{(dd^c_\omega \phi_s)^{n-1}}{(n-1)!}. \]
If moreover $\dot{\phi}_0$ is bounded from below, we put $\mathcal{A}_\Phi^\Psi (0) := \int_X \psi_0 e^{-\dot{\phi}_0} \omega^n/n!$. 
\end{defin}

For a smooth $\Phi$ and $\Psi$, we have $\mathcal{A}_\Phi^\Psi = \mathcal{A}_{\bm{\phi}}^{\bm{\psi}}$ for $\mathcal{A}_{\bm{\phi}}^{\bm{\psi}}$ in the previous subsection. 
The relaxed action $\mathcal{A}_\Phi^\Psi$ gives a locally bounded Borel function on $(0, \infty) \text{ or } [0, \infty)$. 
We give a sufficient condition for continuity. 
As the measure $(dd^c \Phi)^{n+1}$ may charge the boundary $\partial A_t$, we focus on geodesic ray. 

\begin{lem}
For a geodesic ray $\Phi$ with continuous $\dot{\Phi}$ and for a continuous $\Psi$, the relaxed action $\mathcal{A}_\Phi^\Psi$ is continuous on $[0, \infty)$. 
\end{lem}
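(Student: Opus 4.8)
The plan is to treat the three terms in the definition of $\mathcal{A}_\Phi^\Psi(t)$ separately, exploiting the fact that for a geodesic ray the middle term vanishes identically. Since $\Phi$ is a geodesic ray, $(dd^c_\omega \Phi)^{n+1} = 0$ in the Bedford--Taylor sense, so its restriction to any $X \times A_t$ is the zero measure and the term $\frac{1}{\pi} \int_{X \times A_t} \Psi e^{-\dot\Phi} (dd^c_\omega \Phi)^{n+1}/(n+1)!$ is zero for every $t$. This is precisely where the geodesic hypothesis is used: for a general subgeodesic the measure could charge the varying boundary $\partial A_t$ and create jumps. It thus remains to prove continuity of the first term $I(t) := \int_X \psi_t e^{-\dot\phi_t} (dd^c_\omega \phi_t)^n/n!$ and of the cumulative integral $III(t) := \int_0^t J(s)\, ds$ with $J(s) := \int_X \sigma \wedge e^{-\dot\phi_s}(dd^c_\omega\phi_s)^{n-1}/(n-1)!$.

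For the first term, I would fix $t \ge 0$ and take $t_i \to t$. The integrand $g_i := \psi_{t_i} e^{-\dot\phi_{t_i}}$ is a continuous function on $X$ converging \emph{uniformly} to $g := \psi_t e^{-\dot\phi_t}$: indeed $\psi_{t_i} \to \psi_t$ uniformly by the uniform continuity of $\Psi$ on compact subsets, while $\dot\phi_{t_i} \to \dot\phi_t$ uniformly by the assumed continuity of $\dot\Phi$ on $X \times A_\infty$ (this is also what makes $\dot\phi_0$ bounded, removing the boundary irregularity at $t=0$ where a priori $\dot\phi_0$ is only bounded above). Meanwhile the equicontinuity of $\{\phi_t\}$ established above gives $\phi_{t_i} \to \phi_t$ uniformly, whence the Bedford--Taylor measures $\mu_i := (dd^c_\omega\phi_{t_i})^n$ converge weakly to $\mu := (dd^c_\omega\phi_t)^n$. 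Splitting $\int g_i\, d\mu_i - \int g\, d\mu = \int(g_i - g)\, d\mu_i + (\int g\, d\mu_i - \int g\, d\mu)$ and using that the total masses $\mu_i(X) = (L^{\cdot n})$ are a fixed cohomological constant, the first piece is bounded by $\|g_i - g\|_\infty (L^{\cdot n}) \to 0$ and the second tends to $0$ by weak convergence. This gives $I(t_i) \to I(t)$.

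The third term is then handled by showing $J$ is a bounded Borel (in fact continuous) function on $[0,\infty)$, after which $III(t) = \int_0^t J$ is continuous, indeed Lipschitz, in $t$. Continuity of $J$ follows by exactly the same weak-convergence argument applied to the measures $\sigma \wedge (dd^c_\omega\phi_s)^{n-1}$, whose masses are again the cohomologically constant mixed intersection number, with $e^{-\dot\phi_s}$ converging uniformly by continuity of $\dot\Phi$. Finally the matching of $\mathcal{A}_\Phi^\Psi(0)$ with the $t \to 0^+$ limit is automatic: the middle and third terms vanish at $t=0$, one has $(dd^c_\omega\phi_0)^n = \omega^n$, and $I(0) = \int_X \psi_0 e^{-\dot\phi_0}\omega^n/n!$ is precisely the defining value.

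The hard part will be the convergence of integrals against the \emph{moving} Monge--Amp\`ere measures: one cannot simply invoke dominated convergence, since the measures $\mu_i$ themselves vary and need not be absolutely continuous with respect to a fixed reference. The resolution is the standard weak$^*$ lemma asserting convergence of a uniformly convergent continuous integrand paired against weakly convergent finite measures of uniformly bounded mass, whose three hypotheses are supplied respectively by continuity of $\dot\Phi$ and $\Psi$, by Bedford--Taylor continuity under the equicontinuity of $\{\phi_t\}$, and by the cohomological invariance of the total mass.
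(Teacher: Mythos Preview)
Your proof is correct and follows essentially the same approach as the paper: both use that the middle term vanishes for a geodesic, then pair uniformly convergent continuous integrands (from the assumed continuity of $\dot\Phi$ and $\Psi$) against weakly convergent Bedford--Taylor measures (from the equicontinuity of $\{\phi_t\}$). The only minor difference is that you bound the masses of the moving measures by the cohomological intersection number, whereas the paper invokes the uniform boundedness principle; your route is slightly more elementary, though for the signed measure $\sigma \wedge (dd^c_\omega\phi_s)^{n-1}$ you should bound the total variation via $|\sigma| \le C\omega$ rather than the net integral.
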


\begin{proof}
As we already noted, we have $(dd_\omega^c \phi_{t_i})^k \to (dd_\omega^c \phi_t)^k$ weakly as currents of order $0$. 
By our assumption, $\psi_{t_i}$ (resp. $\dot{\phi}_{t_i}$) are continuous and converge uniformly to $\psi_t$ (resp. $\dot{\phi}_t$) as $t_i \to t$. 
The desired continuity follows from the following fact: for a uniformly convergent sequence $f_i \to f$ of continuous functions and a weakly convergent sequence $\mu_i \to \mu$ of Radon measures on $X$, we have $\int_X f_i \mu_i \to \int_X f \mu$. 
Indeed, we have 
\begin{align*} 
\left| \int_X f_i \mu_i - \int_X f \mu \right| 
&\le \left| \int_X f \mu_i - \int_X f \mu \right| + \int_X |f- f_i| \mu_i 
\\
&\le \left| \int_X f \mu_i - \int_X f \mu \right| + \| \mu_i \| \| f- f_i \|_{\sup}. 
\end{align*}
Since $\mu_i$ is weakly convergent, the uniform boundedness principle on $\| \mu_i \|$ shows $\| \mu_i \| \le C$. 
This shows the convergence. 
\end{proof}


\begin{prop}
For a $C^{1,1}$-ray $\Phi$ (not necessarily a subgeodesic) and a smooth ray $\Psi$ (up to the boundary), $\mathcal{A}_\Phi^\Psi$ is $C^1$ (up to the boundary). 
In this case, we have 
\[ \frac{d}{dt} \mathcal{A}^\Psi_\Phi (t) = -\int_X (dd^c_\sigma \psi_t - \dot{\psi}_t) e^{dd^c_\omega \phi_t - \dot{\phi}_t}. \]
\end{prop}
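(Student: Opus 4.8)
The plan is to reduce the statement to the fundamental theorem of calculus applied to the original, manifestly differentiable expression for the action functional, and then to transport this identity from smooth rays to $C^{1,1}$-rays by approximation. Write $g(t) := \int_X (dd^c_\sigma \psi_t - \dot{\psi}_t) e^{dd^c_\omega \phi_t - \dot{\phi}_t}$ for the candidate negative derivative. It suffices to establish the integral identity
\[ \mathcal{A}_\Phi^\Psi (t) = \mathcal{A}_\Phi^\Psi (0) - \int_0^t g(s)\, ds \]
together with the continuity of $s \mapsto g(s)$ on $[0,\infty)$; the two together yield both the $C^1$-regularity up to the boundary and the asserted formula $\frac{d}{dt} \mathcal{A}_\Phi^\Psi (t) = -g(t)$.

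First I would treat the smooth case. For a smooth ray $\Phi$ (not necessarily subgeodesic) and smooth $\Psi$, the integration-by-parts Proposition above identifies the Borel definition of $\mathcal{A}_\Phi^\Psi$ with $\mathcal{A}^{\bm{\psi}}_{\bm{\phi}} (t) = - \int_0^t ds \int_X (dd^c_\sigma \psi_s - \dot{\psi}_s) e^{dd^c_\omega \phi_s - \dot{\phi}_s} = -\int_0^t g(s)\, ds$. For smooth rays $g$ is visibly continuous in $s$, so both the integral identity and the derivative formula hold trivially here.

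Next I would pass to $C^{1,1}$. Approximate the $C^{1,1}$-ray $\Phi$ by smooth $U(1)$-invariant rays $\Phi_i \to \Phi$ in the weak $C^{1,1}$-topology introduced above (by mollification in $x$ and in $\log|\tau|$), so that $\dot{\phi}_{i,t} \to \dot{\phi}_t$ uniformly and $dd^c_\omega \phi_{i,t} \to dd^c_\omega \phi_t$ weak-$*$ in $L^\infty$, while $\Psi$ is held fixed. The smooth case gives $\mathcal{A}_{\Phi_i}^\Psi (t) = \mathcal{A}_{\Phi_i}^\Psi (0) - \int_0^t g_i(s)\, ds$ with $g_i$ the analogue of $g$ for $\Phi_i$. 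Letting $i \to \infty$, each of the three terms defining $\mathcal{A}_{\Phi_i}^\Psi$ converges to the corresponding term of $\mathcal{A}_\Phi^\Psi$, using that products of $C^{1,1}$-approximations again converge in $C^{1,1}$ (as recorded above) paired against the uniformly convergent continuous factors $e^{-\dot{\phi}_{i,t}}$ and the smooth $\Psi$, exactly as in the continuity Lemma above; likewise $g_i(s) \to g(s)$ with a uniform $L^\infty$-bound supplied by the $C^{1,1}$-bounds on $\Phi_i$, so dominated convergence passes the $s$-integral. The limit is precisely the displayed integral identity for $\Phi$.

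Finally I would verify that $g$ is continuous on $[0,\infty)$: for a $C^{1,1}$-ray, $\dot{\phi}_t$ is Lipschitz in $t$ uniformly in $x$, and $(dd^c_\omega \phi_t)^k$ is an $L^\infty$-form depending weak-$*$-continuously on $t$, so combining the weak continuity of these forms with the uniform convergence of $e^{-\dot{\phi}_t}$ and the smoothness of $\Psi$ gives $g(s) \to g(t)$ as $s \to t$, again as in the continuity Lemma. The main obstacle is the passage to the limit in the second, $(n+1)$-fold wedge term $\frac{1}{\pi} \int_{X \times A_t} \Psi e^{-\dot{\Phi}} (dd^c_\omega \Phi)^{n+1}/(n+1)!$: since $\Phi$ is only $C^{1,1}$ this is a genuine product of $L^\infty$-forms, and naive weak-$*$ convergence of the top-order derivatives does not survive multiplication. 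It is exactly the stated stability of products under $C^{1,1}$-approximation — one factor converging strongly in $C^1$, the other weak-$*$ in $L^\infty$ — that must be invoked, and the continuity of $g$ rests on the same delicate point.
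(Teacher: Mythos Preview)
Your proof is correct and follows essentially the same approach as the paper: establish the smooth case via the integration-by-parts Proposition, then approximate the $C^{1,1}$-ray $\Phi$ by smooth rays and pass to the limit. The paper phrases the final step as ``pointwise locally uniform convergence of both $\mathcal{A}^\Psi_{\Phi_i}$ and $g_i$'' (from which the $C^1$ regularity of the limit follows directly by a standard calculus lemma), whereas you reframe it as ``pass the integral identity to the limit, then verify continuity of $g$ separately''; these are equivalent, and your version is somewhat more explicit about the delicate point --- the stability of the $(n+1)$-fold product $(dd^c_\omega \Phi)^{n+1}$ under $C^{1,1}$-approximation --- which the paper handles by its earlier remark that products of $C^{k,1}$-approximations are again $C^{k,1}$-approximations.
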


\begin{proof}
The claim holds for smooth $\Phi$ and $\Psi$. 
Take a $C^{1,1}_{\mathrm{loc}}$-approximation $\{ \Phi_i \}$ of $\Phi$ by smooth rays. 
The claim follows from the following pointwise locally uniform convergence: 
\begin{gather*}
\mathcal{A}^\Psi_{\Phi_i} \to \mathcal{A}^\Psi_\Phi, 
\\
\int_X (dd^c_\sigma \psi - \dot{\psi}) e^{dd^c_\omega \phi_i - \dot{\phi}_i} \to \int_X (dd^c_\sigma \psi - \dot{\psi}) e^{dd^c_\omega \phi - \dot{\phi}}. 
\end{gather*}
\end{proof}

We denote $\mathcal{A}^\Psi_\Phi$ by $\mathcal{A}_{\bm{\phi}}^{\bm{\psi}}$ in the rest of this article. 
We note for a $C^{1,1}$-regular $\phi$, ``almost everywhere with respect to the Lebesgue measure'' implies ``almost every where with respect to the measure $(dd^c \phi)^n$''. 

\subsubsection{The action functional}

For a $C^{1,1}$-geodesic ray $\bm{\phi} = \{ \phi_t \}_{t \in [0, \infty)}$ ($C^{1, \bar{1}}$-regularity suffices in this section), $v_t := \omega_{\phi_t}^n/\omega^n$ is a non-negative $L^\infty$-function, so that $v_t \log v_t$ gives an $L^\infty$-function. 
Then we put 
\begin{equation} 
\mathcal{A}_{\bm{\phi}} (t) := \frac{1}{n!} \int_X v_t \log v_t e^{- \dot{\phi}_t} \omega^n + \frac{1}{n!} \int_0^t ds \int_X n \mathrm{Ric} (\omega) \wedge e^{-\dot{\phi}_s} \omega_{\phi_s}^{n-1}. 
\end{equation}
Using $\mu_t = e^{-\dot{\phi}_t} \omega_{\phi_t}^n$, we get the following expression of the first part: 
\[ \int_X v_t \log v_t e^{- \dot{\phi}_t} \omega^n = \int_X \frac{d\mu_t}{d\mu_0} \log \frac{d\mu_t}{d\mu_0} d\mu_0 - \int_X (\dot{\phi}_t - \dot{\phi}_0) e^{-\dot{\phi}_t} \omega_{\phi_t}^n. \]
The term $\int_X \frac{d\mu_t}{d\mu_0} \log \frac{d\mu_t}{d\mu_0} d\mu_0$ is known to be lower semi-continuous with respect to the weak convergence of measures. 
The rest terms are $C^1$ when $\dot{\Phi}$ is continuous (up to the boundary). 
Therefore, $\mathcal{A}_{\bm{\phi}}$ is lower semi-continuous (up to the boundary). 

Since the entropy part is non-negative, we have the following. 

\begin{lem}
\label{W plus}
Let $\bm{\phi} = \{ \phi_s \}_{s \in [0, \infty)}$ be a $C^{1, 1}$-geodesic ray emanating from a smooth initial metric $\omega$. 
Then we have 
\[ -\frac{\frac{d}{dt}_+\Big{|}_{t =0} \mathcal{A}_{\bm{\phi}} (t)}{\int_X e^{- \dot{\phi}_0} \omega^n/n!} \le \cW (\omega, -\dot{\phi}_0). \]
\end{lem}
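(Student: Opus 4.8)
The plan is to bound the one-sided derivative from below by discarding a non-negative entropy term and computing the remainder explicitly. Since $\check{W}(\omega,\cdot)$ is invariant under $\dot{\phi}_0\mapsto\dot{\phi}_0+c$ whereas this shift merely rescales $\mathcal{A}_{\bm{\phi}}$, I would first normalize $\int_X e^{-\dot{\phi}_0}\omega^n=n!$ (Perelman's normalization); under it the target is the unnormalized average
\[ \check{W}(\omega,-\dot{\phi}_0)=-\frac{1}{n!}\int_X\big(s(\omega)+|\partial^\sharp\dot{\phi}_0|^2\big)e^{-\dot{\phi}_0}\omega^n. \]
As $v_0=1$ and the time integral is empty, $\mathcal{A}_{\bm{\phi}}(0)=0$, so the claim reduces to $\liminf_{t\to+0}\mathcal{A}_{\bm{\phi}}(t)/t\ge-\check{W}(\omega,-\dot{\phi}_0)$.

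First I would split $\mathcal{A}_{\bm{\phi}}$ through the relative entropy of $\mu_t:=e^{-\dot{\phi}_t}\omega_{\phi_t}^n$ against $\mu_0$, writing
\[ \mathcal{A}_{\bm{\phi}}(t)=\tfrac{1}{n!}\,\mathrm{Ent}(\mu_t\,|\,\mu_0)+H(t)+R(t), \]
where $H(t)=\tfrac{1}{n!}\int_X(\dot{\phi}_t-\dot{\phi}_0)\,d\mu_t$ and $R(t)=\tfrac{1}{n!}\int_0^t ds\int_X n\,\mathrm{Ric}(\omega)\wedge e^{-\dot{\phi}_s}\omega_{\phi_s}^{n-1}$. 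By Lemma \ref{affine functional} the masses $\mu_t(X)$ all coincide, so $\mathrm{Ent}(\mu_t\,|\,\mu_0)\ge0$ with equality at $t=0$; hence it contributes non-negatively to the difference quotient and can be dropped, leaving $\liminf_{t\to+0}\mathcal{A}_{\bm{\phi}}(t)/t\ge \frac{d}{dt}|_0 R+\liminf_{t\to+0}H(t)/t$. The curvature term $R$ is genuinely $C^1$ because its integrand is continuous in $s$ (smooth $\mathrm{Ric}(\omega)$, continuous $\dot{\phi}_s$, weakly continuous $\omega_{\phi_s}^{n-1}$), giving $\frac{d}{dt}|_0 R=\tfrac{1}{n!}\int_X s(\omega)e^{-\dot{\phi}_0}\omega^n$.

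The essential point is the flux $H$. In the smooth case $\frac{d}{dt}|_0 H=\tfrac{1}{n!}\int_X|\partial^\sharp\dot{\phi}_0|^2e^{-\dot{\phi}_0}\omega^n$, simply because the outer factor $\dot{\phi}_t-\dot{\phi}_0$ vanishes at $t=0$ while its $t$-derivative is $\ddot{\phi}_0=|\partial^\sharp\dot{\phi}_0|^2$. For the $C^{1,1}$ geodesic I would work with the difference quotients $g_t:=(\dot{\phi}_t-\dot{\phi}_0)/t\ge0$: convexity of $t\mapsto\phi_t$ makes $t\mapsto g_t$ monotone, and the geodesic equation $\ddot{\phi}_s=|\partial^\sharp\dot{\phi}_s|^2$ (valid a.e.) rewrites $g_t=\tfrac1t\int_0^t|\partial^\sharp\dot{\phi}_s|^2\,ds$. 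Combining the weak convergence $\mu_t\to\mu_0$ of the Monge--Amp\`ere measures (legitimate since $\Phi\in C^{1,1}$, so $\dot{\Phi}$ is continuous and $\omega_{\phi_t}^n\to\omega^n$ as currents of order $0$) with the lower semicontinuity of the Dirichlet energy $\xi\mapsto\int|\partial^\sharp\xi|^2$ as $s\to0$ should give $\liminf_{t\to+0}H(t)/t\ge\tfrac{1}{n!}\int_X|\partial^\sharp\dot{\phi}_0|^2e^{-\dot{\phi}_0}\omega^n$; the clean packaging is to test $g_t$ from below against continuous functions and pass to the weak limit of $\mu_t$. Adding the two contributions then reproduces $-\check{W}(\omega,-\dot{\phi}_0)$ exactly.

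The main obstacle is precisely this last step for $H$: reconciling the merely weak convergence of $\mu_t$ with the only-$L^\infty$ spatial gradients $\nabla\dot{\phi}_s$, which are $s$-measurable but not $s$-continuous, at the single boundary slice $t=0$. For a smooth geodesic all convergences are uniform and one gets equality; the content of the lemma is that the $C^{1,1}$ defect can only raise $\liminf H(t)/t$, so the inequality survives. I would lean on the $C^{1,1}$ regularity of \cite{CTW2} (continuity of $\dot{\Phi}$ and weak continuity of the Bedford--Taylor products) to license the weak-limit passage, and on the monotonicity of $g_t$ to keep the one-sided limit well-behaved while carrying the lower-semicontinuity estimate through the average $\tfrac1t\int_0^t(\cdot)\,ds$.
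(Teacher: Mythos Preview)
Your overall structure---decompose $\mathcal{A}_{\bm{\phi}}$ into a relative-entropy piece, the curvature integral $R$, and the flux $H$, then drop the non-negative entropy and compute the other two---is exactly the paper's strategy, and your treatment of the entropy term and of $R$ matches the paper.

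The difference lies entirely in the flux term $H(t)=\tfrac{1}{n!}\int_X(\dot{\phi}_t-\dot{\phi}_0)\,d\mu_t$. You attempt to handle it directly via the pointwise identity $g_t=\tfrac1t\int_0^t|\partial^\sharp\dot{\phi}_s|^2\,ds$ and a lower-semicontinuity argument against the weakly convergent $\mu_t$, and you correctly flag this as the delicate step. Two points of concern: first, your claim that convexity of $t\mapsto\phi_t$ forces $t\mapsto g_t$ to be monotone is not correct (convexity of $\phi$ only gives that $\dot{\phi}_t$ is increasing, which does not imply monotonicity of the difference quotient of $\dot{\phi}$); second, combining weak convergence of $\mu_t$ with merely a.e.~convergence of uniformly bounded $g_t$ does not by itself yield the desired $\liminf$ inequality, and the sketch via ``testing from below against continuous functions'' is not obviously implementable since $g_t$ need not be lower semicontinuous.

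The paper avoids all of this by exploiting the \emph{second} conclusion of Lemma~\ref{affine functional}, namely that $\int_X\dot{\phi}_t\,e^{-\dot{\phi}_t}\omega_{\phi_t}^n$ is constant in $t$ along the geodesic (you only invoked the first conclusion, constancy of the total mass). This immediately rewrites $H(t)=\mathrm{const}-\tfrac{1}{n!}\int_X\dot{\phi}_0\,d\mu_t$, i.e.~a \emph{fixed} test function $\dot{\phi}_0$ integrated against the varying measure $\mu_t$. One then approximates the $C^{1,1}$ geodesic $\Phi$ by smooth rays $\Phi_i$ in the weak $C^{1,1}$ sense; a direct differentiation of $f_i(t)=\int_X\dot{\phi}_{i,0}\,e^{-\dot{\phi}_{i,t}}\omega_{\phi_{i,t}}^n$ produces one term that is essentially the fibre coefficient of $(dd^c_\omega\Phi_i)^{n+1}$ (hence vanishes in the limit by the geodesic equation) and a second term that converges to $\int_X n\,d\dot{\phi}_0\wedge d^c\dot{\phi}_t\,e^{-\dot{\phi}_t}\omega_{\phi_t}^{n-1}$. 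This shows $t\mapsto\int_X\dot{\phi}_0\,d\mu_t$ is actually $C^1$ with derivative at $t=0$ equal to $\int_X|\partial^\sharp\dot{\phi}_0|^2\,e^{-\dot{\phi}_0}\omega^n$, giving the exact value of $H'(0)$ rather than just a $\liminf$ bound. Once you see this trick, the ``main obstacle'' you identified disappears.
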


\begin{proof}
As the entropy $\int_X \frac{d\mu_t}{d\mu_0} \log \frac{d\mu_t}{d\mu_0} d\mu_0$ is non-negative and is zero when $t= 0$, we have 
\[ \frac{d}{dt}_+\Big{|}_{t=0} \int_X v_t \log v_t e^{- \dot{\phi}_t} \omega^n \ge \frac{d}{dt}_+\Big{|}_{t=0} \int_X \dot{\phi}_0 e^{-\dot{\phi}_t} \omega_{\phi_t}^n. \]
Take a $C^{1,1}_{\mathrm{loc}}$-approximation $\{ \Phi_i \}$ of the geodesic $\Phi$ by smooth rays. 
Each $f_i (t) = \int_X \dot{\phi}_{i, 0} e^{-\dot{\phi}_{i, t}} \omega_{\phi_{i, t}}^n$ is $C^\infty$ on $[0, \infty)$ and its derivative is given by 
\[ f_i' (t) = -\int_X \dot{\phi}_{i, 0} e^{-\dot{\phi}_{i, t}} (\ddot{\phi}_{i, t} \omega_{\phi_{i, t}}^n - n d\dot{\phi}_{i, t} \wedge d^c \dot{\phi}_{i, t} \wedge \omega_{\phi_{i, t}}^{n-1}) + \int_X n d \dot{\phi}_{i, 0} \wedge d^c \dot{\phi}_{i, t} e^{-\dot{\phi}_{i, t}} \omega_{\phi_{i, t}}^{n-1}. \]
We have the pointwise locally uniform convergence 
\begin{gather*}
f_i (t) \to \int_X \dot{\phi}_0 e^{-\dot{\phi}_t} \omega_{\phi_t}^n,
\\
f_i' (t) \to \int_X n d \dot{\phi}_0 \wedge d^c \dot{\phi}_t e^{-\dot{\phi}_t} \omega_{\phi_t}^{n-1}. 
\end{gather*}
Thus the functional $\int_X \dot{\phi}_0 e^{-\dot{\phi}_t} \omega_{\phi_t}^n$ is $C^1$ on $[0, \infty)$ and 
\[ \frac{d}{dt}_+ \Big{|}_{t=0} \int_X \dot{\phi}_0 e^{-\dot{\phi}_t} \omega_{\phi_t}^n = \int_X |\partial^\sharp \dot{\phi}_0|^2 e^{-\dot{\phi}_0} \omega^n. \]
We similarly obtain 
\[ \frac{d}{dt}_+ \Big{|}_{t=0} \int_0^t ds \int_X n \mathrm{Ric} (\omega) \wedge e^{-\dot{\phi}_s} \omega_{\phi_s}^{n-1} = \int_X s (\omega) e^{-\dot{\phi}_0} \omega^n, \]
which shows the claim. 
\end{proof}

\subsubsection{Proof of the convexity}

The rest of this section is devoted to the proof of the following. 

\begin{thm}
For a $C^{1,1}$-geodesic ray $\bm{\phi}$, the action functional $\mathcal{A}_{\bm{\phi}}$ is continuous and pointwise convex on $[0, \infty)$
\end{thm}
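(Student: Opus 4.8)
The plan is to prove the two assertions separately, treating convexity as the main point and deducing continuity from it together with the lower semicontinuity already recorded just before Lemma \ref{W plus}. The guiding principle is that, by $U(1)$-invariance and the substitution $t=\log|\tau|$, convexity of $t\mapsto\mathcal{A}_{\bm\phi}(t)$ is equivalent to subharmonicity in $\tau$ of the $U(1)$-invariant extension of $\mathcal{A}_{\bm\phi}$ over the annulus $A_\infty$; this is the form in which Berman--Berndtsson's positivity argument \cite{BB} applies.

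First, as a warm-up and to isolate the difficulty, I would treat a \emph{smooth} ray by differentiating the expression $\mathcal{A}_{\bm\phi}(t)=\frac{1}{n!}\int_0^t ds\int_X(s(\omega_{\phi_s})-\bar{\Box}_{\phi_s}\dot\phi_s)e^{-\dot\phi_s}\omega_{\phi_s}^n$ and integrating by parts via the self-adjointness relation of the conventions, which yields
\[ \frac{d}{dt}\mathcal{A}_{\bm\phi}(t)=-\frac{1}{n!}\,\check W(\omega_{\phi_t},-\dot\phi_t)\int_X e^{-\dot\phi_t}\omega_{\phi_t}^n. \]
By Lemma \ref{affine functional} the normalizing integral is a positive constant independent of $t$, so $\frac{d}{dt}\mathcal{A}_{\bm\phi}$ is non-decreasing if and only if $t\mapsto\check W(\omega_{\phi_t},-\dot\phi_t)$ is non-increasing; for smooth geodesics this is exactly the monotonicity formula of Subsection \ref{Monotonicity} (note that $\check S$ is constant along a geodesic by Lemma \ref{affine functional}, so $\check W$ and $\check W^\lambda$ share the same monotonicity). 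This settles the smooth case but also shows the real obstacle: a $C^{1,1}$-geodesic is not regular enough for the pointwise quantity $\check W(\omega_{\phi_t},-\dot\phi_t)$ to make sense, since $s(\omega_{\phi_t})$ involves four derivatives of $\phi_t$.

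To reach the $C^{1,1}$ case I would abandon the pointwise formula and work with the entropy formulation in which $\mathcal{A}_{\bm\phi}$ is defined. Writing $\log v_t=\dot\phi_t+\log(d\mu_t/\omega^n)$ with $\mu_t=e^{-\dot\phi_t}\omega_{\phi_t}^n$, and absorbing the $t$-independent integral $\int_X\dot\phi_t\,d\mu_t$ (Lemma \ref{affine functional}) into a constant, one gets
\[ \mathcal{A}_{\bm\phi}(t)=\frac{1}{n!}\int_X\log\frac{d\mu_t}{\omega^n}\,d\mu_t+\frac{1}{n!}\int_0^t ds\int_X n\,\Ric(\omega)\wedge e^{-\dot\phi_s}\omega_{\phi_s}^{n-1}+\mathrm{const}. \]
The second term is $C^1$ with continuous integrand, so the crux is the convexity of the weighted relative entropy in the first term. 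Following \cite{BB}, I would prove subharmonicity of its complexified version by approximating $\Phi$ from above by smooth strict subgeodesics and showing, for these, that the complex Hessian $dd^c_\tau$ of the functional is a fibre integral of a non-negative form driven by the positivity $(dd^c_\omega\Phi)^{n+1}\ge 0$. \emph{The main obstacle is precisely this weighted entropy}: the factor $e^{-\dot\phi_t}$, itself built from the $\tau$-momentum of $\Phi$, makes the integrand differ from the unweighted Monge--Amp\`ere entropy of \cite{BB}, so I expect the bulk of the work to be the bookkeeping showing that the extra terms generated by differentiating $e^{-\dot\Phi}$ reassemble, after integration by parts, into a manifestly non-negative expression (the analogue of the $\int_X|\mathcal D_{\phi_t}\dot\phi_t|^2$ appearing in the smooth computation).

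Finally I would pass subharmonicity through the approximation: subharmonicity survives decreasing limits, and the pointwise-uniform convergence of $\dot\phi$ together with the weak convergence $(dd^c_\omega\phi_{t_i})^n\to(dd^c_\omega\phi_t)^n$ recorded earlier for the $C^{1,1}$ class lets me transfer convexity to $\mathcal{A}_{\bm\phi}$ using the weak-approximation machinery set up before this subsection. For continuity, convexity gives continuity on $(0,\infty)$ automatically; combined with the established lower semicontinuity it remains only to exclude a downward jump at the endpoint $t=0$, where $\mathcal{A}_{\bm\phi}(0)=0$. Here the Ricci term is continuous and vanishes, while the entropy term is controlled from above by the uniform bound $v_t\le C^n$ valid for a $C^{1,1}$-geodesic (bounded Laplacian), which yields upper semicontinuity of the entropy at the endpoint; the finite one-sided derivative bound of Lemma \ref{W plus} then pins down right-continuity at $0$, completing the proof.
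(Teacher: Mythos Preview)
Your approximation scheme is backwards, and this is a genuine gap. You propose to approximate the geodesic $\Phi$ from above by smooth strict subgeodesics $\Phi_\epsilon$, but this is \emph{not} what \cite{BB} does, and it does not work here. First, the monotonicity computation you invoke (the $\int_X|\mathcal{D}_{\phi_t}\dot\phi_t|^2$ formula) uses the geodesic equation $\ddot\phi-|\bar\partial^\sharp\dot\phi|^2=0$ at several places, so it does not show convexity of $\mathcal{A}_{\bm\phi_\epsilon}$ for smooth \emph{sub}geodesics; you would be left trying to prove exactly the statement you want, only now for a smooth ray whose second derivative involves the curvature of $\omega_{\phi_\epsilon}$ with no sign control. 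Second, even granting convexity for $\Phi_\epsilon$, passing to the limit fails: the entropy term $\int_X\log(d\mu_t/\omega^n)\,d\mu_t$ is only lower semicontinuous under such approximation, so you cannot conclude pointwise convergence $\mathcal{A}_{\bm\phi_\epsilon}(t)\to\mathcal{A}_{\bm\phi}(t)$, which is what ``pointwise limit of convex is convex'' requires.

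The paper (following \cite{BB} correctly) keeps the $C^{1,1}$-geodesic $\Phi$ fixed and instead approximates the \emph{target} $\Psi=\log(\omega_\phi^n/\omega^n)$ by the Bergman-kernel functions $\Psi_{B,j}$, which are Lipschitz and, crucially, satisfy the positivity $dd^c_\sigma\Psi_{B,j}\wedge(dd^c_\omega\Phi)^n\ge0$ because the Bergman kernel is a sum of squares of holomorphic sections. One then works with the relaxed functional $\mathcal{A}^{\bm\psi}_{\bm\phi}$ and proves the push-forward identity
\[
dd^c\mathscr{A}^{\bm\psi}_{\bm\phi}=\frac{1}{n!}\,\varpi_*\bigl(e^{-\dot\Phi}\,dd^c_\sigma\Psi\wedge(dd^c_\omega\Phi)^n\bigr)
\]
for Lipschitz $\Psi$; the right-hand side is manifestly positive. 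The genuinely new bookkeeping is in this identity---tracking the weight $e^{-\dot\Phi}$ through an equivariant computation for smooth $\Psi$, then passing to Lipschitz $\Psi$ via an integration-by-parts argument that exploits the Lipschitz regularity of both $\dot\Phi$ and $\Psi$. The limit $\Psi_{B,j}\to\Psi_B\to\log v_t$ is then handled by dominated convergence, which does work in this direction. Your intuition that the weight $e^{-\dot\Phi}$ is the new difficulty is correct; the identity above is where it is resolved. For continuity, note that once convexity is known, upper semicontinuity at $t=0$ is automatic (a convex function on $[0,\infty)$ is upper semicontinuous at the endpoint), so together with the lower semicontinuity already recorded you get continuity immediately---no separate endpoint estimate is needed.
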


To show the pointwise convexity of $\mathcal{A}_{\bm{\phi}}$, we make use of the following convergence similarly as in \cite{BB}. 

\begin{itemize}
\item If a locally uniformly bounded sequence $\psi_{i, t}$ converges to $\psi_t$ almost everywhere with respect to the Lebesgue measure on $X$ for each $t$, we have $\mathcal{A}_{\bm{\phi}}^{\bm{\psi}_i} (t) \to \mathcal{A}_{\bm{\phi}}^{\bm{\psi}} (t)$ by the dominated convergence theorem. 

\item If $\psi_{i, t}$ is a decreasing sequence of bounded functions pointwisely converging to $\log v_t$, we have $\mathcal{A}_{\bm{\phi}}^{\bm{\psi}_i} (t) \to \mathcal{A}_{\bm{\phi}} (t)$ again by the dominated convergence theorem, which we can apply as $v_t \psi_{1, t} \ge v_t \psi_{i, t} \ge v_t \log v_t \ge -e^{-1}$. 
\end{itemize}

Note also the following generalities on convex function: 
\begin{itemize}
\item An upper semi-continuous function $f$ on $[0, \infty)$ is convex (and hence continuous in the interior) iff $F (z) := f (\log |z|)$ satisfies $dd^c F \ge 0$ on $A^\circ$ in the weak sense of current. (weakly subharmonic)

\item A pointwise limit of convex functions on $[0, \infty)$ is automatically convex (possibly not continuous on the boundary). 

\item A pointwise convex function on $[0, \infty)$ is continuous in the interior and upper-semi continuous on the boundary. 
\end{itemize}

By these, the convexity and the continuity of $\mathcal{A}_{\bm{\phi}}$ up to the boundary reduces to the construction of a collection $\{ \Psi_{B, j} \}_{B, j \in \mathbb{N}}$ of $U(1)$-invariant continuous functions on $X \times A$ (up to the boundary) with the following properties: 
\begin{itemize}
\item $\mathscr{A}_{\bm{\phi}}^{\bm{\psi}_{B, j}} (\tau) := \mathcal{A}_{\bm{\phi}}^{\bm{\psi}_{B, j}} (\log |\tau|)$ is weakly subharmonic for each $B, j \in \mathbb{N}$. 

\item For each $B$, the sequence $\{ \Psi_{B, j} \}_j$ is locally uniformly bounded and converges to a locally bounded function $\Psi_B$ almost everywhere with respect to the Lebesgue measure on $X \times \{ t \}$ for each $t$. 

\item $\Psi_B$ is a decreasing sequence converging to $\Psi (z) := \log (\omega_{\phi_{\log |z|}}^n/\omega^n)$ pointwisely. 
\end{itemize}

Now, we check the sequence constructed in \cite{BB} is the desired one. 
Indeed, they constructed a collection $\{ \Psi_{B, j} \}_{B, j \in \mathbb{N}}$ of \textit{Lipschitz functions} satisfying the last two conditions, plus the following property:  
\[ dd^c_\sigma \Psi_{B, j} \wedge (dd^c_\omega \Phi)^n := \sigma \wedge  (dd^c_\omega \Phi)^n + dd^c (\Psi_{B, j} (dd^c_\omega \Phi)^n) \ge 0 \]
for $\sigma = - \mathrm{Ric} (\omega)$. 
(We note in the notation of \cite{BB}, their $\Psi_{B, j}$ stands for our $\psi + \Psi_{B, j}$, taking a local potential $\psi$ of $\sigma = dd^c \psi$. )
Thus the following is the last piece in the proof of the convexity. 

\begin{thm}
\label{key claim for convexity}
Let $\Phi$ be the $C^{1,1}$-geodesic ray and $\Psi$ be a Lipschitz function on $X \times A_\infty^\circ$ with $dd^c_\sigma \Psi \wedge (dd^c_\omega \Phi)^n := \sigma \wedge  (dd^c_\omega \Phi)^n + dd^c (\Psi (dd^c_\omega \Phi)^n) \ge 0$. 
We have the following equality: 
\[ dd^c \mathscr{A}_{\bm{\phi}}^{\bm{\psi}} = \frac{1}{n!} \varpi_* (e^{-\dot{\Phi}} dd^c_\sigma \Psi \wedge (dd^c_\omega \Phi)^n). \]
Here the product current $e^{-\dot{\Phi}} dd^c_\sigma \Psi \wedge (dd^c_\omega \Phi)^n$ is well-defined as $dd^c_\sigma \Psi \wedge (dd^c_\omega \Phi)^n$ is a current of order $0$ by the positivity assumption.  

In particular, $\mathscr{A}_{\bm{\phi}}^{\bm{\psi}}$ is weakly subharmonic (since its $dd^c$ is the push-forward of a positive current). 
\end{thm}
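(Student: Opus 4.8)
The plan is to follow Berman--Berndtsson's scheme: first verify the identity when both $\Phi$ and $\Psi$ are smooth, and then recover the $C^{1,1}$/Lipschitz case by approximation, passing to the limit weakly as currents. Since $\mathscr{A}^{\bm{\psi}}_{\bm{\phi}}$ and the right-hand side are both $U(1)$-invariant currents on $A_\infty^\circ$, it suffices to compare their masses over the annuli $A_t$ for all $t$. The conceptual point driving everything is that fibre integration over the compact fibre $X$ commutes with $dd^c$, so that $dd^c \varpi_* = \varpi_* dd^c$ with no fibre-boundary correction, $X$ being closed.

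For smooth $\Phi$ (not necessarily a geodesic) and smooth $\Psi$, I would argue as follows. The earlier Proposition gives the first-derivative formula $\frac{d}{dt}\mathcal{A}^\Psi_\Phi(t) = -\int_X (dd^c_\sigma \psi_t - \dot{\psi}_t) e^{dd^c_\omega \phi_t - \dot{\phi}_t}$, whose integrand involves only the fibre-direction forms. On the other hand, a direct computation for a $U(1)$-invariant $F(\tau) = f(\log|\tau|)$ gives $\int_{A_t} dd^c F = \pi\big( f'(t) - f'(0) \big)$, which is consistent with the $1/\pi$ normalization already built into the definition of $\mathcal{A}$. Thus the asserted equality of measures is equivalent to the numerical identity $\frac{1}{n!}\int_{X \times A_t} e^{-\dot{\Phi}}\, dd^c_\sigma \Psi \wedge (dd^c_\omega \Phi)^n = \pi\big( \mathcal{A}'(t) - \mathcal{A}'(0) \big)$ for every $t$, and this amounts to the spacetime Stokes/integration-by-parts identity that produced the three-term expression defining $\mathcal{A}^\Psi_\Phi$ in the first place, the full spacetime power $(dd^c_\omega \Phi)^n$ absorbing the $(dd^c_\omega \Phi)^{n+1}$-contribution that is generally nonzero for non-geodesic $\Phi$. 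So the smooth case should reduce to reorganizing the earlier propositions rather than to new analysis.

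For the general case I would fix the Lipschitz function $\Psi$ and mollify the geodesic $\Phi$ to a sequence of smooth rays $\bm{\phi}_i \to \bm{\phi}$ in $C^{1,1}_{\mathrm{loc}}$. Applying the smooth identity to $(\Phi_i, \Psi)$ and letting $i \to \infty$, the left-hand sides converge: $\mathscr{A}^{\bm{\psi}}_{\bm{\phi}_i} \to \mathscr{A}^{\bm{\psi}}_{\bm{\phi}}$ by the continuity of the relaxed action under $C^{1,1}$-approximation established earlier, and $dd^c$ is continuous for the weak topology of currents. On the right-hand side, the Bedford--Taylor products $(dd^c_\omega \phi_i)^n \to (dd^c_\omega \phi)^n$ converge weakly, $\dot{\Phi}_i \to \dot{\Phi}$ locally uniformly, and $\varpi_*$ is weakly continuous, so that the push-forwards converge to the asserted current.

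The main obstacle is this last limit: making sense of, and passing to the limit in, the product current $e^{-\dot{\Phi}}\, dd^c_\sigma \Psi \wedge (dd^c_\omega \Phi)^n$ when $\Psi$ is merely Lipschitz and $\Phi$ merely $C^{1,1}$. Here $dd^c_\sigma \Psi \wedge (dd^c_\omega \Phi)^n = \sigma \wedge (dd^c_\omega \Phi)^n + dd^c\big( \Psi (dd^c_\omega \Phi)^n \big)$ is a priori only a current, but the standing positivity hypothesis forces it to be positive, hence of order $0$, i.e.\ a measure; this is exactly what licenses multiplying by the bounded continuous weight $e^{-\dot{\Phi}}$ and guarantees that the weak limits are taken against genuine measures rather than distributions of higher order. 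I would therefore invoke positivity both to define the limiting product and to secure the uniform mass bounds needed for weak convergence, taking care that the approximation preserves, in the limit, the positivity so that the equality --- and with it the weak subharmonicity of $\mathscr{A}^{\bm{\psi}}_{\bm{\phi}}$ --- survives the passage $i \to \infty$.
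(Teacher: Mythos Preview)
Your approximation scheme has the steps in the wrong order, and this is a genuine gap. You establish the identity only for smooth $\Phi$ and smooth $\Psi$, but then you say you will ``fix the Lipschitz function $\Psi$ and mollify the geodesic $\Phi$'' and apply ``the smooth identity to $(\Phi_i, \Psi)$''. That identity is not available: $\Psi$ is not smooth, so Step~1 does not apply to the pair $(\Phi_i, \Psi)$. The paper instead proceeds in two stages: first keep $\Psi$ smooth and approximate $\Phi$ in $C^{1,1}_{\mathrm{loc}}$ (this uses the smooth Proposition and continuity of the Monge--Amp\`ere operator, and gives the identity for the $C^{1,1}$-geodesic $\Phi$ with smooth $\Psi$); then keep $\Phi$ equal to the geodesic and approximate the Lipschitz $\Psi$ by smooth $\Psi_i$ in $C^{0,1}_{\mathrm{loc}}$.

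The second stage is where the real work lies, and your last paragraph underestimates it. You write that positivity makes $dd^c_\sigma \Psi \wedge (dd^c_\omega \Phi)^n$ a measure and hence the multiplication by $e^{-\dot{\Phi}}$ is licit ``in the limit''. That is correct for the \emph{limit} object, but it does \emph{not} give you convergence: the approximants $dd^c_\sigma \Psi_i \wedge (dd^c_\omega \Phi)^n$ need not be positive, so you cannot pass the bounded weight $e^{-\dot{\Phi}}$ through the weak limit $\Psi_i \to \Psi$ by a measure-theoretic argument alone. The paper handles this by an integration-by-parts identity
\[
e^{-\dot{\Phi}}\, dd^c_\sigma \Psi_i \wedge (dd^c_\omega \Phi)^n
= e^{-\dot{\Phi}} \sigma \wedge (dd^c_\omega \Phi)^n + d\big(e^{-\dot{\Phi}} d^c \Psi_i \wedge (dd^c_\omega \Phi)^n\big) + e^{-\dot{\Phi}} d\dot{\Phi} \wedge d^c \Psi_i \wedge (dd^c_\omega \Phi)^n,
\]
whose individual terms are products of $L^\infty$ forms and pass to the limit termwise under $C^{0,1}$-approximation of $\Psi$; one then needs the algebraic identity $GS + d(G\, d^cF \wedge T) = dG \wedge d^cF \wedge T + G(S + dd^c(FT))$ (with $G = e^{-\dot{\Phi}}$, $F = \Psi$, $T = (dd^c_\omega \Phi)^n$) to recognize the limit as $e^{-\dot{\Phi}}\, dd^c_\sigma \Psi \wedge (dd^c_\omega \Phi)^n$. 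It is precisely the Lipschitz regularity of both $\dot{\Phi}$ and $\Psi$ (so that $d\dot{\Phi}$, $d^c\Psi$ are bounded) that makes each term well-defined and stable under approximation. Your proposal does not supply this step, and without it the passage to the limit on the right-hand side is unjustified.
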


In the proof, we make use of the $C^{0,1}$-regularity of each $\Psi_{B, j}$, not only its continuity. 
This regularity readily follows from the construction: recall we put 
\[ \Psi_{B, j} (x, \tau) := \max \{ \log \frac{\beta_{j, \log |\tau|} (x)}{\omega^n (x)} ,-B \}, \]
where $\beta_{j, t}$ is the Bergman measure defined as follows. 
Let $(s_{j, t}^k)_k \subset H^0 (K_X +j L)$ be an orthogonal basis with respect to the hermitian metric: 
\[ (s, s')_t = (\sqrt{-1})^{n^2} \int_X s \wedge \bar{s}' e^{-j \phi_t}. \]
Then the top form $\beta_{j, t}$ is defined as 
\[ \beta_{j, t} = \frac{n!}{j^n} (\sqrt{-1})^{n^2} \sum_k s_{j, t}^k \wedge \bar{s}_{j, t}^k e^{-j \phi_t}, \]
which is $C^{1,1}$ as $\phi_t$ is so. 
This shows the Lipschitz regularity of $\Psi_{B, j}$. 

Now we prepare the following key computation for smooth $\Phi$ and $\Psi$. 

\begin{prop}
For the functional $\mathscr{A}^{\bm{\psi}}_{\bm{\phi}} (\tau) := \mathcal{A}^{\bm{\psi}}_{\bm{\phi}} (\log |\tau|)$ on the annulus $A_\infty^\circ = \{ 1 < |z| < \infty \}$, we have 
\[ d d^c \mathscr{A}^{\bm{\psi}}_{\bm{\phi}} = \varpi_* \Big{(} d_{U (1)} d_\sigma^c \Psi \wedge e^{d_{U (1)} d_\omega^c \Phi}; \pi^{-1} \eta \Big{)}. \]
Here we put $\Psi (x, \tau) := \psi_{\log |\tau|} (x)$, $d_{U (1)} d_\sigma^c \Psi := \sigma + d_{U (1)} d^c \Psi = \omega + dd^c \Psi + i_\eta d^c \Psi. \eta^\vee$ (similarly for $\Phi$) and consider the $U (1)$-action on $A_\infty$ by the anti-scalar multiplication: its fundamental vector is $\eta = - 2\pi \partial/ \partial \theta$. 
\end{prop}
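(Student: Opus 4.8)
The plan is to use that $\mathscr{A}^{\bm{\psi}}_{\bm{\phi}}$ is $U(1)$-invariant, so that both sides are rotation-invariant two-forms on the annulus and it suffices to compare their $dt\wedge d\theta$-coefficients, where $t=\log|\tau|$ and $\theta=\arg\tau$. On the left, since $\mathscr{A}^{\bm{\psi}}_{\bm{\phi}}(\tau)=\mathcal{A}^{\bm{\psi}}_{\bm{\phi}}(t)$ depends on $\tau$ only through $t$, and $d^c t=\tfrac12 d\theta$ in our convention, a direct computation gives $dd^c\mathscr{A}^{\bm{\psi}}_{\bm{\phi}}=\tfrac12\,\tfrac{d^2}{dt^2}\mathcal{A}^{\bm{\psi}}_{\bm{\phi}}(t)\,dt\wedge d\theta$. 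By the previous proposition $\tfrac{d}{dt}\mathcal{A}^{\bm{\psi}}_{\bm{\phi}}(t)=-\int_X(dd^c_\sigma\psi_t-\dot\psi_t)e^{dd^c_\omega\phi_t-\dot\phi_t}$, so the left-hand coefficient is a single further $t$-derivative of this fibre integral.

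First I would evaluate the equivariant integrand at $\pi^{-1}\eta$. Since $\Phi$ and $\Psi$ are $U(1)$-invariant and $i_\eta d^c t=-\pi$, we get $i_\eta d^c\Phi=-\pi\dot\phi_t$ and $i_\eta d^c\Psi=-\pi\dot\psi_t$; hence $(d_{U(1)}d^c_\omega\Phi;\pi^{-1}\eta)=dd^c_\omega\Phi-\dot\phi_t$ and $(d_{U(1)}d^c_\sigma\Psi;\pi^{-1}\eta)=dd^c_\sigma\Psi-\dot\psi_t$ as inhomogeneous forms on $X\times A_\infty$, where now $dd^c_\omega\Phi=\omega+dd^c\Phi$ and $dd^c_\sigma\Psi=\sigma+dd^c\Psi$ are the full $(1,1)$-forms. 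The evaluated integrand is therefore $e^{-\dot\phi_t}(dd^c_\sigma\Psi-\dot\psi_t)\,e^{dd^c_\omega\Phi}$: the factor $\pi^{-1}$ is exactly what converts the moment components into the weight $e^{-\dot\phi_t}$ appearing in the action functional. As $\varpi_*$ is fibre integration over the compact fibre $X$, the component of $\varpi_*(\,\cdot\,;\pi^{-1}\eta)$ of form-degree two---the genuine two-form matching $dd^c\mathscr{A}^{\bm{\psi}}_{\bm{\phi}}$---is the $X$-integral of the $dt\wedge d\theta$-part of this integrand.

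To extract that part I would decompose by bidegree in the $X$- and $A$-directions, writing $dd^c_\omega\Phi=dd^c_\omega\phi_t+(dt\wedge d^c\dot\phi_t+\tfrac12 d\dot\phi_t\wedge d\theta)+\tfrac12\ddot\phi_t\,dt\wedge d\theta$ with the fibrewise $d,d^c$, and similarly for $dd^c_\sigma\Psi$. Expanding $e^{dd^c_\omega\Phi}$ and retaining only terms with two $A$-legs, the $dt\wedge d\theta$-coefficient collects the pure $(0,2)$-contributions $\tfrac12\ddot\phi_t,\tfrac12\ddot\psi_t$ together with the squares of the mixed one-$A$-leg parts, the latter producing fibrewise factors $d\dot\phi_t\wedge d^c\dot\phi_t$, i.e. $|\partial^\sharp\dot\phi_t|^2$. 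Integrating over $X$ yields an explicit expression in $\ddot\phi_t$, $\ddot\psi_t$, $|\partial^\sharp\dot\phi_t|^2$ and the cross term $(\partial^\sharp\dot\phi_t,\partial^\sharp\dot\psi_t)$.

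It then remains to check this coincides with $\tfrac12\tfrac{d^2}{dt^2}\mathcal{A}^{\bm{\psi}}_{\bm{\phi}}$, which I would obtain by differentiating the first-derivative formula under the integral sign, using $\partial_t(dd^c_\omega\phi_t)=dd^c\dot\phi_t$ and $\partial_t\dot\phi_t=\ddot\phi_t$. I expect the matching to be \emph{the} main obstacle: the $t$-derivatives of the weight $e^{-\dot\phi_t}$ and of the volume form $(dd^c_\omega\phi_t)^n/n!$ do not literally reproduce the equivariant expression, but do so after an integration by parts on the boundaryless $X$---moving a fibrewise $d$ off $dd^c\dot\phi_t$ against $e^{-\dot\phi_t}$ turns $\int_X dd^c\dot\phi_t\wedge(\cdots)$ into precisely the $|\partial^\sharp\dot\phi_t|^2$- and cross-terms supplied by the mixed squares. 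One must also track the constants $\tfrac12$ and $\pi$ so that the two $dt\wedge d\theta$-coefficients agree on the nose. The positivity needed afterwards is then transparent, since in the geodesic case the evaluated integrand reduces to $e^{-\dot\Phi}\,dd^c_\sigma\Psi\wedge(dd^c_\omega\Phi)^n/n!$, whose $\varpi_*$ is a positive current.
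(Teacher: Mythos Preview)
Your approach is correct and essentially the same as the paper's: both decompose the equivariant integrand by $X$/annulus bidegree (the paper via the splitting $dd^c_\sigma\Psi = \sigma_\psi + \gamma_\psi$, you via counting $A$-legs), extract the piece contributing $dt\wedge d\theta$ after fibre integration, and then match with $\tfrac{d^2}{dt^2}\mathcal{A}^{\bm{\psi}}_{\bm{\phi}}$ through exactly the integration-by-parts identities you anticipate. The only cosmetic difference is that the paper keeps a variable $x$ for the Lie-algebra element and substitutes $x=\pi^{-1}$ at the end, while you evaluate at $\pi^{-1}\eta$ from the outset.
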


Remark: If we put $\Psi (x, \tau) = \psi_{-\log |\tau|} (x)$, the $U (1)$-action on $A_\infty$ must be reversed to the usual scalar multiplication. 

\begin{proof}
For a function $f (r e^{i \theta})$ on $A$, we have 
\[ d^c f = f_r d^c r + f_\theta d^c \theta = \frac{1}{2} (r f_r d\theta - f_\theta r^{-1} dr). \]
For a $U(1)$-invariant function $\Psi (x, r e^{i \theta}) = \psi_{\log r} (x)$ on $X \times A$, we have $d^c \Psi = d^c_X \psi_{\log r} + \dot{\psi}_{\log r} \frac{d\theta}{2}$, so we get 
\begin{gather*}
i_{\eta} d^c \Psi = - \pi \dot{\psi}_{\log r},
\\
dd^c_\sigma \Psi = d_X d_{X, \sigma}^c \psi_{\log r} + \gamma_\psi, 
\end{gather*}
where we put 
\[ \gamma_\psi := -d_X^c \dot{\psi}_{\log r} \wedge r^{-1} dr + d_X \dot{\psi}_{\log r} \wedge \frac{d\theta}{2} + \ddot{\psi}_{\log r} \frac{r^{-1} d r \wedge d \theta}{2}. \]
We have 
\[ \gamma_\phi \wedge \gamma_\psi = -(d_X \dot{\phi} \wedge d_X^c \dot{\psi} + d_X \dot{\psi} \wedge d_X^c \dot{\phi}) \wedge \frac{r^{-1} dr \wedge d \theta}{2} \]
and $\gamma_\phi^2 \wedge \gamma_\psi = 0$. 
We also have $\nu \wedge \gamma_\psi = \ddot{\psi}. \nu \wedge \frac{r^{-1} dr \wedge d \theta}{2}$ for $2n$-form $\nu$ on $X$. 
We put $\sigma_\psi := d_X d^c_{X, \sigma} \psi$. 

Using this, we compute 
\begin{align*}
\Big{[} d_{U (1)} d_\sigma^c \Psi 
&\wedge e^{d_{U (1)} d_\omega^c \Phi}; x \eta \Big{]}_{n+1} = [((\sigma_\psi + \gamma_\psi) - \pi x \dot{\psi}) \wedge e^{(\omega_\phi + \gamma_\phi) - \pi x \dot{\phi}}]_{n+1}
\\
&= \frac{1}{n!} e^{- \pi x \dot{\phi}} (\sigma_\psi + \gamma_\psi) (\omega_\phi + \gamma_\phi)^n - \frac{1}{(n+1)!} \pi x \dot{\psi} e^{- \pi x \dot{\phi}} (\omega_\phi + \gamma_\phi)^{n+1}
\\
&= \frac{1}{n!} e^{- \pi x \dot{\phi}} \Big{(} n \sigma_\psi \wedge \omega_\phi^{n-1} \wedge \gamma_\phi + \binom{n}{2} \sigma_\psi \wedge \omega_\phi^{n-2} \wedge \gamma_\phi^2 
\\
&\qquad \qquad \qquad + \gamma_\psi \wedge \omega_\phi^n + n \gamma_\psi \wedge \omega_\phi^{n-1} \wedge \gamma_\phi - \pi x \dot{\psi} \omega_\phi^n \wedge \gamma_\phi - \pi x \dot{\psi} \frac{n}{2} \omega^{n-1}_\phi \wedge \gamma_\phi^2 \Big{)}
\\
&= \frac{1}{n!} e^{- \pi x \dot{\phi}} \frac{r^{-1} dr  \wedge d \theta}{2}
\\
&\qquad \quad \wedge \Big{[} n \ddot{\phi} \sigma_\psi \wedge \omega_\phi^{n-1} + \ddot{\psi} \omega_\phi^n - \mathrm{tr}_\phi (d_X \dot{\phi} \wedge d_X^c \dot{\psi} + d_X \dot{\psi} \wedge d_X^c \dot{\phi}) \omega_\phi^n
\\
&\qquad \qquad \qquad - \pi x \dot{\psi} \Big{(} \ddot{\phi} - \mathrm{tr}_\phi (d_X \dot{\phi} \wedge d_X^c \dot{\phi}) \Big{)} \omega_\phi^n
\\
&\qquad \qquad \qquad \quad - n (n-1) d_X \dot{\phi} \wedge d_X^c \dot{\phi} \wedge \sigma_\phi \wedge \omega_\phi^{n-2} \Big{]} 
\\
&= \frac{1}{n!} \frac{r^{-1} dr \wedge d \theta}{2}
\\
&\qquad \quad \wedge \Big{[} n \ddot{\phi} \sigma_\psi \wedge e^{-\pi x \dot{\phi}} \omega_\phi^{n-1} + \Big{(} \ddot{\psi} - \nabla_X \dot{\phi} (\dot{\psi}) - \pi x \dot{\psi} (\ddot{\phi} - |\partial_X^\sharp \dot{\phi}|^2) \Big{)} e^{- \pi x \dot{\phi}} \omega_\phi^n
\\
&\qquad \qquad \qquad \quad - \frac{1}{\pi x} e^{- \pi x \dot{\phi}} n (n-1) d_X d_X^c \dot{\phi} \wedge \sigma_\psi \wedge \omega_\phi^{n-2} \Big{]}
\\
&\qquad + \frac{1}{n!} \frac{r^{-1} dr  \wedge d \theta}{2} \wedge \frac{1}{\pi x} d_X (e^{- \pi x \dot{\phi}} n (n-1) d_X^c \dot{\phi} \wedge \sigma_\psi \wedge \omega_\phi^{n-2})
\\
&= \frac{r^{-1} dr  \wedge d \theta}{2}
\\
&\qquad \quad \wedge \Big{(} \sigma_\psi \ddot{\phi} + \ddot{\psi} - \nabla_X \dot{\phi} (\dot{\psi}) - \pi x \dot{\psi} (\ddot{\phi} - |\partial^\sharp \dot{\phi}|^2) - \frac{1}{\pi x} \sigma_\psi \wedge  d_X d_X^c \dot{\phi} \Big{)} \wedge e^{\omega_\phi - \pi x \dot{\phi}} 
\\
&\qquad \qquad + \frac{1}{n!} \frac{r^{-1} dr \wedge d \theta}{2} \wedge \frac{1}{\pi x} d_X (e^{- \pi x \dot{\phi}} n (n-1) d_X^c \dot{\phi} \wedge \sigma_\psi \wedge \omega_\phi^{n-2}). 
\end{align*}

Substituting $x = \pi^{-1}$, the integration along fibre is computed as follows: 
\begin{align*}
\int_X \Big{(} \sigma_\psi \ddot{\phi} 
&+ \ddot{\psi} - \nabla \dot{\phi} (\dot{\psi}) - \dot{\psi} (\ddot{\phi} - |\partial^\sharp \dot{\phi}|^2) - \sigma_\psi \wedge d_X d_X^c \dot{\phi} \Big{)} \wedge e^{\omega_\phi - \dot{\phi}}
\\
&= -\int_X \Big{(} (d_X d_X^c \dot{\psi} - \ddot{\psi}) + (\sigma_\psi - \dot{\psi}) \wedge (d_X d_X^c \dot{\phi} -\ddot{\phi}) \Big{)} \wedge e^{\omega_\phi - \dot{\phi}}
\\
&= - \frac{d}{dt} \int_X (\sigma_\psi - \dot{\psi}) \wedge e^{\omega_\phi - \dot{\phi}}
\\
&= \frac{d^2}{dt^2} \mathcal{A}^{\bm{\psi}}_{\bm{\phi}} (t), 
\end{align*}
where we used 
\begin{gather*} 
\int_X \nabla \dot{\phi} (\dot{\psi}) e^{\omega_\phi -\dot{\phi}} = -2 \int_X \bar{\Box} \dot{\psi} e^{\omega_\phi -\dot{\phi}} = 2 \int_X dd^c \dot{\psi} \wedge e^{\omega_\phi - \dot{\phi}},  
\\
\ddot{\phi} - |\partial^\sharp \dot{\phi}|^2 = (\ddot{\phi} + \bar{\Box} \dot{\phi}) - (\bar{\Box} \dot{\phi} + |\partial^\sharp \dot{\phi}|^2), 
\\
\int_X \dot{\psi} (\ddot{\phi} + \bar{\Box} \dot{\phi}) e^{\omega_\phi - \dot{\phi}} = \int_X \dot{\psi} (\ddot{\phi} - dd^c \dot{\phi}) \wedge e^{\omega_\phi - \dot{\phi}}, 
\\
\int_X \dot{\psi} (\bar{\Box} \dot{\phi} + |\partial^\sharp \dot{\phi}|^2) e^{\omega_\phi -\dot{\phi}} = - \int_X \bar{\Box} \dot{\psi} e^{\omega_\phi -\dot{\phi}}  = \int_X dd^c \dot{\psi} \wedge e^{\omega_\phi -\dot{\phi}}. 
\end{gather*}
This proves the claim. 
\end{proof}

\begin{proof}[Proof of Theorem \ref{key claim for convexity}]
We firstly note for a smooth ray $\bm{\psi}$ and for a $C^{1,1}_{\mathrm{loc}}$-approximation $\{ \bm{\phi}_i \}_i$ of the geodesic ray $\bm{\phi}$ by smooth rays, we have the locally uniform convergence $\mathcal{A}_{\bm{\phi}_i}^{\bm{\psi}} \to \mathcal{A}_{\bm{\phi}}^{\bm{\psi}}$. 
So we get the weak convergence $dd^c \mathcal{A}_{\bm{\phi}_i}^{\bm{\psi}} \to dd^c \mathcal{A}_{\bm{\phi}}^{\bm{\psi}}$ of currents. 
On the right hand side, we get the weak convergence of currents
\[ ( d_{U (1)} d_\sigma^c \Psi \wedge e^{d_{U (1)} d_\omega^c \Phi_i}; \pi^{-1} \eta ) \to e^{-\dot{\Phi}} dd_\sigma^c \Psi \wedge (dd_\omega^c \Phi)^n \]
by the continuity of the Monge--Ampere operator with respect to the uniform convergence, which especially implies $(dd_\omega^c \Phi_i)^{n+1} \to (dd_\omega^c \Phi)^{n+1} = 0$. 
Thus the theorem holds for smooth $\bm{\psi}$. 

Next, for a Lipschitz ray $\bm{\psi}$, take a $C^{0,1}_{\mathrm{loc}}$-approximation $\{ \bm{\psi}_i \}_i$ of $\bm{\psi}$ by smooth rays. 
We easily see the locally uniform convergence $\mathcal{A}_{\bm{\phi}}^{\bm{\psi}_i} \to \mathcal{A}_{\bm{\phi}}^{\bm{\psi}}$, hence also the weak convergence $dd^c \mathcal{A}_{\bm{\phi}}^{\bm{\psi}_i} \to dd^c \mathcal{A}_{\bm{\phi}}^{\bm{\psi}}$ of currents. 

On the other hand, we compute the right hand side as 
\[ e^{-\dot{\Phi}} d d^c_\sigma \Psi_i \wedge (dd_\omega^c \Phi)^n = e^{-\dot{\Phi}} \sigma \wedge (dd_\omega^c \Phi)^n + d (e^{-\dot{\Phi}} d^c \Psi_i \wedge (dd_\omega^c \Phi)^n) + e^{-\dot{\Phi}} d \dot{\Phi} \wedge d^c \Psi_i \wedge (dd^c_\omega \Phi)^n. \]
Since $\{ \Psi_i \}_i$ is a $C^{0,1}_{\mathrm{loc}}$-approximation of $\Psi$ and $\dot{\Phi}$ is Lipschitz, we have  
\begin{gather*} 
e^{-\dot{\Phi}} d^c \Psi_i \wedge (dd_\omega^c \Phi)^n \to e^{-\dot{\Phi}} d^c \Psi \wedge (dd_\omega^c \Phi)^n, 
\\
e^{-\dot{\Phi}} d \dot{\Phi} \wedge d^c \Psi_i \wedge (dd^c_\omega \Phi)^n \to e^{-\dot{\Phi}} d \dot{\Phi} \wedge d^c \Psi \wedge (dd^c_\omega \Phi)^n
\end{gather*}
as currents. 
Thus we get 
\[ e^{-\dot{\Phi}} d d_\sigma \Psi_i \wedge (dd_\omega^c \Phi)^n \to e^{-\dot{\Phi}} \sigma \wedge (dd_\omega^c \Phi)^n + d (e^{-\dot{\Phi}} d^c \Psi \wedge (dd_\omega^c \Phi)^n) + e^{-\dot{\Phi}} d \dot{\Phi} \wedge d^c \Psi \wedge (dd^c_\omega \Phi)^n \]
as currents. 

Finally, we note 
\[ G S + d (G d^c F \wedge T) = dG \wedge d^c F \wedge T + G (S+dd^c (F T)) \]
for a closed $(k,k)$-current $T$ with $L^\infty$-coefficients, a $(k+1,k+1)$-current $S$ with $L^\infty$-coefficients and Lipschitz functions $F, G$ satisfying $S + dd^c (FT) \ge 0$ (hence the product $G (S+dd^c (FT))$ is well-defined). 
We can check this as follows. 
If we take a $C^{0,1}$-approximation $G_i \to G$ by smooth functions, $dG_i$ converges to $dG$ in the weak$^*$ topology of $L^\infty = (L^1)^*$, so we have the following weak convergence of each term by the regularity/order assumptions: 
\begin{gather*} 
G_i S \to GS, \quad G_i d^c F \wedge T \to G d^c F \wedge T, 
\\ 
dG_i \wedge d^c F \wedge T \to dG \wedge d^c F \wedge T, \quad G_i (S+dd^c (F T)) \to G (S+dd^c (F T)). 
\end{gather*}
So by regularizing $G$, we may assume $G$ is smooth. 
Then the formula reduces to $d^c F \wedge T = d^c (F T)$. 
Similarly regularizing $F$, we may assume $F$ is smooth, in which case the formula is evident. 

Using this, we compute 
\begin{align*} 
e^{-\dot{\Phi}} \sigma \wedge (dd_\omega^c \Phi)^n 
&+ d (e^{-\dot{\Phi}} d^c \Psi \wedge (dd_\omega^c \Phi)^n) + e^{-\dot{\Phi}} d \dot{\Phi} \wedge d^c \Psi \wedge (dd_\omega^c \Phi)^n
\\
&= e^{-\dot{\Phi}} (\sigma \wedge (dd_\omega^c \Phi)^n + dd^c (\Psi (dd^c_\omega \Phi)^n))
\\
&= e^{-\dot{\Phi}} dd_\sigma^c \Psi \wedge (dd^c_\omega \Phi)^n 
\end{align*}
and complete the proof. 
\end{proof}

\begin{rem}
For a uniformly convergent sequence $\Psi_i \to \Psi$, $\Psi_i (dd^c \Phi)^n$ converges weakly to $\Psi (dd^c \Phi)^n$ as $(dd^c \Phi)^n$ is order $0$. 
This implies that $dd^c_\sigma \Psi_i \wedge (dd^c \Phi)^n = dd^c_\sigma (\Psi_i (dd^c \Phi)^n)$ weakly converges to $dd^c_\sigma \Psi \wedge (dd^c \Phi)^n := dd^c_\sigma (\Psi (dd^c \Phi)^n)$ as currents. 
However, this does not readily imply the product $e^{-\dot{\Phi}} dd^c_\sigma \Psi_i \wedge (dd^c \Phi)^n$ converges to the product $e^{-\dot{\Phi}} dd^c_\sigma \Psi \wedge (dd^c \Phi)^n$ as $\dot{\Phi}$ is only Lipschitz continuous. 
In the above proof, we discussed with ''the integration by parts'' and made use of the Lipschitz regularity of $\dot{\Phi}, \Psi$ to show this convergence. 

We note the above proof can be relaxed to $C^{1, \bar{1}}$-geodesics by carefully tracking with the boundedness of $(dd^c \Phi)^n$ and $L^p_2$-regularity of $\Phi$ for every $1 < p < \infty$. 
\end{rem}

\subsection{Slope formula of the action functional}
\label{Slope formula}

\subsubsection{Preparations}

\begin{lem}[Equivariant $\partial \bar{\partial}$-lemma]
Let $X$ be a compact K\"ahler manifold endowed with a Hamiltonian holomorphic aciton of a compact Lie group $K$, where Hamiltonian means that there is a $K$-invariant K\"ahler form $\omega$ admitting a moment map. 
\begin{enumerate}
\item For a $d_K$-exact equivariant $(1,1)$-form $\sigma + \nu$, the smooth function $f$ with $\sigma = dd^c f$, which exists by the usual $\partial \bar{\partial}$-lemma, satisfies $\sigma + \nu = d_K d^c f$. 

\item For a $K$-invariant divisor $D$, take a smooth $K$-equivariant closed $(1,1)$-form $\Delta +\delta$ in the equivariant cohomology class $[D^K]$. 
Then there is an integrable function $G$ on $X$ which is smooth away from $D$ such that $(\Delta +\delta) + d_K d^c G = D^K$ as equivariant currents. 
\end{enumerate}
\end{lem}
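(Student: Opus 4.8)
The plan is to reduce both parts to their non-equivariant counterparts—the ordinary $\partial\bar\partial$-lemma for (1) and the Poincar\'e--Lelong formula for (2)—and then to verify separately that the zeroth-order (moment) component matches. Throughout I write an equivariant $(1,1)$-form as the sum of its degree-two part (an ordinary $(1,1)$-form) and its degree-zero part (a $\mathfrak{k}^\vee$-valued function), and denote by $\xi_X$ the fundamental vector field of $\xi \in \mathfrak{k}$, so that $d_K(\alpha + a) = d\alpha + (da - \iota_{\xi_X}\alpha)$ and $d_K$-closedness of $\sigma+\nu$ reads $d\sigma = 0$ together with $d\nu(\xi) = \iota_{\xi_X}\sigma$ for every $\xi$.

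For part (1), I would first produce $f$ from the usual $\partial\bar\partial$-lemma applied to the degree-two part, since $d_K$-exactness forces $\sigma$ to be $d$-exact; averaging $f$ over $K$ makes it $K$-invariant without changing $\sigma = dd^c f$ (here $k^* dd^c f = dd^c(k^* f)$ since $K$ acts holomorphically, and $\sigma$ is automatically $K$-invariant). On a compact K\"ahler manifold any two such potentials differ by a global pluriharmonic—hence constant—function, so $d^c f$, and therefore $d_K d^c f$, is canonically determined. Its degree-two part is $dd^c f = \sigma$ by construction, so everything reduces to the degree-zero identity $\nu = -\iota_{\xi_X} d^c f$. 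Since the action is holomorphic, $L_{\xi_X}$ commutes with $d^c$, and combined with $L_{\xi_X} f = 0$ and Cartan's formula $L_{\xi_X} d^c f = d\,\iota_{\xi_X} d^c f + \iota_{\xi_X} dd^c f$ this gives $d(-\iota_{\xi_X} d^c f) = \iota_{\xi_X}\sigma$. Thus $-\iota_{\xi_X} d^c f$ and $\nu(\xi)$ are both Hamiltonians for $\iota_{\xi_X}\sigma$, so they agree up to a constant in $\xi$; the discrepancy $(\sigma+\nu) - d_K d^c f$ is a $d_K$-exact element of $\mathfrak{k}^\vee$, hence of the form $\xi \mapsto -\iota_{\xi_X}\gamma$ for a closed $1$-form $\gamma$. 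Such a function is constant on $X$ and vanishes at any zero of $\xi_X$, which exists because $\xi_X$ is Hamiltonian, so the constant is zero and $\sigma+\nu = d_K d^c f$.

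For part (2), I would choose a $K$-invariant Hermitian metric $h$ on the line bundle $\mathcal{O}(D)$ whose equivariant first Chern form is the given representative $\Delta + \delta$, and let $s$ be the canonical holomorphic section with $\mathrm{div}(s) = D$. Setting $G = \log|s|^2_h$ (up to the normalizing constant dictated by our $d^c$-convention), the ordinary Poincar\'e--Lelong formula yields $\Delta + dd^c G = [D]$ as currents in the degree-two part; here $G$ is smooth away from $D$ and lies in $L^1(X)$ because the singularity is logarithmic along a divisor. Since $D$ is $K$-invariant, $\xi_X$ is tangent to $D$ and $G$ is $K$-invariant, so $\iota_{\xi_X} d^c G$ is a well-defined $L^1$ current, and the same Cartan computation as in (1), now read distributionally, is what should identify the degree-zero part $\delta - \iota_{\xi_X} d^c G$ with the moment component of the equivariant current $D^K$.

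The main obstacle is precisely this last step: making the equivariant Poincar\'e--Lelong identity rigorous across the singular locus $D$. One must check that passing $d_{U(1)} d^c$ through the logarithmically singular $G$ produces exactly $D^K - (\Delta+\delta)$ and, crucially, that the contraction term $\iota_{\xi_X} d^c G$ contributes no spurious current supported on $D$ beyond the prescribed moment of $[D]$. I would handle this by a local computation near $D$: choosing $\xi$-linearized coordinates in which $s$ is a coordinate eigenfunction, one verifies the identity on the smooth locus and controls the boundary contribution using the integrability of both $G$ and $\iota_{\xi_X} d^c G$, exactly as in the scalar Poincar\'e--Lelong theorem but keeping track of the $U(1)$-moment term. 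The degree-two statement is classical; the genuinely new bookkeeping is confined to the zeroth-order component and is localized near $D$.
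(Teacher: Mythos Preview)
Your argument for part (1) is correct and coincides with the paper's: both reduce to the observation that a $d_K$-exact equivariant $2$-form with vanishing degree-two part must vanish identically, because the degree-zero piece $\nu'(\xi)$ is then a constant function (by $K$-invariance and $d\gamma=0$) which evaluates to zero at a fixed point of the Hamiltonian vector field $\xi_X$.

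For part (2), your approach is sound but unnecessarily laborious. You propose to verify the degree-zero identity by a local computation near $D$ in $\xi$-linearized coordinates, treating the possible ``spurious current supported on $D$'' as the main obstacle. The paper bypasses all of this: once the non-equivariant Poincar\'e--Lelong formula gives an integrable $G$ with $\Delta + dd^c G = D$ as ordinary currents, the difference $(\Delta+\delta) - D^K - d_K d^c G$ is a $d_K$-exact equivariant $(1,1)$-current whose degree-two part vanishes, and the \emph{same} vanishing lemma from part (1) --- which works verbatim for currents, since a closed degree-zero current is a constant and the primitive can be averaged and decomposed as harmonic plus exact --- forces the degree-zero part to vanish as well. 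No local analysis near the singular support is needed; the argument is purely cohomological. What you gain from the paper's route is that part (2) becomes a two-line corollary of part (1), whereas your route re-derives the moment identity from scratch in the singular setting.
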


\begin{rem}
For a $K$-invariant divisor $D$, $D + \mu$ for a constant $\mu \in (\mathfrak{k}^\vee)^K$ gives a $d_K$-closed equivariant current. 
Indeed, for $\xi \in \mathfrak{k}$, we have $i_\xi D = 0$ as currents: for a test $2n-1$-form $\varphi$, we compute 
\[ (i_\xi D, \varphi) = \pm (D, i_\xi \varphi) = \pm \int_D (i_\xi \varphi)|_D = \pm \int_D i_{\xi|_D} (\varphi|_D) = 0, \]
where we used the $K$-invariance of $D$. 

We can construct equivariant cohomology via equivariant currents (cf.  \cite{Ino3, GS}), so that the $d_K$-closed equivariant current $D^K := D + 0$ defines the equivariant cohomology class $[D^K]$. 
\end{rem}

\begin{proof}
(1) For a $K$-equivariant $1$-form, which is just a $K$-invariant $1$-form, we have $(d_K \gamma; \xi) = d \gamma + \gamma (\xi)$ for $\xi \in \mathfrak{k}$. 
Suppose $d \gamma = 0$, then by $K$-invariance, we have $L_\xi \gamma = d (\gamma (\xi)) = 0$, so that $\gamma (\xi)$ is constant. 
Since the action is Hamiltonian, $\xi$ has a non-empty zero set. 
It follows that a $d_K$-exact equivariant $2$-form $\sigma + \nu$ is zero iff $\sigma = 0$. 
Since $\sigma' + \nu' := (\sigma + \nu) - d_K d^c f$ is a $d_K$-exact equivariant $2$-form with $\sigma' = 0$, it must be zero as equivariant form. 

We note the proof works also for $d_K$-closed equivariant $(1,1)$-current. 

(2) The claim is well-known in the non-equivariant case: we have an integrable function $G$ smooth away from $D$ such that $\Delta + dd^c G = D$. 
Then since $\sigma' + \nu' = (\Delta + \delta) + d_K d^c G - D^K$ is $d_K$-exact with $\sigma' = 0$, we must have $\nu' = 0$ by the above argument, which proves the claim. 
\end{proof}

\begin{lem}[Equivariant Stokes theorem]
Let $\mathcal{X}$ be a manifold with boundary and $K$ be a compact Lie group acting smoothly on $\mathcal{X}$. 
Let $\alpha, \gamma$ be a $K$-equivariant differential form with Lipschitz coefficients and $\beta$ be a $K$-equivariantly closed differential form with $C^\infty$-coefficients. 
Then we have 
\[ \int_{\mathcal{X}} d_K \alpha \wedge (\beta + d_K \gamma)^l = \int_{\partial \mathcal{X}} \alpha \wedge (\beta + d_K (\gamma|_{\partial \mathcal{X}}))^l. \]
\end{lem}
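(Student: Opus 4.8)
The plan is to reduce everything to the elementary fact that, at the top ordinary form-degree, the equivariant differential $d_K$ agrees with the ordinary $d$, so that the statement becomes classical Stokes. Concretely, I would first record the basic identity
\[ \int_{\mathcal{X}} d_K \Theta = \int_{\partial \mathcal{X}} \Theta|_{\partial \mathcal{X}} \]
for any $K$-equivariant form $\Theta$ with Lipschitz coefficients. Writing $\Theta (\xi) = \sum_j \Theta_j$ with $\Theta_j$ of ordinary form-degree $j$, the paper's convention gives $(d_K \Theta)(\xi) = d\Theta + i_\xi \Theta$, whose only summand of top degree $\dim \mathcal{X}$ is $d\Theta_{\dim \mathcal{X}-1}$, since $i_\xi \Theta_{\dim \mathcal{X}+1} = 0$. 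As $\int_{\mathcal{X}}$ and $\int_{\partial \mathcal{X}}$ extract exactly the components of form-degree $\dim \mathcal{X}$ and $\dim \mathcal{X}-1$ respectively, the identity is precisely ordinary Stokes applied to $\Theta_{\dim \mathcal{X}-1}$, and it holds as polynomials on $\mathfrak{k}$ degree by degree in $\xi$.

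I would then apply this to $\Theta = \alpha \wedge (\beta + d_K \gamma)^l$. Since $\beta$ is equivariantly closed and $d_K^2 = L_\xi = 0$ on $K$-invariant forms, the factor $\beta + d_K \gamma$ is $d_K$-closed, hence so is its $l$-th power $(\beta + d_K \gamma)^l$ by the graded Leibniz rule; therefore $d_K \Theta = d_K \alpha \wedge (\beta + d_K \gamma)^l$, which is the integrand on the left-hand side. For the boundary term I use that $\partial \mathcal{X}$ is $K$-invariant, so the fundamental vector fields are tangent to $\partial \mathcal{X}$; consequently pullback to $\partial \mathcal{X}$ commutes with $i_\xi$, hence with $d_K$ and with $\wedge$. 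This gives $\Theta|_{\partial \mathcal{X}} = \alpha \wedge (\beta + d_K (\gamma|_{\partial \mathcal{X}}))^l$, matching the right-hand side, so that the lemma follows from the basic identity.

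The genuinely delicate point is the Lipschitz regularity of $\alpha$ and $\gamma$. The form $d_K \gamma$ has only $L^\infty$ coefficients (its derivatives exist almost everywhere by Rademacher's theorem), so $\Theta$ is merely $L^\infty$ rather than Lipschitz, and $d_K \Theta$ is a priori distributional; it is the cancellation forced by $d_K$-closedness of $(\beta + d_K \gamma)^l$ that makes $d_K \Theta = d_K \alpha \wedge (\beta + d_K \gamma)^l$ an honest $L^\infty$ form, so that both sides of the lemma make sense. To justify passing from the smooth case to this regularity I would mollify $\alpha$ and $\gamma$ into smooth forms with uniformly bounded Lipschitz constants, apply the smooth identity, and let the mollification tend to the identity. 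The obstruction here is the same one flagged in the Remark after Theorem \ref{key claim for convexity}: the mollified derivatives converge only weak-$*$ in $L^\infty = (L^1)^*$, so products such as $(d_K \gamma_i)^m$ do not converge to $(d_K \gamma)^m$ by naive weak convergence. I expect this to be the main difficulty, and I would resolve it exactly as in the proof of Theorem \ref{key claim for convexity}, by telescoping the difference of powers into first-order terms each carrying one weak-$*$-null factor $d_K (\gamma_i - \gamma)$ and integrating by parts to transfer the derivative off that factor, so that it is tested only against strongly (uniformly) convergent data.
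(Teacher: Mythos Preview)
Your approach is essentially the paper's: reduce to the smooth case (which the paper simply calls ``well-known'' while you derive it from ordinary Stokes via the top-degree observation), then approximate $\alpha,\gamma$ by smooth equivariant forms and pass to the limit on both sides. The paper's proof is three lines: take $C^{0,1}$-approximations $\alpha_i,\gamma_i$, note the restrictions to $\partial\mathcal X$ are still $C^{0,1}$-approximations, and assert both integrals converge.

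Where you add value is in taking seriously the convergence of products of several weak-$*$ convergent factors like $d\alpha_i\wedge(d\gamma_i)^m$, which the paper does not spell out. Your telescoping/integration-by-parts scheme is correct and consistent with the technique used in Theorem~\ref{key claim for convexity}, but it is heavier than necessary here. The simpler route is to remember that the specific approximation the paper uses is mollification: for Lipschitz $\gamma$ one has $d\gamma_i=\rho_\epsilon*d\gamma\to d\gamma$ in every $L^p_{\mathrm{loc}}$ with $p<\infty$ (not merely weak-$*$), so finite products of such factors converge in $L^1$ by H\"older and the integrals converge by dominated convergence. That single observation dispatches both sides at once, without any telescoping. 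Either way the argument is sound; your route just does more work than the paper intended.
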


\begin{proof}
This is well-known when $\alpha$ and $\gamma$ is smooth. 
For Lipschitz $\alpha$ and $\gamma$, we can take approximation by smooth equivariant forms $\{ \alpha_i \}_i, \{ \gamma_i \}_i$ in $C^{0,1}$. 
When restricted to the boundary, $\alpha_i|_{\partial \mathcal{X}}$, $\gamma_i|_{\partial \mathcal{X}}$ are still approximation in $C^{0,1}$ of $\alpha|_{\partial \mathcal{X}}$, $\gamma|_{\partial \mathcal{X}}$, respectively. 
Thus we have 
\begin{gather*}
\int_X d_K \alpha_i \wedge (\beta + d_K \gamma_i)^l \to \int_X d_K \alpha \wedge (\beta + d_K \gamma)^l, 
\\
\int_{\partial \mathcal{X}} \alpha_i \wedge (\beta + d_K (\gamma_i|_{\partial \mathcal{X}}))^l \to \int_{\partial \mathcal{X}} \alpha \wedge (\beta + d_K (\gamma|_{\partial \mathcal{X}}))^l. 
\end{gather*} 
Now the claim follows from the smooth case. 
\end{proof}

\begin{rem}
We will apply this to $\gamma = d^c \phi$ with $C^{1,1}$-regular $\phi$. 
For the weaker regularity $C^{1, \bar{1}}$, the current $d ((d^c \phi)|_{\partial \mathcal{X}})$ may not be bounded. 
Actually, it is not even clear if the restriction $(d^c \phi)|_{\partial \mathcal{X}}$ is in $L^p_1$ as the trace operator $L^p_1 (\mathcal{X}) \to W^{1-1/p, p} (\partial \mathcal{X})$ losses regularity. 
\end{rem}

Let $X$ be a compact K\"ahler manifold. 
Let $\mathcal{X}$ be a test configuration of $X$ and $\mathcal{M}$ be a $U(1)$-equivariant class in $H^{1,1}_{U (1)} (\mathcal{X}, \mathbb{R})$. 
Take a resolution $\beta: \tilde{\mathcal{X}} \to \mathcal{X}$ so that $\tilde{\mathcal{X}}$ dominates the trivial configuration by $\rho: \tilde{\mathcal{X}} \to X \times \mathbb{C}P^1$. 
Take a smooth $U (1)$-equivariant $(1,1)$-form $\Sigma + \nu \in \beta^* \bar{\mathcal{M}}$ on $\tilde{\mathcal{X}}$ and a smooth $(1,1)$-form $\sigma$ on $X$ such that $[\sigma] = i^* \mathcal{M} \in H^2 (X, \mathbb{R})$. 

Let $j_-: X \times \mathbb{C}_- \hookrightarrow \bar{\mathcal{X}}$ denote the natural inclusion. 
We can construct a smooth function $\Psi_0$ on $X \times \mathbb{C}_{-}$ so that $\sigma + d_{U (1)} d^c \Psi_0 = j_-^* (\Sigma + \nu)$ on $X \times \mathbb{C}_{-}$ as follows. 
Note by the following exact sequence (cf. \cite{Ino3})
\[ H^{\mathrm{lf}}_{2n} (\mathcal{X}_0) \to H_{U (1)}^2 (\tilde{\mathcal{X}}) \to H^2_{U (1)} (X \times \mathbb{C}_{-}), \]
the equivariant form $\beta^* \bar{\mathcal{M}} - \rho^* [\sigma]$ can be written as $[D^{U (1)}]$ for a $U (1)$-invariant Cartier divisor $D$ on $\tilde{\mathcal{X}}$ supported on the central fibre. 
By the above lemma, we have a smooth $U (1)$-equivariant $(1,1)$-form $\Delta + \delta$ on $\tilde{\mathcal{X}}$ representing $[D^{U (1)}]$ and an integrable function $G$ on $\tilde{\mathcal{X}}$ which is smooth away from the central fibre such that $(\Delta + \delta) + d_{U (1)} d^c G = D$ as currents. 
On the other hand, we have a smooth function $F$ on $\tilde{\mathcal{X}}$ such that $(\beta^* \Sigma + \beta^* \nu) - \rho^* \sigma - (\Delta + \delta) = d_{U (1)} d^c F$. 
Thus we get $(\beta^* \Sigma + \beta^* \nu) - \rho^* \sigma - (\Delta + \delta) - d_{U (1)} d^c G = d_{U (1)} d^c (F -G)$ as currents on $\tilde{\mathcal{X}}$. 
Since $(\Delta + \delta) + d_{U (1)} d^c G$ is zero away from the central fibre, we get $j_-^* (\Sigma + \nu) = \sigma + d_{U (1)} d^c (F -G)$ on $\tilde{\mathcal{X}} \setminus \tilde{\mathcal{X}}_0 = X \times \mathbb{C}_{-}$. 
Put $\Psi_0 := F - G$. 

\begin{defin}
We say 
\begin{gather} 
\bm{\psi} \in C^{1,1} (X, \sigma; \mathcal{M})
\end{gather}
if for $\Psi (x, \tau) := \psi_{-\log |\tau|} (x)$, $\Psi - \Psi_0$ extends to a $C^{1,1}$-function on $\tilde{\mathcal{X}}_\Delta$ for some $\tilde{\mathcal{X}}$. 
The notion is independent of the choice of equivariant forms $\Sigma + \nu \in \beta^* \bar{\mathcal{M}}$, $\Delta + \delta \in [D^{U(1)}]$. 
Note we assume the regularity across the central fibre $\mathcal{X}_0$ in this notation. 
\end{defin}

For a smooth ample test configuration $(\mathcal{X}, \mathcal{L}; \rho)$, we have $C^{1,1}$-regularity across the central fibre $\mathcal{X}_0$. 

\subsubsection{Equivariant tensor calculus}

\begin{thm}
\label{limit formula}
Let $(\mathcal{X}, \mathcal{L})$ be a test configuration and $\mathcal{M}$ be a $U(1)$-equivariant class in $H^{1,1}_{U (1)} (\mathcal{X}, \mathbb{R})$. 
Take smooth $(1,1)$-forms $\omega \in i^* \mathcal{L}$, $\sigma \in i^* \mathcal{M}$. 
For rays of functions $\bm{\phi} \in C^{1, 1} (X, \omega; \mathcal{L})$ and $\bm{\psi} \in C^{1,1} (X, \sigma; \mathcal{M})$, we have 
\begin{gather*} 
(e^{\mathcal{L}|_{\mathcal{X}_0}}; \rho) = \lim_{t \to \infty} \int_X e^{\omega_{\phi_{\rho t}} - \pi \rho \dot{\phi}_{\rho t}}, 
\\
(\mathcal{M}|_{\mathcal{X}_0}. e^{\mathcal{L}|_{\mathcal{X}_0}}; \rho) = \lim_{t \to \infty} \int_X (\sigma_{\psi_{\rho t}} - \pi \rho \dot{\psi}_{\rho t}) e^{\omega_{\phi_{\rho t}}- \pi \rho \dot{\phi}_{\rho t}}. 
\end{gather*}
We note $d\phi_{\rho t}/dt = \rho \dot{\phi}_{\rho t}$. 
\end{thm}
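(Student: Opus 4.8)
The plan is to realise both limits as fibre integrals of a single equivariantly closed current manufactured from the geodesic ray, and then to identify the limit with a central-fibre equivariant intersection via continuity of fibre integration. All constants below are schematic; the point is the shape of the argument.

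\textbf{Setup on a resolution.} First I would pass to a resolution $\beta:\tilde{\mathcal{X}}\to\mathcal{X}$ dominating the trivial family by $\rho:\tilde{\mathcal{X}}\to X\times\mathbb{C}P^1$, chosen with snc central fibre, and fix smooth $U(1)$-equivariant representatives of $\beta^*\bar{\mathcal{L}}_{U(1)}$ and $\beta^*\bar{\mathcal{M}}_{U(1)}$ together with the reference functions $\Psi_0$ produced by the equivariant $\partial\bar\partial$-lemma, as in the equivariant tensor calculus subsection. Since $\bm{\phi}\in C^{1,1}(X,\omega;\mathcal{L})$, the difference $\Phi-\Psi_0$ extends to a $C^{1,1}$ function across $\tilde{\mathcal{X}}_0$; hence $\Omega_\Phi:=d_{U(1)}d^c_\omega\Phi$ is a closed $U(1)$-equivariant current with $L^\infty$ coefficients representing $\beta^*\bar{\mathcal{L}}_{U(1)}$, and likewise $\Omega_\Psi:=d_{U(1)}d^c_\sigma\Psi$ represents $\beta^*\bar{\mathcal{M}}_{U(1)}$ using $\bm{\psi}\in C^{1,1}(X,\sigma;\mathcal{M})$. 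Evaluated at the generating vector one has $(\Omega_\Phi;\tau\eta)=\omega_\phi-\pi\tau\dot\phi$ and $(\Omega_\Psi;\tau\eta)=\sigma_\psi-\pi\tau\dot\psi$, precisely the integrands in the statement.

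\textbf{Matching the two sides.} Restricting $\Omega_\Phi$ to a smooth fibre $X_s\cong X$ over $s\in\mathbb{C}^*$ and evaluating at $\tau\eta$ gives, under the trivialisation $j_\circ$, exactly $\omega_{\phi_{\tau t}}-\pi\tau\dot\phi_{\tau t}$ along the circle which shrinks onto the central fibre as $t\to\infty$. Thus $\int_X e^{\omega_{\phi_{\tau t}}-\pi\tau\dot\phi_{\tau t}}=\int_{X_s}(e^{\Omega_\Phi};\tau\eta)$ and $\int_X(\sigma_{\psi_{\tau t}}-\pi\tau\dot\psi_{\tau t})e^{\omega_{\phi_{\tau t}}-\pi\tau\dot\phi_{\tau t}}=\int_{X_s}(\Omega_\Psi\wedge e^{\Omega_\Phi};\tau\eta)$. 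On the other side, by the definition of equivariant intersection through the Cartan model and the projection formula for $\beta$ (so that $\int_{\tilde{\mathcal{X}}_0}\beta^*(\,\cdot\,)=\int_{\mathcal{X}_0}(\,\cdot\,)$ on cycles), one has $(e^{\mathcal{L}|_{\mathcal{X}_0}};\tau)=\int_{\tilde{\mathcal{X}}_0}(e^{\Omega_\Phi};\tau\eta)$ and $(\mathcal{M}|_{\mathcal{X}_0}.e^{\mathcal{L}|_{\mathcal{X}_0}};\tau)=\int_{\tilde{\mathcal{X}}_0}(\Omega_\Psi\wedge e^{\Omega_\Phi};\tau\eta)$. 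So the theorem reduces to the continuity of these fibre integrals up to and including the central fibre.

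\textbf{Continuity of the fibre integral.} The remaining task is $\lim_{s\to0}\int_{X_s}(e^{\Omega_\Phi};\tau\eta)=\int_{\tilde{\mathcal{X}}_0}(e^{\Omega_\Phi};\tau\eta)$, and the same with the $\Omega_\Psi$ factor. For smooth rays extending across $\tilde{\mathcal{X}}_0$ this is classical, the pushforward of a smooth form being smooth on the base. For the $C^{1,1}$-geodesic I would approximate: take a $C^{1,1}_{\mathrm{loc}}$-approximation $\Phi_i\to\Phi$ by smooth subgeodesics and use the weak continuity of the Bedford--Taylor products $(dd^c_\omega\phi_i)^k\to(dd^c_\omega\phi)^k$ together with the geodesic vanishing $(dd^c_\omega\Phi)^{n+1}=0$, exactly as in the proof of Theorem \ref{key claim for convexity}. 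To control the cross-fibre variation I would apply the equivariant Stokes theorem in its Lipschitz form on an annular slab $X\times\{e^{t_1}\le|\tau|\le e^{t_2}\}$ to the closed equivariant form $e^{\Omega_\Phi}$: the difference of the two boundary fibre integrals equals a bulk integral which, after the integration-by-parts manipulations already carried out in the key convexity computation, collapses onto $(dd^c_\omega\Phi)^{n+1}=0$ (and, for the weighted identity, onto the order-zero current $\Omega_\Psi\wedge(dd^c_\omega\Phi)^n$, handled by the same identities). Monotonicity and local boundedness of $\dot\phi_t$ give existence of the limit, and the Stokes identity pins it to the central-fibre integral.

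\textbf{The main obstacle.} The hard part is exactly this last continuity statement at the possibly singular, non-reduced central fibre with only $C^{1,1}$ regularity of $\Phi$: one must rule out any jump or escape of mass of the fibre integral as $t\to\infty$, i.e.\ prove genuine continuity rather than mere existence of the limit. This is where the snc model, the $C^{1,1}$-extension across $\tilde{\mathcal{X}}_0$ (which is what makes the restriction of the $L^\infty$-coefficient form $e^{\Omega_\Phi}$ to the central fibre well-defined), the geodesic identity $(dd^c_\omega\Phi)^{n+1}=0$, and the Lipschitz equivariant Stokes theorem must be combined; the weighted identity additionally relies on $\bm{\psi}\in C^{1,1}(X,\sigma;\mathcal{M})$ so that $\Omega_\Psi\wedge e^{\Omega_\Phi}$ is a well-defined order-zero current up to $\tilde{\mathcal{X}}_0$.
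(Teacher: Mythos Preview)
Your outline has the right ingredients (equivariant closed forms built from $\Phi,\Psi$, the Lipschitz equivariant Stokes theorem, $C^{1,1}$-extension across the resolved central fibre), but the argument as written has a genuine gap and diverges from the paper in a way that matters.

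\medskip
\textbf{The geodesic equation is not available.} The hypothesis is only $\bm{\phi}\in C^{1,1}(X,\omega;\mathcal{L})$, a pure regularity class; nothing forces $(dd^c_\omega\Phi)^{n+1}=0$, and indeed the remark following the theorem stresses that no positivity of $\mathcal{L}$ or $\omega_{\phi_t}$ is used. Your cross-fibre control step explicitly collapses the bulk term onto $(dd^c_\omega\Phi)^{n+1}=0$, so as stated your argument only treats geodesic rays, not the theorem.

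\medskip
\textbf{Restriction to the central fibre is not well-defined.} You claim the $C^{1,1}$-extension ``makes the restriction of the $L^\infty$-coefficient form $e^{\Omega_\Phi}$ to the central fibre well-defined''. It does not: $dd^c$ of a $C^{1,1}$ function is merely $L^\infty$, and $L^\infty$ forms have no canonical trace on a hypersurface. So the quantity $\int_{\tilde{\mathcal{X}}_0}(e^{\Omega_\Phi};\tau\eta)$ you want to identify with $(e^{\mathcal{L}|_{\mathcal{X}_0}};\tau)$ is not a priori defined, and the ``continuity of fibre integral'' strategy stalls here.

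\medskip
\textbf{What the paper does instead.} The paper never restricts to $\mathcal{X}_0$. It works degree by degree on the \emph{compactified} total space: for each $k\ge 0$ one shows
\[
(\bar{\mathcal{M}}.\bar{\mathcal{L}}^{n+k};\tau)=\lim_{t\to\infty}-\tfrac{1}{\tau}\int_X(\sigma_{\psi_t}-\pi\tau\dot\psi_t)(\omega_{\phi_t}-\pi\tau\dot\phi_t)^{n+k}
\]
by writing $(\bar{\mathcal{M}}.\bar{\mathcal{L}}^{n+k})=\int_{\bar{\mathcal{X}}}(\Sigma+\nu+d_{U(1)}d^c\tilde\Psi)(\Omega+\mu+d_{U(1)}d^c\tilde\Phi)^{n+k}$, taking the limit over growing domains $\varpi^{-1}(\Delta_t)$ (which is honest since the integrand has $L^\infty$ coefficients), and then applying the Lipschitz equivariant Stokes theorem to reduce to an explicit integral over the $(2n{+}1)$-dimensional boundary $X\times\partial\Delta_t$. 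A direct computation of this boundary integral (picking out the $d\theta$-term) yields the right-hand side. Finally, the theorem follows by the localization identity $(e^{\mathcal{L}|_{\mathcal{X}_0}};\tau)=(e^L)-\tau(e^{\bar{\mathcal{L}}};\tau)$ together with dominated convergence to swap the sum over $k$ with the limit, using the uniform bound $|\dot\phi_t|\le C$ coming from the $C^{1,1}$-extension. This route needs neither the geodesic equation nor any restriction of $L^\infty$ data to $\mathcal{X}_0$.
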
 

By the localization formula, we have 
\begin{gather*} 
(e^{\mathcal{L}|_{\mathcal{X}_0}}; \rho) = (e^L) - \rho (e^{\bar{\mathcal{L}}}; \rho), 
\\
(\mathcal{M}|_{\mathcal{X}_0}. e^{\mathcal{L}|_{\mathcal{X}_0}}; \rho) = (M. e^L) - \rho (\bar{\mathcal{M}}. e^{\bar{\mathcal{L}}}; \rho). 
\end{gather*}
We will relate the limit to the right hand side. 

We do not need any positivity on $\mathcal{L}$ and $\omega_{\phi_t}$ in the above theorem as we compute the integration directly, by applying equivariant Stokes theorem under $C^{1,1}$-regularity. 

\begin{rem}
Take another test configuration $\mathcal{X}'$ dominating $\mathcal{X}$, we can also consider the intersection $(\bar{\mathcal{M}}'. e^{\bar{\mathcal{L}}}; \rho)$ for $\mathcal{M}' \in H^{1,1}_{U (1)} (\mathcal{X}', \mathbb{R})$ by either pushing $\bar{\mathcal{M}}'$ to $\bar{\mathcal{X}}$ as divisor or pulling back $\bar{\mathcal{L}}$ to $\bar{\mathcal{X}}'$. 
The outputs coincide by the projection formula. 
The above theorem holds also for $\bm{\psi}' \in C^{1,1} (X, \sigma, \mathcal{M}')$ with this intersection. 

\end{rem}

We prepare the following key formula. 

\begin{prop}
\label{limit as equivariant intersection}
In the setup of the above theorem, we have 
\[ (\bar{\mathcal{M}}. \bar{\mathcal{L}}^{n+k}; \rho) = \lim_{t \to \infty} - \frac{1}{\rho} \int_X (\sigma_{\psi_t} - \pi \rho \dot{\psi}_t) (\omega_{\phi_t} - \pi \rho \dot{\phi}_t)^{n+k} \]
for $k \ge 0$. 
\end{prop}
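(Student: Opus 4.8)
The plan is to view both sides as the equivariant push-forward to a point of a single equivariant current on the compactified total space $\bar{\mathcal{X}}$, and to evaluate it by integrating over the $\mathbb{C}P^1$-direction. First I would use the hypotheses $\bm{\phi} \in C^{1,1}(X, \omega; \mathcal{L})$ and $\bm{\psi} \in C^{1,1}(X, \sigma; \mathcal{M})$ to produce $C^{1,1}$ equivariant representatives of $\bar{\mathcal{L}}$ and $\bar{\mathcal{M}}$ on $\bar{\mathcal{X}}$: writing $\Phi(x,\tau') = \phi_{-\log|\tau'|}(x)$ and $\Psi(x,\tau') = \psi_{-\log|\tau'|}(x)$, the equivariant forms $d_{U(1)} d^c_\omega \Phi$ and $d_{U(1)} d^c_\sigma \Psi$ are, by the very definition of the spaces $C^{1,1}(X, \cdot\,; \cdot)$, $C^{1,1}$ across the central fibre $\mathcal{X}_0$ and represent $\bar{\mathcal{L}}, \bar{\mathcal{M}}$ in $H^{1,1}_{U(1)}(\bar{\mathcal{X}}, \mathbb{R})$. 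Since the equivariant intersection depends only on equivariant cohomology classes, I would then compute $(\bar{\mathcal{M}}. \bar{\mathcal{L}}^{n+k}; \tau)$ as $\int_{\bar{\mathcal{X}}} [ d_{U(1)} d^c_\sigma \Psi \wedge (d_{U(1)} d^c_\omega \Phi)^{n+k}; \tau\eta]_{n+1}$, the bracket denoting the top-form part of the equivariant form evaluated at $\tau \eta$. That $C^{1,1}$-representatives may be used here is exactly the point already exploited in Theorem \ref{key claim for convexity}: the Bedford--Taylor products $(d_{U(1)} d^c_\omega \Phi)^j$ are currents of order $0$ with $L^\infty$-coefficients, and approximation in $C^{1,1}$ by smooth equivariant forms preserves the integral.

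Two reductions then enter. Restricting the integrand to a fibre $X \times \{\tau'\}$ with $|\tau'| = e^{-t}$ annihilates the base components (the $\gamma$-terms of the preceding computation) while the moment scalars $i_\eta d^c\Phi = \mp \pi \dot\phi$, $i_\eta d^c \Psi = \mp \pi \dot\psi$ survive, so the fibre restriction is precisely $(\sigma_{\psi_t} - \pi\tau \dot\psi_t)(\omega_{\phi_t} - \pi\tau\dot\phi_t)^{n+k}$, whose integral over $X$ I abbreviate $F(t)$; this is the same computation $i_\eta d^c \Psi = \mp\pi\dot\psi$ used in proving Theorem \ref{key claim for convexity}. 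Moreover, because $\bm{\phi}$ is a geodesic the Monge--Amp\`ere relation $(dd^c_\omega \Phi)^{n+1} = 0$ holds, which forces the moment component of the first factor to drop out of the top-form part: on the region covered by the ray the integrand reduces to $\binom{n+k}{n} (-\pi\tau)^k \dot\Phi^{\,k}\, dd^c_\sigma \Psi \wedge (dd^c_\omega \Phi)^n$, a current of order $0$.

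The heart of the argument is the integration over the base. Following verbatim the fibre-integration computation of the preceding proposition, but with $e^{d_{U(1)} d^c_\omega \Phi}$ replaced by the degree $n+k$ term $(d_{U(1)} d^c_\omega \Phi)^{n+k}$, the push-forward $\varpi_* [ d_{U(1)} d^c_\sigma \Psi \wedge (d_{U(1)} d^c_\omega \Phi)^{n+k}; \tau\eta ]_{n+1}$ becomes the $dd^c$ on $\mathbb{C}P^1$ of a radial primitive whose $t$-derivative is $-\tfrac1\tau F(t)$; the factor $\tau^{-1}$ is the equivariant Euler class of the normal line to the fixed fibres. Integrating this $dd^c$ over $\mathbb{C}P^1$ and applying the equivariant Stokes theorem localizes the intersection to the two fixed fibres, giving $-\tfrac1\tau F$ evaluated at the two ends $t \to \infty$ and $t \to -\infty$. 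On the reference side $X \times \mathbb{C}_-$ the metric is static, so $\dot\Phi = \dot\Psi = 0$ there and, for $k \ge 0$, the integrand (which then carries a positive power of $\dot\Phi$ or drops to form-degree $n-1$) contributes nothing; this is the analytic counterpart of the fact that the fibre over $\infty$ feeds only the lower-order term $(M. e^L)$ in Theorem \ref{limit formula}. Hence only the end $t \to \infty$, approaching $\mathcal{X}_0$, survives, and $(\bar{\mathcal{M}}. \bar{\mathcal{L}}^{n+k}; \tau) = -\tfrac1\tau \lim_{t \to \infty} F(t)$.

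The main obstacle is the regularity bookkeeping across $\mathcal{X}_0$, where $\Phi$ is only $C^{1,1}$: I must justify the equivariant Stokes theorem with the merely Lipschitz primitive $d^c\Phi$ (as in the equivariant Stokes lemma above, via $C^{0,1}$-approximation of the primitives), verify that $(dd^c_\omega\Phi)^{n+1}$ places no mass on the boundary slices so that the shrinking cap $\{t > T\}$ around $\mathcal{X}_0$ has integral tending to $0$, and use the continuity of the order-$0$ Bedford--Taylor products under uniform convergence of the potentials to pass the slice integral $F(T)$ to its limit. A secondary, purely combinatorial, difficulty is to pin down the normalization constants ($\pi$, the generator $\eta = -2\pi\,\partial/\partial\theta$, and the $\tfrac1\pi$ of the Borel construction) so that the coefficient is exactly $-\tfrac1\tau$; I would verify this on smooth data and transfer it to the $C^{1,1}$ setting by the approximation already set up.
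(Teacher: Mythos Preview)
Your plan has a genuine gap: you invoke the geodesic equation $(dd^c_\omega \Phi)^{n+1} = 0$, but the proposition does not assume $\bm{\phi}$ is a geodesic. The setup is that of Theorem~\ref{limit formula}, which only asks $\bm{\phi} \in C^{1,1}(X,\omega;\mathcal{L})$ and $\bm{\psi} \in C^{1,1}(X,\sigma;\mathcal{M})$; the remark immediately following that theorem stresses that no positivity (hence no geodesic condition) is needed. Your simplification of the bulk integrand to $\binom{n+k}{n}(-\pi\tau)^k\dot\Phi^{\,k}\,dd^c_\sigma\Psi\wedge(dd^c_\omega\Phi)^n$ therefore fails in the stated generality, and with it the claim that the push-forward to the base is $dd^c$ of a single radial primitive.

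There is a second gap in how you build global representatives. The forms $d_{U(1)}d^c_\omega\Phi$ and $d_{U(1)}d^c_\sigma\Psi$ are \emph{not} defined on all of $\bar{\mathcal{X}}$: by construction $\Phi,\Psi$ live only on the punctured disk $\{0<|\tau|\le 1\}$, with a $C^{1,1}$ extension across the central fibre; nothing is given near $X_\infty$. Declaring the metric ``static'' there amounts to extending $\phi_t$ by $\phi_0$ for $t<0$, which is only Lipschitz across $t=0$ and so does not produce a $C^{1,1}$ equivariant representative on $\bar{\mathcal{X}}$. Your appeal to Theorem~\ref{key claim for convexity} does not help here: that theorem works on the annulus, not across $X_\infty$.

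The paper avoids both issues by a different mechanism. It chooses once and for all \emph{smooth} global representatives $\Omega+\mu\in\bar{\mathcal{L}}$, $\Sigma+\nu\in\bar{\mathcal{M}}$ on $\bar{\mathcal{X}}$, together with $C^{1,1}$ functions $\tilde\Phi,\tilde\Psi$ on $\bar{\mathcal{X}}$ such that $(\Omega+\mu)+d_{U(1)}d^c\tilde\Phi=\omega+d_{U(1)}d^c\Phi$ (and similarly for $\Psi$) on the region where $\Phi$ is defined. Equivariant Stokes on the closed manifold $\bar{\mathcal{X}}$ then allows one to replace the smooth representatives by the modified ones; since the latter have $L^\infty$ coefficients, the integral over $\bar{\mathcal{X}}$ is the limit of integrals over $\varpi^{-1}(\Delta_t)$. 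On that region one switches to $\omega+d_{U(1)}d^c\Phi$ and peels off one factor of $d_{U(1)}d^c\Psi$ or $d_{U(1)}d^c\Phi$ at a time, applying the equivariant Stokes lemma (valid for Lipschitz primitives $d^c\Phi$, $d^c\Psi$) to reduce everything to a boundary integral over the single circle $X\times\partial\Delta_t$. The remaining work is a direct, if lengthy, computation of that boundary integral, which yields exactly $-\tau^{-1}\int_X(\sigma_{\psi_t}-\pi\tau\dot\psi_t)(\omega_{\phi_t}-\pi\tau\dot\phi_t)^{n+k}$. No localization to the two fixed fibres and no Monge--Amp\`ere vanishing are used.
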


\begin{proof}
Replacing $\mathcal{X}$ with the resolution $\tilde{\mathcal{X}}$, we may assume $\mathcal{X}$ is smooth dominating the trivial test configuration (as the equivariant intersection can be computed on $\tilde{\mathcal{X}}$ thanks to the projection formula). 
Take smooth equivariant forms $\Omega + \mu \in \bar{\mathcal{L}}$, $\Sigma + \nu \in \bar{\mathcal{M}}$. 
Then by the assumption there is a $C^{1,1}$-function $\tilde{\Phi}, \tilde{\Psi}$ on $\bar{\mathcal{X}}$ such that $\omega + d_{U (1)} d^c \Phi = (\Omega + \mu) + d_{U (1)} d^c \tilde{\Phi}$, $\sigma + d_{U (1)} d^c \Psi = (\Sigma + \nu) + d_{U (1)} d^c \tilde{\Psi}$ on $X \times \mathbb{C}_{-}$. 
\begin{align*}
(\bar{\mathcal{M}}. \bar{\mathcal{L}}^{n+k})
&= \int_{\bar{\mathcal{X}}} (\Sigma + \nu) (\Omega + \mu)^{n+k} 
\\
&= \int_{\bar{\mathcal{X}}} (\Sigma + \nu + d_{U (1)} d^c \tilde{\Psi}) (\Omega + \mu + d_{U (1)} d^c \tilde{\Phi})^{n+k}
\end{align*}
by the equivariant Stokes theorem. 
Since the equivariant forms have $L^\infty$-coeffiecients, we have 
\[ \int_{\bar{\mathcal{X}}} (\Sigma + \nu + d_{U (1)} d^c \tilde{\Psi}) (\Omega + \mu + d_{U (1)} d^c \tilde{\Phi})^{n+k} = \lim_{t \to \infty} \int_{\varpi^{-1} (\Delta_t)} (\Sigma + \nu + d_{U (1)} d^c \tilde{\Psi}) (\Omega + \mu + d_{U (1)} d^c \tilde{\Phi})^{n+k}. \]
Again by the equivariant Stokes theorem, we have 
\begin{align*} 
\int_{\varpi^{-1} (\Delta_t)} 
&(\Sigma + \nu + d_{U (1)} d^c \tilde{\Psi}) (\Omega + \mu + d_{U (1)} d^c \tilde{\Phi})^{n+k} 
\\
&= \int_{X \times \Delta_t} (\sigma + d_{U (1)} d^c \Psi) \wedge (\omega + d_{U (1)} d^c \Phi)^{n+k} 
\\
&= \int_{X \times \Delta_t} d_{U (1)} d^c \Psi \wedge (\omega+ d_{U (1)} d^c \Phi)^{n+k} 
\\
&\qquad + \sum_{l=0}^{n-1} \binom{n+k}{l} \int_{X \times \Delta_t} d_{U (1)} d^c \Phi \wedge \sigma \wedge \omega^l \wedge (d_{U (1)} d^c \Phi)^{n-1-l+k}
\\
&= \int_{X \times \partial \Delta_t} (d^c \Psi)|_{X \times \partial \Delta_t} \wedge (\omega + d_{U (1)} (d^c \Phi)|_{X \times \partial \Delta_t})^{n+k} 
\\
&\qquad +\sum_{l=0}^{n-1} \binom{n+k}{l} \int_{X \times \partial \Delta_t} (d^c \Phi)|_{X \times \partial \Delta_t} \wedge \sigma \wedge \omega^l \wedge (d_{U (1)} (d^c \Phi)|_{X \times \partial \Delta_t})^{n-1-l+k}
\\
&= \int_{X \times \partial \Delta_t} (d^c_X \psi + \dot{\psi} \frac{d\theta}{2}) \wedge (\omega_\phi + d_X \dot{\phi} \wedge \frac{d\theta}{2} - \pi \dot{\phi}. \eta^\vee)^{n+k} 
\\
&\qquad +\sum_{l=0}^{n-1} \binom{n+k}{l} \int_{X \times \partial \Delta_t} (d^c_X \phi + \dot{\phi} \frac{d\theta}{2}) \wedge \sigma \wedge \omega^l \wedge (d_X d_X^c \phi + d_X \dot{\phi} \wedge \frac{d\theta}{2} - \pi \dot{\phi}. \eta^\vee)^{n-1-l+k}. 
\end{align*}

(1) We compute the first term in the last line as 
\begin{align*}
\int_{X \times \partial \Delta_t} 
&(d^c_X \psi + \dot{\psi} \frac{d\theta}{2}) \wedge (\omega_\phi + d_X \dot{\phi} \wedge \frac{d\theta}{2} - \pi \dot{\phi}. \eta^\vee)^{n+k} 
\\
&= (n+k) \int_X d^c \psi \wedge d (\pi \dot{\phi}) \wedge (\omega_\phi - \pi \dot{\phi}. \eta^\vee)^{n+k-1} + \int_X \pi \dot{\psi} (\omega_\phi - \pi \dot{\phi}. \eta^\vee)^{n+k} 
\\
&= (n+k) \binom{n+k-1}{k} \int_X d^c \psi \wedge d (\pi \dot{\phi}) \wedge \omega_\phi^{n-1} (- \pi \dot{\phi}. \eta^\vee)^k + \binom{n+k}{k} \int_X \pi \dot{\psi} \omega_\phi^n (- \pi \dot{\phi}. \eta^\vee)^k. 
\end{align*}
We can compute the integrand of the first term in two ways as 
\begin{align*} 
[d^c \psi \wedge 
&d (\pi \dot{\phi}) \wedge (\omega_\phi - \pi \dot{\phi}. \eta^\vee)^{n+k-1} ]_{2n}
\\
&= -d ([(\pi \dot{\phi}) d^c \psi \wedge (\omega_\phi - \pi \dot{\phi}. \eta^\vee)^{n+k-1}]_{2n-1}) + [(\pi \dot{\phi}) dd^c \psi \wedge (\omega_\phi - \pi \dot{\phi}. \eta^\vee)^{n+k-1}]_{2n}
\\
& \qquad - (\pi \dot{\phi}) d^c \psi \wedge \binom{n+k-1}{k} \omega_\phi^{n-1} \wedge k (- \pi \dot{\phi}. \eta^\vee)^{k-1}. d (- \pi \dot{\phi}). \eta^\vee 
\end{align*}
and 
\begin{align*}
[d^c \psi \wedge d (\pi \dot{\phi}) 
&\wedge (\omega_\phi - \pi \dot{\phi}. \eta^\vee)^{n+k-1} ]_{2n}
\\
&= \binom{n+k-1}{k} d^c \psi \wedge d (\pi \dot{\phi}) \wedge \omega_\phi^{n-1}. (-\pi \dot{\phi}. \eta^\vee)^k. 
\end{align*}
Putting these together, we obtain 
\begin{align*} 
(n+k) \int_X d^c \psi \wedge d (\pi \dot{\phi}) \wedge (\omega_\phi - \pi \dot{\phi}. \eta^\vee)^{n+k-1} 
&= \frac{n+k}{k+1} \int_X (\pi \dot{\phi}) dd^c \psi \wedge (\omega_\phi - \pi \dot{\phi}. \eta^\vee)^{n+k-1}
\\
&= - (\eta^\vee)^{-1} \int_X dd^c \psi \wedge (\omega_\phi - \pi \dot{\phi}. \eta^\vee)^{n+k}
\end{align*}
and so we get 
\begin{align*}
\int_{X \times \partial \Delta_t} 
&(d^c_X \psi + \dot{\psi} \frac{d\theta}{2}) \wedge (\omega_\phi + d_X \dot{\phi} \wedge \frac{d\theta}{2} - \pi \dot{\phi}. \eta^\vee)^{n+k} 
\\
&= - (\eta^\vee)^{-1} \int_X dd^c \psi \wedge (\omega_\phi - \pi \dot{\phi}. \eta^\vee)^{n+k} + (\eta^\vee)^{-1} \int_X (\pi \dot{\psi}. \eta^\vee) (\omega_\phi - \pi \dot{\phi}. \eta^\vee)^{n+k}.  
\end{align*}

(2) Similarly, we compute the second term as 
\begin{align*}
\int_{X \times \partial \Delta_t} 
&(d^c_X \phi + \dot{\phi} \frac{d\theta}{2}) \wedge \sigma \wedge \omega^l \wedge (d_X d_X^c \phi + d_X \dot{\phi} \wedge \frac{d\theta}{2} - \pi \dot{\phi}. \eta^\vee)^{n-1-l+k}
\\
&= (n-1-l+k) \int_X d^c \phi \wedge \sigma \wedge \omega^l \wedge d (\pi \dot{\phi}) \wedge (dd^c \phi - \pi \dot{\phi}. \eta^\vee)^{n-1-l+k-1}
\\
&\qquad + \binom{n-1-l+k}{k} \int_X (\pi \dot{\phi}) \sigma \wedge \omega^l \wedge (dd^c \phi)^{n-1-l} (- \pi \dot{\phi}. \eta^\vee)^k, 
\end{align*}
unless $l= n-1$ and $k=0$, in which case we have 
\[ \int_{X \times \partial \Delta_t} (d^c_X \phi + \dot{\phi} \frac{d\theta}{2}) \wedge \sigma \wedge \omega^l \wedge (d_X d_X^c \phi + d_X \dot{\phi} \wedge \frac{d\theta}{2} - \pi \dot{\phi}. \eta^\vee)^{n-1-l+k} = \int_X (\pi \dot{\phi}) \sigma \wedge \omega^{n-1}. \]

When $l \le n-2$, we compute the integrand of the first part in two ways as 
\begin{align*} 
[d^c \phi \wedge 
&\sigma \wedge \omega^l \wedge d (\pi \dot{\phi}) \wedge (dd^c \phi - \pi \dot{\phi}. \eta^\vee)^{n-1-l+k-1}]_{2n} 
\\
&= - d [(\pi \dot{\phi}) d^c \phi \wedge \sigma \wedge \omega^l \wedge (dd^c \phi - \pi \dot{\phi}. \eta^\vee)^{n-1-l+k-1}]_{2n} 
\\
&\qquad + [(\pi \dot{\phi}) dd^c \phi \wedge \sigma \wedge \omega^l \wedge (dd^c \phi - \pi \dot{\phi}. \eta^\vee)^{n-1-l+k-1}]_{2n}
\\
&\qquad \quad - (\pi \dot{\phi}) d^c \phi \wedge \sigma \wedge \omega^l \wedge \binom{n-1-l+k-1}{k} (dd^c \phi)^{n-2-l} \wedge k (- \pi \dot{\phi}. \eta^\vee)^{k-1}. d (-\pi \dot{\phi}). \eta^\vee 
\end{align*}
and 
\begin{align*}
[d^c \phi \wedge 
&\sigma \wedge \omega^l \wedge d (\pi \dot{\phi}) \wedge (dd^c \phi - \pi \dot{\phi}. \eta^\vee)^{n-1-l+k-1}]_{2n} 
\\
&= \binom{n-1-l+k-1}{k} d^c \phi \wedge \sigma \wedge \omega^l \wedge d (\pi \dot{\phi}) \wedge (dd^c \phi)^{n-2-l} (- \pi \dot{\phi}. \eta^\vee)^k. 
\end{align*}
When $l= n-1$, it is zero. 

Combining these together, we obtain 
\begin{align*} 
(n-1-l
&+k) \int_X d^c \phi \wedge \sigma \wedge \omega^l \wedge d (\pi \dot{\phi}) \wedge (dd^c \phi - \pi \dot{\phi}. \eta^\vee)^{n-1-l+k-1} 
\\
&= \binom{n-1-l+k}{k+1} \int_X (\pi \dot{\phi}) \sigma \wedge \omega^l \wedge (dd^c \phi)^{n-1-l} (- \pi \dot{\phi}. \eta^\vee)^k 
\end{align*}
for $l \ge n-2$ and $= 0$ when $l=n-1$. 
Thus we get  
\begin{align*}
\int_{X \times \partial \Delta_t} 
&(d^c_X \phi + \dot{\phi} \frac{d\theta}{2}) \wedge \sigma \wedge \omega^l \wedge (d_X d_X^c \phi + d_X \dot{\phi} \wedge \frac{d\theta}{2} - \pi \dot{\phi}. \eta^\vee)^{n-1-l+k}
\\
&= \binom{n-l+k}{k+1} \int_X (\pi \dot{\phi}) \sigma \wedge \omega^l \wedge (dd^c \phi)^{n-1-l} (- \pi \dot{\phi}. \eta^\vee)^k, 
\end{align*}
which holds also for $l=n-1$. 

Finally, using 
\[ \sum_{l=0}^{n-1} \binom{n+k}{l} \binom{n-l+k}{k+1} \omega^l \wedge (dd^c \phi)^{n-1-l} = \binom{n+k}{k+1} \omega_\phi^{n-1}, \]
we obtain 
\begin{align*}
\sum_{l=0}^{n-1} \binom{n+k}{l} 
&\int_{X \times \partial \Delta_t} (d^c_X \phi + \dot{\phi} \frac{d\theta}{2}) \wedge \sigma \wedge \omega^l \wedge (d_X d_X^c \phi + d_X \dot{\phi} \wedge \frac{d\theta}{2} - \pi \dot{\phi}. \eta^\vee)^{n-1-l+k}
\\
&=\binom{n+k}{k+1} \int_X (\pi \dot{\phi}) \sigma \wedge \omega_\phi^{n-1} (- \pi \dot{\phi}. \eta^\vee)^k
\\
&= -(\eta^\vee)^{-1} \int_X \sigma \wedge (\omega_\phi - \pi \dot{\phi}. \eta^\vee)^{n+k}. 
\end{align*}

\vspace{2mm}
Combining the all, we get 
\[ (\bar{\mathcal{M}}. \bar{\mathcal{L}}^{n+k}) = \lim_{t \to \infty} - (\eta^\vee)^{-1} \int_X (\sigma_\psi - \pi \dot{\psi}. \eta^\vee) (\omega_\phi - \pi \dot{\phi}. \eta^\vee)^{n+k}. \]
\end{proof}

\begin{proof}[Proof of Theorem \ref{limit formula}]
By the above proposition, we get 
\begin{gather*}
(e^{\mathcal{L}|_{\mathcal{X}_0}}; \rho) = \frac{1}{n!} \int_X \omega_{\phi_t}^n + \sum_{k=1}^\infty \frac{1}{(n+k)!} \lim_{t \to \infty} \int_X (\omega_{\phi_t} - \pi \rho \dot{\phi}_t)^{n+k}
\\
(\mathcal{M}|_{\mathcal{X}_0}. e^{\mathcal{L}|_{\mathcal{X}_0}}; \rho) = \frac{1}{(n-1)!} \int_X \sigma_{\psi_t} \wedge \omega_{\phi_t}^{n-1} + \sum_{k=0}^\infty \frac{1}{(n+k)!} \lim_{t \to \infty} \int_X (\sigma_{\psi_t} - \pi \rho \dot{\psi}_t) (\omega_{\phi_t} - \pi \rho \dot{\phi}_t)^{n+k}. 
\end{gather*}

Since $\dot{\phi}_t = i_\eta d^c \Phi|_{\mathcal{X}_{-\log t}} = (\mu_\eta + i_\eta d^c \tilde{\Phi})|_{\mathcal{X}_{-\log t}}$, it is uniformly bounded. 
Thus we have 
\[ \left| \int_X (\omega_{\phi_t} - \pi \rho \dot{\phi}_t)^{n+k} \right| \le \binom{n+k}{k} \int_X |\pi \rho \dot{\phi}_t|^k \omega_{\phi_t}^n \le \binom{n+k}{k} (L^{\cdot n}) C^k. \]
Then by the dominated convergence theorem, we get 
\begin{align*} 
\sum_{k=1}^\infty \frac{1}{(n+k)!} \lim_{t \to \infty} \int_X (\omega_{\phi_t} - \pi \rho \dot{\phi}_t)^{n+k} 
&= \lim_{t \to \infty} \sum_{k=1}^\infty \frac{1}{(n+k)!} \int_X (\omega_{\phi_t} - \pi \rho \dot{\phi}_t)^{n+k}
\\
&= \lim_{t \to \infty} \int_X \sum_{k=1}^\infty \frac{1}{(n+k)!} (\omega_{\phi_t} - \pi \rho \dot{\phi}_t)^{n+k}
\\
&= - \frac{1}{n!} \int_X \omega_{\phi_t}^n + \lim_{t \to \infty} \int_X e^{\omega_{\phi_t} - \pi \rho \dot{\phi}_t}.  
\end{align*}
This proves the first equality in the claim. 
We can similarly discuss on $\psi$. 
\end{proof}

\begin{cor}
\label{slope formula}
Let $(\mathcal{X}, \mathcal{L})$ be a smooth test configuration and $\bm{\phi}$ be the $C^{1,1}$-geodesic ray subordinate to $(\mathcal{X}, \mathcal{L})$ emanating from a smooth metric. 

\begin{enumerate}
\item We have 
\begin{gather*} 
\frac{1}{n!} \int_X e^{-\rho \dot{\phi}_{\rho t}} \omega_{\phi_{\rho t}}^n = (e^{\mathcal{L}|_{\mathcal{X}_0}}; \pi^{-1} \rho), 
\\
\frac{1}{n!} \int_X \rho \dot{\phi}_{\rho t} e^{-\rho \dot{\phi}_{\rho t}} \omega_{\phi_{\rho t}}^n = n (e^{\mathcal{L}|_{\mathcal{X}_0}}; \pi^{-1} \rho) - (\mathcal{L}|_{\mathcal{X}_0}. e^{\mathcal{L}|_{\mathcal{X}_0}}; \pi^{-1} \rho). 
\end{gather*}

\item For $\bm{\psi} \in C^\infty (X, -\mathrm{Ric} (\omega), 2\pi K^{\log}_{\bar{\mathcal{X}}/\mathbb{P}^1})$ and $\bm{\psi}' \in C^\infty (X, -\mathrm{Ric} (\omega), 2\pi K^{\log}_{\bar{\mathcal{X}}/\mathbb{P}^1})$, we have 
\begin{gather*} 
\lim_{t \to \infty} \frac{d}{dt} \mathcal{A}_{\bm{\phi}_\rho}^{\bm{\psi}_\rho} (t) = -2\pi (\kappa_{\mathcal{X}_0}. e^{\mathcal{L}|_{\mathcal{X}_0}}; \pi^{-1} \rho), 
\\
\lim_{t \to \infty} \frac{d}{dt} \mathcal{A}_{\bm{\phi}_\rho}^{\bm{\psi}'_\rho} (t) = -2\pi ((K_X. e^L) - \pi^{-1} \rho (K^{\log}_{\bar{\mathcal{X}}/\mathbb{P}^1}. e^{\bar{\mathcal{L}}}; \pi^{-1} \rho)). 
\end{gather*}
\end{enumerate}
\end{cor}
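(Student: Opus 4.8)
The whole statement is an assembly of two facts already established: the limit formula (Theorem \ref{limit formula}) and the derivative formula $\frac{d}{dt}\mathcal{A}^{\bm\psi}_{\bm\phi}(t) = -\int_X (dd^c_\sigma\psi_t - \dot\psi_t)\,e^{dd^c_\omega\phi_t - \dot\phi_t}$ proved above for $C^{1,1}$-rays $\bm\phi$ and smooth $\bm\psi$. The only genuinely error-prone ingredient is the $\pi$-bookkeeping, so I keep it explicit. Two elementary reductions are used repeatedly: extracting the degree-$2n$ part converts the equivariant exponential into the scalar integral $\int_X e^{\omega_\phi - \pi\tau\dot\phi} = \frac{1}{n!}\int_X e^{-\pi\tau\dot\phi}\omega_\phi^n$; and the normalization $i_\eta d^c\Phi = -\pi\dot\Phi$ means that evaluating an equivariant intersection at $\pi^{-1}\tau$ replaces the weight $e^{-\pi\tau\dot\phi}$ by $e^{-\tau\dot\phi}$, which is exactly the weight appearing on the analytic side of the Corollary.

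\textbf{Part (1).} First I note that the rescaled ray $\bm\phi_\tau = \{\phi_{\tau t}\}_t$ is again a $C^{1,1}$-geodesic, since the geodesic equation is invariant under the affine reparametrization $t\mapsto\tau t$, and its momentum is $\frac{d}{dt}\phi_{\tau t} = \tau\dot\phi_{\tau t}$. Applying Lemma \ref{affine functional} to $\bm\phi_\tau$ shows that both $\frac{1}{n!}\int_X e^{-\tau\dot\phi_{\tau t}}\omega_{\phi_{\tau t}}^n$ and $\frac{1}{n!}\int_X \tau\dot\phi_{\tau t}\,e^{-\tau\dot\phi_{\tau t}}\omega_{\phi_{\tau t}}^n$ are independent of $t$. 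Extracting the degree-$2n$ part of the first equality of Theorem \ref{limit formula} evaluated at $\pi^{-1}\tau$ gives $(e^{\mathcal{L}|_{\mathcal{X}_0}};\pi^{-1}\tau) = \lim_{t\to\infty}\frac{1}{n!}\int_X e^{-\tau\dot\phi_{\pi^{-1}\tau t}}\omega_{\phi_{\pi^{-1}\tau t}}^n$; the $t$-independence lets me replace the scaling $\pi^{-1}\tau t$ by $\tau t$, yielding the first identity without a limit. For the second identity I take $\mathcal{M}=\mathcal{L}$, $\sigma=\omega$, $\bm\psi=\bm\phi$ in the second equality of Theorem \ref{limit formula} and compute the degree-$2n$ part of $(\omega_\phi - \pi\tau\dot\phi)e^{\omega_\phi - \pi\tau\dot\phi}$, namely $\bigl(\tfrac{n}{n!} - \tfrac{\pi\tau\dot\phi}{n!}\bigr)e^{-\pi\tau\dot\phi}\omega_\phi^n$; after evaluating at $\pi^{-1}\tau$ this reads $(\mathcal{L}|_{\mathcal{X}_0}.e^{\mathcal{L}|_{\mathcal{X}_0}};\pi^{-1}\tau) = n\,(e^{\mathcal{L}|_{\mathcal{X}_0}};\pi^{-1}\tau) - \frac{1}{n!}\int_X \tau\dot\phi_{\tau t}\,e^{-\tau\dot\phi_{\tau t}}\omega_{\phi_{\tau t}}^n$, which rearranges to the claim.

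\textbf{Part (2) and the main obstacle.} I apply the derivative formula to the rescaled rays, obtaining $\frac{d}{dt}\mathcal{A}^{\bm\psi_\tau}_{\bm\phi_\tau}(t) = -\int_X (\sigma_{\psi_{\tau t}} - \tau\dot\psi_{\tau t})\,e^{\omega_{\phi_{\tau t}} - \tau\dot\phi_{\tau t}}$, valid because $\bm\phi_\tau$ is $C^{1,1}$ and $\bm\psi_\tau$ is smooth. Since I need only the limit of this derivative — not an interchange of limit and derivative — I may pass $t\to\infty$ directly inside the integral and invoke the second equality of Theorem \ref{limit formula} at evaluation $\pi^{-1}\tau$, giving $\lim_{t\to\infty}\frac{d}{dt}\mathcal{A}^{\bm\psi_\tau}_{\bm\phi_\tau}(t) = -(\mathcal{M}|_{\mathcal{X}_0}.e^{\mathcal{L}|_{\mathcal{X}_0}};\pi^{-1}\tau)$, where $\mathcal{M} = 2\pi K_{\bar{\mathcal{X}}/\mathbb{P}^1}$ for $\bm\psi$ and $\mathcal{M} = 2\pi K^{\log}_{\bar{\mathcal{X}}/\mathbb{P}^1}$ for $\bm\psi'$ (here $\tau t\to\infty$ and $\pi^{-1}\tau t\to\infty$ simultaneously, so the scaling mismatch is harmless in the limit). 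For $\bm\psi'$ the localization identity recorded in the remark after Theorem \ref{limit formula}, $(\mathcal{M}|_{\mathcal{X}_0}.e^{\mathcal{L}|_{\mathcal{X}_0}};\pi^{-1}\tau) = (M.e^L) - \pi^{-1}\tau\,(\bar{\mathcal{M}}.e^{\bar{\mathcal{L}}};\pi^{-1}\tau)$ with $M=2\pi K_X$, produces the stated expression directly. For $\bm\psi$ I must instead identify the central-fibre restriction of the relative canonical class with the equivariant canonical Chow class, $K_{\bar{\mathcal{X}}/\mathbb{P}^1}|_{\mathcal{X}_0} = \kappa_{\mathcal{X}_0}$ in $\mathrm{CH}^{\mathbb{C}^\times}(\mathcal{X}_0,\mathbb{Q})$, which is the localization identity recorded in the introduction (cf. \cite{Ino3}). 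I expect the main obstacle to be precisely this last identification together with the consistent tracking of the $\pi$-normalization: one must verify that the analytic weight $e^{-\tau\dot\phi}$ coming from the action functional matches the equivariant exponential evaluated at $\pi^{-1}\tau$, and that the passage to $\kappa_{\mathcal{X}_0}$ remains valid even when $\mathcal{X}_0$ is non-reduced, where $\kappa_{\mathcal{X}_0}$ is defined via Fulton--Edidin--Graham intersection theory rather than by naive adjunction.
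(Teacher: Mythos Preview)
Your derivation of the formulas from Theorem \ref{limit formula} is correct and indeed more explicit than the paper's, which simply says ``Thus we can apply the above theorem.'' However, you have skipped precisely the one point that the paper's proof actually addresses: verifying that the geodesic ray $\bm{\phi}$ lies in the class $C^{1,1}(X,\omega;\mathcal{L})$. Recall that this class requires $C^{1,1}$-regularity of the extension \emph{across the central fibre} $\mathcal{X}_0$, not merely $C^{1,1}_{\mathrm{loc}}$ on $X\times A_\infty$. The known regularity for geodesic rays subordinate to test configurations (from \cite{CTW2}) is only $C^{1,1}_{\mathrm{loc}}$ on the punctured total space, whereas Theorem \ref{limit formula} needs the full class $C^{1,1}(X,\omega;\mathcal{L})$ (its proof uses the equivariant Stokes theorem on $\bar{\mathcal{X}}$ and the uniform boundedness of $\dot{\phi}_t$ coming from the extension).

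The paper fills this gap by invoking \cite[Corollary 1.3]{CTW1}: after assuming $\bar{\mathcal{L}}$ is ample and choosing a K\"ahler form $\Omega\in\bar{\mathcal{L}}$, one solves $(\Omega+dd^c\tilde{\Phi})^{n+1}=0$ on $\mathcal{X}_\Delta$ with prescribed boundary data, and the cited result guarantees a solution $\tilde{\Phi}$ that is $C^{1,1}$ up to and including $\mathcal{X}_0$. This exhibits $\bm{\phi}$ as a member of $C^{1,1}(X,\omega;\mathcal{L})$, after which Theorem \ref{limit formula} applies. Without this step your appeal to Theorem \ref{limit formula} is not justified. (A minor aside: you correctly guessed that $\bm{\psi}$ should correspond to $2\pi K_{\bar{\mathcal{X}}/\mathbb{P}^1}$ without the log, despite the apparent typo in the statement.)
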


\begin{proof}
As we already noted, we may assume $\bar{\mathcal{L}}$ is ample. 
Take a K\"ahler form $\Omega$ in $\bar{\mathcal{L}}$, which can be written as $\Omega = \omega + dd^c \Phi_\Omega$ on $\bar{\mathcal{X}} \setminus \mathcal{X}_0$. 
It is known by \cite[Corollary 1.3]{CTW1} that there exists a unique $\Omega$-psh function $\tilde{\Phi}$ on $\mathcal{X}_\Delta$ which is $C^{1,1}$ on $\mathcal{X}_\Delta$ and solves 
\[ (\Omega + dd^c \tilde{\Phi})^{n+1}  = 0, \quad \tilde{\Phi}|_{\partial \mathcal{X}_\Delta} = \tilde{\phi}_0. \]
Thus the geodesic ray $\bm{\phi} = \{ \phi_t = \Phi_\Omega (\cdot, e^{-t}) + \tilde{\Phi} (\cdot, e^{-t}) \}_{t \in [0, \infty)}$ is of class $C^{1,1} (X, \omega; \mathcal{L})$. 
Thus we can apply the above theorem, which shows the limits are expressed by the corresponding equivariant intersections. 
By Lemma \ref{affine functional}, the integrations in (1) is independent of $t$, so it is equal to the limit $t \to \infty$. 
\end{proof}

\subsubsection{Slope formula of the action functional}

\begin{thm}
Let $(\mathcal{X}, \mathcal{L})$ be an snc smooth test configuration which dominates the trivial configuration. 
For the geodesic ray $\bm{\phi}$ subordinate to $(\mathcal{X}, \mathcal{L})$ emanating from a smooth metric, we have 
\[ \lim_{t \to \infty} \frac{d}{dt}_+ \mathcal{A}_{\bm{\phi}_\rho} (t) = -2\pi ((K_X. e^L) - \pi^{-1} \rho (K^{\log}_{\bar{\mathcal{X}}/\mathbb{P}^1}. e^{\bar{\mathcal{L}}}; \pi^{-1} \rho)). \]
\end{thm}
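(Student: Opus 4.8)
The plan is to compare the entropy action $\mathcal{A}_{\bm{\phi}_\tau}$ with a smooth relaxed action $\mathcal{A}_{\bm{\phi}_\tau}^{\bm{\psi}'_\tau}$ whose slope at infinity is already computed in Corollary \ref{slope formula}. First I would exploit the convexity established above: since $\mathcal{A}_{\bm{\phi}_\tau}$ is convex on $[0,\infty)$, its right derivative $\frac{d}{dt}_+\mathcal{A}_{\bm{\phi}_\tau}(t)$ is nondecreasing, so the limit as $t\to\infty$ exists in $(-\infty,+\infty]$ and equals the asymptotic slope $\lim_{t\to\infty}\mathcal{A}_{\bm{\phi}_\tau}(t)/t$. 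Thus it suffices to identify this asymptotic slope. On the other side, fix the smooth ray $\bm{\psi}'$ for which Corollary \ref{slope formula} (2) gives $\lim_{t\to\infty}\frac{d}{dt}\mathcal{A}_{\bm{\phi}_\tau}^{\bm{\psi}'_\tau}(t)=-2\pi((K_X.e^L)-\pi^{-1}\tau(K^{\log}_{\bar{\mathcal{X}}/\mathbb{P}^1}.e^{\bar{\mathcal{L}}};\pi^{-1}\tau))$; since this derivative converges, the Cesàro average $\mathcal{A}_{\bm{\phi}_\tau}^{\bm{\psi}'_\tau}(t)/t$ converges to the same value. The goal is then to show the two asymptotic slopes coincide.

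Writing $\bm{\psi}=\{\log(\omega_{\phi_s}^n/\omega^n)\}$, so that $\mathcal{A}_{\bm{\phi}}=\mathcal{A}_{\bm{\phi}}^{\bm{\psi}}$, the next step is to subtract the two functionals. The double (Ricci) integrals in $\mathcal{A}_{\bm{\phi}}$ and in $\mathcal{A}_{\bm{\phi}}^{\bm{\psi}'}$ coincide because $\tfrac{n}{n!}=\tfrac{1}{(n-1)!}$, and the mixed $(n+1)$-fold term in $\mathcal{A}_{\bm{\phi}}^{\bm{\psi}'}$ vanishes since $\bm{\phi}$ is a geodesic. Using $v_t\log v_t\,\omega^n=\log v_t\cdot\omega_{\phi_t}^n$ this leaves
\[ \mathcal{A}_{\bm{\phi}}(t)-\mathcal{A}_{\bm{\phi}}^{\bm{\psi}'}(t)=\frac{1}{n!}\int_X\big(\log(\omega_{\phi_t}^n/\omega^n)-\psi'_t\big)e^{-\dot{\phi}_t}\omega_{\phi_t}^n, \]
which is exactly the relative entropy $\tfrac{1}{n!}\mathrm{Ent}(\mu_t\,|\,\nu_t)$ of $\mu_t=e^{-\dot{\phi}_t}\omega_{\phi_t}^n$ against $\nu_t=e^{\psi'_t-\dot{\phi}_t}\omega^n$, since $d\mu_t/d\nu_t=e^{\log v_t-\psi'_t}$.

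It then remains to show this difference is $o(t)$. For the lower bound I would use Jensen's inequality in Gibbs form, $\mathrm{Ent}(\mu_t\,|\,\nu_t)\ge\mu_t(X)\log(\mu_t(X)/\nu_t(X))$, together with the fact that $\mu_t(X)$ is independent of $t$ by Lemma \ref{affine functional} (equivalently Corollary \ref{slope formula} (1)) and that $\nu_t(X)$ stays bounded; this already forces the asymptotic slope of $\mathcal{A}_{\bm{\phi}}$ to be $\ge$ the right-hand side. The hard part is the upper bound: I must show $\log(\omega_{\phi_t}^n/\omega^n)-\psi'_t$ is bounded above along the ray, i.e. that the Monge--Amp\`ere density of the $C^{1,1}$-geodesic is dominated by a bounded multiple of the smooth log canonical density $e^{\psi'_t}\omega^n$. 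This is precisely where the snc smoothness enters: after pulling back to the log-resolution $\bar{\mathcal{X}}$, the reduced central fibre makes the correct comparison class the \emph{log} canonical $K^{\log}_{\bar{\mathcal{X}}/\mathbb{P}^1}$ rather than $K_{\bar{\mathcal{X}}/\mathbb{P}^1}$, and the uniform bound on $dd^c_\omega\phi_t$ controls the degenerating density across the central fibre up to a bounded factor. Granting this, the difference stays bounded, so $\mathrm{Ent}(\mu_t\,|\,\nu_t)=o(t)$, both asymptotic slopes equal the right-hand side, and by the convexity reduction of the first step I conclude $\lim_{t\to\infty}\frac{d}{dt}_+\mathcal{A}_{\bm{\phi}_\tau}(t)=-2\pi((K_X.e^L)-\pi^{-1}\tau(K^{\log}_{\bar{\mathcal{X}}/\mathbb{P}^1}.e^{\bar{\mathcal{L}}};\pi^{-1}\tau))$. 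The main obstacle is thus the entropy upper bound, namely matching the logarithmic Monge--Amp\`ere density with the log canonical representative through the snc structure and $C^{1,1}$-regularity.
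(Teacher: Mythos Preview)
Your overall strategy—compare $\mathcal{A}_{\bm{\phi}_\tau}$ with the smooth relaxed action $\mathcal{A}_{\bm{\phi}_\tau}^{\bm{\psi}'_\tau}$, use convexity to pass to asymptotic slopes, and show the difference is $o(t)$ via a relative-entropy lower bound and a log-density upper bound—is exactly the paper's argument. Two points deserve correction, however.

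First, the paper does not attempt both bounds against the single ray $\bm{\psi}'\in C^\infty(X,-\Ric(\omega),2\pi K^{\log}_{\bar{\mathcal{X}}/\mathbb{P}^1})$. It begins with a normalized base change $(\mathcal{X}_d,\mathcal{L}_d)$ with $d\gg 0$, under which both the geodesic ray and the right-hand side (with $K^{\log}$) are invariant; for such $d$ the central fibre is reduced, so $K_{\bar{\mathcal{X}}_d/\mathbb{P}^1}=K^{\log}_{\bar{\mathcal{X}}_d/\mathbb{P}^1}$. The upper bound is then obtained for a ray $\bm{\psi}$ attached to $K_{\bar{\mathcal{X}}/\mathbb{P}^1}$ (non-log), citing \cite[Theorem~5.1]{S-D} for the uniform bound $\log(\omega_{\phi_t}^n/e^{-\psi_t}\omega^n)\le C$; the lower bound uses $\bm{\psi}'$ for $K^{\log}$. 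Your claim that the snc structure makes $K^{\log}$ ``the correct comparison class'' for the \emph{upper} bound is not how the paper proceeds: the Sj\"ostr\"om Dyrefelt estimate is naturally for the non-log canonical class, and the base change is what matches the two. One can deduce the $K^{\log}$ upper bound from the $K$ one (since $K-K^{\log}=\sum_i(d_i-1)E_i$ is effective, the Green-type potential for the difference is bounded above), but this step has to be made explicit if you want to avoid the base change.

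Second, your lower-bound claim that $\nu_t(X)$ ``stays bounded'' is not correct in general and is not what the paper uses. Writing the difference as a relative entropy, the paper bounds
\[
\mathcal{A}_{\bm{\phi}_\tau}-\mathcal{A}_{\bm{\phi}_\tau}^{\bm{\psi}'_\tau}\;\ge\; C - (e^{\mathcal{L}|_{\mathcal{X}_0}})\log\frac{1}{n!}\int_X e^{-\psi'_{\tau t}}\omega^n,
\]
and then invokes \cite[Lemma~3.11]{BHJ2} to get $\log\int_X e^{-\psi'_{\tau t}}\omega^n=O(\log t)$, hence $o(t)$. The reference volume can grow polynomially in $t$; only the logarithmic growth of its log is needed. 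With these two corrections (base-change reduction or an explicit effective-divisor comparison for the upper bound, and $O(\log t)$ rather than $O(1)$ for the lower bound), your outline matches the paper's proof.
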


\begin{proof}
We firstly note $\bm{\phi}_{(\mathcal{X}, \mathcal{L}), \rho} = \bm{\phi}_{(\mathcal{X}_d, \mathcal{L}_d), d^{-1} \rho}$ for the geodesic $\bm{\phi}_{(\mathcal{X}, \mathcal{L}), \rho}$ subordinate to $(\mathcal{X}, \mathcal{L}; \rho)$. 
On the right hand side, we have 
\[ \rho (K^{\log}_{\bar{\mathcal{X}}/\mathbb{P}^1}. e^{\bar{\mathcal{L}}}; \rho) = d^{-1} \rho (K^{\log}_{\bar{\mathcal{X}}_d/\mathbb{P}^1}. e^{\bar{\mathcal{L}}_d}; d^{-1} \rho) \]
for the normalized base change $(\mathcal{X}_d, \mathcal{L}_d)$, as we already explained in the introduction. 
For sufficiently divisible integer $d > 0$, the central fibre of $\mathcal{X}_d$ is reduced, so that we have $\rho (K^{\log}_{\bar{\mathcal{X}}/\mathbb{P}^1}. e^{\bar{\mathcal{L}}}; \rho) = d^{-1} \rho (K_{\bar{\mathcal{X}}_d/\mathbb{P}^1}. e^{\bar{\mathcal{L}}}_d; d^{-1} \rho)$ for sufficiently divisible $d$. 

Therefore, it suffices to show 
\[ \mathcal{A}_{\bm{\phi}_\rho} - \mathcal{A}_{\bm{\phi}_\rho}^{\bm{\psi}_\rho} \le O (1) \]
and 
\[ o (t) \le \mathcal{A}_{\bm{\phi}_\rho} - \mathcal{A}_{\bm{\phi}_\rho}^{\bm{\psi}'_\rho} \]
for smooth $\bm{\psi}, \bm{\psi'}$ in the previous corollary. 

As in the proof of \cite[Theorem 5.1]{S-D}, we have a uniform upper bound on $\log (\omega^n_{\phi_t}/e^{-\psi_t} \omega^n)$. 
So we get 
\[ \mathcal{A}_{\bm{\phi}_\rho} - \mathcal{A}_{\bm{\phi}_\rho}^{\bm{\psi}_\rho} = \frac{1}{n!} \int_X \log (\omega^n_{\phi_{\rho t}}/e^{-\psi_{\rho t}} \omega^n) e^{- \rho \dot{\phi}_{\rho t}} \omega_{\phi_{\rho t}}^n \le C (e^{\mathcal{L}|_{\mathcal{X}_0}}; \pi^{-1} \rho) \]
as desired. 

To see 
\[ o (t) \le \mathcal{A}_{\bm{\phi}_\rho} - \mathcal{A}_{\bm{\phi}_\rho}^{\bm{\psi}'}, \]
we put $d\mu_t := e^{-\rho \dot{\phi}_{\rho t}} \omega^n_{\phi_{\rho t}}/(n! (e^{\mathcal{L}|_{\mathcal{X}_0}}))$, $d\nu_t := e^{-\psi'_{\rho t}} \omega^n/\int_X e^{-\psi'_{\rho t}} \omega^n$ and compute 
\begin{align*} 
\mathcal{A}_{\bm{\phi}_\rho} - \mathcal{A}_{\bm{\phi}_\rho}^{\bm{\psi}'_\rho} 
&= \frac{1}{n!} \int_X \log (\omega^n_{\phi_{\rho t}}/e^{-\psi'_{\rho t}} \omega^n) e^{- \rho \dot{\phi}_{\rho t}} \omega_{\phi_{\rho t}}^n
\\ 
&= (e^{\mathcal{L}|_{\mathcal{X}_0}}) \log (e^{\mathcal{L}|_{\mathcal{X}_0}}) + \frac{1}{n!} \int_X \rho \dot{\phi}_{\rho t} e^{-\rho \dot{\phi}_{\rho t}} \omega_{\phi_{\rho t}}^n 
\\
&\qquad + (e^{\mathcal{L}|_{\mathcal{X}_0}}) \int_X \log \frac{d\mu_t}{d\nu_t} d\mu_t - (e^{\mathcal{L}|_{\mathcal{X}_0}}) \log \frac{1}{n!} \int_X e^{-\psi'_{\rho t}} \omega^n 
\\
& \ge C - (e^{\mathcal{L}|_{\mathcal{X}_0}}) \log \frac{1}{n!} \int_X e^{-\psi'_{\rho t}} \omega^n, 
\end{align*}
where we used $\int_X \log \frac{d\mu_t}{d\nu_t} d\mu_t \ge 0$ in the last inequality. 
The last term is of class $O (\log t) \subset o (t)$ by \cite[Lemma 3.11]{BHJ2}. 
This proves the claim. 
\end{proof}

\subsubsection{Proof of main theorems}

Now we can prove the rest of our main theorems. 

\begin{cor}
For $\lambda \in \mathbb{R}$, we have 
\[ \sup_{(\mathcal{X}, \mathcal{L}), \rho \ge 0} \cmu^\lambda (\mathcal{X}, \mathcal{L}; \rho) \le \inf_{\omega_\varphi \in \mathcal{H} (X, L)} \cmu^\lambda (\omega_\varphi), \]
where $(\mathcal{X}, \mathcal{L}; \rho)$ runs over all test configurations. 
\end{cor}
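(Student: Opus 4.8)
The plan is to reduce the asserted inequality to the single-test-configuration estimate $\cmu^\lambda(\mathcal{X},\mathcal{L};\tau)\le\cmu^\lambda(\omega_\varphi)$, valid for every test configuration $(\mathcal{X},\mathcal{L};\tau)$ and every smooth $\omega_\varphi\in\mathcal{H}(X,L)$; taking the supremum over the former and the infimum over the latter yields the corollary. Following the roadmap announced in the discussion of ``everywhere versus almost everywhere'', I would attach to each such pair a monotonically decreasing function $\check{W}^+$ on $[0,\infty)$ with $\check{W}^+(0)\le\cmu^\lambda(\omega_\varphi)$ and $\lim_{t\to\infty}\check{W}^+(t)=\cmu^\lambda(\mathcal{X},\mathcal{L};\tau)$, whence the estimate is immediate. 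First I would reduce to an snc smooth test configuration dominating the trivial one: passing to an equivariant snc resolution $\mathcal{X}'$ with $\mathcal{L}'=\beta^*\mathcal{L}$ leaves every equivariant intersection number entering $\cmu$ and $\bm{\check{\sigma}}$ unchanged (projection formula, together with the birational invariance $(K^{\log}_{\bar{\mathcal{X}}'/\mathbb{P}^1}.e^{\bar{\mathcal{L}}};\tau)=(K^{\log}_{\bar{\mathcal{X}}/\mathbb{P}^1}.e^{\bar{\mathcal{L}}};\tau)$), so $\cmu^\lambda$ is preserved; the case $\tau=0$ is the constant ray, where $\cmu^\lambda(\mathcal{X},\mathcal{L};0)=\check{W}^\lambda(\omega_\varphi,0)\le\cmu^\lambda(\omega_\varphi)$ trivially.

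For $\tau>0$, let $\bm{\phi}_\tau$ be the $C^{1,1}$-geodesic ray subordinate to $(\mathcal{X},\mathcal{L};\tau)$ emanating from $\omega_\varphi$ (its existence and regularity are supplied in Corollary \ref{slope formula}), with momentum $f_t:=-\tau\dot{\phi}_{\tau t}$. The decisive structural observation is that the $\check{S}$-component is \emph{constant} along the ray: by Lemma \ref{affine functional} both $\int_X e^{f_t}\omega_{\phi_{\tau t}}^n$ and $\int_X f_t e^{f_t}\omega_{\phi_{\tau t}}^n$ are independent of $t$, and a direct computation using Corollary \ref{slope formula}(1) identifies the common value of $\check{S}(\omega_{\phi_{\tau t}},f_t)$ with $\bm{\check{\sigma}}(\mathcal{X},\mathcal{L};\tau)$ (indeed this reproduces exactly the localized expression of $\bm{\check{\sigma}}$). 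I would therefore define $\check{W}^+(t)$ as the $\check{W}$-component $-\frac{d}{dt}_+\mathcal{A}_{\bm{\phi}_\tau}(t)\big/(e^{\mathcal{L}|_{\mathcal{X}_0}};\pi^{-1}\tau)$ plus the constant $\lambda\,\bm{\check{\sigma}}(\mathcal{X},\mathcal{L};\tau)$, the normalization by the (constant) mass being precisely what turns $-\frac{d}{dt}_+\mathcal{A}$ into $\check{W}$ in the smooth model computation.

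Three properties then close the argument. Monotonicity of $\check{W}^+$ is exactly the convexity of $\mathcal{A}_{\bm{\phi}_\tau}$ established through the Berman--Berndtsson subharmonicity argument (Theorem \ref{key claim for convexity} and the convexity theorem preceding it): convexity makes $\frac{d}{dt}_+\mathcal{A}$ nondecreasing, hence $\check{W}^+$ nonincreasing, while $\bm{\check{\sigma}}$ is constant. For the initial value, Lemma \ref{W plus} gives $-\frac{d}{dt}_+\big|_{t=0}\mathcal{A}_{\bm{\phi}_\tau}\le\check{W}(\omega_\varphi,f_0)\cdot(e^{\mathcal{L}|_{\mathcal{X}_0}};\pi^{-1}\tau)$; dividing by the mass and adding $\lambda\bm{\check{\sigma}}=\lambda\check{S}(\omega_\varphi,f_0)$ gives $\check{W}^+(0)\le\check{W}^\lambda(\omega_\varphi,f_0)\le\cmu^\lambda(\omega_\varphi)$, the last step being the definition of $\cmu^\lambda$ as the supremum over momenta (valid since $f_0=-\tau\dot{\phi}_0\in C^{0,1}(X)$ as $\bm{\phi}_\tau$ is $C^{1,1}$). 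For the limit, the slope formula of the action functional gives $\lim_{t\to\infty}\frac{d}{dt}_+\mathcal{A}_{\bm{\phi}_\tau}(t)=-2\pi\big((K_X.e^L)-\pi^{-1}\tau(K^{\log}_{\bar{\mathcal{X}}/\mathbb{P}^1}.e^{\bar{\mathcal{L}}};\pi^{-1}\tau)\big)$, which after dividing by the mass is $\cmu(\mathcal{X},\mathcal{L};\tau)$; adding $\lambda\bm{\check{\sigma}}$ yields $\lim_{t\to\infty}\check{W}^+(t)=\cmu^\lambda(\mathcal{X},\mathcal{L};\tau)$. Chaining the three facts gives $\cmu^\lambda(\mathcal{X},\mathcal{L};\tau)\le\check{W}^+(0)\le\cmu^\lambda(\omega_\varphi)$.

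Since the genuine analytic difficulties — the $C^{1,1}$-regularity of the subordinate geodesic, the well-posedness and convexity of $\mathcal{A}$ for such rays, and the slope computation via equivariant Stokes — have all been discharged in the preceding subsections, the remaining work is assembly together with bookkeeping of normalizations (the $d^c$-convention factors of $\pi$, the $2\pi$ in $\cmu$, the mass $(e^{\mathcal{L}|_{\mathcal{X}_0}};\pi^{-1}\tau)$). I expect the most delicate point to be the initial-value inequality: in contrast to the smooth model where $-\frac{d}{dt}\mathcal{A}$ equals $\check{W}$ times the mass exactly, the relative-entropy term in $\mathcal{A}$ is only lower semicontinuous, so Lemma \ref{W plus} yields merely the one-sided bound $\check{W}^+(0)\le\check{W}^\lambda(\omega_\varphi,f_0)$ — fortunately in the direction required. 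A secondary care point is verifying that the reduction to snc smooth configurations and the passage $\tau\mapsto d^{-1}\tau$ under normalized base change (used to make the central fibre reduced, so that $K^{\log}=K$) are compatible with the $t\to\infty$ limit, exactly as in the proof of the slope formula.
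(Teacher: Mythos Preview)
Your approach is essentially the same as the paper's: reduce to snc smooth configurations, build $\check{W}^+$ from $-\frac{d}{dt}_+\mathcal{A}_{\bm{\phi}_\tau}$ divided by the constant mass plus the constant $\lambda\bm{\check{\sigma}}$, then chain Lemma~\ref{W plus}, convexity of $\mathcal{A}$, and the slope formula. Two small points deserve correction.

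First, your reduction ``pass to an snc resolution with $\mathcal{L}'=\beta^*\mathcal{L}$'' loses relative ampleness of the polarization, which is used both to produce the $C^{1,1}$-geodesic (Corollary~\ref{slope formula} invokes \cite{CTW1} with a K\"ahler $\Omega\in\bar{\mathcal{L}}$) and in the slope analysis. The paper does not reduce directly but instead, after resolving, \emph{approximates} the pullback $\tilde{\mathcal{L}}$ by an ample series $\mathcal{L}_\epsilon$ and passes to the limit in the intersection numbers defining $\cmu^\lambda$. You should add this approximation step.

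Second, the slope formula yields the \emph{log} canonical intersection $(K^{\log}_{\bar{\mathcal{X}}/\mathbb{P}^1}.e^{\bar{\mathcal{L}}};\cdot)$, so after dividing by the mass the limit of $\check{W}^+$ is $\NAmu(\mathcal{X},\mathcal{L};\pi^{-1}\tau)$, not $\cmu(\mathcal{X},\mathcal{L};\pi^{-1}\tau)$ as you wrote. The paper's proof in fact concludes the stronger inequality with $\NAmu$; the stated corollary then follows from the general inequality $\NAmu^\lambda\ge\cmu^\lambda$ (equivalently, for snc smooth $(\mathcal{X},\mathcal{L})$ with reduced central fibre one has $K^{\log}=K$, and the approximation handles the rest). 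Your remark about base change and $K^{\log}=K$ anticipates this, but the identification in your main paragraph should be adjusted.
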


\begin{proof}
Now by Lemma \ref{W plus}, Lemma \ref{affine functional}, the convexity of $\mathcal{A}_{\bm{\phi}}$, Corollary \ref{slope formula} and the above theorem, we get 
\[ \cmu (\omega) \ge \cW (\omega, - \rho \dot{\phi}_0) \ge - \frac{\frac{d}{dt}_+|_{t=0} \mathcal{A}_{\bm{\phi}_\rho}}{\int_X e^{-\rho \dot{\phi}_0} \omega^n/n!} \ge \lim_{t \to \infty} - \frac{\frac{d}{dt}_+ \mathcal{A}_{\bm{\phi}_\rho} (t)}{\int_X e^{-\rho \dot{\phi}_{\rho t}} \omega_{\phi_{\rho t}}^n/n!} = \NAmu (\mathcal{X}, \mathcal{L}; \pi^{-1} \rho) \]
for an snc smooth test configuration $(\mathcal{X}, \mathcal{L})$ and $\rho \ge 0$. 

On the other hand, by Lemma \ref{affine functional} and Corollary \ref{slope formula}, we have 
\[ \check{S} (\omega, -\rho \dot{\phi}_0) = \check{S} (\omega_{\phi_{\rho t}}, -\rho \dot{\phi}_{\rho t}) = \bm{\check{\sigma}} (\mathcal{X}, \mathcal{L}; \pi^{-1} \rho). \]
Thus the claim holds for snc smooth test configurations. 

For a general test configuration $(\mathcal{X}, \mathcal{L})$, shifting $\mathcal{L}$ by a weight so that $\bar{\mathcal{L}}$ is ample and then taking a resolution $\tilde{\mathcal{X}} \to \mathcal{X}$ and approximating the pull-back $\tilde{\mathcal{L}}$ by ample series $\bar{\mathcal{L}}_\epsilon$, we can approximate $\NAmu (\mathcal{X}, \mathcal{L}; \rho)$ by $\NAmu (\tilde{\mathcal{X}}, \mathcal{L}_\epsilon; \rho)$. 
This shows $\cmu (\omega) \ge \NAmu (\mathcal{X}, \mathcal{L}; \rho)$ for general $(\mathcal{X}, \mathcal{L}; \rho)$. 
\end{proof}

\begin{cor}
\label{minimizer}
Suppose $\lambda \le 0$. 
If $\omega$ is a $\mu^\lambda$-cscK metric, then it minimizes $\cmu^\lambda$. 
\end{cor}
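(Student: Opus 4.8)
The plan is to deduce this essentially formally from the sup--inf inequality just established, combined with the identification of $\cmu^\lambda$ of a product configuration with the value $\cmu^\lambda(X,L;\xi)$ of the $\mu$-entropy at a vector. Since $\omega$ is a $\mu^\lambda$-cscK metric, it is $\check{\mu}^\lambda_\xi$-cscK for some $\xi\in\mathfrak{h}_c(X,L)$. First I would invoke Theorem \ref{Uniqueness of moment}: the critical momentum of $\check{W}^\lambda(\omega,\cdot)$ is $\mu_\xi$ and it computes the $\mu$-entropy, so that
\[ \cmu^\lambda(\omega)=\check{W}^\lambda(\omega,\mu_\xi)=\cmu^\lambda(X,L;\xi), \]
where the last term is the value \eqref{mu-entropy for vector} of the $\mu$-entropy functional at the vector $\xi\in\mathfrak{t}$.

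Next I would bound $\cmu^\lambda(X,L;\xi)$ by the infimum over $\mathcal{H}(X,L)$. For a rational direction $\xi=\tau\eta$ with $\eta\in N$ integral and $\tau\ge 0$ rational, the product configuration $(X_{\mathbb{C},\eta},L_{\mathbb{C},\eta})$ is a genuine test configuration, and by the computation recalled before \eqref{mu-entropy for vector} we have $\cmu^\lambda(X_{\mathbb{C},\eta},L_{\mathbb{C},\eta};\tau)=\check{W}^\lambda(\omega,\mu_{\tau\eta})=\cmu^\lambda(X,L;\xi)$. Hence the preceding corollary applies directly and gives
\[ \cmu^\lambda(X,L;\xi)=\cmu^\lambda(X_{\mathbb{C},\eta},L_{\mathbb{C},\eta};\tau)\le\sup_{(\mathcal{X},\mathcal{L}),\,\tau\ge 0}\cmu^\lambda(\mathcal{X},\mathcal{L};\tau)\le\inf_{\omega_\varphi\in\mathcal{H}(X,L)}\cmu^\lambda(\omega_\varphi) \]
for every rational $\xi$. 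For a general, possibly irrational, $\xi\in\mathfrak{t}$ I would choose rational vectors $\xi_i\to\xi$ and pass to the limit using the continuity of $\xi\mapsto\cmu^\lambda(X,L;\xi)=\check{W}^\lambda(\omega,\mu_\xi)$ on $\mathfrak{t}$, which is manifest from the expression \eqref{mu-entropy for vector} in terms of the $\mu$-volume; this yields $\cmu^\lambda(X,L;\xi)\le\inf_{\omega_\varphi}\cmu^\lambda(\omega_\varphi)$ for all $\xi$.

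Combining the two displays gives $\cmu^\lambda(\omega)=\cmu^\lambda(X,L;\xi)\le\inf_{\omega_\varphi}\cmu^\lambda(\omega_\varphi)$, while trivially $\inf_{\omega_\varphi}\cmu^\lambda(\omega_\varphi)\le\cmu^\lambda(\omega)$ since $\omega\in\mathcal{H}(X,L)$. Therefore equality holds throughout and $\omega$ minimizes $\cmu^\lambda$, which is the desired refinement of the last claim of Theorem \ref{Uniqueness of moment} (and settles the implication (a)$\Rightarrow$(b) of Theorem \ref{main theorem on mu-entropy}).

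I expect no genuine analytic obstacle here: all the real content has already been absorbed into the preceding sup--inf corollary (the monotonicity and convexity along the geodesic ray, via Berman--Berndtsson subharmonicity and the slope formula) and into Theorem \ref{Uniqueness of moment} (uniqueness of the critical momentum and the identity $\cmu^\lambda(\omega)=\cmu^\lambda(X,L;\xi)$). The only point requiring a word of care is the passage from rational to arbitrary $\xi$: the sup--inf inequality is stated for honest test configurations and so applies directly only to rational directions, so one must confirm that the continuous extension \eqref{mu-entropy for vector} of $\cmu^\lambda(X,L;\bullet)$ respects the bound. This is immediate from the density of rational vectors in $\mathfrak{t}$ together with the explicit continuity of the $\mu$-volume, so the argument is entirely formal once the earlier results are in hand.
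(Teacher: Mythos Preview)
Your proof is correct and follows essentially the same approach as the paper: identify $\cmu^\lambda(\omega)=\cmu^\lambda(X,L;\xi)$ via Theorem~\ref{Uniqueness of moment}, apply the sup--inf corollary to the product configuration for rational $\xi$, and extend to arbitrary $\xi$ by density and continuity of $\cmu^\lambda(X,L;\bullet)$ on $\mathfrak{t}$. The paper's proof is merely a terser version of exactly this argument.
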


\begin{proof}
We firstly note $\cmu^\lambda (X, L; \xi) \le \cmu^\lambda (\omega_\varphi)$ for every vector field $\xi$ and metric $\omega_\varphi$ (not necessarily $\xi$-invariant). 
Indeed, when $\xi$ is rational, i.e. generating a $\mathbb{C}^\times$-action, this is a consequence of the above corollary. 
Since $\cmu^\lambda (X, L; \bullet)$ is continuous on any torus $\mathfrak{t}$ and rational $\xi$ is dense in $\mathfrak{t}$, we obtain the inequality for general $\xi$. 

If $\omega$ is a $\mu^\lambda$-cscK metric for $\lambda \le 0$, then there is $\xi$ such that $\cmu^\lambda (\omega) = \cmu^\lambda (X, L; \xi)$. 
Thus we have $\cmu^\lambda (\omega) \le \cmu^\lambda (\omega_\varphi)$ for every metric $\omega_\varphi$. 
\end{proof}

If $\omega$ is a $\check{\mu}^\lambda_\xi$-cscK metric for $\lambda \le 0$, then $\xi$ maximizes $\cmu^\lambda (X, L; \bullet)$ among all vectors. 
Thus we get the following slight refinement of \cite[Corollary 3.10]{Ino2}. 

\begin{cor}
Conjecture \ref{uniqueness} reduces to the uniqueness of the maximizer of $\cmu^\lambda (X, L; \bullet)$ on a maximal torus $\mathfrak{t}$ or on the center of a maximal compact (cf. \cite[Corollary 3.19]{Ino2}). 
\end{cor}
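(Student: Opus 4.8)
The plan is to deduce Conjecture \ref{uniqueness} from the uniqueness of the maximizer of $\cmu^\lambda(X, L; \bullet)$ by exploiting the minimax relation already assembled above. The starting observation is that, for $\lambda \le 0$, a $\check{\mu}^\lambda_\xi$-cscK metric $\omega$ simultaneously realizes $\cmu^\lambda(\omega) = \cmu^\lambda(X, L; \xi) = \inf_{\omega_\varphi} \cmu^\lambda(\omega_\varphi)$ by Theorem \ref{Uniqueness of moment} and Corollary \ref{minimizer}, while $\cmu^\lambda(X, L; \eta) \le \cmu^\lambda(\omega)$ for every vector $\eta$. Hence $\xi$ is a \emph{global} maximizer of $\cmu^\lambda(X, L; \bullet)$, and its value is the global minimum of the $\mu$-entropy. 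So uniqueness of the $\mu^\lambda$-cscK metric is forced once we control (i) the maximizing vector $\xi$ and (ii) the metric for fixed $\xi$.

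First I would fix two $\mu^\lambda$-cscK metrics $\omega_0, \omega_1$, with respect to $\xi_0, \xi_1$ respectively. By the above, both $\xi_0$ and $\xi_1$ are global maximizers of $\cmu^\lambda(X, L; \bullet)$, attaining the same value. Next I would use the $\mathrm{Aut}(X, L)$-invariance of $\cmu^\lambda(X, L; \bullet)$ (which follows from its intrinsic cohomological definition, since $\cmu^\lambda(X, L; \xi) = \check{W}^\lambda(\omega, \mu_\xi)$ is independent of the chosen $\xi$-invariant metric and depends only on the isomorphism class of $(X, L, \xi)$) together with a conjugacy argument to bring $\xi_0, \xi_1$ into a common linear space. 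For the maximal-torus version I would enlarge each $T_i = \overline{\exp(\mathbb{R}.\xi_i)}$ to a maximal torus of a maximal compact subgroup and use conjugacy of maximal tori to find $g \in \mathrm{Aut}(X, L)$ with $g_* \xi_1, \xi_0 \in \mathfrak{t}$; for the sharper center version I would invoke \cite[Corollary 3.19]{Ino2}, which locates the soliton vector in the center $\mathfrak{z}(\mathfrak{k})$ of a maximal compact, and use conjugacy of maximal compact subgroups so that after replacing $\xi_1$ by $g_* \xi_1$ both vectors lie in one and the same $\mathfrak{z}(\mathfrak{k})$. In either case $g_*\xi_1$ remains a global, hence a restricted, maximizer on $\mathfrak{t}$ (resp. $\mathfrak{z}(\mathfrak{k})$) by invariance.

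Assuming the maximizer on $\mathfrak{t}$ (resp. $\mathfrak{z}(\mathfrak{k})$) is unique then gives $\xi_0 = g_* \xi_1$. With the vectors matched, $\omega_0$ and the pullback of $\omega_1$ under $g$ are both $\check{\mu}^\lambda_{\xi_0}$-cscK metrics for the \emph{single} vector $\xi_0$, so the fixed-$\xi$ uniqueness of \cite{Lah2} identifies them modulo $\mathrm{Aut}_{\xi_0}(X, L)$. Composing with $g$, the metrics $\omega_0$ and $\omega_1$ coincide modulo $\mathrm{Aut}(X, L)$, which is precisely Conjecture \ref{uniqueness}.

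I expect the main obstacle to be the second step: cleanly reducing two a priori unrelated maximizing vectors $\xi_0, \xi_1$ to a common torus or center. This rests on two ingredients that must be checked carefully, namely the $\mathrm{Aut}(X, L)$-invariance of $\cmu^\lambda(X, L; \bullet)$ and the conjugacy statement appropriate to each formulation. The center version is the stronger reduction, since it demands uniqueness only on the smaller space $\mathfrak{z}(\mathfrak{k})$, but it is also the more delicate one, as it relies essentially on the centrality of the soliton vector from \cite[Corollary 3.19]{Ino2}; the torus version is more robust but requires the full force of uniqueness on $\mathfrak{t}$.
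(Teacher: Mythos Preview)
Your proposal is correct and follows essentially the same approach as the paper, which is extremely terse here: the paper only records the key observation preceding the corollary (that for $\lambda\le 0$ the vector $\xi$ of a $\check{\mu}^\lambda_\xi$-cscK metric is a \emph{global} maximizer of $\cmu^\lambda(X,L;\bullet)$, by combining $\cmu^\lambda(X,L;\eta)\le\cmu^\lambda(\omega)$ from Corollary~\ref{minimizer} with $\cmu^\lambda(\omega)=\cmu^\lambda(X,L;\xi)$ from Theorem~\ref{Uniqueness of moment}) and then defers the conjugacy and fixed-$\xi$ uniqueness steps to \cite[Corollary~3.10, 3.19]{Ino2} and \cite{Lah2}. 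Your write-up simply unpacks those same steps explicitly.
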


\section{Observations}
\label{Observations}

Here we briefly observe relation of our framework with He and Calabi functional. 

\subsection{K\"ahler--Ricci soliton and He functional}
\label{Observations, He}

\subsubsection{$H$-functional and $\mu$-entropy}

It is observed in \cite{Per, TZ2} that on a Fano manifold $X$ the critical points of the $\mu$-entropy for the polarization $L = -K_X$ and $\lambda = 2\pi$ are precisely K\"ahler--Ricci solitons, implicitly assuming the smoothness of the $\mu$-entropy in this ``mild temperature'' case $\lambda = 2 \pi$. 
Though the author could not recover the actual proof of the smoothness in this case, we can rephrase this observation in the following two ways. 
\begin{itemize}
\item The critical points of $\cW^{2\pi}$ are precisely K\"ahler--Ricci solitons. 

\item The minimizers of $\cmu^{2\pi}$ are precisely K\"ahler--Ricci solitons. 
\end{itemize}

The first one follows from the main theorem of this article and the fact that $\mu^{2\pi}$-cscK metrics in the polarization $L=-K_X$ on a Fano manifold are precisely K\"ahler--Ricci solitons (cf. \cite{Ino1, Ino2}). 

If $\omega$ is a K\"ahler--Ricci soliton, i.e. $\mathrm{Ric} (\omega) - L_{J\xi} \omega = 2\pi \omega$, we have the equality $\cmu^{2\pi} (\omega) = \cW^{2\pi} (\omega, \theta_\xi) = \cmu^{2\pi} (-2\xi)$ as observed in \cite{TZ2}. 
Since we have $\cmu^{2\pi} (\omega_\varphi) \ge \cmu^{2\pi} (-2\xi)$ for general metric $\omega_\varphi$, the K\"ahler--Ricci soliton $\omega$ minimizes the $\mu$-entropy. 
To see the reverse implication, we discuss as follows: if $\omega$ minimizes $\cmu^{2\pi}$, it also minimizes the $H$-entropy by the equality $\inf_\omega \cmu^{2\pi} (\omega) = \inf_\omega H (\omega)$ from \cite{DS}, so that it is a K\"ahler--Ricci soliton. 
(Note the sign of our $\mu$-entropy is reversed from Perelman's original convention. ) 

The \textit{$H$-entropy} of a K\"ahler metric $\omega$ in $\mathcal{H} (X, -K_X)$ is given by 
\[ \frac{1}{2\pi} H (\omega) := \int_X h e^h \omega^n \Big{/} \int_X e^h \omega^n - \log  \int_X e^h \frac{\omega^n}{n!}, \]
where $h$ is a Ricci potential: $\sqddbar h = \Ric (\omega) - 2\pi \omega$. 
The critical points of $H$-entropy are K\"ahler--Ricci solitons. 
As noticed in \cite{He}, we have 
\[ \cW^{2\pi} (\omega, h) = H (\omega), \]
so that the following inequality holds 
\begin{equation} 
\cmu^{2\pi} (\omega) \ge H (\omega), 
\end{equation}
which is analogous to the inequality $M \ge D$ on Mabuchi and Ding functional, and the equality holds iff $\omega$ is a K\"ahler--Ricci soliton (cf. \cite{He}, \cite[Theorem 2.4.3]{Fut}). 

\subsubsection{Legendre duality}

The following formula on the $H$-entropy is known as Legendre duality: 
\begin{equation} 
\frac{1}{2\pi} H (\omega) = \sup_{f \in C^0 (X)} \Big{(} \int_X f e^h \omega^n \Big{/} \int_X e^h \omega^n - \log \int_X e^f \frac{\omega^n}{n!} \Big{)}, 
\end{equation}
which follows from Jensen's inequality: 
\[ \int_X (f- h) \frac{e^h \omega^n}{\int_X e^h \omega^n} \le \log \int_X e^{f-h} \frac{e^h \omega^n}{\int_X e^h \omega^n}. \]

Now we consider the following functional 
\begin{equation} 
\frac{1}{2 \pi} \check{\mathcal{L}} (\omega, f) := \int_X f e^h \omega^n \Big{/} \int_X e^h \omega^n - \log \int_X e^f \frac{\omega^n}{n!}
\end{equation}
defined on the tangent bundle $T \mathcal{H} (X, -K_X)$. 
We easily see that critical points of $\check{\mathcal{L}}$ are K\"ahler--Ricci solitons. 
One can consider this functional as an extension of Tian--Zhu's volume functional \cite{TZ1}. 

It is shown in \cite{DS} that this functional is monotonic along geodesics. 
They then proved the following equality for $H$-invariant: 
\[ \sup_{(\mathcal{X}, \mathcal{L})} H (\mathcal{X}, \mathcal{L}) = \inf_{\omega_\varphi \in \mathcal{H} (X, L)} H (\omega_\varphi), \]
based on the work \cite{CSW} on the limiting behavior of K\"ahler--Ricci flow. 
The monotonicity is used for the inequality $\sup \le \inf$. 

We note Berman gave another bound on $\mu$-entropy for Fano manifolds (cf. \cite[Corollary 4.5]{Ber}). 

\subsection{Extremal metric and Calabi functional}
\label{Observations, Calabi}

\subsubsection{Extremal limit $\lambda^{-1} \to 0$}

As in \cite{Ino2}, we consider the following rescaled $W$-entropy for $\kappa \neq 0$: 
\begin{align} 
\cW^\dagger_\kappa (\omega, f) 
&:= \kappa^{-1} \Big{(} \cW^{\kappa^{-1}} (\omega, \kappa f) - \cW^{\kappa^{-1}} (\omega, 0) \Big{)}
\\ \notag
&= \kappa^{-1} \Big{(} \frac{\int_X (s (\omega) + \kappa \bar{\Box} f) e^{\kappa f} \omega^n}{\int_X e^{\kappa f} \omega^n} - \bar{s} \Big{)} 
\\ \notag
&\qquad- \kappa^{-2} \Big{(} \frac{\int_X \kappa f e^{\kappa f} \omega^n}{\int_X e^{\kappa f} \omega^n} - \log \int_X e^{\kappa f} \omega^n \Big{)}.  
\end{align}
The computation in \cite[Section 5.2]{Ino2} shows the limit as $\kappa \to 0$ is given by 
\begin{equation} 
\mathcal{W}_{\mathrm{ext}} (\omega, f) := \lim_{\kappa \to 0} \cW^\dagger_\kappa (\omega, f) = - \frac{1}{2} \frac{\int_X (\hat{s} (\omega) - \hat{f})^2 \omega^n}{\int_X \omega^n} + \frac{1}{2} \frac{\int_X \hat{s}^2 (\omega) \omega^n}{\int_X \omega^n}, 
\end{equation}
where we put $\hat{u} := u - \int_X u \omega^n / \int_X \omega^n$. 
We easily see that $f$ is a critical point of $\mathcal{W}_{\mathrm{ext}} (\omega, \cdot)$ iff $f = s (\omega) + \mathrm{const}.$, which obviously maximizes $\mathcal{W}_{\mathrm{ext}} (\omega, \cdot)$. 
Since 
\[ C (\omega) := \sup_{f \in C^\infty (X)} \mathcal{W}_{\mathrm{ext}} (\omega, f) = \frac{1}{2} \frac{\int_X \hat{s}^2 (\omega) \omega^n}{\int_X \omega^n} \]
is nothing but the Calabi functional, the critical points of $\mathcal{W}_{\mathrm{ext}}$ are precisely extremal metrics. 

We can similarly check the derivative of the functionals $\cW^\dagger_\kappa$ also converge to that of $\mathcal{W}_{\mathrm{ext}}$ as in \cite{Ino2} again. 
It is also observed in the paper when $\kappa$ approaches to zero from the negative side $\kappa < 0$ ($\lambda \to -\infty$), the holomorphic vector fields $\partial^\sharp f_\kappa$ associated to some critical points $(\omega_\kappa, f_\kappa)$ of $\cW^\dagger_\kappa$ always converge to an extremal vector field modulo holomorphic gauges: it suffices to take gauges $g_\kappa \in \mathrm{Aut} (X, L)$ so that $g_\kappa^* \omega_\kappa$ are invariant with respect to the center of a fixed maximal compact. 
In contrast, when $\kappa$ approaches to zero from the positive side $\kappa > 0$ ($\lambda \to + \infty$), there may be critical points whose associated holomorphic vector fields $\partial^\sharp f_\kappa$ fade away to infinity, so that $(\omega_\kappa, f_\kappa)$ never converges. 

On can also derive the functional $\mathcal{W}_{\mathrm{ext}}$ in the spirit of volume minimization. 
The functional $\mathcal{W}_{\mathrm{ext}} (\omega, \mu_\xi^\omega)$ defines a convex functional on the Lie algebra $\mathfrak{k}$ of a compact Lie group acting on $(X, L)$. 
The derivative at $\xi$ gives the relative Futaki invariant relative to $\xi$. 
We can recover the uniqueness of extremal vector by the convexity, which rephrases the proof in \cite{FM} in view of volume minimization. 

\subsubsection{Donaldson's lower bound on Calabi functional --- a simple proof}

Standing on our Lagrangian perspective, we exhibit a simple proof of Donaldson's inequality \cite{Don}: 
\[ \frac{2\pi^2}{(L^{\cdot n})} \frac{\mathrm{DF} (\mathcal{X}, \mathcal{L})^2}{\| (\mathcal{X}, \mathcal{L}) \|^2} \le C (\omega) \]
for any test configuration $(\mathcal{X}, \mathcal{L})$ with $\mathrm{DF} (\mathcal{X}, \mathcal{L}) \le 0$. 
The proof here shares the spirit with the proof in \cite{DS} for the $H$-functional and that in this article for the $W$-entropy. 
The readers should compare the proof with the Kempf--Ness approach in \cite{Der1}, which pursues the idea of \cite{Chen}. 
The author is grateful to R. Dervan for informing the author these reference. 

Now we put 
\begin{align*}
M_{\mathrm{NA}} (\mathcal{X}, \mathcal{L}; \rho) 
&:= \rho \Big{(} (K^{\log}_{\bar{\mathcal{X}}/\mathbb{C}P^1}. \bar{\mathcal{L}}^{\cdot n}) - \frac{n}{n+1} \frac{(K_X. L^{\cdot n-1})}{(L^{\cdot n})} (\bar{\mathcal{L}}^{\cdot n}) \Big{)}, 
\\
\| (\mathcal{X}, \mathcal{L}; \rho) \|^2 
&:= - \Big{(} \frac{2 \rho (\bar{\mathcal{L}}_{\mathbb{C}^{\times}}^{\cdot n+2}; \rho)}{(n+2)(n+1)} + \frac{1}{(L^{\cdot n})} \Big{(} \frac{\rho (\bar{\mathcal{L}}^{\cdot n+1})}{n+1} \Big{)}^2 \Big{)}, 
\\
C_{\mathrm{NA}} (\mathcal{X}, \mathcal{L}; \rho) 
&:= \frac{-1}{2 (L^{\cdot n})} \left( 4\pi M_{\mathrm{NA}} (\mathcal{X}, \mathcal{L}; \rho) + \| (\mathcal{X}, \mathcal{L}; \rho) \|^2 \right). 
\end{align*}
We have $M_{\mathrm{NA}} (\mathcal{X}, \mathcal{L}; \rho) = \rho M_{\mathrm{NA}} (\mathcal{X}, \mathcal{L}) \le \rho \mathrm{DF} (\mathcal{X}, \mathcal{L})$ as in \cite{BHJ1}. 
By the equivariant localization, we have 
\begin{align*} 
\frac{2\rho (\bar{\mathcal{L}}_{\mathbb{C}^{\times}}^{\cdot n+2}; \rho)}{(n+2)!} + \frac{n!}{(L^{\cdot n})} \Big{(} \frac{\rho (\bar{\mathcal{L}}^{\cdot n+1})}{(n+1)!} \Big{)}^2 
&= - 2! \frac{(\mathcal{L}_{\mathbb{C}^\times}|_{\mathcal{X}_0}^{\cdot n+2}; \rho)}{(n+2)!} + \frac{n!}{(L^{\cdot n})} \Big{(} \frac{(\mathcal{L}_{\mathbb{C}^\times}|_{\mathcal{X}_0}^{\cdot n+1}; \rho)}{(n+1)!} \Big{)}^2 
\\
&= - \int_\mathbb{R} (-\rho t)^2 \mathrm{DH} +  \frac{(\int_\mathbb{R} (-\rho t) \mathrm{DH})^2}{\int_\mathbb{R} \mathrm{DH}} 
\\
&= - \rho^2 \int_\mathbb{R} (t- b)^2 \mathrm{DH} < 0 
\end{align*}
for $b := \int_\mathbb{R} t \mathrm{DH}/\int_\mathbb{R} \mathrm{DH}$,  so that $\| (\mathcal{X}, \mathcal{L}; \rho) \|^2$ is positive unless $(\mathcal{X}, \mathcal{L})$ is trivial. 
Since we have 
\[ \frac{d}{d\rho}\Big{|}_{\rho = 0} C_{\mathrm{NA}} (\mathcal{X}, \mathcal{L}; \rho) = - \frac{2\pi}{(L^{\cdot n})} M_{\mathrm{NA}} (\mathcal{X}, \mathcal{L}), \]
$(X, L)$ is K-semistable if $C_{\mathrm{NA}}$ is maximized at the trivial configuration among all test configurations. 
In the same way as we will do for the non-archimedean $\mu$-entropy in the subsequent article, we can also extend this to the extremal case: $(X, L)$ is relatively K-semistable if $C_{\mathrm{NA}}$ is maximized at a product configuration among all test configurations (cf. \cite{Der2}). 
Especially when $(X, L)$ is K-semistable, we can show the reverse implication, since if $\mathrm{DF} (\mathcal{X}, \mathcal{L}) \ge 0$ then the maximum of $C_{\mathrm{NA}} (\mathcal{X}, \mathcal{L}; \rho)$ on $\rho \ge 0$ is achieved at $\rho = 0$, so the trivial configuration maximizes $C_{\mathrm{NA}}$. 

When $\mathrm{DF} (\mathcal{X}, \mathcal{L}) \le 0$, the maximum is achieved at $\rho = -2\pi M_{\mathrm{NA}} (\mathcal{X}, \mathcal{L})/\| (\mathcal{X}, \mathcal{L}) \|^2$ and is given by 
\[ \max_{\rho \ge 0} C_{\mathrm{NA}} (\mathcal{X}, \mathcal{L}; \rho) = \frac{2\pi^2}{(L^{\cdot n})} \frac{M_{\mathrm{NA}} (\mathcal{X}, \mathcal{L})^2}{\| (\mathcal{X}, \mathcal{L}) \|^2} > 0. \]
Therefore Donaldson's theorem \cite{Don} on the lower bound of the Calabi functional can be rephrased as 
\[ \sup_{(\mathcal{X}, \mathcal{L}; \rho)} C_{\mathrm{NA}} (\mathcal{X}, \mathcal{L}; \rho) \le \inf_{\omega \in \mathcal{H} (X, L)} C (\omega). \]
It is conjectured that these values are indeed the same. (cf. \cite{His, Xia, Li2})

The following proof is based on the developments on the convexity (\cite{BB}) of the Mabuchi functional along $C^{1,1}$-geodesic rays and the slope formula (\cite{S-D}. See also \cite{Xia, Li2}. ) for geodesic rays with algebraic singularities. 

Firstly, we note that 
\[ \int_0^t \mathcal{W}_{\mathrm{ext}} (\omega_s, \dot{\phi}_s) ds = - \frac{1}{2 (L^{\cdot n})} \left( 2 \mathcal{M} (\omega_t) + \int_0^t ds \int_X \hat{\dot{\phi}}_s^2 \omega_s^n \right) \]
for smooth paths $\omega_s$. 
Along $C^{1, 1}$-geodesics, the integrand $\int_X \hat{\dot{\phi}}_s^2 \omega_s^n$ is constant and equal to $\| (\mathcal{X}, \mathcal{L}) \|^2$ by Proposition \ref{limit as equivariant intersection} (cf. \cite{His16}). 

Since the right derivative $\frac{d}{dt}_+ \Big{|}_{t=s} \mathcal{M} (\omega_t)$ exists for a $C^{1,1}$ geodesic $\omega_t$ by the convexity of the Mabuchi functional, we can define $\mathcal{W}_{\mathrm{ext}}^+ (\omega_s, \dot{\phi}_s)$ for a $C^{1,1}$-geodesic by 
\[ \mathcal{W}_{\mathrm{ext}}^+ (\omega_s, \dot{\phi}_s) := - \frac{1}{2\int_X \omega^n} \left( 2 \frac{d}{dt}_+ \Big{|}_{t=s} \mathcal{M} (\omega_t) + \int_X \hat{\dot{\phi}}_s^2 \omega_s^n \right), \]
which is monotonically decreasing again by the convexity. 
Meanwhile, if $\omega_0$ is smooth, $\mathcal{W}_{\mathrm{ext}} (\omega_0, \dot{\phi}_0)$ is well-defined in the usual sense as $\dot{\phi}_0$ is $L^2$-integrable. 
We can relate these as follows. 
Recall the Chen--Tian expression of the Mabuchi functional
\[ \mathcal{M} (\omega_t) = \int_X \log \frac{\omega_t^n}{\omega_0^n} \omega_t^n - n \int_0^t ds \int_X \dot{\phi}_s \mathrm{Ric} (\omega_0) \wedge \omega_s^{n-1}. \] 
Since the entropy part $\int_X \log \frac{\omega_t^n}{\omega_0^n} \omega_t^n$ is non-negative, we have 
\[ \frac{d}{dt}_+ \Big{|}_{t=0} \mathcal{M} (\omega_t) \ge - \int_X \dot{\phi}_0 s (\omega_0) \omega_0^n. \] 
Thus we get $\mathcal{W}_{\mathrm{ext}} (\omega_0, \dot{\phi}_0) \ge \mathcal{W}_{\mathrm{ext}}^+ (\omega_0, \dot{\phi}_0)$. 

On the other hand, the slope formula on the Mabuchi functional shows 
\[ C_{\mathrm{NA}} (\mathcal{X}, \mathcal{L}; \rho) = \lim_{t \to \infty} \mathcal{W}_{\mathrm{ext}}^+ (\omega_{\rho t}, \dot{\phi}_{\rho t}) \]
for the $C^{1,1}$-geodesic ray subordinate to the test configuration $(\mathcal{X}, \mathcal{L})$ with any smooth initial metric $\omega_0$. 
Now we obtain the inequality by 
\[ C_{\mathrm{NA}} (\mathcal{X}, \mathcal{L}; \rho) = \lim_{t \to \infty} \mathcal{W}_{\mathrm{ext}}^+ (\omega_{\rho t}, \dot{\phi}_{\rho t}) \le \mathcal{W}_{\mathrm{ext}}^+ (\omega_0, \rho \dot{\phi}_0) \le \mathcal{W}_{\mathrm{ext}} (\omega_0, \rho \dot{\phi}_0) \le C (\omega_0) \]
for every $(\mathcal{X}, \mathcal{L}; \rho)$ and $\omega_0$. 
This proves Donaldson's inequality.

\subsection{Are they Lagrangian?}

We saw that $\cW^\lambda$, $\mathcal{W}_{\mathrm{ext}}$ and $\check{\mathcal{L}}$ in different frameworks are monotonic along geodesics. 
It is natural to ask if there is some common universal background for these functionals. 
The author have not yet reached a good answer to this naive question. 
Here we describe such attempt, without mentioning detail. 
Though the observation does not conclude anything so far, the author thinks it is worth to share. 

The functionals are defined on the tangent bundle of the space of K\"ahler metrics. 
Lagrangian in physics is also a functional on tangent bundle. 
This motivates us to observe the Euler--Lagrange equation of these functional. 
It turns out that there is a common feature for $\mathcal{W}_{\mathrm{ext}}$ and $\check{\mathcal{L}}$. 

We briefly review Lagrangian formalism in physics. 
It explains the motion of a particle by the principle of least action. 
The formalism is formulated in a coordinate free expression, so that it is suitable to deal with holonomic constraints, such as pendulum. 
A Lagrangian is a functional $\mathcal{L}: T M \to \mathbb{R}$ on the tangent bundle of a space $M$. 
The action functional $\mathscr{A}$ is a functional on the path space $C^\infty ([a,b], M)$ which is defined as 
\[ \mathscr{A} (\phi) := \int_a^b \mathcal{L} (\phi (t), \dot{\phi} (t)) dt. \]

The principle of least action states that the motion of a particle minimizes this functional, among all the curve $\phi$ with the fixed initial and end positions $\phi (a) = x, \phi (b) = y$. 
Critical points of the functional is characterized by the Euler--Lagrange equation: 
\begin{equation}
\frac{\partial \mathcal{L}}{\partial q} (\phi (t), \dot{\phi} (t)) - \frac{d}{dt} \Big{(} \frac{\partial \mathcal{L}}{\partial \dot{q}} (\phi (t), \dot{\phi} (t)) \Big{)} = 0, 
\end{equation}
where $(q, \dot{q})$ is a local (canonical) coordinate of $T M$. 
This equation is known to be coordinate free. 
In the Cartesian coordinate, Lagrangian is given by $\mathcal{L} = T -V$, where $T = \frac{1}{2} m |v|^2$ is the kinematic energy and $V$ is a potential energy of the system. 

Lagrangian formalism appears also in mathematics. 
Geodesics in Riemannian geometry can be characterized by the Euler--Lagrange equation on the kinematic Lagrangian $\mathcal{L} (x, v) = \frac{1}{2} |v|_x^2$. 
Perelman's $\mathscr{L}$-geometry \cite{Per} may be regarded as a Lagrangian formalism on the spacetime of Ricci flow. 

Lempert \cite{Lem} observed that parallel fibrewise convex Lagrangians on $T \mathcal{H} (X, L)$ is minimized by geodesics. 
We note our `Lagrangians' $\cW^\lambda, \mathcal{W}_{\mathrm{ext}}, \check{\mathcal{L}}$ are defined on the same space, but are not parallel (as for $\cW^\lambda$, not even fibrewise convex). 
Now, let us observe what happens for our `Lagrangians'. 

\subsubsection{$\check{\mathcal{L}}$ is a Lagrangian}

Recall we put 
\[ \frac{1}{2 \pi} \check{\mathcal{L}} (\omega, f) = \int_X f e^h \omega^n \Big{/} \int_X e^h \omega^n - \log \int_X e^f \frac{\omega^n}{n!}. \]
We regard $\check{\mathcal{L}} (\omega, - f)$ as a Lagrangian. 
Then the Euler--Lagrange equation turns into 
\begin{equation}
\label{EL}
\frac{\partial \check{\mathcal{L}}}{\partial \omega} (\omega_t, - \dot{\phi}_t) + \frac{d}{dt} \frac{\partial \check{\mathcal{L}}}{\partial f} (\omega_t, -\dot{\phi}_t) = 0. 
\end{equation}

A simple calculation shows the following. 

\begin{prop}
The Euler--Lagrange equation (\ref{EL}) is equivalent to the geodesic equation. 
\end{prop}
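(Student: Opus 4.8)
The plan is to treat $\mathcal{L}(\phi, \dot\phi) := L(\omega_\phi, -\dot\phi)$ as a Lagrangian on $T\mathcal{H}(X, L)$ (with $L = -K_X$, $\lambda = 2\pi$) and to compute the first variation of the action $\mathscr{A}(\bm\phi) = \int_a^b \mathcal{L}(\phi_t, \dot\phi_t)\, dt$ under a variation $\phi_t \mapsto \phi_t + s\eta_t$ with $\eta_a = \eta_b = 0$, so that \eqref{EL} becomes the vanishing of the standard Euler--Lagrange density. The first thing I would record is that $L$ depends on the base point in two ways: through the volume $\omega_\phi^n/n!$ in the partition--function term, and through the probability measure $d\nu_\phi := e^{h_{\omega_\phi}}\omega_\phi^n / \int_X e^{h_{\omega_\phi}}\omega_\phi^n$ built from the Ricci potential. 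The key preliminary identity is that under $\omega \mapsto \omega_\phi$ the Ricci potential transforms as $h_{\omega_\phi} = h_\omega - \log(\omega_\phi^n/\omega^n) - 2\pi\phi + \mathrm{const}$, obtained from $\Ric(\omega_\phi) - \Ric(\omega) = -\sqddbar\log(\omega_\phi^n/\omega^n)$ and $\sqddbar h_{\omega_\phi} = \Ric(\omega_\phi) - 2\pi\omega_\phi$. Consequently $d\nu_\phi = e^{-2\pi\phi}\, d\nu_0 / \int_X e^{-2\pi\phi}\, d\nu_0$ depends on $\phi$ only through the explicit weight $e^{-2\pi\phi}$, with $d\nu_0$ a fixed measure.

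The crucial simplification, which I expect to make the calculation genuinely \emph{simple}, is that along any path the ``soliton'' part of $\mathcal{L}$ is a total time derivative. Writing $\tfrac{1}{2\pi}\mathcal{L} = -\int_X \dot\phi_t\, d\nu_{\phi_t} - \log\int_X e^{-\dot\phi_t}\,\omega_{\phi_t}^n/n!$ and using the identity above, one finds $-\int_X \dot\phi_t\, d\nu_{\phi_t} = \frac{d}{dt}\big(\tfrac{1}{2\pi}\log\int_X e^{-2\pi\phi_t}\, d\nu_0\big)$, which is the total $t$-derivative of a function of the position $\phi_t$ alone. Such a term contributes only boundary terms to $\mathscr{A}$ and hence drops out of the Euler--Lagrange equation. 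Therefore \eqref{EL} coincides with the Euler--Lagrange equation of the \emph{reduced} Lagrangian $2\pi\,\tilde{\mathcal{L}}(\phi,\dot\phi) = -\log\int_X e^{-\dot\phi}\,\omega_\phi^n/n!$.

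It then remains to compute the Euler--Lagrange equation of $\tilde{\mathcal{L}}$. With $d\mu_t := e^{-\dot\phi_t}\omega_{\phi_t}^n / \int_X e^{-\dot\phi_t}\omega_{\phi_t}^n$, the fibre derivative gives the momentum $\tfrac{1}{2\pi}\,d\mu_t$, while the base variation, using $\delta_\eta\,\omega_{\phi_t}^n = -\bar{\Box}_{\phi_t}\eta\,\omega_{\phi_t}^n$, yields $\tfrac{1}{2\pi}\int_X \bar{\Box}_{\phi_t}\eta\, d\mu_t$. Integrating by parts twice against the weight $e^{-\dot\phi_t}\omega_{\phi_t}^n$, via $\sqddbar e^{-\dot\phi_t} = e^{-\dot\phi_t}(\sqrt{-1}\,\partial\dot\phi_t\wedge\bar\partial\dot\phi_t - \sqddbar\dot\phi_t)$, converts this into an integral of $\eta$ against a second-order expression in $\dot\phi_t$. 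Assembling with $\frac{d}{dt}$ of the momentum (which again involves $\frac{d}{dt}\omega_{\phi_t}^n = -\bar{\Box}_{\phi_t}\dot\phi_t\,\omega_{\phi_t}^n$), the two $\bar{\Box}_{\phi_t}\dot\phi_t$ terms cancel and one is left with $\int_X \eta\,(\ddot\phi_t - |\bar\partial^\sharp\dot\phi_t|^2)\, d\mu_t$ modulo its $d\mu_t$-average. Since $\eta$ is arbitrary and the statement is an identity of densities, \eqref{EL} reduces to $\ddot\phi_t - |\bar\partial^\sharp\dot\phi_t|^2 \equiv \mathrm{const}$, which is exactly the geodesic equation on $\mathcal{H}(X, L) = C^\infty(X)/\mathbb{R}$.

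The step I expect to be the main obstacle is the correct bookkeeping of the base-point dependence: one must track the $\omega_\phi^n$ and $h_{\omega_\phi}$ contributions simultaneously and recognize the total-derivative structure that removes the entropy/soliton term, rather than grinding through the two $d\nu_\phi$-variations directly (which also works, but then the $d\nu_\phi$-contributions arising from $\partial L/\partial\omega$ and from $\frac{d}{dt}\,\partial L/\partial f$ must be verified to cancel exactly). The remaining analytic points — the integration by parts against the weighted measure and the reduction of the density identity to equality up to an additive constant, matching the passage to $C^\infty(X)$ modulo constants — are routine.
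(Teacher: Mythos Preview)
Your argument is correct. The paper gives no detailed proof beyond ``A simple calculation shows the following'', so there is nothing to compare line by line; your route is a clean way to perform that calculation. The key observation---that $e^{h_{\omega_\phi}}\omega_\phi^n$ is a fixed measure times $e^{-2\pi\phi}$, so that the first term of $\tfrac{1}{2\pi}\mathcal{L}$ equals $\tfrac{d}{dt}\bigl(\tfrac{1}{2\pi}\log\int_X e^{-2\pi\phi_t}\,d\nu_0\bigr)$---is exactly the structural fact that makes the computation short: a total $t$-derivative of a function of position alone contributes nothing to the Euler--Lagrange equation. After this reduction, your computation for $\tilde{\mathcal{L}}=-\tfrac{1}{2\pi}\log\int_X e^{-\dot\phi}\omega_\phi^n/n!$ is straightforward and yields $\ddot\phi_t-|\bar\partial^\sharp\dot\phi_t|^2=\mathrm{const}$, which is the geodesic equation on $\mathcal{H}(X,L)=C^\infty(X)/\mathbb{R}$. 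The alternative you mention in the last paragraph---computing the two $d\nu_\phi$-contributions directly and checking they cancel---also works and is presumably what the paper has in mind; your total-derivative reduction is simply a more conceptual packaging of the same cancellation.
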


The \textit{energy} of this Lagrangian system is given by 
\begin{equation} 
\mathcal{U} (\omega, f) := \langle \frac{\partial \check{\mathcal{L}}}{\partial f} (\omega, f), f \rangle - \check{\mathcal{L}} (\omega, f) = -2\pi \Big{(} \frac{\int_X f e^f \omega^n}{\int_X e^f \omega^n} - \log \int_X e^f \frac{\omega^n}{n!} \Big{)}. 
\end{equation}
It explains the reason that this functional is conserved along geodesics.

\subsubsection{$\mathcal{W}_{\mathrm{ext}}$ is a Lagrangian}

Recall we put 
\[ \mathcal{W}_{\mathrm{ext}} (\omega, f) = - \frac{1}{2} \frac{\int_X (\hat{s} (\omega) - \hat{f})^2 \omega^n}{\int_X \omega^n} + \frac{1}{2} \frac{\int_X \hat{s}^2 (\omega) \omega^n}{\int_X \omega^n}.  \]
We regard $\mathcal{W}_{\mathrm{ext}} (\omega, f)$ as Lagrangian. 
Similarly as before, we can observe the following. 

\begin{prop}
The Euler--Lagrange equation 
\[ \frac{\partial \mathcal{W}_{\mathrm{ext}}}{\partial \omega} (\omega_t, \dot{\phi}_t) - \frac{d}{dt} \frac{\partial \mathcal{W}_{\mathrm{ext}}}{\partial f} (\omega_t, \dot{\phi}_t) = 0 \] 
is equivalent to the geodesic equation. 
\end{prop}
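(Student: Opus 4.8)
The plan is to treat $W_{\mathrm{ext}}(\omega,f)$ as a Lagrangian on $T\mathcal{H}(X,L)$, exactly as in the preceding proposition for $L$, form the action $\mathscr{A}(\bm{\phi}) := \int_a^b W_{\mathrm{ext}}(\omega_{\phi_t}, \dot{\phi}_t)\, dt$ along a path $\omega_{\phi_t} = \omega + \sqddbar \phi_t$, and read off its first variation. Varying $\phi_t \mapsto \phi_t + s\eta_t$ with $\eta_a = \eta_b = 0$ deforms $\omega_{\phi_t}$ by $s\,\sqddbar\eta_t$ and $\dot{\phi}_t$ by $s\dot{\eta}_t$; integrating the fibre term by parts in $t$ (the boundary terms vanishing) recasts the variation as
\[ \frac{d}{ds}\Big|_{s=0}\mathscr{A}(\bm{\phi}+s\bm{\eta}) = \int_a^b \Big( \frac{\partial W_{\mathrm{ext}}}{\partial\omega}(\sqddbar\eta_t) - \frac{d}{dt}\frac{\partial W_{\mathrm{ext}}}{\partial f}(\eta_t) \Big)\, dt, \]
so that the critical paths are precisely the solutions of the stated Euler--Lagrange equation. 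The goal is then to compute the two partial derivatives explicitly and show that the Euler--Lagrange combination equals a nonzero elliptic operator applied to the geodesic defect $\ddot{\phi}_t - |\bar{\partial}^\sharp \dot{\phi}_t|^2$, whence vanishing for all admissible $\eta$ is equivalent to the geodesic equation $\ddot{\phi}_t - |\bar{\partial}^\sharp \dot{\phi}_t|^2 = 0$.

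First I would compute the fibre derivative. Writing $\hat{u} = u - \int_X u\,\omega^n/\int_X \omega^n$ and expanding the square, one has $W_{\mathrm{ext}}(\omega,f) = V^{-1}\int_X \hat{s}\, f\, \omega^n - \tfrac{1}{2}V^{-1}\int_X \hat{f}^2\, \omega^n$ with $V := \int_X \omega^n$ cohomologically fixed, so the fibre derivative is represented by the density $V^{-1}(\hat{s}(\omega) - \hat{f})\,\omega^n$. Differentiating this along the path uses only standard ingredients: the linearization $\frac{d}{dt}s(\omega_{\phi_t})$ of the scalar curvature, which produces the fourth-order term $-\mathcal{D}^*\mathcal{D}\dot{\phi}_t$ together with the transport term $(\bar{\partial}^\sharp s, \nabla\dot{\phi}_t)$, the volume evolution $\frac{d}{dt}\omega_{\phi_t}^n = (\bar{\Box}\dot{\phi}_t)\,\omega_{\phi_t}^n$, and $\frac{d}{dt}\dot{\phi}_t = \ddot{\phi}_t$; the mean-subtraction in $\hat{\,}$ contributes extra terms because the averaging measure $\omega_{\phi_t}^n$ itself moves.

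Next I would compute the base derivative $\frac{\partial W_{\mathrm{ext}}}{\partial\omega}(\sqddbar\eta)$ at fixed $f$, which again calls for the linearization of $s(\omega)$ in the direction $\sqddbar\eta$, the change of volume form $(\bar{\Box}\eta)\,\omega^n$, and the change of the means hidden in $\hat{s}$ and $\hat{f}$. The key structural point --- and the \emph{main obstacle} --- is that the two fourth-order Lichnerowicz contributions $\mathcal{D}^*\mathcal{D}$ arising respectively in the base and fibre derivatives must cancel in the Euler--Lagrange combination; this is where the self-adjointness of $\mathcal{D}^*\mathcal{D}$ and the specific quadratic shape of $W_{\mathrm{ext}}$ (inherited from the $\kappa\to 0$ limit computed in \cite{Ino2}) are essential. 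Once this cancellation is verified, the remaining first- and second-order terms in $\dot{\phi}_t$ should reorganize, after integration by parts against $\eta$, into a constant multiple of $\ddot{\phi}_t - |\bar{\partial}^\sharp \dot{\phi}_t|^2$; tracking the mean-value corrections carefully so that no spurious additive constant survives the quotient $C^\infty(X)/\mathbb{R}$ is the most delicate bookkeeping. Granting the cancellation, the Euler--Lagrange equation holds for every admissible $\eta$ precisely when $\ddot{\phi}_t - |\bar{\partial}^\sharp \dot{\phi}_t|^2 = 0$, which is the asserted equivalence with the geodesic equation.
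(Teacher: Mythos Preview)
The paper does not give a proof of this proposition: it merely writes ``Similarly as before, we can observe the following'' (the ``before'' being the analogous statement for $L$, which itself is asserted as ``a simple calculation shows the following'' without details). Your plan is therefore exactly what the paper has in mind --- compute the base and fibre derivatives of $W_{\mathrm{ext}}$ directly and check that the Euler--Lagrange combination collapses to the geodesic defect --- and your identification of the key cancellation of the two $\mathcal{D}^*\mathcal{D}$-terms via self-adjointness is correct.

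That said, your write-up remains a sketch: you defer the actual reorganization of the lower-order terms with phrases like ``should reorganize'' and ``granting the cancellation''. For this particular Lagrangian the bookkeeping is genuinely short once one uses the expression $W_{\mathrm{ext}}(\omega,f)=V^{-1}\int_X \hat{s}\,f\,\omega^n - \tfrac12 V^{-1}\int_X \hat f^2\,\omega^n$ you wrote down: after the Lichnerowicz cancellation, the surviving terms from $\tfrac{d}{dt}\omega_t^n$, $\tfrac{d}{dt}\dot\phi_t$, and the transport pieces in $\tfrac{d}{dt}s(\omega_t)$ combine (after a single integration by parts) into $V^{-1}\int_X(\hat s-\hat f)\,\widehat{(\ddot\phi_t - |\bar\partial^\sharp\dot\phi_t|^2)}\,\omega_t^n$, which vanishes for all test variations iff $\ddot\phi_t - |\bar\partial^\sharp\dot\phi_t|^2$ is constant, i.e.\ vanishes modulo $\mathbb R$. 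So the proof is complete once you actually carry out the two or three lines of cancellation rather than just asserting them.
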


%
%

The \textit{energy} of this Lagrangian system is given by 
\begin{equation} 
\mathcal{H}_{\mathrm{ext}} (\omega, f) := \langle \frac{\partial \mathcal{W}_{\mathrm{ext}}}{\partial f} (\omega, f), f \rangle - \mathcal{W}_{\mathrm{ext}} (\omega, f) = -\frac{1}{2} \frac{\int_X \hat{f}^2 \omega^n}{\int_X \omega^n}. 
\end{equation}
It again explains the reason that this functional is conserved along geodesics. 

%

\subsubsection{$\cW^\lambda$ is ...}

Recall we put 
\begin{align*} 
 \cW (\omega, f) 
 &= - \frac{\int_X (s (\omega) + \bar{\Box} f) e^f \omega^n}{\int_X e^f \omega^n}, 
\\
\check{S} (\omega, f) 
&= \frac{\int_X (n+f) e^f \omega^n}{\int_X e^f \omega^n} - \log \int_X e^f \frac{\omega^n}{n!}. 
\end{align*}
We regard $\cW^\lambda (\omega, - f) = \cW (\omega, - f) + \lambda \check{S} (\omega, -f)$ as Lagrangian. 

\begin{prop}
We have 
\[ \Big{(} \frac{\partial \check{S}}{\partial \omega} (\omega_t, -\dot{\phi}_t) + \frac{d}{dt} \frac{\partial \check{S}}{\partial f} (\omega_t, -\dot{\phi}_t) \Big{)} (\varphi) = \frac{\int_X (1 + \hat{f} ) \widehat{(\dot{f} + |\partial f|^2)} \hat{\varphi} ~ e^f \omega^n}{\int_X e^f \omega^n} \]
and 
\begin{align*}
\Big{(} \frac{\partial \cW}{\partial \omega} (\omega_t, -\dot{\phi}_t) 
&+ \frac{d}{dt} \frac{\partial \cW}{\partial f} (\omega_t, -\dot{\phi}_t) \Big{)} (\varphi) 
\\
&= - \frac{\int_X \Big{(} |\mathcal{D} f|^2 + (\Delta - \nabla f) \widehat{(\dot{f} +|\partial f|^2)} + \hat{s}_f \widehat{(\dot{f} + |\partial f|^2)} \Big{)} \hat{\varphi} ~e^f \omega^n}{\int_X e^f \omega^n},
\end{align*}
where we put $f := -\dot{\phi}$. 
\end{prop}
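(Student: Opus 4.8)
The two displayed identities are nothing but the Euler--Lagrange operators of the Lagrangians $\check{S}(\omega,-f)$ and $\check{W}(\omega,-f)$, tested against $\varphi$. So the plan is to assemble each expression from three pieces, every one of which is (essentially) already available in the preceding sections: the fibre partial $\partial_f$, the base partial $\partial_\omega$, and the time derivative $\tfrac{d}{dt}\partial_f$ obtained after substituting $f=-\dot\phi_t$. Throughout I abbreviate $d\mathbb{P}:=e^f\omega^n/\int_X e^f\omega^n$ for the weighted probability measure, and let a hat denote subtraction of the $d\mathbb{P}$-mean.

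First I would read off the fibre partials from Lemma~\ref{Variational formula}. Writing $\check{W}^\lambda=\check{W}+\lambda\check{S}$ and using $s^\lambda_f-\bar{s}^\lambda_f=(s^0_f-\bar{s}^0_f)-\lambda\hat{f}$, the $\lambda$-homogeneous pieces of the first variation there separate into $\partial_f\check{S}(\omega,f)(u)=\int_X u\,\hat{f}\,d\mathbb{P}$ and $\partial_f\check{W}(\omega,f)(u)=-\int_X u\,\hat{s}_f\,d\mathbb{P}$, with $\hat{s}_f=s^0_f-\bar{s}^0_f$; so the two fibre momenta are the densities $\hat{f}$ and $-\hat{s}_f$. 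For the base partials, $\partial_\omega\check{S}(\varphi)=\int_X(1-\hat f)(\bar{\Box}\varphi)\,d\mathbb{P}$ is a one-line computation using $\tfrac{d}{dt}\omega_t^n=-(\bar{\Box}\varphi)\omega^n$, while $\partial_\omega\check{W}$ is precisely the general-$f$ first variation displayed inside the proof of Theorem~\ref{Critical points of W}: its $\lambda=0$ part contributes the term $\int_X\mathrm{Re}(\mathcal{D}\varphi,\mathcal{D}f)\,d\mathbb{P}$ together with a $(s^0_f-\bar{s}^0_f)(\nabla f,\nabla\varphi)$ piece.

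The heart of the argument is the assembly. Substituting $f=-\dot\phi_t$, I would differentiate the momentum pairings $\int_X\hat{f}\varphi\,d\mathbb{P}$ and $-\int_X\hat{s}_f\varphi\,d\mathbb{P}$ in $t$ by the product rule, using $\tfrac{d}{dt}(e^f\omega_{\phi}^n)=(\dot{f}+\bar{\Box}f)e^f\omega_{\phi}^n$ (here the sign convention $\tfrac{d}{dt}\omega_\phi^n=-(\bar{\Box}\dot\phi)\omega_\phi^n$ and the invariance in Lemma~\ref{affine functional} enter). Then I would integrate the base partial by parts so the operator falls on $\varphi$; for $\check{S}$ this turns $\int_X(1-\hat f)(\bar{\Box}\varphi)\,d\mathbb{P}$ into a term carrying $\bar{\Box}f-|\partial f|^2$ plus a drift term in $(\nabla f,\nabla\varphi)$. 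The decisive cancellation is that the $\bar{\Box}f$ produced by the measure derivative and the $\bar{\Box}f$ produced by this integration by parts combine to leave exactly $\dot{f}+|\partial f|^2=-(\ddot\phi-|\bar\partial\dot\phi|^2)$, the geodesic operator; the prefactor $1+\hat f$ then materializes as the normalization constant $1$ together with the cross term from differentiating $d\mathbb{P}$, and $\hat\varphi$ rather than $\varphi$ appears because $\check{S}$ and $\check{W}$ are unchanged under $\varphi\mapsto\varphi+c$. This settles the $\check{S}$ identity cleanly.

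The main obstacle is the $\check{W}$ case, where the base partial carries the full fourth-order linearization of the scalar curvature. Here I would invoke the Lemma computing $\mathcal{D}^{f*}\mathcal{D}\varphi$ together with the two identities $\int_X\mathrm{Re}A_f(\varphi)\,e^f\omega^n=0$ and $\mathrm{Re}(A_f(\varphi)-B_f(\varphi))=\mathrm{Re}(\mathcal{D}\varphi,\mathcal{D}f)$, in order to peel off the genuinely fourth-order contribution — which, after substituting $f=-\dot\phi$ and differentiating the momentum $-\hat{s}_f$, produces the pointwise $|\mathcal{D}f|^2$ term — from the remaining second-order contributions, which must be shown to reorganize into $(\Delta-\nabla f)\widehat{(\dot f+|\partial f|^2)}+\hat{s}_f\,\widehat{(\dot f+|\partial f|^2)}$. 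The delicate bookkeeping is keeping straight which terms carry the explicit $\tfrac{d}{dt}$ versus the base variation, and recognizing the drift Laplacian $\Delta-\nabla f$ as the self-adjoint operator naturally attached to the measure $e^f\omega^n$; once the geodesic operator $\dot f+|\partial f|^2$ has been isolated as the common factor, the stated form follows.
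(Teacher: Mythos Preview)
The paper states this proposition without proof --- the computation is omitted entirely. Your outline is the natural approach and is essentially correct: extract the fibre momentum $\partial_f$ from Lemma~\ref{Variational formula}, the base partial $\partial_\omega$ from the general-$f$ variation displayed inside the proof of Theorem~\ref{Critical points of W}, differentiate the momentum in $t$ after substituting $f=-\dot\phi_t$, and assemble. Your identification of the geodesic operator $\dot f+|\partial f|^2=-(\ddot\phi-|\bar\partial^\sharp\dot\phi|^2)$ as the common organizing factor is exactly the point, and the $A_f,B_f$ identities from the Proposition preceding Theorem~\ref{Critical points of W} are indeed what isolate the $|\mathcal D f|^2$ contribution in the $\check W$ case.

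Two small corrections. First, your appeal to Lemma~\ref{affine functional} is misplaced: that lemma records the \emph{constancy} of $\int_X e^f\omega_\phi^n$ and $\int_X f e^f\omega_\phi^n$ along \emph{geodesics}, whereas here you are computing the Euler--Lagrange operator along an arbitrary path and cannot assume those quantities are constant. What you actually need is only the pointwise formula $\tfrac{d}{dt}(e^f\omega_\phi^n)=(\dot f+\bar\Box f)e^f\omega_\phi^n$ together with the integration-by-parts identity $\int_X(\bar\Box f-|\partial^\sharp f|^2)e^f\omega^n=0$, which is what converts $\bar\Box f$ into $|\partial f|^2$ after integration and produces $\hat g$ with $g=\dot f+|\partial f|^2$ as the time-derivative of $d\mathbb P$. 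Second, for $\check W$ you stop at ``must be shown to reorganize''; the honest work there is tracking $\tfrac{d}{dt}\hat s_f$, which through the variation of $s(\omega_t)$ and $\bar\Box_t$ already computed in the proof of Theorem~\ref{Critical points of W} generates the drift Laplacian $(\Delta-\nabla f)\hat g$ acting on the geodesic operator, plus the $\hat s_f\,\hat g$ cross term from differentiating the measure. That bookkeeping is lengthy but mechanical, and your roadmap does lead to the stated identity.
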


Therefore, we conclude that $\cW^\lambda$ does not characterize geodesics by the principle of least action, obstructed by the integration $\int_X |\mathcal{D} \dot{\phi}|^2 \hat{\varphi} ~e^{-\dot{\phi}} \omega^n$. 

\begin{quest}
What does this mean? 
\end{quest}

\subsubsection{A remark by Laszlo Lempert}

The author was leant from Laszlo Lempert that the functionals $\check{\mathcal{L}}$ and $\mathcal{W}_{\mathrm{ext}}$ are both the sum of parallel Lagrangian and a closed 1-form, which is given by the differential of Ding and Mabuchi functional, respectively.  
The Euler--Lagrange equation of a functional does not change when adding a closed 1-form, so that our observation that the Euler--Lagrange equation of these functionals give geodesic equation can be derived from his result on parallel Lagrangians \cite{Lem}.


\begin{thebibliography}{100}

\bibitem[Ber]{Ber} R. J. Berman, \textit{K-polystability of $\mathbb{Q}$-Fano varieties admitting K\"ahler--Einstein metrics}, Invent. Math. \textbf{203}, 3 (2016), 973--1025. 

\bibitem[BB]{BB} R. J. Berman, B. Berndtsson, \textit{Convexity of the $K$-energy on the space of K\"ahler metrics and uniqueness of extremal metrics}, J. Amer. Math. Soc. \textbf{30} (2017), 1165--1196. 

\bibitem[BBJ]{BBJ} R. J. Berman, S. Boucksom, M. Jonsson, \textit{A variational approach to the Yau--Tian--Donaldson conjecture}, \verb|arXiv:1509.04561v3|. 

\bibitem[BHJ1]{BHJ1} S. Boucksom, T. Hisamoto, M. Jonsson, \textit{Uniform K-stability, Duistermaat--Heckman measures and singularities of pairs}, Ann. Inst. Fourier (Grenoble) \textbf{67} (2017), 87--139. 

\bibitem[BHJ2]{BHJ2} S. Boucksom, T. Hisamoto, M. Jonsson, \textit{Uniform K-stability and asymptotics of energy functionals in K\"ahler geometry}, J. Eigne Angrew. Math. \textbf{751} (2019), 27--89. 

\bibitem[BJ1]{BJ1} S. Boucksom, M. Jonsson, \textit{Singular semipositive metrics on line bundles on varieties over trivially valued fields}, \verb|arXiv:1801.08229|. 

\bibitem[BJ2]{BJ2} S. Boucksom, M. Jonsson, \textit{A non-archimedean approach to K-stability}, \verb|arXiv:1805.11160|. 

\bibitem[Chen]{Chen} X. Chen, \textit{Space of K\"ahler metrics. III. On the lower bound of the Calabi energy and geodesic distance}, Invent. Math. \textbf{175}, 3 (2009), 453--503.  

\bibitem[CC]{CC} X. Chen, J. Cheng, \textit{On the constant scalar curvature K\"ahler metrics (I) -- a priori estimate}, \verb|arXiv:|1712.06697. 

\bibitem[CSW]{CSW} X. Chen, S. Sun, B. Wang, \textit{K\"ahler-Ricci flow, K\"ahler-Einstein metric, and K-stability}, Geom. Topol. \textbf{22} (2018), 3145--3173. 

\bibitem[CTW1]{CTW1} J. Chu, V. Tossati, B. Weinkove, \textit{On the $C^{1,1}$-regularity of geodesics in the space of K\"ahler metrics}, Ann. PDE \textbf{3}, 2 (2017), paper No. 15, 12pp. 

\bibitem[CTW2]{CTW2} J. Chu, V. Tossati, B. Weinkove, \textit{$C^{1,1}$ regularity for degenerate complex Monge-Amp\`ere equations and geodesic rays}, Comm. Partial Differential Equations \textbf{3} (2018), 292--312. 

\bibitem[Der1]{Der1} R. Dervan, \textit{Relative K-stability for K\"ahler manifolds}, Math. Ann. \textbf{372}, No. 3--4 (2018), 859--889. 

\bibitem[Der2]{Der2} R. Dervan, \textit{K-semistability of optimal degenerations}, Q. J. Math. \textbf{71}, 3 (2020), 989--995. 

\bibitem[DS]{DS} R. Dervan, G. Sz\'ekelyhidi, \textit{The K\"ahler-Ricci flow and optimal degenerations}, J. Diff. Geom. \textbf{116}, 1 (2020), 187--203. 

\bibitem[Don]{Don} S. Donaldson, \textit{Lower bounds on the Calabi functional}, J. Diff. Geom. \textbf{70}, 3 (2005), 453--472. 

\bibitem[EG1]{EG1} D. Edidin, W. Graham, \textit{Equivariant Intersection Theory}, Invent. Math. \textbf{131}, 3 (1998), 595--634. 

\bibitem[EG2]{EG2} D. Edidin, W. Graham, \textit{Riemann--Roch for equivariant Chow groups}, Duke Math. J. \textbf{102}, 3 (2000), 567--594. 

\bibitem[Ful]{Ful} Fulton, \textit{Intersection Theory, second edition}, Springer-Verlag, Berlin, 1998. 

\bibitem[Fut]{Fut} A. Futaki, \textit{K\"ahler--Einstein Metrics and Integral Invariants}, Invent. Math. \textbf{73} (1983), 437--443. 

\bibitem[FM]{FM} A. Futaki, T. Mabuchi, \textit{Bilinear forms and extremal K\"ahler vector fields}, Math. Ann. \textbf{301} (1995), 2, 199--210. 


\bibitem[GGK]{GGK} V. Ginzburg, V. Guillemin, Y. Karshon, \textit{Moment maps, cobordisms, and Hamiltonian group actions}, Mathematical Surveys and Monographs, 98. American Mathematical Society, Providence, RI, 2002. 

\bibitem[GS]{GS} V. W. Guillemin, A. Sternberg, \textit{Supersymmetry and Equivariant deRham Theory}, Mathematics Past and Present. Springer-Verlag, Berlin, 1999. 

\bibitem[HL1]{HL1} J. Han, C. Li, \textit{On the Yau-Tian-Donaldson conjecture for generalized K\"ahler-Ricci soliton equations}, \verb|arXiv:2006.00903|. 

\bibitem[HL2]{HL2} J. Han, C. Li, \textit{Algebraic uniqueness of K\"ahler-Ricci flow limits and optimal degenerations of Fano varieties}, \verb|arXiv.2009.01010|. 

\bibitem[He]{He} W. He, \textit{K\"ahler--Ricci soliton and $H$-functional}, Asian J. Math. \textbf{20} (2016), 645--664. 

\bibitem[His1]{His16} T. Hisamoto, \textit{On the limit of spectral measures associated to a test configuration of a polarized K\"ahler manifold}, J. Reigne Angew. Math. \textbf{713} (2016), 129--148

\bibitem[His2]{His} T. Hisamoto, \textit{Geometric flow, multiplier ideal sheaves and optimal destabilizer for a Fano manifold}, \verb|arXiv:1901.08480|. 

\bibitem[Ino1]{Ino1} E. Inoue, \textit{The moduli space of Fano manifolds with K\"ahler--Ricci solitons}, Adv. in Math. \textbf{357} (2019), 106841. 

\bibitem[Ino2]{Ino2} E. Inoue, \textit{Constant $\mu$-scalar curvature K\"ahler metric - formulation and foundational results}, J. Geom. Anal. \textbf{32} (2022), Article number 145. 

\bibitem[Ino3]{Ino3} E. Inoue, \textit{Equivariant calculus on $\mu$-character and $\mu$K-stability of polarized schemes}, \verb|arXiv:2004.06393|. 

\bibitem[Ino4]{Ino4} E. Inoue, \textit{Entropies in $\mu$-framework of canonical metrics and K-stability, II -- Non-archimedean aspect: non-archimedean $\mu$-entropy and $\mu$K-stability}, preprint. 

\bibitem[Lah1]{Lah1} A. Lahdili, \textit{K\"ahler metrics with constant weighted scalar curvature and weighted K-stability}, Proc. London Math. Soc. (3) \textbf{119} (2019), 1065--1114. 

\bibitem[Lah2]{Lah2} A. Lahdili, \textit{Convexity of the weighted Mabuchi functional and the uniqueness of weighted extremal metrics}, \verb|arXiv:2007.01345|. 

\bibitem[Leg]{Leg} E. Legendre, \textit{Localizing the Donaldson--Futaki invariant}, \verb|arXiv:2006.08987|. 

\bibitem[Lem]{Lem} L. Lempert, \textit{The principle of least action in the space of K\"ahler potentials}, \verb|arXiv:2009.09949|. 

\bibitem[Li1]{Li1} C. Li, \textit{K-semistability of equivariant volume minimization}, Duke Math. J. \textbf{166}, 16 (2017), 3147--3218. 

\bibitem[Li2]{Li2} C. Li, \textit{Geodesic rays and stability in the cscK problem}, \verb|arXiv:2001.01366|. 


\bibitem[MSY]{MSY} D. Martelli, J. Sparks, S.-T. Yau, \textit{Sasaki-Einstein manifolds and volume minimization}, Comm. Math. Phys. \textbf{280} (2008), 611--673. 


\bibitem[Per]{Per} G. Perelman, \textit{The entropy formula for the Ricci flow and its geometric applications}, \verb|arXiv:0211159|. 

\bibitem[PS1]{PS1} D. H. Phong, J. Sturm, \textit{Test configurations for K-stability and geodesic rays}, J. Symp. Geom. \textbf{5}, 2 (2007), 221--247. 

\bibitem[PS2]{PS2} D. H. Phong, J. Sturm, \textit{Regularity of geodesic rays and Monge--Ampere equations}, Proc. Amer. Math. Soc. \textbf{138}, 10 (2010), 3637--3650. 

\bibitem[Rot]{Rot} Rothaus, \textit{Logarithmic Sobolev inequalities and the supremum of Schr\"odinger operators}, J. Func. Anal. \textbf{42} (1981), 110--120. 

\bibitem[S-D]{S-D} Z. Sj\"ostr\"om Dyrefelt, \textit{K-semistability of cscK manifolds with transcendental cohomology class}, J. Geom. Anal. \textbf{28}, 4 (2018), 2927--2960. 


\bibitem[TZZZ]{TZZZ} G. Tian, S. Zhang, Z. Zhang, X. Zhu, \textit{Supremum of Perelman's entropy and K\"ahler--Ricci flow on a Fano manifold}, Trans. Amer. Math. Soc. \textbf{365}, 12 (2013), 6669--6695. 

\bibitem[TZ1]{TZ1} G. Tian, X. Zhu, \textit{A new holomorphic invariant and uniqueness of K\"ahler--Ricci solitons}, Comment. Math. Helv. \textbf{77} (2002), 297--325. 

\bibitem[TZ2]{TZ2} G. Tian, X. Zhu, \textit{Convergence of the K\"ahler--Ricci flow on Fano manifolds}, J. Amer. Math. Soc. \textbf{20} (3) (2007), 675--699. 


\bibitem[Xia]{Xia} M. Xia, \textit{On sharp lower bounds for Calabi type functionals and destabilizing properties of gradient flows}, \verb|arXiv:1901.07889|. 

\end{thebibliography}
\end{document}